\setlist[enumerate]{leftmargin=.5in}
\setlist[itemize]{leftmargin=.5in}
\crefname{hypothesis}{Hypothesis}{Hypotheses}
\theoremstyle{plain}
\numberwithin{assumption}{section}
\newenvironment{manualassumption}[1]{%
  \manualassumptioninner
}{\endmanualassumptioninner}
\newenvironment{manualtheorem}[1]{%
  \manualtheoreminner
}{\endmanualtheoreminner}
\newcommand{\customlabel}[2]{%
   \protected@write \@auxout {}{\string \newlabel {#1}{{#2}{\thepage}{#2}{#1}{}} }%
   \hypertarget{#1}{}
}
\DeclareMathOperator*{\argsup}{arg\,sup}
\numberwithin{equation}{section}
\numberwithin{theorem}{section}
\title{Parameter Estimation for the McKean-Vlasov Stochastic Differential Equation \thanks{\color{header1} \textbf{Funding:} \color{black} The first author was funded by the EPSRC CDT in the Mathematics of Planet Earth (grant number EP/L016613/1) and the National Physical Laboratory. The second, third, and fourth authors were partially funded under a J.P. Morgan A.I. Research Award (2022). The fourth author was partially supported by the EPSRC (grant number EP/P031587/1).}
}
\author{L. Sharrock\thanks{Department of Mathematics, Imperial College London, South Kensington, London, SW7 2AZ, UK
  (\email{louis.sharrock16@imperial.ac.uk}, \email{n.kantas@imperial.ac.uk}, \email{g.pavliotis@imperial.ac.uk})}
\and N. Kantas\footnotemark[2]
\and P. Parpas\footnotemark[3]\thanks{Department of Computing, Imperial College London, South Kensington, London, SW7 2AZ, UK (\email{panos.parpas@imperial.ac.uk})}
\and G.A. Pavliotis\footnotemark[2]
 }
\begin{document}

\maketitle

\begin{abstract}
We consider the problem of parameter estimation for a stochastic McKean-Vlasov equation, and the associated system of weakly interacting particles. We study two cases: one in which we observe multiple independent trajectories of the McKean-Vlasov SDE, and another in which we observe multiple particles from the interacting particle system. In each case, we begin by establishing consistency and asymptotic normality of the (approximate) offline maximum likelihood estimator, in the limit as the number of observations $N\rightarrow\infty$. We then propose an online maximum likelihood estimator, which is based on a continuous-time stochastic gradient ascent scheme with respect to the asymptotic log-likelihood of the interacting particle system. We characterise the asymptotic behaviour of this estimator in the limit as $t\rightarrow\infty$, and also in the joint limit as $t\rightarrow\infty$ and $N\rightarrow\infty$. In these two cases, we obtain a.s. or $\mathbb{L}_1$ convergence to the stationary points of a limiting contrast function, under suitable conditions which guarantee ergodicity and uniform-in-time propagation of chaos. We also establish, under the additional condition of global strong concavity, $\mathbb{L}_2$ convergence to the unique maximiser of the asymptotic log-likelihood of the McKean-Vlasov SDE, with an asymptotic convergence rate which depends on the learning rate, the number of observations, and the dimension of the non-linear process. Our theoretical results are supported by two numerical examples, a linear mean field model and a stochastic opinion dynamics model.
\end{abstract}

\begin{keywords}
McKean-Vlasov equation, nonlinear diffusion, maximum likelihood, parameter estimation, consistency, asymptotic normality, stochastic gradient descent
\end{keywords}

\begin{AMS}
60F05, 60F25, 60H10, 62F12
\end{AMS}

\section{Introduction} \label{sec:introduction}
In this paper, we consider a family of McKean-Vlasov stochastic differential equations (SDEs) on $\mathbb{R}^d$, parametrised by $\theta\in\mathbb{R}^p$, of the form
\begin{align}
\mathrm{d}x_t^{\theta}&= B(\theta,x_t^{\theta},\mu_t^{\theta})\mathrm{d}t + \sigma(x_t^{\theta})\mathrm{d}w_t,~~~t\geq 0 \label{MVSDE} \\
\mu_t^{\theta} &= \mathcal{L}(x_t^{\theta}), \label{MVSDE2}
\end{align}
where $B:\mathbb{R}^p\times\mathbb{R}^d\times\mathcal{P}(\mathbb{R}^d)\rightarrow\mathbb{R}^d$, $\sigma:\mathbb{R}^d\rightarrow\mathbb{R}^{d\times d}$ are Borel measurable functions, $(w_t)_{t\geq0}$ is a $\mathbb{R}^d$-valued standard Brownian motion, and $\mathcal{L}(x_t^{\theta})$ denotes the law of of $x_t^{\theta}$. We assume that $x_0\in\mathbb{R}^d$, or that $x_0$ is a $\mathbb{R}^d$-valued random variable with law $\mu_0$, independent of $(w_t)_{t\geq 0}$. This equation is non-linear in the sense of McKean \cite{McKean1966,McKean1967,Sznitman1991}; in particular, the drift coefficient depends on the law of the solution, in addition to the solution itself. We will restrict our attention to the case in which the dependence on the law only enters linearly in the drift, namely, that
\begin{equation}
B(\theta,x,\mu) = b(\theta,x) + \int_{\mathbb{R}^d}\phi(\theta,x,y)\mu(\mathrm{d}y), \label{bigB}
\end{equation}
for some Borel measurable functions $b:\mathbb{R}^p\times\mathbb{R}^d\rightarrow\mathbb{R}^d$ and $\phi:\mathbb{R}^p\times\mathbb{R}^d\times\mathbb{R}^d\rightarrow\mathbb{R}^d$. This choice of dynamics, while not the most general possible, is sufficiently broad for many applications of interest. Moreover, it includes the popular case in which $b$ and $\phi$ both have gradient forms, that is, $b(\theta,x) = -\nabla V_{\theta}(x)$ and $\phi(\theta,x,y) = -\nabla W_{\theta}(x-y)$, in which case $V_{\theta}$ and $W_{\theta}$ are referred to as the confinement potential and the interaction potential, respectively (e.g., \cite{Durmus2020,Malrieu2001}).
 
The McKean-Vlasov SDE arises naturally as the mean field limit ($N\rightarrow\infty$) of the weakly interacting particle system (IPS)
\begin{equation}
\mathrm{d}x_t^{\theta,i,N} = B(\theta,x_t^{\theta,i,N},\mu_t^{\theta,N})\mathrm{d}t+ \sigma(x_t^{\theta,i,N})\mathrm{d}w_t^{i}~,~~~i=1,\dots,N \label{IPS}
\end{equation}
where $(w_t^{i})_{t\geq 0}$ are $N$ independent $\mathbb{R}^d$-valued independent standard Brownian motions, $x_0^{i}$ are a family of i.i.d. $\mathbb{R}^d$-valued random variables with common law $\mu_0$, independent of $\smash{(w_t^{i})_{t\geq 0}}$, and $\smash{\mu_t^{\theta,N} = \frac{1}{N}\sum_{i=1}^N \delta_{x_t^{\theta,i,N}}}$
is the empirical law of the interacting particles. In particular, under relatively weak assumptions, it is well known that the empirical law $\mu_{t}^{\theta,N}\rightarrow\mu_t^{\theta}$ weakly as $N\rightarrow\infty$ (e.g., \cite{Oelschlager1984}). This phenomenon is commonly known as the {propagation of chaos} \cite{Sznitman1991}. 

The McKean-Vlasov SDE also has a natural connection to a non-linear, non-local partial differential equation on the space of probability measures (e.g., \cite{Cattiaux2008}). In particular, under some regularity conditions on $b$ and $\phi$, one can show that $\mathcal{L}(x_t^{\theta})$ is absolutely continuous with respect to the Lebesgue measure for all $t\geq 0$ \cite{McKean1967,Tamura1984} and its density, which we will denote by $u_t^{\theta}$, satisfies a non-linear partial differential equation of the form
\begin{equation}
\frac{\partial u_t^{\theta}(x)}{\partial t} = \nabla \left[\frac{1}{2}\sigma(x)\sigma^T(x)\nabla u_t^{\theta}(x) + u_t^{\theta}(x)\left[b(\theta,x)+\int_{\mathbb{R}^d}\phi(\theta,x,y)u_t^{\theta}(y)\mathrm{d}y\right]\right]. \label{PDE}
\end{equation}
In the particular case that $b(x) = \nabla V(x)$ and $\phi(x,y) = \nabla W(x-y)$, this is commonly referred to as the {granular media equation} or the {McKean-Vlasov-Fokker-Planck equation} (e.g., \cite{Benachour1998,Cattiaux2008}). 

\subsection{Literature Review}
 The systematic study of McKean-Vlasov SDEs was first initiated by McKean \cite{McKean1966} in the 1960s, inspired by Kac's programme in Kinetic Theory \cite{Kac1956}. We refer to \cite{Funaki1984,Meleard1996,Sznitman1991} 
 for some other classical references. In the last two decades, the study of non-linear diffusions has continued to receive considerable attention, with extensive results on well-posedness (e.g., \cite{ChaudrudeRaynal2020,Huang2019}), existence and uniqueness (e.g., \cite{Bauer2018,Funaki1984,Hammersley2018,Jourdain2008,Li2016,Mishura2020,Wang2018}), 
ergodicity (e.g., \cite{Bashiri2020,Bolley2013,Butkovsky2014,Carrillo2006,Cattiaux2008,DelMoral2019,Eberle2019,Herrmann2010,Malrieu2001,Tugaut2013}),
  and propagation of chaos (e.g., \cite{Benachour1998,Bustos2008,Durmus2020,Malrieu2001,Malrieu2003}). 
  This has no doubt been motivated, at least in part, by the increasing number of applications for McKean-Vlasov SDEs, including in statistical physics \cite{Benedetto1997}, 
 multi-agent systems \cite{Benachour1998}, 
  mean-field games \cite{Carmona2018,Cardaliaguet2019,Cardaliaguet2018},
   stochastic control \cite{Buckdahn2017},
 filtering \cite{Crisan2010}, mathematical biology (including neuroscience \cite{Baladron2012} and structured models of population dynamics \cite{Burger2007,Mogilner1999}),
  epidemic dynamics \cite{Ball2020}, social sciences (including opinion dynamics \cite{Chazelle2017,Goddard2022} and cooperative behaviours \cite{Canuto2012}), financial mathematics \cite{Giesecke2020}, high dimensional sampling \cite{Liu2017,Korba2020}, and the analysis of 
 neural networks \cite{Hu2020,Mei2018,Rotskoff2019,Sirignano2020}. 

Despite the recent renewed interest in the study of McKean-Vlasov SDEs, however, the problem of parameter estimation for this class of equations has received relatively little attention. This is contrast to the wealth of literature on parameter inference in linear (i.e., not measure-dependent) diffusion processes (e.g., \cite{Bishwal2008,Borkar1982,Kutoyants1984,Liptser2001}).
 Recently, Wen et al. \cite{Wen2016} established the asymptotic consistency and asymptotic normality of the (offline) maximum likelihood estimator (MLE) for a broad class of McKean-Vlasov SDEs, based on continuous observation of $(x_t)_{t\in[0,T]}$. These results have since been extended by Liu et al. to the path-dependent case \cite{Liu2020}. We also mention the work of Catalot and Laredo \cite{Catalot2021a,Catalot2021,Catalot2022}, who have studied parametric inference for a particular class of nonlinear self-stabilising SDEs using an approximate log-likelihood function, again based on continuous observation of the non-linear diffusion process, and established asymptotic properties (consistency, normality, convergence rates) of the resulting estimators in several asymptotic regimes (e.g., small noise and long time limit). On a related topic, Gomes et al. \cite{Gomes2019} have considered parameter estimation for a McKean-Vlasov PDE, based on independent realisations of the associated non-linear SDE, in the context of models for pedestrian dynamics. 
 
Despite the recent renewed interest in the study of McKean-Vlasov SDEs, however, the problem of parameter estimation for this class of equations has received relatively little attention. This is contrast to the wealth of literature on parameter inference in linear (i.e., not measure-dependent) diffusion processes (e.g., \cite{Borkar1982,Kutoyants1984,Liptser2001}).
 Recently, Wen et al. \cite{Wen2016} established the asymptotic consistency and asymptotic normality of the (offline) maximum likelihood estimator (MLE) for a broad class of McKean-Vlasov SDEs, based on continuous observation of $(x_t)_{t\in[0,T]}$. These results have since been extended by Liu et al. to the path-dependent case \cite{Liu2020}. We also mention the work of Catalot and Laredo \cite{Catalot2021a,Catalot2021,Catalot2022}, who have studied parametric inference for a particular class of nonlinear self-stabilising SDEs using an approximate log-likelihood function, again based on continuous observation of the non-linear diffusion process, and established asymptotic properties (consistency, normality, convergence rates) of the resulting estimators in several asymptotic regimes (e.g., small noise and long time limit). On a related topic, Gomes et al. \cite{Gomes2019} have considered parameter estimation for a McKean-Vlasov PDE, based on independent realisations of the associated non-linear SDE, in the context of models for pedestrian dynamics. 

In a slightly different framework, Maestra and Hoffmann \cite{DellaMaestra2022} consider non-parametric estimation of the drift-term in a McKean-Vlasov SDE, and the solution of the corresponding non-linear Fokker-Planck equation, based on continuous observation of the associated IPS over a fixed time horizon, namely $(x_t^{i,N})_{t\in[0,T]}^{i=1,\dots,N}$, in the limit as $N\rightarrow\infty$. The authors obtain adaptive estimators based on the solution map of the Fokker-Planck equation, and prove their optimality in a minimax sense. Moreover, in the case that $b(x) = -\nabla V(x)$ and $\phi(x,y) = -\nabla W(x-y)$, the authors derive an estimator of the interaction potential, and establish its consistency. We also refer to \cite{Lu2019,Lang2021,Lu2021,Comte2022,Yao2022} for some other recent contributions on non-parametric inference for IPSs. 
 
Despite these recent contributions, however, there are no existing works which tackle the problem of {online} parameter estimation for McKean-Vlasov SDEs. The main purpose of this paper is to address this gap. There is significant motivation for this approach. In comparison to classical (offline) methods, which process observations in a batch fashion, online methods perform inference in real time, can track changes in parameters over time, are more computationally efficient, and have significantly smaller storage requirements. Even for standard diffusion processes, literature on online parameter estimation is somewhat sparse, with some notable recent exceptions \cite{Bhudisaksang2021,Sirignano2017a,Sirignano2020a,Surace2019}. The problem of recursive estimation in continuous-time stochastic processes was first rigorously analysed by Levanony et al. \cite{Levanony1994}. These authors propose an online MLE which, irrespective of initial conditions, is consistent and asymptotically efficient. This estimator, however, involves computing gradients of a Girsanov log-likelihood, $\mathcal{L}_t(\theta)$, every time a new observation arrives; as a result, it is computationally expensive, and cannot be implemented in a truly online fashion, since $\nabla_{\theta}\mathcal{L}_t(\theta)$ depends on the entire trajectory of the process $x_t$. This problem has more recently been revisited by Sirignano and Spiliopoulos \cite{Sirignano2017a,Sirignano2020a}, who propose an online statistical learning algorithm - `stochastic gradient descent in continuous time' - for the estimation of the parameters in a fully observed ergodic diffusion process. These authors establish the a.s. convergence of this estimator to the stationary points of a suitably defined objective function \cite{Sirignano2017a}, and, under additional assumptions, also obtain an $\mathbb{L}^{p}$ convergence rate and a central limit theorem \cite{Sirignano2020a}. These results have since also been extended to partially observed diffusion processes \cite{Surace2019}, and jump-diffusion processes \cite{Bhudisaksang2021}.
 
There also exists relatively little previous literature on statistical inference for IPSs, in the limit as the number of particles $N\rightarrow\infty$. In the context of parameter estimation, the mean field regime was first analysed by Kasonga \cite{Kasonga1990}, who considered a system of interacting diffusion processes, depending linearly on some unknown parameter, and established that the MLE based on continuous observations over a fixed time interval $[0,T]$ is consistent and asymptotically normal in the limit as $N\rightarrow\infty$. Bishwal \cite{Bishwal2011} later extended these results to the case in which the parameter to be estimated is a function of time, proving consistency and asymptotic normality of the sieve estimator (in the case of continuous observations) and an approximate MLE (in the case of discrete observations). In this paper, we extend the results in \cite{Kasonga1990} in another direction, establishing consistency and asymptotic normality of the offline MLE when the parametrisation is not linear.

More recently, Giesecke et al. \cite{Giesecke2020} have established the asymptotic properties (consistency, asymptotic normality, and asymptotic efficiency) of an approximate MLE for a much broader class of dynamic interacting stochastic systems, widely applicable in financial mathematics, which additionally allow for discontinuous (i.e., jump) dynamics. In addition, Chen \cite{Chen2021} has established the optimal convergence rate for the MLE in an interacting parameter system with linear interaction for $\phi$, simultaneously in the large $N$ (mean-field limit) and large $T$ (long-time dynamics) regimes. For other more recent contributions, we refer also to \cite{Amorino2022,Maestra2022,Messenger2022,Pavliotis2022}. None of the these works, however, considers parameter estimation for the IPS in the online setting.

\subsection{Contributions}
The main contributions of this paper relate to both the methodology and the theory of parameter estimation for the McKean-Vlasov SDE \eqref{MVSDE} - \eqref{MVSDE2}. Regarding methodology:
\begin{itemize}
\item We discuss how one can formulate an appropriate approximation to the true likelihood function in this problem, under various modelling assumptions.
\item We distinguish between cases in which the data consists of multiple independent samples from the McKean-Vlasov SDE (Case I), or multiple interacting trajectories from the IPS (Case II). 
\end{itemize}
In each of these cases, we perform a rigorous asymptotic analysis of the MLE, with a focus on online parameter estimation. Our main theoretical contributions can be summarised as follows:
\begin{itemize}
\item In Case II, we establish asymptotic consistency and asymptotic normality of the offline MLE, in the limit as the number of particles $N\rightarrow\infty$. Our results generalise those in \cite{Kasonga1990} to the case in which $b$ and $\phi$ depend non-linearly on the parameter.
\item In Case I and Case II, we propose implementable online estimators for the parameters of the McKean-Vlasov SDE, which evolve according to continuous-time stochastic gradient ascent algorithms with respect to the asymptotic log-likelihood function of the corresponding IPS. 
\item We establish that these estimators converge, either almost surely (in the limit as $t\rightarrow\infty$) or in $\mathbb{L}^1$ (in the limit as  $t\rightarrow\infty$ and $N\rightarrow\infty$) to the stationary points of certain contrast functions. These results hold under assumptions which guarantee ergodicity and, in the double asymptotic framework, uniform-in-time propagation of chaos.
\item We establish, under the additional condition of global strong concavity, that these estimators converge in $\mathbb{L}^2$ to the true parameter $\theta_0$, i.e., the unique maximiser of the asymptotic log-likelihood of the McKean-Vlasov SDE. We also provide explicit convergence rates, which depend on the dimension of the non-linear process, the number of observations, and the learning rate.
\end{itemize}
Finally, we provide numerical examples to illustrate the application of these results to two cases of interest, namely, a linear mean-field model, and a stochastic opinion dynamics model. It is worth emphasising that, given the connection between the McKean-Vlasov SDE \eqref{MVSDE} - \eqref{MVSDE2} and the non-linear, non-local PDE \eqref{PDE}, the results of this paper are also applicable when one is primarily interested in parameter estimation for the non-linear PDE \eqref{PDE}. 

\subsection{Paper Organisation}
The remainder of this paper is organised as follows. In Section \ref{sec:problem}, we formulate the estimation problem, and propose a recursive estimator for the McKean-Vlasov SDE. In Section \ref{sec:results}, we state our assumptions and our main results regarding the asymptotic properties of the offline and online MLEs. In Section \ref{sec:proofs}, we provide the proofs of these results. In Section \ref{sec:numerics}, we provide several numerical examples illustrating the performance of the proposed algorithm. Finally, in Section \ref{sec:conclusions}, we provide some concluding remarks.

\subsection{Additional Notation}
We will assume throughout this paper that $(x_t^{\theta})_{t\geq 0}$ is defined on a complete probability space $(\Omega,\mathcal{F},\mathbb{P}_{\theta})$, equipped with filtration $(\mathcal{F}_t)_{t\geq 0}$, and denote the corresponding expectation by $\mathbb{E}_{\theta}$. In addition, if $(x_t^{\theta})_{t\geq 0}$ is a solution with $x_0 = x\in\mathbb{R}^d$, we will occasionally make explicit the dependence on the initial condition by writing $\smash{x_t^{\theta}:= x_{t}^{\theta,x}}$, $\mu_t^{\theta}:= \mu_t^{\theta,x}$, and $\mathbb{E}_{\theta}:=\mathbb{E}_{\theta,x}$. We will use $\langle\cdot,\cdot\rangle$ and $||\cdot||$ to denote, respectively, the Euclidean inner product and the corresponding norm on $\mathbb{R}^d$. We write $\mathcal{P}(\mathbb{R}^d)$ and $\mathcal{P}_p(\mathbb{R}^d)$, $p>0$, for the collection of all probability measures on $\mathbb{R}^d$, and the collection of all probability measures on $\mathbb{R}^d$ with finite $p^{\text{th}}$ moment. In a slight abuse of notation, we will frequently write $\mu(||\cdot||^p)$ for the $p^{\text{th}}$ moment of $\mu$; that is, $\mu(||\cdot||^p)= \int_{\mathbb{R}^d} ||x||^p\mu(\mathrm{d}x)$. For $\mu,\nu\in\mathcal{P}_p(\mathbb{R}^d)$, we write $\mathbb{W}_{p}(\mu,\nu)$ to denote the Wasserstein distance between $\mu$ and $\nu$, viz
\begin{equation}
\mathbb{W}_{p}(\mu,\nu) = \inf_{\pi\in\Pi(\mu,\nu)}\left[\int_{\mathbb{R}^d\times\mathbb{R}^d}||x-y||^p\pi(\mathrm{d}x,\mathrm{d}y)\right]^{\frac{1}{\max\{1,p\}}}.
\end{equation}
where $\Pi(\mu,\nu)$ is the set of all couplings of $\mu,\nu$. That is, if $\pi\in\Pi(\mu,\nu)$, then $\pi(A\times \mathbb{R}^d) = \mu(A)$ and $\pi(\mathbb{R}^d\times A) = \nu(A)$ for all $A\in\mathcal{B}(\mathbb{R}^d)$.

\section{Parameter Estimation for the McKean-Vlasov SDE}
\label{sec:problem}
We will assume, throughout this paper, that there exists a true, static parameter $\theta_0\in\mathbb{R}^{p}$ which generates observations $(x_t)_{t\geq 0}:=(x_t^{\theta_0})_{t\geq 0}$ of the McKean-Vlasov SDE \eqref{MVSDE}, \color{black} with corresponding law $(\mu_t)_{t\geq 0}:= (\mu_t^{\theta_0})_{t\geq 0}$. Thus, in our notation, we will suppress the dependence of the observed path, and its law, on the true parameter $\theta_0$. We will make the same assumption when instead we observe trajectories of the IPS \eqref{IPS}, in which case the observations are $\smash{(x_t^{i,N})_{t\geq 0}^{i=1,\dots,N} = (x_t^{\theta_0,i,N})_{t\geq 0}^{i=1,\dots,N}}$, with corresponding empirical law $\smash{(\mu_t^{N})_{t\geq 0} := (\mu_t^{\theta_0,N})_{t\geq 0}}$.

\subsection{The Likelihood Function} \label{sec:likelihood}
 Let $\mathbb{P}_{t}^{\theta}$ denote the probability measure induced by a path $(x_s^{\theta})_{s\in[0,t]}$ of the McKean-Vlasov SDE \eqref{MVSDE}. Then, under certain regularity conditions, to be specified below, one can use the Girsanov formula to obtain a likelihood function as (e.g., \cite{Wen2016})
\begin{align}
\mathcal{L}_t(\theta)
=\log \frac{\mathrm{d}\mathbb{P}_t^{\theta}}{\mathrm{d}\mathbb{P}_t^{\theta_0}}&= \int_0^t  \left\langle\big[B(\theta,x_s,\mu_s^{\theta}) - B(\theta_0,x_s,\mu_s)\big],(\sigma(x_s) \sigma^T(x_s))^{-1} 
\mathrm{d}x_s \right\rangle \label{log_lik_1} \\
&\hspace{5mm}-\frac{1}{2}\int_0^t\left[ \left|\left|\sigma^{-1}(x_s) B(\theta,x_s,\mu_s^{\theta})\right|\right|^2-\left|\left|\sigma^{-1}(x_s)B(\theta_0,x_s,\mu_s)\right|\right|^2\right]\mathrm{d}s. \nonumber 
\end{align}

In the case that the diffusion coefficient $\sigma$ depends on the unknown parameter $\theta$, the measures $\{\mathbb{P}_{\theta}^t\}$ are, in general, mutually singular (e.g., \cite[Section 1.3]{Kutoyants2004}). We thus adopt the standard condition of parameter independence for the diffusion coefficient, and for convenience set $\sigma = 1$ (e.g., \cite{Borkar1982,Levanony1994,Wen2016}). For linear diffusion processes (in the sense of McKean), there are several approaches which can be applied when $\sigma$ does depend on an unknown parameter, including those based  on a quasi log-likelihood function \cite{Heyde1994,Hutton1986}, or on minimising a least squares type function for the diffusion coefficient \cite{Sirignano2017a}. In principle, the methods introduced in this paper can be extended to either of these cases.

In order to proceed, it will be convenient to define the functions $G:\mathbb{R}^p\times \mathbb{R}^d\times \mathcal{P}(\mathbb{R}^d) \color{black}\times \mathcal{P}(\mathbb{R}^d) \color{black}\rightarrow\mathbb{R}^d$ and $L:\mathbb{R}^p\times \mathbb{R}^d\times \mathcal{P}(\mathbb{R}^d)\color{black} \times \mathcal{P}(\mathbb{R}^d) \color{black} \rightarrow\mathbb{R}$ according to $G(\theta,x,\mu,\mu'):= B(\theta,x,\mu) - B(\theta_0,x,\mu)$ and $L(\theta,x,\mu,\mu'):= -\frac{1}{2}||G(\theta,x,\mu,\mu')||^2$. We can then write the log-likelihood function as
\begin{align}
\mathcal{L}_t(\theta)&= \int_0^t \underbrace{-\tfrac{1}{2}\left|\left| B(\theta,x_s,\mu_s^{\theta}) - B(\theta_0,x_s,\mu_s)\right|\right|^2}_{:=L(\theta,x_s,\mu_s^{\theta},\mu_s)}\mathrm{d}s + \int_0^t \big \langle \underbrace{B(\theta,x_s,\mu_s^{\theta}) - B(\theta_0,x_s,\mu_s)}_{:=G(\theta,x_s,\mu_s^{\theta},\mu_s)},\mathrm{d}w_s\big\rangle. \label{log_likelihood}
\end{align}

In general, while we may observe a sample path $(x_t)_{t\geq 0}$ of the McKean-Vlasov SDE, typically we will not have direct access to its law $(\mu_t)_{t\geq 0}$. As such, it is generally not possible to compute the likelihood function $\mathcal{L}_t(\theta)$ in \eqref{log_likelihood} directly. On this basis, even if one is interested in fitting data to the McKean-Vlasov SDE, it will typically be necessary to approximate the corresponding likelihood function. In order to make such an approximation, we will assume that we can simultaneously observe multiple continuous sample paths. Indeed, this is much more typical of the data that we observe in practice. Within this framework, there are now two main possibilities, as we outline below.

\subsubsection{Case I}
The first possibility is to assume the observed paths are $N$ independent instances $(x_t^{i})_{t\geq 0}^{i=1,\dots,N}$ of the McKean-Vlasov SDE \eqref{MVSDE}. In this case, we can approximate the true log-likelihood function $\mathcal{L}_t(\theta)$ using 
\begin{align} 
\mathcal{L}^{[N]}_t(\theta):=\frac{1}{N}\sum_{i=1}^N\mathcal{L}_t^{[i,N]}(\theta) 
=\frac{1}{N}\sum_{i=1}^N
\bigg[\int_0^t L(\theta,x_s^{i},\mu_s^{[N]},\mu_s^{[N]})\mathrm{d}s+\int_0^t \big\langle G(\theta,x_s^{i},\mu_s^{[N]},\mu_s^{[N]}),\mathrm{d}w^{i}_s\big\rangle \bigg], \hspace{-5mm}
\label{log_lik_independent}
\end{align}
where $\smash{\mu_t^{[N]} = \frac{1}{N}\sum_{i=1}^N \delta_{x_t^{i}}}$ denotes the empirical measure of the sample paths. In this approximation, the functions $\smash{\mathcal{L}_t^{[i,N]}(\theta)}$, $i=1,\dots,N$, can be viewed as $N$ \color{black} `Monte Carlo esque' approximations of $\smash{\mathcal{L}_t(\theta)}$, obtained by replacing both \color{black} $\mu_t^{\theta}$ and $\mu_t$ in \eqref{log_likelihood} by the empirical law $\smash{\mu_t^{[N]}:=\mu_t^{\theta_0,[N]}}$. The approximation $\smash{\mathcal{L}_t^{[N]}(\theta)}$ then follows by independence. Alternatively, this approximation can also be obtained by substituting the observations of the McKean-Vlasov SDE  $(x_t^{i})_{t\geq 0}^{i=1,\dots,N}$ into the log-likelihood function of the corresponding IPS (see below). 

\subsubsection{Case II} 
The second possibility is to assume that the observed paths correspond to the trajectories of $N$ particles $\smash{(x_t^{i,N})_{t\geq 0}^{i=1,\dots,N}}$ from the IPS \eqref{IPS}. In this case, we can approximate $\mathcal{L}_t(\theta)$ by the Girsanov log-likelihood function for the IPS, which is given by (e.g., \cite{Bishwal2011,Chen2021,Kasonga1990,Maestra2022})
\color{black}
\begin{align} 
\mathcal{L}^{N}_t(\theta):=\frac{1}{N}\sum_{i=1}^N\mathcal{L}_t^{i,N}(\theta)
=\frac{1}{N}\sum_{i=1}^N
\bigg[\int_0^t L(\theta,x_s^{i,N},\mu_s^N,\mu_s^N)\mathrm{d}s+\int_0^t \big\langle G(\theta,x_s^{i,N},\mu_s^N,\mu_s^N),\mathrm{d}w^{i}_s\big\rangle \bigg],
\label{log_lik_particles}  
\end{align}
where $\smash{\mu_t^N = \frac{1}{N}\sum_{j=1}^N \delta_{x_t^{j,N}}}$ 
denotes the empirical measure of the IPS, and we have included $\smash{\frac{1}{N}}$ as a normalisation factor. The connection between the functions $\mathcal{L}_t^{N}(\theta)$ and $\mathcal{L}_t^{[N]}(\theta)$ is now abundantly clear. In particular, they are identical as functions of the data. 

\begin{table}[!t]
\renewcommand{\arraystretch}{2.0}
\setlength\tabcolsep{4.0pt}
\footnotesize{
\begin{center}
\begin{tabular}{|m{1.3cm}|m{4.6cm}m{2.51cm}|m{2.41cm}m{2.41cm}|} 
\hline 

\multirow{1}{1.3cm}{\centering{\bf Case}}  & 
\multirow{1}{4.6cm}{\centering\bf{Data-Generating Model}} &
\multirow{1}{2.51cm}{\bf{Observation(s)}} &
\multicolumn{2}{c|}{\centering{\bf Likelihood Function}}
 \\[-3mm]
 
 \multirow{1}{1.3cm}{\centering{\bf }}  & 
\multirow{1}{4.6cm}{\bf{}} &
\multirow{1}{2.51cm}{\bf{}} &
\multirow{1}{2.41cm}{\centering{\emph{Approximate}}} &
\multirow{1}{2.41cm}{\centering{\emph{True}}} 
 \\
 
\hline

 \multirow{1}{1.3cm}{\centering{Case I}} & 
 \multirow{1}{4.6cm}{\centering{MVSDE} \eqref{MVSDE} } & 
\multirow{1}{2.51cm}{\centering$(x_t^{i})^{i=1,\dots,N}_{t\geq 0}$} & 
\multirow{1}{2.41cm}{\centering{$\mathcal{L}_t^{[N]}(\theta)$ in \eqref{log_lik_independent}}} &
\multirow{1}{2.41cm}{\centering{$\mathcal{L}_t(\theta)$ in \eqref{log_likelihood}}} 
 \\
 
  \hdashline
  
  \multirow{1}{1.3cm}{\centering{Case II}}  &  
\multirow{1}{4.6cm}{\centering{IPS} \eqref{IPS}}  & 
 \multirow{1}{2.51cm}{\centering$(x_t^{i,N})^{i=1,\dots,N}_{t\geq 0}$} &  
\multirow{1}{2.41cm}{\centering{$\mathcal{L}_t^N(\theta)$ in \eqref{log_lik_particles}}} &
\multirow{1}{2.41cm}{\centering{$\mathcal{L}_t^N(\theta)$ in \eqref{log_lik_particles}}} 
\\
 
 \hline

\end{tabular}
\end{center}
}
\vspace{5mm}
\caption{Parameter Estimation: Summary of Different Cases}\label{tab:summary2}
\end{table}

\subsubsection{Discussion}
These two cases are summarised in Table \ref{tab:summary2}. Case I is of particular interest if one believes that the true data-generating model is the McKean-Vlasov SDE. In particular, the function $\smash{\mathcal{L}_t^{[N]}(\theta)}$ in \eqref{log_lik_independent} should be regarded as an approximation of the true log-likelihood $\mathcal{L}_t(\theta)$ in \eqref{log_likelihood}, which can be computed under more realistic modelling assumptions. Meanwhile, Case II is of particular interest if one believes that the true data-generating model is the IPS, for a fixed and finite number of particles. Indeed, in this scenario, the function $\mathcal{L}_t^{i,N}(\theta)$ in \eqref{log_lik_particles} can and should be regarded as the true likelihood function. Nonetheless, it will be interesting to consider the behaviour of this function in the mean-field limit, in order to analyse any differences that arise with Case I.

In the limit as $N\rightarrow\infty$, standard propagation-of-chaos results (e.g., \cite{Malrieu2001}) show that the dynamics of the observations in Cases I and II will coincide. In our results, we will establish rigorously that this also holds for the different implied `likelihood' functions, $\mathcal{L}_t^{N}(\theta)$ and $\smash{\mathcal{L}_t^{[N]}(\theta)}$. This should not be a surprise given the similarities between these two functions.

\subsection{Offline Parameter Estimation} \label{sec:offline}
In the offline setting, the objective is to estimate the true parameter $\theta_{0}$ after receiving a batch of data over a fixed time interval $[0,t]$. In this case, a natural objective is to maximise the log-likelihood of the Mckean-Vlasov SDE in order to obtain the MLE
\begin{equation}
\hat{\theta}_t = \argsup_{\theta\in\mathbb{R}^{p}}\mathcal{L}_t(\theta). \label{MLE}
\end{equation} 
In practice, however, we cannot compute this estimator, since the log-likelihood is a function not only of the sample trajectory $(x_t)_{t\geq 0}$, but also on its law $(\mu_t)_{t\geq 0}$. Instead, we will consider the estimators obtained by maximising the (approximate) log-likelihood of the McKean-Vlasov SDE in \eqref{log_lik_approx}, and the log-likelihood of the IPS in \eqref{log_lik_particles}.

\subsubsection{Case I}
We first consider the case in which we observe $N$ independent paths $(x_t^{i})_{s\in[0,t]}^{i=1,\dots,N}$ from the McKean-Vlasov SDE \eqref{MVSDE}. In particular, we are interested in analysing the asymptotic properties of the approximate MLE
\begin{equation}
\hat{\theta}_t^{[N]} = \argsup_{\theta\in\mathbb{R}^p}\mathcal{L}_t^{[N]}(\theta),
\end{equation}
in the limit as the number of observations $N\rightarrow\infty$. Other than some related results in \cite{Maestra2022} (see, e.g., Proposition 10), we are not aware of any works which study the asymptotic properties of this approximate maximum likelihood estimator. In Theorems \ref{offline_theorem1} - \ref{offline_theorem2}, we establish consistency and asymptotic normality of this estimator as $N\rightarrow\infty$, given a fixed time horizon $t$.

\subsubsection{Case II}
We next consider the case in which we observe $N$ trajectories $\smash{(x_t^{i,N})_{s\in[0,t]}^{i=1,\dots,N}}$ following the dynamics of the IPS \eqref{IPS}. In this case, we aim instead to maximise the value of $\mathcal{L}_t^N(\theta)$, and are thus interested in the asymptotic properties of the following MLE
\begin{equation}
\hat{\theta}_t^N = \argsup_{\theta\in\mathbb{R}^p}\mathcal{L}_t^{N}(\theta).
\end{equation}
The asymptotic properties of this estimator as $t\rightarrow\infty$, for fixed $N$, are covered by well established results for parameter estimation in standard SDEs (e.g., \cite{Bishwal2008,Levanony1994,Liptser2001}). Conversely, there are relatively few results on the properties of this MLE in the limit as $N\rightarrow\infty$, aside from in the case of a linear parametrisation \cite{Bishwal2011,Kasonga1990}.\footnote{Since this preprint first appeared on the arXiv, we note that Maestra and Hoffmann \cite{Maestra2022} have considerably strengthened existing results on this estimator in the limit as $N\rightarrow\infty$, establishing local asymptotic normality and global minimax optimality.} We thus find it instructive to revisit this problem. In Theorems \ref{offline_theorem1} - \ref{offline_theorem2}, we extend previous results to the more general and possible non-linear setting (in the sense of parametrisation), establishing consistency and asymptotic normality of this estimator as $N\rightarrow\infty$, for fixed $t$.

\subsection{Online Parameter Estimation} \label{sec:online}
In the online setting, our objective is to estimate the true parameter $\theta_{0}$ in real time, using the continuous stream of observations. In this case, a natural objective function is the asymptotic (or average) log-likelihood function $\tilde{\mathcal{L}}(\theta)$ of the McKean-Vlasov SDE, viz 
\begin{equation}
\tilde{\mathcal{L}}(\theta) = \lim_{t\rightarrow\infty}\frac{1}{t}\mathcal{L}_t(\theta) \stackrel{a.s.}{=} \int_{\mathbb{R}^d}L(\theta,x,\mu_{\infty}^{\theta},\mu_{\infty})\mu_{\infty}(\mathrm{d}x). \label{asymptotic_MVSDE}
\end{equation}
where $\mu_{\infty}^{\theta}(\mathrm{d}x)$ denotes the unique invariant measure of $(x_t^{\theta})_{t\geq 0}$, which exists under suitable conditions on \eqref{MVSDE}, and similar to before we write $\mu_{\infty}:= \mu_{\infty}^{\theta_0}$. \color{black} In the spirit of \cite{Bhudisaksang2021,Sirignano2017a,Surace2019}, 
one can recursively maximise this objective function by defining an estimator $(\theta_t)_{t\geq 0}$ which follows the gradient of the integrand of the log-likelihood $\mathcal{L}_t(\cdot)$ in \eqref{log_likelihood}, evaluated with the current parameter estimate. \color{black} This represents a stochastic estimate for the direction of steepest ascent of the asymptotic log-likelihood $\tilde{\mathcal{L}}(\cdot)$. \color{black} In our case, initialised at $\theta_{\text{init}}\in\mathbb{R}^p$, this yields
\begin{align}
\mathrm{d}\theta_t&= \gamma_t\bigg(\underbrace{\nabla_{\theta}L(\theta_t,x_t,\mu_t^{\theta_t},\mu_t)\mathrm{d}t}_{\text{(noisy) ascent term}} + \underbrace{\nabla_{\theta}B(\theta_t,x_t,\mu_t^{\theta_t})\mathrm{d}w_t}_{\text{noise term}}\bigg) \label{theta_ideal_2}
\end{align} 
\color{black}
where $\smash{\gamma_t:\mathbb{R}_{+}\rightarrow \mathbb{R}^p_{+}}$ is a positive, non-increasing function known as the \emph{learning rate} and, \color{black} in a slight abuse of notation, $\smash{(\mu_t^{\theta_t})_{t\geq 0}}$ denotes the law of $\smash{(x_t^{\theta_t})_{t\geq 0}}$, the solution of \eqref{MVSDE} integrated with the online parameter estimate. \color{black} One also arrives at this estimator by considering a `least-squares' type objective for the drift function (see \cite{Sirignano2017a}).
This evolution equation does indeed represent a continuous-time stochastic gradient ascent scheme on the asymptotic log-likelihood function.
To see this, we can rewrite the parameter update equation \eqref{theta_ideal_2} in the form
\begin{align}
\mathrm{d}\theta_t
&=\gamma_t\bigg(\underbrace{\nabla_{\theta}\tilde{\mathcal{L}}(\theta_t)\mathrm{d}t}_{\text{(true) ascent term}}+\underbrace{(\nabla_{\theta}L(\theta_t,x_t,\mu_t^{\theta_t},\mu_t)-\nabla_{\theta}\tilde{\mathcal{L}}(\theta_t))\mathrm{d}t}_{\text{fluctuations term}} + \underbrace{\nabla_{\theta}B(\theta_t,x_t,\mu_t^{\theta_t})\mathrm{d}w_t}_{\text{noise term}}\bigg) \label{eq212}
\end{align}
The first term in this decomposition represents the true ascent direction $\nabla_{\theta}\tilde{\mathcal{L}}(\theta_t)$, the second term the deviation between the stochastic gradient ascent direction \color{black} $\nabla_{\theta} L(\theta_t,x_t,\mu_t^{\theta_t},\mu_t)$ \color{black} and the true gradient ascent direction $\nabla_{\theta}\tilde{\mathcal{L}}(\theta_t)$, while the third term is a zero-mean noise term. Heuristically, we might expect that, provided the learning rate decreases (sufficiently quickly), the ascent term will dominate the fluctuations term and the noise term for sufficiently large $t$. If this is the case, we could then reasonably expect $\theta_t$ to converge to a local maximum of $\tilde{\mathcal{L}}(\theta)$. 

\color{black} Unfortunately, there are several challenges associated with implementing the `ideal' online estimator \eqref{theta_ideal_2}. In particular, this estimator depends on $(\mu_t^{\theta_t})_{t\geq 0}$, the law of the solution of the McKean-Vlasov SDE \eqref{MVSDE}, integrated with the online parameter estimate. We will thus seek approximate, easy-to-implement estimators based on the approximations to the log-likelihood given in \eqref{log_lik_independent} and \eqref{log_lik_particles}. Following our earlier discussion, we consider two cases.  \color{black}

\subsubsection{Case I}
\color{black}
We begin with the case in which the $N$ sample paths $\smash{(x_t^{i})_{t\geq 0}^{i=1,\dots,N}}$ correspond to independent replicates of the McKean-Vlasov SDE \eqref{MVSDE}. In this case, it is natural to consider an estimator $\smash{(\theta_t^{[i,N]})_{t\geq 0}}$ which evolves according to the gradient of the integrand of the function $\mathcal{L}_t^{[i,N]}(\theta)$ in \eqref{log_lik_independent}. This yields the update equation
\begin{align}
\mathrm{d}\theta_t^{[i,N]}
&=\gamma_t\left[\nabla_{\theta}L(\theta_t^{[i,N]},x_t^{i},\mu_t^{[N]},\mu_t^{[N]})\mathrm{d}t+\nabla_{\theta} B(\theta_t^{[i,N]},x_t^{i},\mu_t^{[N]})\mathrm{d}w_t^{i}\right], \label{theta_approx_}
\end{align}
Alternatively, one could average over all of the trajectories, which is equivalent to defining an estimator $(\theta_t^{[N]})_{t\geq 0}$ which evolves according to the gradient of the integrand of the function $\mathcal{L}_t^{[N]}(\theta)$ in \eqref{log_lik_independent}, viz 
\begin{align}
\mathrm{d}\theta_t^{[N]}
&=\gamma_t\frac{1}{N}\sum_{i=1}^N\left[\nabla_{\theta}L(\theta_t^{[N]},x_t^{i},\mu_t^{[N]},\mu_t^{[N]})\mathrm{d}t+\nabla_{\theta} B(\theta_t^{[N]},x_t^{i},\mu_t^{[N]})\mathrm{d}w_t^{i}\right]. \label{theta_approx0_}
\end{align} 
The advantage of \eqref{theta_approx_} is that the computation can be performed locally at each particle, following a message passing step for retrieving the empirical law $\smash{\mu_t^N}$. It is thus convenient for a distributed implementation. On the other hand, \eqref{theta_approx0_} will typically be more accurate, as we will later demonstrate (see, in particular, the bounds in Theorem \ref{theorem1_2_star}). These two schemes can be seen as stochastic gradient ascent algorithms for maximising the average of the approximate `likelihood' function of the $i^{\text{th}}$ realisation of the McKean-Vlasov SDE or the approximate `likelihood' function of $N$ realisations of the McKean-Vlasov SDE, as defined by \eqref{log_lik_independent}. That is,  
\begin{equation}
\tilde{\mathcal{L}}^{[i,N]}(\theta) = \lim_{t\rightarrow\infty}\frac{1}{t}\mathcal{L}_t^{[i,N]}(\theta)~~~\text{or}~~~
\tilde{\mathcal{L}}^{[N]}(\theta) = \lim_{t\rightarrow\infty}\frac{1}{t}\mathcal{L}_t^{[N]}(\theta). \label{asymptotic_approx}
\end{equation}
Under suitable conditions on the learning rate, we can expect, based on a decomposition analogous to \eqref{eq212}, that the online estimates defined in \eqref{theta_approx_} and \eqref{theta_approx0_} will converge to local maxima of $\smash{\tilde{\mathcal{L}}^{[i,N]}(\theta)}$ and $\smash{\tilde{\mathcal{L}}^{[N]}(\theta)}$ as $t\rightarrow\infty$. In Theorems \ref{theorem1_1} and \ref{theorem1_1_star}, we show rigorously that this is indeed the case. Meanwhile, by the law of large numbers, we expect that $\smash{\tilde{\mathcal{L}}^{[i,N]}(\theta)}$ and $\smash{\tilde{\mathcal{L}}^{[N]}(\theta)}$ will converge to a limiting contrast function $\smash{\underline{\tilde{\mathcal{L}}}(\theta)}$ 
 as $N\rightarrow\infty$. In the joint limit as $t\rightarrow\infty$ and $N\rightarrow\infty$ it thus seems reasonable to hypothesise that the estimates \eqref{theta_approx_} and \eqref{theta_approx0_} will in fact converge to stationary points of $\smash{\underline{\tilde{\mathcal{L}}}(\theta)}$. In Theorem \ref{theorem1_1_star}, we establish rigorously that this is the case, and provide an explicit characterisation of the limiting contrast function as 
\begin{equation}
\tilde{\underline{\mathcal{L}}}(\theta) := \lim_{t,N\rightarrow\infty} \frac{1}{t} \mathcal{L}_t^{[N]}(\theta) =\int_{\mathbb{R}^d} L(\theta,x,\mu_{\infty},\mu_{\infty}) \mu_{\infty}(\mathrm{d}x). 
\label{log_lik_approx}
\end{equation} 
It is worth noting that this function is distinct from $\tilde{\mathcal{L}}(\theta)$, the asymptotic log-likelihood of the McKean-Vlasov SDE, as defined in \eqref{asymptotic_MVSDE}. Importantly, however, these functions are both maximised at the true parameter $\theta=\theta_0$. 

\subsubsection{Case II}
We now consider the case in which the observations $\smash{(x_t^{i,N})_{t\geq 0}^{i=1,\dots,N}}$ correspond to the trajectories of the IPS \eqref{IPS}. In this case, we can use essentially the same update equations. This should not be surprising, given that the `approximations' of the log-likelihood of the McKean-Vlasov SDE given in \eqref{log_lik_independent} and \eqref{log_lik_particles} were identical up to specification of the data. To be explicit, we now have
\begin{align}
\mathrm{d}\theta_t^{i,N}
&=\gamma_t\left[\nabla_{\theta}L(\theta_t^{i,N},x_t^{i,N},\mu_t^N,\mu_t^N)\mathrm{d}t+\nabla_{\theta} B(\theta_t^{i,N},x_t^{i,N},\mu_t^{N})\mathrm{d}w_t^{i}\right]. \label{theta_approx}
\end{align}
and
\begin{align}
\mathrm{d}\theta_t^{N}
&=\gamma_t\frac{1}{N}\sum_{i=1}^N\left[\nabla_{\theta}L(\theta_t^N,x_t^{i,N},\mu_t^N,\mu_t^N)\mathrm{d}t+\nabla_{\theta} B(\theta_t^N,x_t^{i,N},\mu_t^{N})\mathrm{d}w_t^{i}\right]. \label{theta_approx0}
\end{align} 
These two schemes now represent stochastic gradient ascent algorithms for maximising the average log-likelihood of the $i^{\text{th}}$ particle in the IPS, or the average log-likelihood of all of the particles in the IPS, respectively. That is, the functions
\begin{equation}
\tilde{\mathcal{L}}^{i,N}(\theta) = \lim_{t\rightarrow\infty}\frac{1}{t}\mathcal{L}_t^{i,N}(\theta)~~~\text{or}~~~
\tilde{\mathcal{L}}^N(\theta) = \lim_{t\rightarrow\infty}\frac{1}{t}\mathcal{L}_t^{N}(\theta). \label{asymptotic_IPS}
\end{equation}
Based on similar arguments to before, we can expect that the estimates defined in \eqref{theta_approx} and \eqref{theta_approx0} will converge to local maxima of $\smash{\tilde{\mathcal{L}}^{i,N}(\theta)}$ and $\smash{\tilde{\mathcal{L}}^N(\theta)}$ as $t\rightarrow\infty$. We establish this rigorously in Theorems \ref{theorem2_1} - \ref{theorem2_2}. Meanwhile, due to uniform-in-time propagation of chaos, we might also reasonably expect $\smash{\tilde{\mathcal{L}}^{i,N}(\theta)}$ and $\smash{\tilde{\mathcal{L}}^{N}(\theta)}$ to converge to $\smash{\underline{\tilde{\mathcal{L}}}(\theta)}$ as $N\rightarrow\infty$, the same limiting function as obtained in Case I. Thus, in the long-time and large-particle regime, we should obtain convergence results very similar to those obtained before. This is the subject of Theorems \ref{theorem2_1_star} - \ref{theorem2_2_star}. 

\section{Main Results}
\label{sec:results}
We are now almost ready to outline our main results. Before doing so, we let us elaborate on our standing assumptions. We begin with the following integrability assumption on the initial condition. 

\begin{manualassumption}{A.1} \label{assumption_init}
The initial law satisfies $\mu_0\in\mathcal{P}_k(\mathbb{R}^d)$ for all $k\in\mathbb{N}$.
\end{manualassumption}

This condition, alongside the conditions below, guarantees that the solutions of the McKean Vlasov SDE and the IPS have bounded moments of all orders (see Proposition \ref{prop_moment_bounds}), and so do their invariant measures (see Proposition \ref{lemma_invariant_moment_bounds}). In turn, this ensures that one can control the polynomial growth of the log-likelihood and its derivatives (in the offline case), and the polynomial growth of the solutions of the relevant Poisson equations (in the online case). This condition has also appeared in other studies on parameter estimation for the McKean-Vlasov SDE and its associated IPS (see, e.g., Assumption 1 in \cite{Maestra2022}). 

Regarding the drift function $B(\theta_0,\cdot,\cdot):\mathbb{R}^d\times\mathcal{P}(\mathbb{R}^d)\rightarrow\mathbb{R}^d$ we will work under the following rather classical assumptions. 

\begin{manualassumption}{B.1} \label{assumption1}
The function $b(\theta_0,\cdot):\mathbb{R}^d\rightarrow\mathbb{R}^d$ has the following properties.
\begin{itemize}
\item[(i)] $b(\theta_0,\cdot)$ is locally Lipschitz. That is, for all $x,x'\in\mathbb{R}^d$ such that $||x||, ||x'||\leq R$, there exists $0<L_{\theta_0,1}<\infty$ such that 
\begin{equation}
||b(\theta_0,x)-b(\theta_0,x')||\leq L_{\theta_0,1}||x-x'||.
\end{equation}
\item[(ii)] $b(\theta_0,\cdot)$ is `monotonic'. That is, for all $x,x'\in\mathbb{R}^d$, there exists $\alpha_{\theta_0}>0$ such that 
\begin{equation}
\left \langle x-x', b(\theta_0,x) - b(\theta_0,x')\right\rangle\leq -\alpha_{\theta_0} ||x-x'||^2.
\end{equation}
\end{itemize}
\end{manualassumption}

\begin{manualassumption}{B.2} \label{assumption2}
The function $\phi(\theta_0,\cdot,\cdot):\mathbb{R}^{d}\times\mathbb{R}^d\rightarrow\mathbb{R}^d$ has the following property.
\begin{itemize}
\item[(i)] $\phi(\theta_0,\cdot,\cdot)$ is globally Lipschitz. In particular, there exists $0<2L_{\theta_0,2}<\alpha_{\theta_0}$ such that, for all $x,y,x',y'\in\mathbb{R}^d$, 
\begin{equation}
||\phi(\theta_0,x,y)-\phi(\theta_0,x',y')||\leq L_{\theta_0,2}(||x-x'||+||y-y'||).
\end{equation}
\end{itemize}
\end{manualassumption}

These two conditions are used to establish existence and uniqueness of the strong solution to the McKean-Vlasov SDE (e.g., {\cite[Theorem 2.2]{Meleard1996}}), uniform moment bounds, uniform-in-time propagation of chaos, and the existence of, and exponential convergence to, a unique invariant measure (see Appendix \ref{appendix_existing_results}).  In the literature on non-linear diffusions, it is typical to consider the case in which $b(\theta,x) = -\nabla V(\theta,x)$ for some confinement potential $V$, and $\phi(\theta,x,y) = -\nabla W(\theta,x-y)$ for some interaction potential $W$. 
In this case, classical conditions which can be used in place Conditions \ref{assumption1} -  \ref{assumption2} (e.g., convexity) may be found in \cite{Malrieu2001}. 

Let us remark briefly upon some weaker conditions under which these results still hold, and therefore under which the main results of our paper will also still hold (albeit with some additional technical overhead). In the case that there is no confinement potential (i.e. $V\equiv 0$), and the interaction potential is uniformly convex with gradient that is locally Lipschitz with polynomial growth, Malrieu established uniform-in-time propagation of chaos and exponential convergence to equilibrium \cite{Malrieu2003}. Cattiaux et al. \cite{Cattiaux2008} later established the same results in the case that the interaction potential is degenerately convex. Meanwhile, in \cite{Carrillo2003,Carrillo2006}, the authors establish exponential convergence to equilibrium under the strict convexity condition $\mathrm{Hess}(V+2W)\geq \beta I_{d}$, for some $\beta>0$. 

In the case that $V+2W$ is not convex, far fewer results are available; indeed, without additional conditions on $V$ and $W$, even the existence of a unique stationary distribution is not guaranteed (see, e.g., \cite{Herrmann2010}). This being said, Bolley et al. \cite{Bolley2013} proved uniform exponential convergence to equilibrium in both degenerately convex, and weakly non-convex cases. More recently, \cite{Durmus2020,Eberle2019} have established uniform-in-time propagation of chaos and exponential convergence to equilibrium in the non-convex case, provided the confinement potential $V$ is strictly convex outside a ball, and the interaction potential is globally Lipschitz with sufficiently small Lipschitz constant. For some other relevant recent contributions, see also \cite{Carrillo2020,Delgadino2021,Lacker2018,Liu2021a}. 

In addition to Assumptions \ref{assumption1} - \ref{assumption2}, we will also impose the following regularity conditions.

\begin{manualassumption}{C.1} \label{assumption3}
The functions $b:\mathbb{R}^p\times\mathbb{R}^d\rightarrow\mathbb{R}^d$ and $\phi:\mathbb{R}^p\times\mathbb{R}^d\times\mathbb{R}^d\rightarrow\mathbb{R}^d$ have the following properties.
\begin{itemize}
\item[(i)] $b(\theta,\cdot)\in C^{2+\alpha}(\mathbb{R}^d)$, $\phi(\theta,\cdot,\cdot)\in C^{2+\alpha}(\mathbb{R}^d,\mathbb{R}^d)$  with $\alpha\in(0,1)$, $\nabla_{\theta} b(\cdot,x),\nabla_{\theta} \phi(\cdot,x,y)\in\mathcal{C}^2(\mathbb{R}^p)$ for all $x,y\in\mathbb{R}^d$, $\frac{\partial^2}{\partial x^2} \nabla_{\theta} b \in\mathcal{C}(\mathbb{R}^p,\mathbb{R}^d)$, $\frac{\partial^2}{\partial x^2}\nabla_{\theta} \phi \in\mathcal{C}(\mathbb{R}^p,\mathbb{R}^d,\mathbb{R}^d)$, and $\nabla_{\theta}^{i} b(\theta,\cdot)\in\mathcal{C}^{1+\alpha}(\mathbb{R}^d)$, $\nabla_{\theta}^{i} \phi(\theta,\cdot,\cdot)\in\mathcal{C}^{1+\alpha}(\mathbb{R}^d,\mathbb{R}^d)$, $i=1,2$, uniformly in $\theta\in\mathbb{R}^p$ for some $\alpha\in(0,1)$.
\item[(ii)] For all $\theta\in\mathbb{R}^p$, the functions $\nabla_{\theta}^{k}b(\theta,\cdot,\cdot)$ and $\nabla_{\theta}^{k}\phi(\theta,\cdot,\cdot)$ are locally Lipschitz with polynomial growth for $k=0,1,2,3$. That is, there exist constants $q_{k},K_{k}<\infty$ such that
\begin{align}
\left|\left|\nabla_{\theta}^{k} b(\theta,x) - \nabla_{\theta}^{k}b(\theta,x') \right|\right| &\leq K_{k}\left[||x-x'||\right]\left[1+||x||^{q_{k}} + ||x'||^{q_{k}}\right]\\
\left|\left|\nabla_{\theta}^{k} \phi(\theta,x,y) - \nabla_{\theta}^{k}\phi(\theta,x',y')\right|\right| &\leq K_{k}\left[||x-x'||+||y-y'||\right] \\
&\hspace{10mm}\left[1+||x||^{q_{k}}+||x'||^{q_{k}}+||y||^{q_{k}}+||y'||^{q_{k}}\right].
\end{align}
\end{itemize}
\end{manualassumption}

In the offline setting, these conditions are required in order to control the growth of the log-likelihood function and its derivatives. In the online setting, these conditions are required in order to control the ergodic behaviour of the solution of the McKean-Vlasov SDE, and the associated IPS, which is central to establishing convergence of the online MLE. In particular, they ensure that fluctuation terms of the form $\smash{\int_{0}^t \gamma_s(\nabla_{\theta}L(\theta_s,x_s,\mu_s^{\theta},\mu_s) - \nabla_{\theta}\tilde{\mathcal{L}}(\theta_s))\mathrm{d}s}$, as in \eqref{eq212}, tend to zero sufficiently quickly as $t\rightarrow\infty$. Using an approach which is now well established in the analysis of stochastic approximation algorithms in continuous time \cite{Bhudisaksang2021,Sharrock2022c,Sharrock2020a,Sirignano2017a,Sirignano2020a,Surace2019}, we control such terms by rewriting them in terms of the solutions of some related Poisson equations. Condition \ref{assumption3} ensures that these solutions are unique, and that they grow at most polynomially in a suitable sense (see Lemma \ref{lemma_poisson_3} in  Appendix \ref{sec:theorem1_lemmas}). We note, as in \cite{Sirignano2020a}, that our convergence results also hold under a slightly weaker version of Condition \ref{assumption3}(ii), in which $b(\theta,x)$ and $\phi(\theta,x,y)$ are allowed to grow linearly with respect to the parameter $\theta$.

We should remark that, for the sake of convenience and to remain in line with much of the recent literature, we have restricted our attention to the case in which the measure enters only linearly in the drift coefficient $B(\theta,x,\mu)$. As such, our main conditions are stated in terms of the functions $b:\mathbb{R}^p\times\mathbb{R}^d\times\mathbb{R}^d$ and $\phi:\mathbb{R}^p\times\mathbb{R}^d\times\mathbb{R}^d\rightarrow\mathbb{R}^d$. Our main results, however, can be extended straightforwardly to more general choices of interaction function, under suitable conditions on $B:\mathbb{R}^p\times\mathbb{R}^d\times\mathcal{P}(\mathbb{R}^d)\rightarrow\mathbb{R}^d$. In particular, in the online setting, we simple require conditions which guarantee the existence of a unique invariant measure, and uniform-in-time propagation of chaos. As an example, we can replace Condition \ref{assumption3}(ii) by $||\nabla_{\theta} B(\theta,x,\mu)||\leq K[1+||x||^q + \mu(||\cdot||^q)]$.

\subsection{Offline Parameter Estimation}
For our results on offline parameter estimation, we will require the following additional assumptions. 
\begin{manualassumption}{D.1} \label{offline_assumption_2_1}
For all $t> 0$, and for all $\theta\in\mathbb{R}^p$, the function $m_t:\mathbb{R}^p\rightarrow\mathbb{R}$, defined according to
\begin{equation}
m_t(\theta) = \int_0^t \int_{\mathbb{R}^d} L(\theta,x,\mu_s,\mu_s)\mu_s(\mathrm{d}x)\mathrm{d}s
\end{equation}
satisfies  $\sup_{||\theta-\theta_0||>\delta}m_t(\theta)<0$ a.s. $\forall\delta>0$.
\end{manualassumption}

\begin{manualassumption}{D.2} \label{offline_assumption_2_2}
For all $t> 0$, the matrix $I_t(\theta_0) = [I_t(\theta_0)]_{k,l=1,\dots,p}\in \mathbb{R}^{p\times p}$, defined according to
\begin{equation}
[I_t(\theta_0)]_{kl}= \int_0^t \int_{\mathbb{R}^d}  [\nabla_{\theta}B(\theta_0,x,\mu_s)]_{k}[\nabla_{\theta}B(\theta_0,x,\mu_s)]_{l}\mu_s(\mathrm{d}x)\mathrm{d}s
\end{equation}
is positive-definite, with $\lambda^T I_t(\theta_0)\lambda$ increasing for all $\lambda\in\mathbb{R}^p$, and $I_0(\theta_0)=0$. 
\end{manualassumption}

The first of these two conditions relates to parameter identifiability, guaranteeing the uniqueness of $\theta_0$ as the optimal parameter in the sense of some asymptotic cost, and is necessary in order to establish consistency of the MLE as $N\rightarrow\infty$.  It can be seen, in some sense, as an analogue of the classical condition used to obtain consistency in the long-time regime (e.g., \cite{Borkar1982}, \cite[pp. 137-139]{Ljung1976}, \cite[pp. 252 - 253]{Levanony1994} \cite[Condition {$A_5$}]{Rao1981}). It is also closely related to the so-called `coercivity condition', introduced in \cite{Bongini2017}, which appears in the study of non-parametric inference for IPSs (see also \cite{Li2021,Lu2020,Lu2021,Lu2019}). Meanwhile, the second condition is necessary in order to establish asymptotic normality, and can be seen as a generalisation of a similar condition introduced in \cite{Kasonga1990} (see also \cite{Bishwal2011}).

We are now ready to state our two main results in the offline case. 

\begin{manualtheorem}{1.1} \label{offline_theorem1}
Assume that Conditions \ref{assumption_init}, \ref{assumption1} - \ref{assumption2}, \ref{assumption3}, and \ref{offline_assumption_2_1} hold. Let $\Theta\subseteq\mathbb{R}^p$ be a compact set, and suppose $\theta_0\in\Theta$. Then, for all $t> 0$, $\hat{\theta}_{t}^{[N]}$ and $\hat{\theta}_t^N$ are weakly consistent estimators of $\theta_0$ as $N\rightarrow\infty$. That is, as $N\rightarrow\infty$,
\vspace{-2mm} 
\begin{align}
\hat{\theta}_t^{[N]}  \stackrel{\mathrm{\mathbb{P}}}{\longrightarrow} \theta_0~~~\text{and}~~~\hat{\theta}_t^N  \stackrel{\mathrm{\mathbb{P}}}{\longrightarrow} \theta_0.
\end{align}
\end{manualtheorem}

\begin{proof}
See Section \ref{sec:offline_proof1}.
\end{proof}

\begin{manualtheorem}{1.2} \label{offline_theorem2}
Assume that Conditions  \ref{assumption_init}, \ref{assumption1} - \ref{assumption2}, \ref{assumption3}, and \ref{offline_assumption_2_1} - \ref{offline_assumption_2_2} hold. Let $\Theta\subseteq\mathbb{R}^p$ be a compact set, and suppose $\theta_0\in\Theta$. Then, for all $t> 0$, $N^{\frac{1}{2}}(\hat{\theta}^{[N]}_t - \theta_0)$ and $N^{\frac{1}{2}}(\hat{\theta}^{N}_t - \theta_0)$ are asymptotically normal with mean zero and variance $I^{-1}_t(\theta_0)$. 
That is, as $N\rightarrow\infty$,
\vspace{-2mm}
\begin{equation}
N^{\frac{1}{2}}(\hat{\theta}_t^{[N]}-\theta_0)\stackrel{\mathcal{D}}{\longrightarrow} \mathcal{N}(0,I^{-1}_t(\theta_0))~~~\text{and}~~~N^{\frac{1}{2}}(\hat{\theta}_t^N-\theta_0)\stackrel{\mathcal{D}}{\longrightarrow} \mathcal{N}(0,I^{-1}_t(\theta_0)).
\end{equation}
\end{manualtheorem}

\begin{proof}
See Section \ref{sec:offline_proof2}.
\end{proof}

Interestingly, the properties of $\hat{\theta}_t^{[N]}$ (Case I) and $\hat{\theta}_t^N$ (Case II) are identical as the number of observations $N\rightarrow\infty$. This should not be surprising, given that the dynamics of the McKean-Vlasov SDE and the IPS, and thus the functions $\smash{\mathcal{L}_t^{[N]}(\theta)}$ and $\smash{\mathcal{L}_t^{N}(\theta)}$, coincide as $N\rightarrow\infty$.

\subsection{Online Parameter Estimation}
In the online case, we will also require some additional assumptions. We first proceed with some additional assumptions which will be required in order to establish our $\mathbb{L}^2$ convergence results (Theorems \ref{theorem2_2} and \ref{theorem2_2_star}, and Theorems \ref{theorem1_1_star} and \ref{theorem1_2_star}).  

\begin{manualassumption}{E.1} \label{assumption_bound}
There exists a positive constant $R<\infty$, and an almost everywhere positive function $\kappa:\mathbb{R}^d\times\mathcal{P}(\mathbb{R}^d)\times\mathcal{P}(\mathbb{R}^d)\rightarrow\mathbb{R}$, such that, for all $||\theta||\geq R$, 
\begin{equation}
\langle \nabla_{\theta} L(\theta,x,\mu,\mu),\theta\rangle \leq - \kappa(x,\mu)||\theta||^2.
\end{equation}
\end{manualassumption}

\begin{manualassumption}{E.2} \label{assumption_bound2}
Define the function $\tau:\mathbb{R}^{p}\times\mathbb{R}^{d}\times\mathcal{P}(\mathbb{R}^d)\rightarrow\mathbb{R}$ 
according to 
\begin{align}
\tau(\theta,x,\mu)& = \big\langle \nabla_{\theta} B(\theta,x,\mu)\nabla_{\theta}B^T(\theta,x,\mu)\frac{\theta}{||\theta||},\frac{\theta}{||\theta||}\big\rangle^{\frac{1}{2}} 
\end{align}
Then, there exists $0<q,K<\infty$ such that, for all $\theta,\theta'\in\mathbb{R}^p$, for all $x,y\in\mathbb{R}^d$, 
\begin{align}
|\tau(\theta,x,\mu) - \tau(\theta',x,\mu)||&\leq K||\theta-\theta'||(1+||x||^q+||\mu(||\cdot||^{2})||^{\frac{q}{2}})
\end{align}
\end{manualassumption}

These two conditions ensure, via the comparison theorem (e.g., \cite{Ikeda1977,Yamada1973}), that the online parameter estimates generated by the McKean-Vlasov SDE and the IPS, namely $(\theta_t)_{t\geq 0}$ and $\smash{(\theta_t^{i,N})_{t\geq 0}}$, have uniformly bounded moments (see Lemma \ref{lemma_theta_moments}). We refer to \cite{Khasminskii2012} for some more general conditions under which this result still holds.  The first condition relates to the drift terms in the two parameter update equations, and can be seen as a recurrence condition; the second condition relates to the diffusion terms, and can be seen as an extension of Condition \ref{assumption2}. This condition was introduced in \cite{Sirignano2017a}, and has since also appeared in \cite{Bhudisaksang2021}. Instead of these conditions, one could instead use a projection or truncation device to ensure that the online parameter estimates remain bounded (e.g., \cite{Chen1987}). This is fairly common within the stochastic approximation literature (e.g., \cite{Borkar2008}).

In order to establish consistency, we will also require strong concavity for at least one of the asymptotic objective functions.

\begin{manualassumption}{F.1} \label{assumption4''}
The function $\tilde{\mathcal{L}}^{[i,N]}(\theta)$, or equivalently $\tilde{\mathcal{L}}^{i,N}(\theta)$, is strongly concave with constant $\eta>0$, for all $N\in\mathbb{N}$, $i=1,\dots,N$. 
\end{manualassumption}

\begin{manualassumption}{F.2} \label{assumption4}
The function $\smash{\underline{\tilde{\mathcal{L}}}(\theta)}$ is strongly concave with constant ${\eta}>0$.
\end{manualassumption}

The first condition implies that $\tilde{\mathcal{L}}^{i,N}(\theta)$ and $\tilde{\mathcal{L}}^{[i,N]}(\theta)$ have unique maximisers for each fixed $N\in\mathbb{N}$, and will be used when we consider asymptotics in the long-time regime, for a fixed number of particles. The second condition implies that $\smash{\underline{\tilde{\mathcal{L}}}(\theta)}$ has a unique maximiser, and will be relevant when we consider the double asymptotic framework as $t\rightarrow\infty$ and $N\rightarrow\infty$. It is worth emphasising that neither Assumption \ref{assumption4''}, nor Assumption \ref{assumption4} is redundant. In particular, one of these conditions may hold, while the other does not. For example, in the linear mean field model studied in Section \ref{sec:numerics1}, we will see that Assumption \ref{assumption4''} is satisfied, while Assumption \ref{assumption4} is not (see Appendix \ref{app:verification}).

It remains only to specify our conditions on the learning rate.

\begin{manualassumption}{G.1} \label{assumption0}
The learning rate $\gamma_t:\mathbb{R}_{+}\rightarrow\mathbb{R}_{+}$ is a positive, non-increasing function such that 
$\int_{0}^{\infty}\gamma_t\mathrm{d}t = \infty$, $\int_{0}^{\infty}\gamma^2_t\mathrm{d}t <\infty$, $\int_{0}^{\infty}\gamma'_t\mathrm{d}t< \infty$.
Moreover, there exists $p>0$ such that 
$\lim_{t\rightarrow\infty}\gamma_t^2 t^{2p+\frac{1}{2}}=0$. 
\end{manualassumption}

\begin{manualassumption}{G.2} \label{assumption0_1}
Let $\Phi_{s,t} = \exp(-2\eta\int_{s}^{t} \gamma_u\mathrm{d}u)$, for the constant $\eta$ defined in Condition \ref{assumption4}. The learning rate $\gamma_t:\mathbb{R}_{+}\rightarrow\mathbb{R}_{+}$ satisfies $\int_{0}^{t}\gamma^2_s\Phi_{s,t}\mathrm{d}s = O(\gamma_t)$, $\int_{0}^{t}\gamma'_t\Phi_{s,t}\mathrm{d}s=O(\gamma_t)$, $\int_0^{t}\gamma_s\Phi_{s,t}\mathrm{d}s = O(1)$, $\int_0^t \gamma_s\Phi_{s,t} e^{-\lambda s}\mathrm{d}s = O(\gamma_t)$, and $\Phi_{1,t} = O(\gamma_t)$. 
\end{manualassumption}

The first of these conditions represents the continuous-time analogue of the standard step-size condition used in the convergence analysis of stochastic approximation algorithms in discrete time (e.g., \cite{Robbins1951,Sirignano2017a}). The second condition, first introduced in \cite{Sirignano2020a}, is specific to stochastic gradient descent in continuous time, and is only required in order to establish our $\mathbb{L}^2$ convergence results (Theorems \ref{theorem2_2} and \ref{theorem2_2_star}, and Theorems \ref{theorem1_2} and \ref{theorem1_2_star}). 
A standard choice of learning rate which satisfies both of these conditions is $\gamma_t = C_{\gamma}(C_{\theta,0} + t)^{-1}$, where $C_{\gamma}, C_{\theta,0}>0$ are positive constants such that $C_{\gamma}\eta>1$.

We are now ready to state our main results in the online case. These results are summarised in Table \ref{tab:summary}. 

\subsubsection{Case I} We begin, once more, with the case in which we observe $N$ independent sample paths $\smash{(x_t^{i})^{i=1,\dots,N}_{t\geq 0}}$ from the McKean-Vlasov SDE \eqref{MVSDE}. We thus generate online parameter estimates according to \eqref{theta_approx_} or \eqref{theta_approx0_}. We will consider two asymptotic regimes. 

\begin{itemize}
\item[(i)] $t\rightarrow\infty$ and $N\in\mathbb{N}$. We begin by considering asymptotics only in the long-time regime, with the number of sample paths fixed and finite. In this case, we can establish a.s. convergence of \eqref{theta_approx_} and \eqref{theta_approx0_} to the stationary points of the contrast functions $\tilde{\mathcal{L}}^{[i,N]}(\theta)$ and $\tilde{\mathcal{L}}^{[N]}(\theta)$, respectively. Under additional assumptions on these functions, including strong concavity (Condition \ref{assumption4''}), we also obtain $\mathbb{L}_2$ convergence to their unique maximisers, with a convergence rate which depends on the learning rate $(\gamma_t)_{t\geq 0}$. 

\begin{manualtheorem}{2.1} \label{theorem1_1}
Assume that Conditions \ref{assumption_init}, \ref{assumption1} - \ref{assumption2}, \ref{assumption3}, and \ref{assumption0} hold. Then, for all $N\in\mathbb{N}$, $i=1,\dots,N$, it holds a.s. that
\begin{alignat}{2}
\lim_{t\rightarrow\infty} ||\nabla_{\theta}\tilde{\mathcal{L}}^{[i,N]}({\theta}^{[i,N]}_t)||&=\lim_{t\rightarrow\infty} ||\nabla_{\theta}\tilde{\mathcal{L}}^{[N]}({\theta}^{[N]}_t)||= 0.
\end{alignat}
\end{manualtheorem}

\begin{manualtheorem}{2.2} \label{theorem1_2}
Assume that Conditions \ref{assumption_init}, \ref{assumption1} - \ref{assumption2}, \ref{assumption3}, \ref{assumption_bound} - \ref{assumption_bound2}, \ref{assumption4''}, and \ref{assumption0} - \ref{assumption0_1} hold. Then, for all $N\in\mathbb{N}$, $i=1,\dots,N$, and for sufficiently large $t$, there exist positive constants $K_{\theta_0,1},K_{\theta_0,2}$, 
such that
\begin{align}
\mathbb{E}_{\theta_0}\left[||\theta_t^{[i,N]}-\theta_{0}||^2\right] &\leq \left(K_{\theta_0,1}+K_{\theta_0,2}\right)\gamma_t, \label{convergence_rate-} \\
\mathbb{E}_{\theta_0}\left[||\theta_t^{[N]}-\theta_{0}||^2\right] &\leq (K_{\theta_0,1}+\frac{K_{\theta_0,2}}{N})\gamma_t. \label{convergence_rate_2-}
\end{align}
\end{manualtheorem}

\end{itemize}

\begin{table}[!t]
\color{black}
\renewcommand{\arraystretch}{2.5}
\setlength\tabcolsep{3.4pt}
\footnotesize{
\begin{center}
\begin{tabular}{|m{0.8cm}|m{1.59cm}|m{1.6cm}|m{2.4cm}|m{1.6cm}|m{5.9cm}|}
\hline

\multirow{1}{0.8cm}{\centering{\bf Case}}  & 
\multirow{1}{1.59cm}{\centering{\bf Limit(s)}} & 
\multirow{1}{1.6cm}{\centering{\bf Theorems}}  & 
\multirow{1}{2.4cm}{\centering{\bf Parameter Estimates}} & 
 \multirow{1}{1.6cm}{\centering \bf{Objective Function}} & 
\multirow{1}{5.9cm}{\centering{\bf Convergence Rate}}
 \\ 
 
\hline
 
   \multirow{2}{0.8cm}{\centering{Case I}}  & 
   \multirow{2}{1.59cm}{\centering{$t\rightarrow\infty$, $N\in\mathbb{N}$}} &
 \multirow{2}{1.6cm}{\centering{\ref{theorem1_1} - \ref{theorem1_2}}}  & 
\multirow{1}{2.4cm}{\centering{$\theta_t^{[i,N]}$ from \eqref{theta_approx_}}} &
\multirow{1}{1.6cm}{\centering{$\tilde{\mathcal{L}}^{[i,N]}(\theta)$}}  & 
\multirow{1}{5.9cm}{\centering{$(K_{\theta_0,1}+K_{\theta_0,2})\gamma_t$}} 
\\

& 
&
&
\multirow{1}{2.4cm}{\centering{$\theta_t^{[N]}$ from \eqref{theta_approx0_}}} &
\multirow{1}{1.6cm}{\centering{$\tilde{\mathcal{L}}^{[N]}(\theta)$}}  & 
\multirow{1}{5.9cm}{\centering{$(K_{\theta_0,1} + \dfrac{K_{\theta_0,2}}{N})\gamma_t$}} 
 \\
 
 \hdashline
 
 \multirow{2}{0.8cm}{\centering{Case I}}  & 
 \multirow{2}{1.59cm}{\centering{$t\rightarrow\infty$, $N\rightarrow\infty$}} &
 \multirow{2}{1.6cm}{\centering{\ref{theorem1_1_star} - \ref{theorem1_2_star}}}  & 
\multirow{1}{2.4cm}{\centering{$\theta_t^{[i,N]}$ from \eqref{theta_approx_}}} &
\multirow{2}{1.6cm}{\centering{$\tilde{\underline{\mathcal{L}}}(\theta)$}}  & 
\multirow{1}{5.9cm}{\centering{$\left(K_{\theta_0,1}+K_{\theta_0,2}\right)\gamma_t + {K_{\theta_0,3}}\alpha(N)$}} 
\\

& 
&
&
\multirow{1}{2.4cm}{\centering{$\theta_t^{[N]}$ from \eqref{theta_approx0_}}} &
&
\multirow{1}{5.9cm}{\centering{$(K_{\theta_0,1}+\dfrac{K_{\theta_0,2}}{N})\gamma_t+{K_{\theta_0,3}}\alpha(N)$}} 
 \\
 
 \hdashline
  \hdashline
  
 \multirow{2}{0.8cm}{\centering{Case II}}  & 
 \multirow{2}{1.59cm}{\centering{$t\rightarrow\infty$, $N\in\mathbb{N}$}} &
 \multirow{2}{1.6cm}{\centering{\ref{theorem2_1} - \ref{theorem2_2}}}  & 
\multirow{1}{2.4cm}{\centering{$\theta_t^{i,N}$ from \eqref{theta_approx}}} &
\multirow{1}{1.6cm}{\centering{$\tilde{\mathcal{L}}^{i,N}(\theta)$}}  & 
\multirow{1}{5.9cm}{\centering{$(K_{\theta_0,1}^{\dagger}+K_{\theta_0,2}^{\dagger})\gamma_t$}} 
\\

& 
&
&
\multirow{1}{2.4cm}{\centering{$\theta_t^{N}$ from \eqref{theta_approx0}}} &
\multirow{1}{1.6cm}{\centering{$\tilde{\mathcal{L}}^N(\theta)$}}  & 
\multirow{1}{5.9cm}{\centering{$(K_{\theta_0,1}^{\dagger} + \dfrac{K_{\theta_0,2}^{\dagger}}{N})\gamma_t$}} 
 \\
 
 \hdashline 

\multirow{2}{0.8cm}{\centering{Case II}}  & 
\multirow{2}{1.59cm}{\centering{$t\rightarrow\infty$, $N\rightarrow\infty$}} &
\multirow{2}{1.6cm}{\centering{\ref{theorem2_1_star} - \ref{theorem2_2_star}}}  & 
\multirow{1}{2.4cm}{\centering{$\theta_t^{i,N}$ from \eqref{theta_approx}}} &
\multirow{2}{1.6cm}{\centering{$\tilde{\underline{\mathcal{L}}}(\theta)$}}  & 
\multirow{1}{5.9cm}{\centering{$\smash{(K_{\theta_0,1}^{\dagger}+K_{\theta_0,2}^{\dagger})\gamma_t+ {K_{\theta_0,3}^{\dagger}}\alpha(N) + \dfrac{K_{\theta_0,4}^{\dagger}}{N^{1/2}}}$}} 
\\

& 
&
&
\multirow{1}{2.4cm}{\centering{$\theta_t^{N}$ from \eqref{theta_approx0}}} &
&
\multirow{1}{5.9cm}{\centering{$\smash{(K_{\theta_0,1}^{\dagger}+\dfrac{K_{\theta_0,2}^{\dagger}}{N})\gamma_t+ {K_{\theta_0,3}^{\dagger}}\alpha(N) + \dfrac{K_{\theta_0,4}^{\dagger}}{N^{1/2}}}$}} 
 \\
 
\hline

 \hline

\end{tabular}
\end{center}
\vspace{5mm}
}
\caption{Online Parameter Estimation: Summary of Main Results}\label{tab:summary}
\end{table}

\begin{itemize}
\item[(ii)] $t\rightarrow\infty$ and  $N\rightarrow\infty$. We next consider joint asymptotics in the many observations and the long-time regime. In this case, we establish $\mathbb{L}^1$ convergence of \eqref{theta_approx_} and \eqref{theta_approx0_} to the stationary points of $\smash{\underline{\tilde{\mathcal{L}}}(\theta)}$, the contrast function defined in \eqref{log_lik_approx}. Under the additional assumption of strong log-concavity for this contrast function (Condition \ref{assumption4}), among other technical assumptions, we then obtain convergence to the unique maximiser of $\tilde{\mathcal{L}}(\theta)$, with a convergence rate which depends on the learning rate $(\gamma_t)_{t\geq 0}$, the number of sample paths $N$, and the dimension of the non-linear process $d$. \\

It is worth emphasising that, even though we now take $N\rightarrow\infty$, the first of these two results does not guarantee convergence to the stationary points of $\tilde{\mathcal{L}}(\theta)$, the true asymptotic log-likelihood function of the McKean-Vlasov SDE. This should be seen as the penalty for replacing $\smash{(\mu_t^{\theta_t})_{t\geq 0}}$ in the `theoretically correct' (but intractable) update equation \eqref{theta_ideal_2}, which depends on the path of the online parameter estimates, by $\smash{(\mu_t^{[N]})_{t\geq 0}}$ in the approximate (but easy-to-implement) update equations \eqref{theta_approx_} and \eqref{theta_approx0_}, which depends only on the true data-generating parameter $\theta_0$.

\begin{manualtheorem}{2.1$^{*}$} \label{theorem1_1_star}
Assume that Conditions \ref{assumption_init}, \ref{assumption1} - \ref{assumption2}, \ref{assumption3}, and \ref{assumption0} hold. Then, in $\mathbb{L}^1$, it holds that
\begin{alignat}{2}
\lim_{t,N\rightarrow\infty}||\nabla_{\theta}\underline{\tilde{\mathcal{L}}}({\theta}^{[i,N]}_t)||=\lim_{t,N\rightarrow\infty} ||\nabla_{\theta}\underline{\tilde{\mathcal{L}}}({\theta}^{[N]}_t)||=0.
\end{alignat}
Suppose, in addition, that $\Theta_0=\{\theta\in\mathbb{R}^p: \nabla_{\theta} \underline{\tilde{\mathcal{L}}}(\theta)=0\} = \{\theta_0\}$. Then, in $\mathbb{L}^1$, it also holds that 
\begin{alignat}{2}
\lim_{t,N\rightarrow\infty}||\nabla_{\theta}{\tilde{\mathcal{L}}}({\theta}^{[i,N]}_t)|| =\lim_{t,N\rightarrow\infty} ||\nabla_{\theta}{\tilde{\mathcal{L}}}({\theta}^{[N]}_t)||=0
\end{alignat}
\end{manualtheorem}

\begin{manualtheorem}{2.2$^{*}$} \label{theorem1_2_star}
Assume that Conditions \ref{assumption_init}, \ref{assumption1} - \ref{assumption2}, \ref{assumption3}, \ref{assumption_bound} - \ref{assumption_bound2}, \ref{assumption4}, and \ref{assumption0} - \ref{assumption0_1} hold. Then, for sufficiently large $t$, there exist positive constants $K_{\theta_0,1},K_{\theta_0,2}, K_{\theta_0,3}$, such that 
\begin{align}
\mathbb{E}_{\theta_0}\left[||\theta_t^{[i,N]}-\theta_{0}||^2\right] &\leq \left(K_{\theta_0,1}+K_{\theta_0,2}\right)\gamma_t + {K_{\theta_0,3}}\alpha(N), \label{convergence_rate-} \\
\mathbb{E}_{\theta_0}\left[||\theta_t^{[N]}-\theta_{0}||^2\right] &\leq (K_{\theta_0,1}+\frac{K_{\theta_0,2}}{N})\gamma_t + {K_{\theta_0,3}}\alpha(N),\label{convergence_rate_2-}
\end{align}
where $\alpha:\mathbb{N}\rightarrow\mathbb{R}_{+}$ is a function defined according to
\begin{equation}
\alpha(N)= \left\{
\begin{array}{lll} 
N^{-\frac{1}{4}} & \text{if} & d=1 \\
N^{-\frac{1}{4}}\log(1+N)^{\frac{1}{2}} & \text{if} & d=2 \\
N^{-\frac{1}{2d}} & \text{if} & d\geq 3.
\end{array}
\right. \label{eq:alpha}
\end{equation}
\end{manualtheorem}

\end{itemize}

\subsubsection{Case II} 
We now turn our attention to the case in which we observe $N$ particles $(x_t^{i})_{t\geq 0}^{i=1,\dots,N}$ from the IPS \eqref{IPS}, and generate online parameter estimates according to \eqref{theta_approx} or \eqref{theta_approx0}. Once again, we will consider two asymptotic regimes.

\begin{itemize}
\item[(i)] $N\in\mathbb{N}$ and $t\rightarrow\infty$. We first consider asymptotics in the long-time regime, for a fixed number of particles. This is perhaps the most natural asymptotic regime for Case II, in which implicitly we have assumed that the true data-generating model is a finite system of interacting particles. Thus, arguably, our objective should simply be to maximise the `partial' asymptotic log-likelihood of the IPS, $\tilde{\mathcal{L}}^{i,N}(\theta)$, or the `complete' asymptotic log-likelihood of the IPS, $\tilde{\mathcal{L}}^N(\theta)$, for finite $N$, since this corresponds to the true log-likelihood of the data-generating process. \\

We begin by establishing a.s. convergence of \eqref{theta_approx} or \eqref{theta_approx0} to the stationary points of these two functions. Assuming also strong concavity for these asymptotic log-likelihood functions (Condition \ref{assumption4''}), we then obtain $\mathbb{L}_2$ convergence to their unique maximisers, with an $\mathbb{L}_2$ convergence rate which depends on the learning rate $(\gamma_t)_{t\geq 0}$. 

\begin{manualtheorem}{3.1} \label{theorem2_1}
Assume that Conditions \ref{assumption_init}, \ref{assumption1} - \ref{assumption2}, \ref{assumption3}, and \ref{assumption0} hold. Then, for all $N\in\mathbb{N}$, $i=1,\dots,N$, it holds a.s. that 
\begin{align}
\lim_{t\rightarrow\infty} ||\nabla_{\theta}\tilde{\mathcal{L}}^{i,N}(\theta_t^{i,N})|| = \lim_{t\rightarrow\infty} ||\nabla_{\theta}\tilde{\mathcal{L}}^{N}(\theta_t^N)|| =  0.
\end{align}
\end{manualtheorem}

\begin{manualtheorem}{3.2} \label{theorem2_2}
Assume that Conditions \ref{assumption_init}, \ref{assumption1} - \ref{assumption2}, \ref{assumption3}, \ref{assumption_bound} - \ref{assumption_bound2}, \ref{assumption4''}, and \ref{assumption0} - \ref{assumption0_1} hold. Then, for all $N\in\mathbb{N}$, $i=1,\dots,N$, and for sufficiently large $t$, there exist positive constants $K_{\theta_0,1}^{\dagger},K_{\theta_0,2}^{\dagger}$, 
such that
\begin{align}
\mathbb{E}_{\theta_0}\left[||\theta_t^{i,N}-\theta_{0}||^2\right] &\leq \left(K_{\theta_0,1}^{\dagger}+K_{\theta_0,2}^{\dagger}\right)\gamma_t. \label{convergence_rate2''} \\
\mathbb{E}_{\theta_0}\left[||\theta_t^{N}-\theta_{0}||^2\right] &\leq (K_{\theta_0,1}^{\dagger}+\frac{K_{\theta_0,2}^{\dagger}}{N})\gamma_t. \label{convergence_rate2''_2}
\end{align}
\end{manualtheorem}

\item[(ii)] $N\rightarrow\infty$ and $t\rightarrow\infty$. 
Finally, for completeness, we consider joint asymptotics in the large particle and the long-time regime. Given that the assumed form of the data-generating mechanism in this case (the IPS) and in the previous case (the McKean-Vlasov SDE) coincide as $N\rightarrow\infty$ and $t\rightarrow\infty$, it is natural to analyse whether the same is true for the resulting parameter estimates. \\

Similar to before, we first establish $\mathbb{L}^1$ convergence of \eqref{theta_approx} and \eqref{theta_approx0} to the stationary points of the contrast function $\smash{\underline{\tilde{\mathcal{L}}}(\theta)}$. Under the additional assumption of strong concavity for this contrast function (Condition \ref{assumption4}), we then obtain $\mathbb{L}^2$ convergence to the unique maximiser of $\tilde{\mathcal{L}}(\theta)$, with a convergence rate which once again depends on the learning rate $(\gamma_t)_{t\geq 0}$, the number of particles $N$, and the dimension of each of the particles $d$.   

\begin{manualtheorem}{3.1$^{*}$} \label{theorem2_1_star}
Assume that Conditions \ref{assumption_init}, \ref{assumption1} - \ref{assumption2}, \ref{assumption3}, and \ref{assumption0} hold. Then, in $\mathbb{L}^1$, it holds that
\begin{alignat}{2}
\lim_{t,N\rightarrow\infty} ||\nabla_{\theta}\underline{\tilde{\mathcal{L}}}({\theta}^{i,N}_t)||=\lim_{t,N\rightarrow\infty} ||\nabla_{\theta}\underline{\tilde{\mathcal{L}}}({\theta}^{N}_t)||=0.
\end{alignat}
Suppose, in addition, that $\Theta_0=\{\theta\in\mathbb{R}^p: \nabla_{\theta} \underline{\tilde{\mathcal{L}}}(\theta)=0\} = \{\theta_0\}$. Then, in $\mathbb{L}^1$, it also holds that 
\begin{alignat}{2}
\lim_{t,N\rightarrow\infty}||\nabla_{\theta}{\tilde{\mathcal{L}}}({\theta}^{i,N}_t)|| =\lim_{t,N\rightarrow\infty} ||\nabla_{\theta}{\tilde{\mathcal{L}}}({\theta}^{N}_t)||=0
\end{alignat}
\end{manualtheorem}

\begin{manualtheorem}{3.2$^{*}$} \label{theorem2_2_star}
Assume that Conditions \ref{assumption_init}, \ref{assumption1} - \ref{assumption2}, \ref{assumption3}, \ref{assumption_bound} - \ref{assumption_bound2}, \ref{assumption4}, and \ref{assumption0} - \ref{assumption0_1} hold. Then, for sufficiently large $t$, and for $N\geq 1$, $1\leq i \leq N$, there exist positive constants $\smash{K_{\theta_0,1}^{\dagger},K_{\theta_0,2}^{\dagger},K_{\theta_0,3}^{\dagger},K_{\theta_0,4}^{\dagger}}$, such that
\begin{align}
\mathbb{E}_{\theta_0}\left[||\theta_t^{i,N}-\theta_{0}||^2\right] &\leq (K_{\theta_0,1}^{\dagger}+K_{\theta_0,2}^{\dagger})\gamma_t + {K_{\theta_0,3}^{\dagger}}\alpha(N) + \frac{K_{\theta_0,4}^{\dagger}}{N^{\frac{1}{2}}}, \label{convergence_rate} \\[2mm]
\mathbb{E}_{\theta_0}\left[||\theta_t^{N}-\theta_{0}||^2\right] &\leq (K_{\theta_0,1}^{\dagger}+\frac{K_{\theta_0,2}^{\dagger}}{N})\gamma_t + {K_{\theta_0,3}^{\dagger}}\alpha(N) + \frac{K_{\theta_0,4}^{\dagger}}{N^{\frac{1}{2}}}, \label{convergence_rate_2}
\end{align}
where $\alpha:\mathbb{N}\rightarrow\mathbb{R}_{+}$ is the function defined in \eqref{eq:alpha}.
\end{manualtheorem}
\end{itemize}

\subsubsection{Discussion}

Let us briefly compare the results obtained in Case I (Theorems \ref{theorem1_1} - \ref{theorem1_2} and Theorems \ref{theorem1_1_star} - \ref{theorem1_2_star}) and Case II (Theorems \ref{theorem2_1} - \ref{theorem2_2} and Theorems \ref{theorem2_1_star} - \ref{theorem2_2_star}). As noted previously, the online parameter estimates in both of these cases follow the same update equations; the only difference is in the assumed form of the data-generating model. We thus expect that the results obtained in these two cases will be similar, if not identical. In Theorems \ref{theorem1_1_star} - \ref{theorem1_2_star} and Theorems \ref{theorem2_1_star} - \ref{theorem2_2_star}, this is indeed seen to be the case. 

In particular, Theorem \ref{theorem1_1_star} and Theorem \ref{theorem2_1_star} establish that, regardless of the data-generating mechanism, the online parameter estimates generated via \eqref{theta_approx_} - \eqref{theta_approx0_} and \eqref{theta_approx} - \eqref{theta_approx0} converge to the stationary points of $\smash{\underline{\tilde{\mathcal{L}}}(\theta)}$ as $N\rightarrow\infty$ and $t\rightarrow\infty$. 
Meanwhile, Theorem \ref{theorem1_2_star} and Theorem \ref{theorem2_2_star} establish $\mathbb{L}^2$ convergence to the true parameter $\theta_0$. It is worth noting that there is a difference between the rates established in Theorem \ref{theorem1_2_star} and Theorem \ref{theorem2_2_star}. In particular, in Case II (Theorem \ref{theorem2_2_star}) there is an additional $\smash{{O}({N^{-\frac{1}{2}}})}$ term. We can interpret this term as a penalty for the mismatch between the asymptotic likelihood function implied by the data-generating model in Case II, namely the IPS \eqref{IPS}, and the asymptotic likelihood function of the limiting McKean-Vlasov SDE \eqref{MVSDE}. 

\section{Proof of Main Results} \label{sec:proofs}
In this section, we provide proofs of our main results. Many of these proofs will rely on additional auxiliary lemmas; in the interest of brevity, the statements and proofs of these lemmas have been deferred to the appendices.

\subsection{Offline Parameter Estimation}
We first provide proofs of our results in the offline case, namely Theorem \ref{offline_theorem1} and Theorem \ref{offline_theorem2}. 

\subsubsection{Proof of Theorem \ref{offline_theorem1}} \label{sec:offline_proof1}
We begin by establishing consistency of the offline MLEs $\smash{\hat{\theta}_t^{[N]}}$ (Case I) and $\smash{\hat{\theta}_t^{N}}$ (Case II) as $N\rightarrow\infty$. In the interest of brevity, we will provide full details of the proof for $\smash{\theta_t^{N}}$ (Case II), noting that essentially identical arguments can be used for $\smash{\theta_t^{[N]}}$ (Case I).\footnote{In particular, the arguments in this proof remain valid if we replace, e.g.,  $\smash{(x_s^{i,N})_{s\in[0,t]}}$ by $\smash{(x_s^{i})_{s\in[0,t]}}$ and $\smash{(\mu_s^{N})_{s\in[0,t]}}$ by $\smash{(\mu_s^{[N]})_{s\in[0,t]}}$, since all of the required properties (e.g., uniformly bounded moments) hold both for the McKean-Vlasov SDE and the IPS.}

We should emphasise that, throughout this proof, the value of $t$ will be fixed and finite. This being said, our method of proof will broadly follow the classical approach for establishing strong consistency of the MLE in a different asymptotic regime, namely, in the limit as $t\rightarrow\infty$ (e.g., \cite{Borkar1982}). Since we consider an entirely different asymptotic regime, however, at times we will need to rely on slightly different arguments (e.g., Lemma \ref{theorem1_lemma1}), and, of course, different conditions (e.g., Condition \ref{offline_assumption_2_1}).

\begin{proof}
Let $\mathbb{P}^{\theta}_{t,N}$ denote the probability measure induced by $\smash{(x_s^{\theta,i,N})_{s\in[0,t]}^{i=1,\dots,N}}$. We begin with the observation that, since $\Theta\subseteq\mathbb{R}^p$ is compact, for all $t\geq 0$, and for all $N\in\mathbb{N}$, there exists $\hat{\theta}_t^N\in\Theta$ such that
\begin{equation}
\left.\frac{\mathrm{d}\mathbb{P}^{\theta}_{t,N}}{\mathrm{d}\mathbb{P}_{t,N}^{\theta_0}}\right|_{\theta = \hat{\theta}_t^N} \geq \frac{\mathrm{d}\mathbb{P}^{\tilde{\theta}}_{t,N}}{\mathrm{d}\mathbb{P}^{\theta_0}_{t,N}}~~~\text{a.s.}
\end{equation}
for all $\tilde{\theta}\in\Theta$. We thus have, setting $\tilde{\theta}=\theta_0$, that $\smash{{\mathrm{d}\mathbb{P}^{\theta}_{t,N}}/{\mathrm{d}\mathbb{P}_{t,N}^{\theta_0}}|_{\theta = \hat{\theta}_t^N}  \geq 1}$ a.s., from which it follows, by definition of the log-likelihood, that 
\begin{align} 
\mathcal{L}_t^{N}(\hat{\theta}_t^N) 
&=\frac{1}{N}\sum_{i=1}^N\bigg[\int_0^t \left\langle  G(\theta,x_s^{i,N},\mu_s^N,\mu_s^N),\mathrm{d}w^{i}_s\right\rangle_{\theta=
\hat{\theta}_t^N}-\frac{1}{2}\int_0^t \left|\left|G(\hat{\theta}_t^N,x_s^{i,N},\mu_s^N,\mu_s^N)\right|\right|^2\mathrm{d}s\bigg]\\[2mm]
&\geq 0~~~\text{a.s.}  
\end{align}
It follows straightforwardly that
\begin{align}
&\frac{1}{N}\sum_{i=1}^N\int_0^t \left\langle G(\theta,x_s^{i,N},\mu_s^N,\mu_s^N),\mathrm{d}w^{i}_s\right\rangle_{\theta=\hat{\theta}_t^N}\geq\frac{1}{2N}\sum_{i=1}^N\int_0^t \left|\left| G(\hat{\theta}_t^N,x_s^{i,N},\mu_s^N,\mu_s^N)\right|\right|^2\mathrm{d}s\geq 0. \label{eq4_4}
\end{align}
In addition, by Lemma \ref{theorem1_lemma1}, we have that $\frac{1}{N}\sum_{i=1}^N\int_0^t \langle G(\theta,x_s^{i,N},\mu_s^N,\mu_s^N),\mathrm{d}w^{i}_s\rangle_{\theta=\hat{\theta}_t^N}\stackrel{\mathbb{P}}{\rightarrow}0$ as $N\rightarrow\infty$. Thus, taking the limit as $N\rightarrow\infty$, we have 
\begin{align}
\lim_{N\rightarrow\infty} \frac{1}{N}\sum_{i=1}^N\int_0^t \left|\left| G(\hat{\theta}_t^N,x_s^{i,N},\mu_s^N,\mu_s^N)\right|\right|^2\mathrm{d}s\stackrel{\mathbb{P}}{\rightarrow} 0.
\label{eq_4_7}
\end{align}
We next observe, making use of the Cauchy-Schwarz inequality, that
\begin{align}
&\bigg|\frac{1}{N}\sum_{i=1}^N\int_0^t \big|\big|G(\theta,x_s^{i,N},\mu_s^N,\mu_s^N)\big|\big|^2\mathrm{d}s-\frac{1}{N}\sum_{i=1}^N\int_0^t \big|\big|G(\theta',x_s^{i,N},\mu_s^N,\mu_s^N)\big|\big|^2\mathrm{d}s\bigg| \label{G_change} \\[3mm]
&\leq\bigg[\frac{1}{N}\sum_{i=1}^N\int_0^t \big|\big|G(\theta,x_s^{i,N},\mu_s^N,\mu_s^N)- G(\theta',x_s^{i,N},\mu_s^N,\mu_s^N)\big|\big|^2\mathrm{d}s\bigg]^{\frac{1}{2}} \\
&\hspace{3mm}\cdot\bigg[\frac{1}{N}\sum_{i=1}^N\int_0^t\big|\big|G(\theta,x_s^{i,N},\mu_s^N,\mu_s^N)+G(\theta',x_s^{i,N},\mu_s^N,\mu_s^N)\big|\big|^2\mathrm{d}s\bigg]^{\frac{1}{2}} \nonumber \\[2mm]
&\leq K ||\theta-\theta'||\bigg[\frac{1}{N}\sum_{i=1}^N\int_0^t \big|\big|\frac{1}{N}\sum_{j=1}^N (1+||x_s^{i,N}||^q+||x_s^{j,N}||^q)\big|\big|^2\mathrm{d}s\bigg]^{\frac{1}{2}} \label{eq4_9} \\
&\hspace{20mm}\cdot\bigg[\frac{2}{N}\sum_{i=1}^N\bigg[\int_0^t\big|\big|G(\theta,x_s^{i,N},\mu_s^N,\mu_s^N)\big|\big|^2\mathrm{d}s+\int_0^t\big|\big|G(\theta',x_s^{i,N},\mu_s^N,\mu_s^N,)\big|\big|^2\mathrm{d}s\bigg]\bigg]^{\frac{1}{2}} \hspace{-5mm} \nonumber
\end{align}
where in the final line we have used Conditions \ref{assumption3}(i) - \ref{assumption3}(ii). In addition, the uniform moment bounds on the IPS (Proposition \ref{prop_moment_bounds}), which follow from Condition \ref{assumption_init}, together with Condition \ref{assumption3}(ii), imply that all terms on the RHS of this inequality 
are bounded. 
It follows immediately that the function $\frac{1}{N}\sum_{i=1}^N \int_0^t \big|\big|G(\theta,x_s^{i,N},\mu_s^N,\mu_s^N)\big|\big|^2\mathrm{d}s$
is Lipschitz continuous in $\theta$, uniformly in $N$. Combining this with \eqref{eq_4_7}, we have that, as $N\rightarrow\infty$,
\begin{align}
\hat{\theta}_t^N \stackrel{\mathbb{P}}{\rightarrow} \mathcal{D}_t^N& = \bigg\{\theta\in\Theta:\lim_{N\rightarrow\infty}\frac{1}{N}\sum_{i=1}^N\int_0^t \big|\big|G(\theta,x_s^{i,N},\mu_s^N,\mu_s^N)\big|\big|^2\mathrm{d}s=0\bigg\} \label{eq4_13}
\end{align}
by which we we mean more precisely that $\inf_{\theta\in\mathcal{D}_t}||\hat{\theta}^N_t-\theta||\stackrel{\mathbb{P}}{\rightarrow}0$ as $N\rightarrow\infty$.
 It remains to observe that, by a repeated application of the McKean-Vlasov Law of Large Numbers (Proposition \ref{prop_lln}), as $N\rightarrow\infty$, and for all $t> 0$, we have
 \begin{align}
 \mathcal{D}_t^N \stackrel{\mathbb{P}}{\rightarrow} \mathcal{D}_t = \bigg\{\theta\in\Theta:\int_0^t \left[\int_{\mathbb{R}^d}\left|\left|G(\theta,x,\mu_s,\mu_s)\right|\right|^2\mu_s(\mathrm{d}x)\right]\mathrm{d}s=0\bigg\} = \{\theta_0\}, \label{eq4_15}
 \end{align}
where in the second equality we have also made use of the identifiability condition in Condition \ref{offline_assumption_2_1}. It follows immediately, combining \eqref{eq4_13} and \eqref{eq4_15} that, for all fixed $t>0$, as $N\rightarrow\infty$, $\hat{\theta}_t^N \stackrel{\mathbb{P}}{\longrightarrow} \theta_0$.  
\end{proof}

\subsubsection{Proof of Theorem \ref{offline_theorem2}} \label{sec:offline_proof2}

The proof of this theorem, similarly to the previous proof, combines well known techniques used to establishing strong consistency of the MLE as $t\rightarrow\infty$ (e.g., \cite{Levanony1994}) with ideas relevant to the asymptotic regime as $N\rightarrow\infty$ (e.g., \cite{Kasonga1990}). Once again, we emphasise that throughout this proof the value of $t$ will be fixed and finite, and we will consider the limit only as $N\rightarrow\infty$. 

\begin{proof}
We begin by considering a Taylor expansion of $\nabla_{\theta} \mathcal{L}_t^{N}(\theta)$ around the true value of the parameter $\theta=\theta_0$, viz, 
\begin{equation}
0 = \nabla_{\theta}\mathcal{L}_t^N(\hat{\theta}_t^N)  = \nabla_{\theta}\mathcal{L}_t^N(\theta_0) + (\theta_t^N-\theta_0)\nabla^2_{\theta}\mathcal{L}_t(\bar{\theta}_t^N)
\end{equation}
where $\bar{\theta}_t^N$ is point in the segment connecting $\hat{\theta}_t^N$ and $\theta_{0}$. The validity of this expansion is based on the sample path continuity of the log-likelihood and its derivatives. It follows that 
\begin{equation}
N^{\frac{1}{2}}(\hat{\theta}_t^N-\theta_0)\nabla^2_{\theta}\mathcal{L}_t^N(\bar{\theta}_t^N) = -N^{\frac{1}{2}}\nabla_{\theta}\mathcal{L}_t^{N}(\theta_0) 
\end{equation}
To deal with the terms in this equation, we will rely extensively on a multivariate version of Rebolledo's Central Limit Theorem \cite{Rebolledo1980}, as stated in \cite[Corollary to Theorem 2]{Kasonga1990}. Let us begin by considering the RHS. First observe that 
\begin{align}
N^{\frac{1}{2}}\nabla_{\theta}\mathcal{L}_t^N(\theta_0) &= N^{-\frac{1}{2}}\sum_{i=1}^N\int_0^t \left\langle  \nabla_{\theta} B(\theta_0,x_s^{i,N},\mu_s^N),\mathrm{d}w_s^{i} \right\rangle \\
&+N^{-\frac{1}{2}}\sum_{i=1}^N\int_0^t \nabla_{\theta} B(\theta_0,x_s^{i,N},\mu_s^N)G(\theta_0,x_s^{i,N},\mu_s^N,\mu_s^N)\mathrm{d}s  \nonumber \\
&= N^{-\frac{1}{2}}\sum_{i=1}^N\int_0^t \left\langle \nabla_{\theta} B(\theta_0,x_s^{i,N},\mu_s^N),\mathrm{d}w_s^{i} \right\rangle
\end{align}
where in the second line we have used the fact that, by definition, $G(\theta_0,\cdot,\cdot)=0$ is identically equal to zero. It follows, using also Condition \ref{assumption3}(ii) (the polynomial growth property) and Proposition \ref{prop_moment_bounds} (uniform moment bounds for the solutions of the IPS), that for all $t\geq 0$, $\smash{(N^{\frac{1}{2}}\nabla_{\theta}\mathcal{L}_t^N(\theta_0))_{N\in\mathbb{N}}}$ is a sequence of local square integrable martingales, which implies that the first condition of \cite[Corollary to Theorem 2]{Kasonga1990} is satisfied. 

Next, observe that the process $\smash{(N^{\frac{1}{2}}\nabla_{\theta} \mathcal{L}_t^N(\theta))_{t\geq 0}}$ is continuous (in time), and thus the second condition of \cite[Corollary to Theorem 2]{Kasonga1990} (the Lindenberg condition) is satisfied. Finally, we have that, for all $k,l=1,\dots,p$, as $N\rightarrow\infty$,
\begin{align}
&\big\langle \big[N^{\frac{1}{2}} \nabla_{\theta}{\mathcal{L}}_t^N(\theta_0)\big]_{k},\big[N^{\frac{1}{2}} \nabla_{\theta}{\mathcal{L}}_t^N(\theta_0)\big]_{l}\big\rangle \label{eq_420} \\
&\hspace{10mm} = \frac{1}{N}\sum_{i=1}^N\int_0^t [\nabla_{\theta} B(\theta_0,x_s^{i,N},\mu_s^N)]_{k}[\nabla_{\theta} B(\theta_0,x_s^{i,N},\mu_s^N)]_{l}\mathrm{d}s \label{eq_421}\\
&\hspace{10mm}\stackrel{\mathbb{P}}{\rightarrow} \int_0^t \left[\int_{\mathbb{R}^d} \left[\nabla_{\theta} B(\theta_0,x,\mu_s)\right]_{k}\left[\nabla_{\theta} B(\theta_0,x,\mu_s)\right]_{l}\mu_s(\mathrm{d}x)\right]\mathrm{d}s= \left[I_t(\theta_0)\right]_{kl}, \label{eq_422}
\end{align}
where in the final line, we have used a repeated application of the weak law of large numbers for the empirical distribution of the IPS (Proposition \ref{prop_lln}), and the definition of $I_t(\theta)$ (see Condition \ref{offline_assumption_2_2}). Thus, the final condition in \cite[Corollary to Theorem 2]{Kasonga1990} is satisfied. It follows from this result that
\begin{equation}
-N^{\frac{1}{2}}\nabla_{\theta}\mathcal{L}_t^N(\theta_0)\stackrel{\mathcal{D}}{\longrightarrow}\mathcal{N}_p(0,I_t(\theta_0)).
\end{equation}
It remains to prove that $\smash{\nabla^2_{\theta}\mathcal{L}_t^N(\bar{\theta}_t^N)\stackrel{\mathbb{P}}{\longrightarrow} -I_t(\theta_0)}$.
In fact, since $\smash{\hat{\theta}_t^N\stackrel{\mathbb{P}}{\rightarrow}\theta_0}$ as $\smash{N\rightarrow\infty}$ by Theorem \ref{offline_theorem1}, the continuity of $\smash{\{\nabla_{\theta}^2\mathcal{L}^N_t(\cdot)\}_{N\in\mathbb{N}}}$ in $\theta$ implies that this limit holds provided we can establish that $\smash{\nabla_{\theta}^2\mathcal{L}_t^N(\theta_0)\stackrel{\mathbb{P}}{\longrightarrow} -I_t(\theta_0)}$.  To do so, let us begin with the observation, via a simple calculation, we have that
\begin{align}
\left[\nabla^2_{\theta}\mathcal{L}_t^N(\theta_0)\right]_{kl} 
&= \frac{1}{N}\sum_{i=1}^N \int_0^t  \left[\nabla_{\theta}^2B(\theta_0,x_s^{i,N},\mu_s^N)\right]_{kl}\mathrm{d}w^{i}_s \label{eq_4_21} \\
&-\frac{1}{N}\sum_{i=1}^N\int_0^t [\nabla_{\theta} B(\theta_0,x_s^{i,N},\mu_s^N)]_{k}[\nabla_{\theta} B(\theta_0,x_s^{i,N},\mu_s^N)]_{l}\mathrm{d}s \nonumber
\end{align}
Arguing as in the proof of Lemma \ref{theorem1_lemma1} (see Appendix \ref{appendixB}), we can show that, as $N\rightarrow\infty$, we have
\begin{equation}
 \frac{1}{N}\sum_{i=1}^N\int_0^t \nabla_{\theta}^2B(\theta_0,x_s^{i,N},\mu_s^N)\mathrm{d}w^{i}_s \stackrel{\mathbb{P}}{\longrightarrow}0. \label{eq_4_22}
\end{equation}
Moreover, we have already established, c.f. 
\eqref{eq_422}, that, as $N\rightarrow\infty$, we have
\begin{align}
&\frac{1}{N}\sum_{i=1}^N\int_0^t [\nabla_{\theta} B(\theta_0,x_s^{i,N},\mu_s^N)]_{k}[\nabla_{\theta} B(\theta_0,x_s^{i,N},\mu_s^N)]_{l}\mathrm{d}s\stackrel{\mathbb{P}}{\longrightarrow} [I_t(\theta_0)]_{kl}.  \label{eq_4_23} 
\end{align}
It follows, substituting \eqref{eq_4_22} - \eqref{eq_4_23} into \eqref{eq_4_21}, that $\nabla^2_{\theta}\mathcal{L}_t^N(\theta_0) \stackrel{\mathbb{P}}{\longrightarrow}- I_t(\theta_0)$ as $N\rightarrow\infty$. By our previous remarks, this completes the proof. 
\end{proof}

\subsection{Online Parameter Estimation}
We now provide proofs of our main results in the online case; namely Theorems \ref{theorem1_1} - \ref{theorem1_2}, \ref{theorem1_1_star} - \ref{theorem1_2_star}, \ref{theorem2_1} - \ref{theorem2_2}, and \ref{theorem2_1_star} - \ref{theorem2_2_star}. 

Before we proceed, it will be necessary to introduce some additional notation. As in Section \ref{sec:likelihood}, we will write $\smash{(x_t^{i})_{t\geq 0}}$ for a solution of the McKean-Vlasov SDE \eqref{MVSDE}, where the Brownian motion $(w_t)_{t\geq 0}$ is replaced by $\smash{(w_t^{i})_{t\geq 0}}$. We will still write $\smash{(\mu_t)_{t\geq 0}}$ for the law of this solution. We also now write
\begin{align}
\mathcal{L}_t^{i}(\theta) &=\int_0^t L(\theta,x_s^{i},\mu_s^{\theta},\mu_s)\mathrm{d}s+ \int_0^t \langle G(\theta,x_s^{i},\mu_s^{\theta},\mu_s),\mathrm{d}w_s^{i}\rangle. \label{log_likelihood_i} \\
\underline{\mathcal{L}}_t^{i}(\theta) &=\int_0^t L(\theta,x_s^{i},\mu_s,\mu_s)\mathrm{d}s+ \int_0^t \langle G(\theta,x_s^{i},\mu_s,\mu_s),\mathrm{d}w_s^{i}\rangle. \label{log_likelihood_i_approx}
\end{align}
In addition, we now define $\smash{\hat{x}^{N}_t= (x_t^{1,N},\dots,x_{t}^{N,N})^T\in(\mathbb{R}^d)^N}$, the process consisting of the concatenation of the $N$ solutions of the IPS \eqref{IPS}. This process is the solution of the following SDE on $(\mathbb{R}^d)^N$
\begin{equation}
\mathrm{d}\hat{x}^{N}_t = \hat{B}(\theta,\hat{x}^{N}_t)\mathrm{d}t + \mathrm{d}\hat{w}_t^N, \label{big_SDE}
\end{equation}
where $\hat{w}_t^N$ is a $\smash{(\mathbb{R}^d)^N}$-valued Brownian motion, and the function $\smash{\hat{B}(\theta,\cdot):(\mathbb{R}^d)^N\rightarrow(\mathbb{R}^d)^N}$ is of the form $\hat{B}(\theta,\hat{x}^{N})= (\hat{B}^{1,N}(\theta,\hat{x}^{N}),\dots,\hat{B}^{N,N}(\theta,\hat{x}^{N}))^T$, where, for $i=1,\dots,N$, $\hat{B}^{i,N}(\theta,\cdot):(\mathbb{R}^d)^N\rightarrow\mathbb{R}^{d}$ is defined according to
\begin{align}
\hat{B}^{i,N}(\theta,\hat{x}^{N}) &= b(\theta,x^{i,N}) + \frac{1}{N}\sum_{j=1}^N \phi(\theta,x^{i,N},x^{j,N}). \label{hatBfunc}
\end{align}
It will also be useful to define the functions $\hat{G}^{i,N}(\theta,\cdot):(\mathbb{R}^d)^N\rightarrow\mathbb{R}^d$ and $\hat{L}^{i,N}(\theta,\cdot):(\mathbb{R}^d)^N\rightarrow\mathbb{R}$ according to 
$\hat{G}^{i,N}(\theta,\hat{x}^N)= \hat{B}^{i,N}(\theta,\hat{x}^N) - \hat{B}^{i,N}(\theta_{0},\hat{x}^N)$ and $\hat{L}^{i,N}(\theta,\hat{x}) = -\frac{1}{2}||\hat{G}^{i,N}(\theta,\hat{x}^N)||^2$.
Finally, we will write $\smash{\hat{\mu}_{t}^N= \mathcal{L}(\hat{x}^N_t)}$ to denote the law of $\smash{\hat{x}_t^N = (x_t^{1,N},\dots,x_t^{N,N})}$. 

\subsubsection{Proof of Theorem \ref{theorem1_1}, Theorem \ref{theorem1_1_star}, Theorem \ref{theorem2_1}, and Theorem \ref{theorem2_1_star}} 
\label{sec:proof1}
We first turn our attention to our $\mathbb{L}^1$ convergence results. We will focus, in particular, on the proofs of Theorems \ref{theorem2_1} and \ref{theorem2_1_star}, later demonstrating how the same approach can be used to obtain Theorems \ref{theorem1_1} and \ref{theorem1_1_star}.

\begin{proof}
Using the triangle inequality, we can decompose $\tilde{\underline{\mathcal{L}}}(\theta_t^{i,N})$, the approximate asymptotic log-likelihood of the McKean-Vlasov SDE evaluated at the online parameter estimates generated by the IPS, as 
\begin{alignat}{3}
||\nabla_{\theta}\underline{\tilde{\mathcal{L}}}({\theta}^{i,N}_t)||
&\leq \underbrace{||\nabla_{\theta}\underline{\tilde{\mathcal{L}}}({\theta}^{i,N}_t) - \tfrac{1}{t}\nabla_{\theta}\underline{\mathcal{L}}_t^{i}({\theta}^{i,N}_t)||}_{\rightarrow~0 \text{ as $t\rightarrow\infty~~\forall N\in\mathbb{N}$ by Lemma \ref{lemma0}}}
+\underbrace{||\tfrac{1}{t}\nabla_{\theta}{\underline{\mathcal{L}}}_t^{i}({\theta}^{i,N}_t) - \tfrac{1}{t}\nabla_{\theta}{\mathcal{L}}_t^{[i,N]}({\theta}^{i,N}_t)||}_{\rightarrow~0\text{ as $N\rightarrow\infty~~\forall t\in\mathbb{R}_{+}$   by Lemma \ref{lemmaC_1}}} \\[2mm]
&~~~+ \underbrace{||\tfrac{1}{t}\nabla_{\theta}{\mathcal{L}}_t^{[i,N]}({\theta}^{i,N}_t) - \tfrac{1}{t}\nabla_{\theta}{\mathcal{L}}_t^{i,N}({\theta}^{i,N}_t)||}_{\rightarrow~0\text{ as $N\rightarrow\infty~~\forall t\in\mathbb{R}_{+}$   by Lemma \ref{lemmaC}}} 
+\underbrace{||\tfrac{1}{t}\nabla_{\theta}{\mathcal{L}}_t^{i,N}({\theta}^{i,N}_t) - \nabla_{\theta}\tilde{\mathcal{L}}^{i,N}({\theta}^{i,N}_t)||}_{\rightarrow~0\text{ as $t\rightarrow\infty~~\forall N\in\mathbb{N}$ by Lemma \ref{lemma0_1}}} \\[2mm]
&~~~+\underbrace{||\nabla_{\theta}\tilde{\mathcal{L}}^{i,N}({\theta}^{i,N}_t)||}_{\rightarrow~0\text{ as $t\rightarrow\infty~~\forall N\in\mathbb{N}$ by Lemma \ref{lemmaD}}}.\label{decomposition1}
\end{alignat}
or, almost identically, as
\begin{alignat}{3}
||\nabla_{\theta}\underline{\tilde{\mathcal{L}}}({\theta}^{N}_t)||
&\leq \underbrace{||\nabla_{\theta}\underline{\tilde{\mathcal{L}}}({\theta}^{N}_t) - \tfrac{1}{t}\nabla_{\theta}\underline{{\mathcal{L}}}_t^{i}({\theta}^{N}_t)||}_{\rightarrow~0\text{ as $t\rightarrow\infty~~\forall N\in\mathbb{N}$ by Lemma \ref{lemma0}}}
+ \underbrace{||\tfrac{1}{t}\nabla_{\theta}{\underline{\mathcal{L}}}_t^{i}({\theta}^{N}_t) - \tfrac{1}{t}\nabla_{\theta}{\mathcal{L}}_t^{[N]}({\theta}^{N}_t)||}_{\rightarrow~0\text{ as $N\rightarrow\infty~~\forall t\in\mathbb{R}_{+}$   by Lemma \ref{lemmaC_1}}} \\[2mm]
&~~~+ \underbrace{||\tfrac{1}{t}\nabla_{\theta}{\mathcal{L}}_t^{[N]}({\theta}^{N}_t) - \tfrac{1}{t}\nabla_{\theta}{\mathcal{L}}_t^{N}({\theta}^{N}_t)||}_{\rightarrow~0\text{ as $N\rightarrow\infty~~\forall t\in\mathbb{R}_{+}$   by Lemma \ref{lemmaC}}} 
+\underbrace{||\tfrac{1}{t}\nabla_{\theta}{\mathcal{L}}_t^{N}({\theta}^{N}_t) - \nabla_{\theta}\tilde{\mathcal{L}}^{N}({\theta}^{N}_t)||}_{\rightarrow~0\text{ as $t\rightarrow\infty~~\forall N\in\mathbb{N}$ by Lemma \ref{lemma0_1}}}  \\[2mm]
&~~~\underbrace{||\nabla_{\theta}\tilde{\mathcal{L}}^{N}({\theta}^{N}_t)||}_{\rightarrow~0\text{ as $t\rightarrow\infty~~\forall N\in\mathbb{N}$ by Lemma \ref{lemmaD}}}. \label{decomposition2}
\end{alignat}
where $\underline{\tilde{\mathcal{L}}}(\theta)$ is defined in \eqref{log_lik_approx}, $\tilde{\mathcal{L}}^{[i,N]}(\theta)$ and $\tilde{\mathcal{L}}^{[N]}(\theta)$ are defined in \eqref{asymptotic_approx}, and $\tilde{\mathcal{L}}^{i,N}(\theta)$ and $\tilde{\mathcal{L}}^{N}(\theta)$ are defined in \eqref{asymptotic_IPS}. 

In both \eqref{decomposition1} and \eqref{decomposition2}, the final limit holds a.s.. This proves Theorem \ref{theorem2_1}. Meanwhile, all of the limits hold in $\mathbb{L}^1$. This proves the first statement in Theorem \ref{theorem2_1_star}. Now suppose that $\Theta_0 = \{\theta:\nabla_{\theta}\underline{\tilde{\mathcal{L}}}(\theta)=0\}=\{\theta_0\}$. Then, from the previous result, we have $\theta_t^{i,N}\rightarrow\theta_0$ in $\mathbb{L}^1$. By continuity, it follows immediately that $||\nabla_{\theta}{\tilde{\mathcal{L}}}(\theta_t^{i,N})||\rightarrow ||\nabla_{\theta}{\tilde{\mathcal{L}}}(\theta_0)|| = 0$ in $\mathbb{L}^1$, using the definition of $\tilde{\mathcal{L}}(\theta)$ in \eqref{asymptotic_MVSDE}. This proves the second statement in Theorem \ref{theorem2_1_star}.  

Theorem \ref{theorem1_1} and \ref{theorem1_1_star} are proved in the same fashion. In this case, in \eqref{decomposition1} and \eqref{decomposition2}, we must now replace $\smash{\theta_t^{i,N}}$ by $\smash{\theta_t^{[i,N]}}$, $\smash{\mathcal{L}_t^{i,N}(\cdot)}$ by $\smash{\mathcal{L}_t^{[i,N]}(\cdot)}$, and $\smash{\tilde{\mathcal{L}}^{i,N}(\cdot)}$ by $\smash{\tilde{\mathcal{L}}^{[i,N]}(\cdot)}$ in \eqref{decomposition1}, and $\smash{\theta_t^{N}}$ by $\smash{\theta_t^{[N]}}$, $\smash{\mathcal{L}_t^{N}(\cdot)}$ by $\smash{\mathcal{L}_t^{[N]}(\cdot)}$, and $\smash{\tilde{\mathcal{L}}^{N}(\cdot)}$ by $\smash{\tilde{\mathcal{L}}^{[N]}(\cdot)}$ in \eqref{decomposition2}. The third term in \eqref{decomposition1} and \eqref{decomposition2} now vanishes entirely. The limits for the remaining terms in \eqref{decomposition1} and \eqref{decomposition2}, established in Lemmas \ref{lemma0}, \ref{lemma0_1}, \ref{lemmaC_1}, and \ref{lemmaD} all still hold. Thus, arguing as above, we obtain the claimed results. 
\end{proof}

Before we proceed to the proofs of Lemmas \ref{lemma0} - \ref{lemmaD}, let us provide a brief high level overview of these results. We note that the proofs of these Lemmas will rely on several auxiliary results. The statements and proofs of these results can be found in \ref{appendix_existing_results} and \ref{sec:theorem1_lemmas}.
\begin{itemize}
\item[(i)] In Lemmas \ref{lemma0} and \ref{lemma0_1}, we establish the existence of $\tilde{\underline{\mathcal{L}}}(\theta)$, $\tilde{\mathcal{L}}^{i,N}(\theta)$, and $\tilde{\mathcal{L}}^N(\theta)$, as well as their derivatives. We provide explicit expressions for these functions in terms of the unique invariant measure of the McKean-Vlasov SDE or the IPS, and prove appropriate convergence results as $t\rightarrow\infty$ (both a.s. and in $\mathbb{L}^{1}$). 
\item[(iii)] In Lemma \ref{lemmaC}, we prove that, for all $t\geq 0$, the gradient of the particle based approximation of the log-likelihood of the McKean-Vlasov SDE, as defined in \eqref{log_lik_independent}, converges to the gradient of the log-likelihood of the IPS as $N\rightarrow\infty$ (in $\mathbb{L}^1$). We also provide $\mathbb{L}^1$ convergence rates. The proof of this result relies on classical uniform-in-time propagation of chaos results \cite{Bustos2008,Malrieu2001}. 
\item[(iv)] In Lemma \ref{lemmaC_1}, we prove that, for all $t\geq 0$, the gradient of the particle based approximation of the log-likelihood of the McKean-Vlasov SDE in \eqref{log_lik_independent} converges to the gradient of the `approximate' log-likelihood of the McKean-Vlasov SDE in \eqref{log_likelihood_i_approx} as $N\rightarrow\infty$ (in $\mathbb{L}^1$). We also provide $\mathbb{L}^1$ convergence rates. The proof relies on existing results on the convergence of the empirical law of the McKean-Vlasov SDE \cite{Fournier2015}.
\item[(vi)] In Lemma \ref{lemmaD}, we establish that, for all $N\in\mathbb{N}$, the gradient of the asymptotic log-likelihood of the IPS, evaluated at the relevant online parameter updates, converges to zero as $t\rightarrow\infty$ (both a.s. and in $\mathbb{L}^1$). This result can be seen as a generalisation of \cite[Theorem 2.4]{Sirignano2017a}.
\end{itemize}

\begin{lemma} \label{lemma0}
Assume that Conditions \ref{assumption_init}, \ref{assumption1} - \ref{assumption2}, and \ref{assumption3} hold. Then the processes 
$\frac{1}{t}\nabla_{\theta}^{k}\underline{\mathcal{L}}_t^{i}(\theta)$, $k=0,1,2$, 
converge, both a.s. and in $\mathbb{L}^1$, to the functions
\begin{alignat}{3}
  \nabla_{\theta}^{k}\tilde{\underline{\mathcal{L}}}(\theta) &= \int_{\mathbb{R}^d}\nabla_{\theta}^{k}L(\theta,x,\mu_{\infty},\mu_{\infty})\mu_{\infty}(\mathrm{d}x).
\end{alignat}
\end{lemma}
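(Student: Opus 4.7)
The plan is to differentiate $\underline{\mathcal{L}}_t^{i}(\theta)$ in $\theta$ under the integral signs — justified by the regularity in Condition \ref{assumption3}(i) together with the uniform-in-time polynomial growth of $\nabla_\theta^k L$ and $\nabla_\theta^k G$ and the moment bounds of Proposition \ref{prop_moment_bounds} — to obtain, for $k=0,1,2$,
\begin{equation}
\tfrac{1}{t}\nabla_\theta^k \underline{\mathcal{L}}_t^i(\theta)
=\tfrac{1}{t}\int_0^t \nabla_\theta^k L(\theta,x_s^i,\mu_s,\mu_s)\,\mathrm{d}s
+\tfrac{1}{t}\int_0^t \big\langle \nabla_\theta^k G(\theta,x_s^i,\mu_s,\mu_s),\mathrm{d}w_s^i\big\rangle.
\end{equation}
I would then treat the two terms separately and show that the first converges to $\int_{\mathbb{R}^d}\nabla_\theta^k L(\theta,x,\mu_\infty,\mu_\infty)\mu_\infty(\mathrm{d}x)$ (both a.s.\ and in $\mathbb{L}^1$), while the second converges to zero.

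For the Lebesgue term, my strategy would be to insert and subtract the integrand with $\mu_s$ replaced by $\mu_\infty$:
\begin{align*}
\tfrac{1}{t}\!\int_0^t\!\! \nabla_\theta^k L(\theta,x_s^i,\mu_s,\mu_s)\,\mathrm{d}s
&=\tfrac{1}{t}\!\int_0^t\!\! \nabla_\theta^k L(\theta,x_s^i,\mu_\infty,\mu_\infty)\,\mathrm{d}s\\
&\quad+\tfrac{1}{t}\!\int_0^t\!\!\big[\nabla_\theta^k L(\theta,x_s^i,\mu_s,\mu_s)-\nabla_\theta^k L(\theta,x_s^i,\mu_\infty,\mu_\infty)\big]\mathrm{d}s.
\end{align*}
The first piece is a time average of a function (with polynomial growth) of the solution of the McKean–Vlasov SDE under its true coefficients, so by ergodicity of the non-linear diffusion under Conditions \ref{assumption1}--\ref{assumption2} (see the results recalled in Appendix \ref{appendix_existing_results}, together with the standard coupling to the stationary solution started from $\mu_\infty$) it converges a.s.\ and in $\mathbb{L}^1$ to $\int_{\mathbb{R}^d}\nabla_\theta^k L(\theta,x,\mu_\infty,\mu_\infty)\mu_\infty(\mathrm{d}x)$. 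The remainder vanishes because the Lipschitz-in-measure property of $\nabla_\theta^k L$ (inherited from Condition \ref{assumption3} and the polynomial growth of $\nabla_\theta^k b,\nabla_\theta^k \phi$) together with the exponential Wasserstein decay $\mathbb{W}_2(\mu_s,\mu_\infty)\leq Ce^{-\lambda s}$ bounds the integrand by $Ce^{-\lambda s}(1+\|x_s^i\|^q+\mu_s(\|\cdot\|^q)^{1/2}+\mu_\infty(\|\cdot\|^q)^{1/2})$, whose time-average divided by $t$ is $O(1/t)$ in $\mathbb{L}^1$ by the uniform moment bounds, and is a.s.\ $o(1)$ by dominated convergence.

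For the Itô term, I would set $M_t^{(k)}=\int_0^t \langle\nabla_\theta^k G(\theta,x_s^i,\mu_s,\mu_s),\mathrm{d}w_s^i\rangle$. Using the polynomial growth of $\nabla_\theta^k G$ and the uniform-in-time moment bounds, the quadratic variation satisfies $\mathbb{E}[\langle M^{(k)}\rangle_t]\leq C t$, hence $\mathbb{E}[|M_t^{(k)}/t|^2]\leq C/t\to 0$, which gives the $\mathbb{L}^2$ (and therefore $\mathbb{L}^1$) convergence to zero. For the a.s.\ statement I would invoke the standard strong law for continuous local martingales (applied to $M^{(k)}$, whose quadratic variation either diverges and then the LLN gives $M_t^{(k)}/\langle M^{(k)}\rangle_t\to 0$ a.s., forcing $M_t^{(k)}/t\to 0$ a.s., or remains bounded and the limit is trivial), or equivalently use Burkholder–Davis–Gundy combined with a dyadic/Borel–Cantelli argument.

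The main obstacle will be verifying uniform-in-time control of all the moment and integrability quantities that appear (both $\mathbb{E}[\|x_s^i\|^{q'}]$ and $\mu_s(\|\cdot\|^{q'})$ for arbitrary order), since everything — the polynomial growth bounds on $\nabla_\theta^k G$ and $\nabla_\theta^k L$, the rate of $\mathbb{W}_2(\mu_s,\mu_\infty)$, and the quadratic variation estimates — ultimately rests on these. Condition \ref{assumption_init} together with Proposition \ref{prop_moment_bounds} and Proposition \ref{lemma_invariant_moment_bounds} supplies what is needed, but the bookkeeping to chain these through each of the two terms and each of the three values $k=0,1,2$ requires care. Once these uniform bounds are in place, assembling the a.s.\ and $\mathbb{L}^1$ conclusions is routine.
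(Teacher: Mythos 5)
Your proposal is correct and follows essentially the same route as the paper: the same splitting into a Lebesgue term and an It\^o term, the same insertion of $\mu_{\infty}$ with the ergodic theorem handling the stationary piece and the exponential Wasserstein contraction plus uniform-in-time moment bounds controlling the remainder, and the It\^o isometry giving the $\mathbb{L}^2$ (hence $\mathbb{L}^1$) convergence of the martingale part, with the $k=1,2$ cases reduced to $k=0$ via differentiation under the integral justified by Condition C.1. The only cosmetic difference is the a.s. treatment of the stochastic integral: you invoke the strong law for continuous local martingales (or BDG with Borel--Cantelli), whereas the paper weights the integrand by $1/s$ and combines Doob's martingale convergence theorem with a Kronecker-type identity — both are standard and interchangeable here.
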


\begin{proof}
We will prove Lemma \ref{lemma0} for the function $\underline{\mathcal{L}}_t(\cdot)$, in the case $k=0$. The results for $k=1,2$, are proved similarly.
 \footnote{Regarding the results for $k=1,2$, we note that the processes $\frac{1}{t}\nabla_{\theta}^{k}\underline{\mathcal{L}}_t^{i}(\theta)$, and hence also $\nabla_{\theta}^{k}\underline{\tilde{\mathcal{L}}}(\theta)$, exist due to Condition \ref{assumption3}. With this established, the proof when $k=1,2$ is almost identical to the proof when $k=0$.}
 Let us begin by recalling the definition of $\frac{1}{t}\mathcal{L}_t^{i}(\theta)$, viz
\begin{align}
\frac{1}{t}\underline{\mathcal{L}}_t^{i}(\theta) 
&=\underbrace{\frac{1}{t}\int_0^t L(\theta,x_s^{i},\mu_s, \mu_s)\mathrm{d}s}_{I_1^N(\theta,t)} + \underbrace{\frac{1}{t}\int_0^t \langle G(\theta,x_s^{i},\mu_s,\mu_s),\mathrm{d}w_s^{i}}_{I_2^N(\theta,t)}  \rangle \label{log_lik}
\end{align}
We will first consider the first term on the RHS. We will characterise the asymptotic behaviour of this term via the following decomposition
\begin{align}
\underbrace{\frac{1}{t}\int_0^t L(\theta,x_s^{i},\mu_s,\mu_s)\mathrm{d}s}_{I_1^N(\theta,t)} 
&= \underbrace{\frac{1}{t}\int_0^t\left[ L(\theta,x_s^{i},\mu_s,\mu_s)-L(\theta,x_s^{i},\mu_{\infty},\mu_{\infty})\right]\mathrm{d}s}_{I_{1,1}^{N}(\theta,t)}
\\
&\hspace{5mm}+\underbrace{\frac{1}{t}\int_0^t L(\theta,x_s^{i},\mu_{\infty},\mu_{\infty})\mathrm{d}s 
}_{I_{1,2}^{N}(\theta,t)} \label{eq4_3_4} 
\end{align}
where $\mu_{\infty}$ is the unique invariant measure of $(x_t)_{t\geq 0}$, which exists as a consequence of Proposition \ref{prop_invariant_measure} (see Appendix \ref{appendix_existing_results}). We begin with the observation that, as $t\rightarrow\infty$, we have 
\begin{align}
\frac{1}{t}\int_0^t [L(\theta,x_s^{i},\mu_s,\mu_s)-L(\theta,x_s^{i},\mu_{\infty},\mu_{\infty})]\mathrm{d}s&\stackrel{\mathrm{a.s.}}{\longrightarrow} 0 \label{eq4_32}  \\
\frac{1}{t}\int_0^t L(\theta,x_s^{i},\mu_{\infty},\mu_{\infty})\mathrm{d}s&\overset{\mathrm{a.s.}}{\underset{\mathbb{L}^1}\longrightarrow}\tilde{\underline{\mathcal{L}}}(\theta), \label{eq4_33} 
\end{align}
the former by Proposition \ref{prop_invariant_measure} (see Appendix \ref{appendix_existing_results}) and the latter by an appropriate version of the ergodic theorem (e.g., \cite[Chapter X]{Revuz1999}).  
Let us now demonstrate that $\smash{I_{1,1}^N(\theta,t)}$ also converges to zero in $\mathbb{L}^1$. Using Condition \ref{assumption3}, Proposition \ref{prop_moment_bounds} (moment bounds for the McKean-Vlasov SDE), Proposition \ref{prop_invariant_measure} (the exponential contractivity of the McKean-Vlasov SDE), and Proposition \ref{lemma_invariant_moment_bounds} (moment bounds for the invariant measure of the McKean-Vlasov SDE), we have
\begin{align}
&\left|\left|L(\theta,x_s^{i},\mu_s,\mu_s)-L(\theta,x_s^{i},\mu_{\infty},\mu_{\infty})\right|\right| 
\leq K_{\theta_0} \big[1+||x_s^{i}||^{q}\big]e^{-\lambda_{\theta_0} s} \label{eq4.37}
\end{align}
where $\lambda_{\theta_0} = \alpha_{\theta_0} - 2L_{\theta_0,2}>0$, and $\alpha_{\theta_0}, \lambda_{\theta_0}>0$ are the constants defined in Condition \ref{assumption2}. It follows, making use once more of Proposition \ref{prop_moment_bounds}, and allowing the value of the constant to increase between inequalities, that
\begin{align}
\mathbb{E}_{\theta_0}[|I_{1,1}^{N}(t)|] 
&\leq \frac{1}{t}\int_0^t K_{\theta_0}\big(1+\mathbb{E}_{\theta_0}\big[||x_s^{i}||^{q}\big]\big)e^{-\lambda_{\theta_0} s}\mathrm{d}s\leq \frac{K_{\theta_0}}{t}\int_0^t e^{-\lambda_{\theta_0} s}\mathrm{d}s 
\leq \frac{K_{\theta_0}(1-e^{-\lambda_{\theta_0} t})}{\lambda_{\theta_0} t}, \label{eq316}
\end{align}
so that the convergence of $\smash{I_{1,1}^N(\theta,t)}$ to zero does also hold in $\mathbb{L}^1$. We thus have, substituting 
\eqref{eq4_33} into \eqref{eq4_3_4}, that $\smash{I_{1}^N(\theta,t)\rightarrow\tilde{\mathcal{L}}(\theta)}$, both a.s. and in $\mathbb{L}^1$. 

We now turn our attention $I_2^N(\theta,t)$. Using the It\^o's isometry, Condition \ref{assumption3}(ii) (the polynomial growth of $B$ and therefore also $G$), Proposition \ref{prop_moment_bounds} (the bounded moments of the McKean-Vlasov SDE), and Lemma \ref{lemma1} (the asymptotic growth rate of the moments of the McKean-Vlasov SDE), we have 
\begin{align}
\mathbb{E}_{\theta_0}\left[\left|\int_0^t \langle G(\theta,x_s^{i},\mu_s,\mu_s),\mathrm{d}w_s^{i}\rangle\right|^2\right] &= \mathbb{E}_{\theta_0}\left[\int_0^t ||G(\theta,x_s^{i},\mu_s,\mu_s)||^2\mathrm{d}s\right] \\
&\leq \mathbb{E}_{\theta_0}\left[\int_0^t K\left(1+||x_s^{i}||^q+\mu_s(||\cdot||^q) \right)\mathrm{d}s\right]  
\leq K_{\theta_0}t\bigg[1+\sqrt{t}\bigg]
\end{align}
where, in the final inequality, we have once more made explicit the dependence of the constant $K_{\theta_0}$ on the true parameter $\theta_0$. It follows that for each $\theta\in\mathbb{R}^p$, 
\begin{equation}
\mathbb{E}_{\theta_0}\left[||\frac{1}{t}\langle G(\theta,x_s,\mu_s,\mathrm{d}w_s\rangle||^2\right] \leq \frac{K_{\theta_0}(1+\sqrt{t})}{t},
\end{equation}
and so this term converges in $\mathbb{L}^2$ (and hence $\mathbb{L}^1$) to zero.

It remains only to demonstrate a.s. convergence of this term to zero. To do so, consider the local martingale 
\begin{align}
M_t &= \int_0^t \frac{1}{s} \langle G(\theta,x_s^{i},\mu_s^{\theta},\mu_s),\mathrm{d}w_s^{i}\rangle \\
&=\frac{1}{t} \int_0^t \langle G(\theta,x_s^{i},\mu_s^{\theta},\mu_s),\mathrm{d}w_s^{i}\rangle + \int_0^t \frac{1}{s^2} \left[\int_0^s \langle G(\theta,x_u^{i},\mu_u^{\theta},\mu_u),\mathrm{d}w_u^{i}\rangle \right]\mathrm{d}s,
\end{align}
where the second equality follows from It\^o's Lemma. Using the It\^o isometry, Condition \ref{assumption3}(ii) (the polynomial growth of $G$), and Proposition \ref{prop_moment_bounds} (the bounded moments of the McKean-Vlasov SDE), and arguing as above, we have
\begin{align}
\sup_{t>0}\mathbb{E}_{\theta_0}\left[|M_t|^2\right]& = \mathbb{E}_{\theta_0}\left[\int_0^{\infty} \frac{1}{s^2}\mathbb{E}_{\theta_0}\left[||G(\theta,x_s^{i},\mu_s^{\theta},\mu_s)||^2\right]\mathrm{d}s\right] \\
&\leq  K_{\theta_0}\left[\int_0^t \frac{1}{s^2}\left(1+\mathbb{E}_{\theta_0}\left[||x_s^{i}||^q\right]\right)\mathrm{d}s\right]<\infty. 
\end{align}
By Doob's martingale convergence theorem \cite{Doob1953}, there thus exists a finite random variable $M_{\infty}$ such that $M_t\rightarrow M_{\infty}$ a.s. It follows immediately that $\frac{1}{t} \int_0^t \langle G(\theta,x_s^{i},\mu_s^{\theta},\mu_s),\mathrm{d}w_s^{i}\rangle$ also converges to zero a.s., as claimed. Putting everything together, we thus have that $\frac{1}{t}\mathcal{L}_t^{i}(\theta)$ converges to $\tilde{\mathcal{L}}(\theta)$ both a.s. and in $\mathbb{L}^1$
\end{proof}

\begin{lemma} \label{lemma0_1}
Assume that Conditions \ref{assumption_init}, \ref{assumption1} - \ref{assumption2}, and \ref{assumption3}  hold. Then, for all $N\in\mathbb{N}$, the processes $\frac{1}{t}\nabla_{\theta}^{k}\mathcal{L}^{i,N}_t(\theta)$ and $\frac{1}{t}\nabla_{\theta}^{k}\mathcal{L}^{N}_t(\theta)$, $m=0,1,2$, 
converge, both a.s. and in $\mathbb{L}^1$, to the functions
\begin{alignat}{3}
\nabla_{\theta}^{k} \tilde{\mathcal{L}}^{i,N}(\theta)= \int_{(\mathbb{R}^d)^N} \nabla_{\theta}^{k} \hat{L}^{i,N}(\theta,\hat{x}^N)\hat{\mu}^{N}_{\infty}(\mathrm{d}\hat{x}^N)~~~,~~~\nabla_{\theta}^{k} \tilde{\mathcal{L}}^{N}(\theta)&= \frac{1}{N}\sum_{i=1}^N\nabla_{\theta}^{k} \tilde{\mathcal{L}}^{i,N}(\theta).
\end{alignat}
\end{lemma}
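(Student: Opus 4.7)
The plan is to mirror the proof of Lemma \ref{lemma0}, exploiting the fact that the concatenated IPS $\hat{x}_t^N$ satisfies the \emph{standard} (i.e.\ non-McKean) SDE \eqref{big_SDE} on $(\mathbb{R}^d)^N$, so that classical ergodic tools for additive-noise dissipative SDEs apply directly. The first step is to observe that, by construction of $\hat{L}^{i,N}$ and $\hat{G}^{i,N}$, one has $L(\theta,x_s^{i,N},\mu_s^N,\mu_s^N) = \hat{L}^{i,N}(\theta,\hat{x}_s^N)$ and $G(\theta,x_s^{i,N},\mu_s^N,\mu_s^N) = \hat{G}^{i,N}(\theta,\hat{x}_s^N)$, so
\begin{equation}
\tfrac{1}{t}\mathcal{L}_t^{i,N}(\theta) = \tfrac{1}{t}\int_0^t \hat{L}^{i,N}(\theta,\hat{x}_s^N)\,\mathrm{d}s + \tfrac{1}{t}\int_0^t \langle \hat{G}^{i,N}(\theta,\hat{x}_s^N),\mathrm{d}w_s^i\rangle.
\end{equation}

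For the drift term, I would first verify that $\hat{B}(\theta_0,\cdot)$ inherits a uniform dissipativity on $(\mathbb{R}^d)^N$: a short computation using monotonicity of $b(\theta_0,\cdot)$ (Condition \ref{assumption1}(ii)) and the global Lipschitz bound $2L_{\theta_0,2}<\alpha_{\theta_0}$ for $\phi(\theta_0,\cdot,\cdot)$ (Condition \ref{assumption2}(i)) yields $\langle \hat{x}-\hat{x}',\hat{B}(\theta_0,\hat{x})-\hat{B}(\theta_0,\hat{x}')\rangle \le -(\alpha_{\theta_0}-2L_{\theta_0,2})\|\hat{x}-\hat{x}'\|^2$, with rate independent of $N$. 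Since \eqref{big_SDE} is then an ordinary SDE with additive noise and a uniform dissipativity condition, the standard-SDE analogues of Propositions \ref{prop_invariant_measure} and \ref{lemma_invariant_moment_bounds} deliver a unique invariant measure $\hat{\mu}^N_\infty$ on $(\mathbb{R}^d)^N$, with finite polynomial moments of all orders and exponential convergence to equilibrium. The ergodic theorem for geometrically ergodic Markov processes then gives
\begin{equation}
\tfrac{1}{t}\int_0^t \hat{L}^{i,N}(\theta,\hat{x}_s^N)\,\mathrm{d}s \longrightarrow \int_{(\mathbb{R}^d)^N}\hat{L}^{i,N}(\theta,\hat{x}^N)\,\hat{\mu}^N_\infty(\mathrm{d}\hat{x}^N) \qquad \text{a.s.},
\end{equation}
and $\mathbb{L}^1$ convergence follows by dominated convergence using the uniform moment bounds on the IPS (Proposition \ref{prop_moment_bounds}) together with the polynomial growth of $\hat{L}^{i,N}$ inherited from Condition \ref{assumption3}(ii).

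For the stochastic integral, I would repeat the argument from Lemma \ref{lemma0} almost verbatim. It\^o's isometry, combined with the polynomial growth of $\hat{G}^{i,N}$ and the uniform moment bounds for the IPS, gives $\mathbb{E}_{\theta_0}[|\tfrac{1}{t}\int_0^t\langle \hat{G}^{i,N}(\theta,\hat{x}_s^N),\mathrm{d}w_s^i\rangle|^2]\to 0$, hence $\mathbb{L}^2$ (and a fortiori $\mathbb{L}^1$) convergence to zero. Almost sure convergence follows by applying Doob's martingale convergence theorem to the local martingale $M_t = \int_0^t s^{-1}\langle \hat{G}^{i,N}(\theta,\hat{x}_s^N),\mathrm{d}w_s^i\rangle$, whose supremum of second moments is finite by the same polynomial-growth estimate, and then rearranging via It\^o's lemma exactly as in Lemma \ref{lemma0}. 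The claim for $\tfrac{1}{t}\mathcal{L}_t^N(\theta) = \tfrac{1}{N}\sum_{i=1}^N \tfrac{1}{t}\mathcal{L}_t^{i,N}(\theta)$ then follows immediately by linearity of the limit. For $k=1,2$, Condition \ref{assumption3} provides the same regularity and polynomial growth for $\nabla_\theta^k b$ and $\nabla_\theta^k \phi$, hence for $\nabla_\theta^k \hat{L}^{i,N}$ and $\nabla_\theta^k \hat{G}^{i,N}$, so the same argument applies without modification.

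The main obstacle is transferring the ergodic theory to the higher-dimensional SDE \eqref{big_SDE}. The crucial point is that the dissipativity of $\hat{B}$ on $(\mathbb{R}^d)^N$ holds with a rate $\alpha_{\theta_0}-2L_{\theta_0,2}$ that is \emph{independent of $N$}, which is what allows us to invoke the classical SDE ergodic results cleanly and gives bounds that will be stable enough to also pass to the mean-field limit later. This is the fixed-$N$ counterpart of the uniform-in-time propagation of chaos used in Theorems \ref{theorem1_1_star}--\ref{theorem2_2_star}; once it is in place, the remainder of the proof is purely mechanical repetition of Lemma \ref{lemma0}.
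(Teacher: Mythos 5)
Your proposal is correct and follows essentially the same route as the paper: decompose $\tfrac{1}{t}\mathcal{L}_t^{i,N}(\theta)$ into the time average of $\hat{L}^{i,N}(\theta,\hat{x}_s^N)$, handled by ergodicity of the IPS on $(\mathbb{R}^d)^N$, plus a martingale term killed exactly as in Lemma \ref{lemma0}, and then average over $i$ for $\mathcal{L}_t^N$. The only difference is cosmetic: your explicit dissipativity computation for $\hat{B}$ (with rate $\alpha_{\theta_0}-2L_{\theta_0,2}$ independent of $N$) re-derives what the paper simply cites from Propositions \ref{prop_invariant_measure}, \ref{lemma_invariant_moment_bounds}, and \ref{prop_moment_bounds}, which are already stated for the IPS.
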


\begin{proof} 
We will begin by proving that the statement holds for the function $\mathcal{L}_t^{i,N}(\theta)$. The proof, in this case, is very similar to the proof of Lemma \ref{lemma0}, with some simplifications. We will provide a sketch of the proof, signposting differences with the previous proof where necessary. As previously, we will only consider the case $k=0$, with the results for $k=1,2$ proved analogously. We begin by recalling the definition of the function $\frac{1}{t}\mathcal{L}_t^{i,N}(\theta)$ from \eqref{log_lik_particles}, which we now write in the form
\begin{align}
\frac{1}{t}\mathcal{L}^{i,N}_t(\theta) 
&=\frac{1}{t}\int_0^t \hat{L}^{i,N}(\theta,\hat{x}_s^N)\mathrm{d}s+\frac{1}{t}\int_0^t \langle \hat{G}^{i,N}(\theta,\hat{x}_s^N),\mathrm{d}w_s^{i} \rangle   \label{log_lik_2} 
\end{align}
We begin with the first term on the RHS. By Proposition \ref{prop_invariant_measure}, the IPS admits a unique invariant measure $\smash{\hat{\mu}_{\infty}^N\in\mathcal{P}((\mathbb{R}^d)^N)}$. Thus, for all $N\in\mathbb{N}$,  by the ergodic theorem (e.g., \cite[Chapter X]{Revuz1999}) we have that as $t\rightarrow\infty$,
\begin{align}
\frac{1}{t}\int_0^t \hat{L}^{i,N}\left(\theta,\hat{x}^N_s\right)\mathrm{d}s 
&\overset{\mathrm{a.s.}}{\underset{\mathbb{L}^1}\longrightarrow}  \int_{(\mathbb{R}^d)^N} \hat{L}^{i,N}\left(\theta,\hat{x}^N\right)\hat{\mu}_{\infty}(\mathrm{d}\hat{x}^N) 
= \tilde{\mathcal{L}}^{i,N}(\theta),
\end{align}
It remains to bound the second term on the RHS of \eqref{log_lik_2}. We show that this term converges to zero a.s. and in $\mathbb{L}^1$, and satisfies the required convergence rate, using essentially identical arguments to those used in the proof of Lemma \ref{lemma0}. This concludes the proof. 
We now turn our attention to the function $\mathcal{L}_t^N(\theta)$. The proof of the statements regarding this function now follows easily. In particular, using the definition of $\mathcal{L}_t^N(\theta)$, c.f. \eqref{log_lik_particles}, and the results above, we have
\begin{equation}
\frac{1}{t}\mathcal{L}_t^N(\theta) = \frac{1}{t}\left[\frac{1}{N}\sum_{i=1}^N \mathcal{L}_t^{i,N}(\theta)\right] = \frac{1}{N}\sum_{i=1}^N \left[\frac{1}{t}\mathcal{L}_t^{i,N}(\theta)\right]\overset{\mathrm{a.s.}}{\underset{\mathbb{L}^1}\longrightarrow} \frac{1}{N}\sum_{i=1}^N \tilde{\mathcal{L}}^{i,N}(\theta) = \tilde{\mathcal{L}}^N(\theta).
\end{equation}
\end{proof}

\begin{lemma} \label{lemmaC}
Assume that Conditions \ref{assumption_init}, \ref{assumption1} - \ref{assumption2}, and \ref{assumption3} hold. Then, for all $\theta\in\mathbb{R}^p$, for all $t>0$, for all $i=1,\dots,N$, we have, in $\mathbb{L}^1$, that
\begin{align}
\lim_{N\rightarrow\infty} ||\tfrac{1}{t}\nabla_{\theta}\mathcal{L}_t^{i,N}(\theta)||=||\tfrac{1}{t}\nabla_{\theta}\mathcal{L}_t^{[i,N]}(\theta)||~~~\text{and}~~~ 
\lim_{N\rightarrow\infty} ||\tfrac{1}{t}\nabla_{\theta}\mathcal{L}_t^{N}(\theta)||=||\tfrac{1}{t}\nabla_{\theta}\mathcal{L}_t^{[N]}(\theta)||.
\end{align}
In addition, there exists a positive constant $K_{\theta_0}$ such that, for all $\theta\in\mathbb{R}^p$, for all $t>0$, for all $N\in\mathbb{N}$, and for all $i=1,\dots,N$, 
\begin{align}
\mathbb{E}_{\theta_0}\left[\left|\left|\frac{1}{t}\nabla_{\theta}\mathcal{L}_t^{i,N}(\theta) - \frac{1}{t}\nabla_{\theta}\mathcal{L}_t^{[i,N]}(\theta)\right|\right|\right]&\leq \frac{K_{\theta_0}}{\sqrt{N}}\left(1+\frac{1}{\sqrt{t}}\right),
\end{align}
and this bound also holds if $\mathcal{L}_t^{i,N}(\cdot)$ and $\mathcal{L}_t^{[i,N]}$ are replaced by $\mathcal{L}_t^N(\cdot)$ and $\mathcal{L}_t^{[N]}(\cdot)$.
\end{lemma}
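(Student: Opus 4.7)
The plan is to couple the interacting particle system with $N$ independent copies of the McKean-Vlasov SDE driven by the same Brownian motions $(w_t^i)_{t\geq 0}^{i=1,\dots,N}$, so that $\mathcal{L}_t^{i,N}(\theta)$ and $\mathcal{L}_t^{[i,N]}(\theta)$ are defined on the same probability space and can be compared pathwise. Once this coupling is in place, the bound on $\mathbb{E}_{\theta_0}[\|x_t^{i,N}-x_t^i\|^2]$ and on $\mathbb{E}_{\theta_0}[\mathbb{W}_2^2(\mu_t^N,\mu_t^{[N]})]$ provided by the uniform-in-time propagation of chaos result (quoted from the appendix, under Conditions \ref{assumption1}--\ref{assumption2}) is $O(1/N)$, uniformly in time. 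This uniformity is the essential input that makes the final bound $\frac{K_{\theta_0}}{\sqrt{N}}(1+\frac{1}{\sqrt{t}})$ possible.

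Next, I would split the difference $\nabla_\theta \mathcal{L}_t^{i,N}(\theta) - \nabla_\theta \mathcal{L}_t^{[i,N]}(\theta)$ into a Lebesgue-integral part involving $\nabla_\theta L$ and a stochastic-integral part involving $\nabla_\theta G$. For the Lebesgue part, I would insert and subtract intermediate terms and use Condition \ref{assumption3}(ii) (Lipschitz continuity of $\nabla_\theta b$ and $\nabla_\theta \phi$ with polynomial growth) to bound the integrand by terms of the form
\begin{equation}
C\,(1+\|x_s^i\|^q+\|x_s^{i,N}\|^q+\tfrac{1}{N}\textstyle\sum_j(\|x_s^j\|^q+\|x_s^{j,N}\|^q))\bigl(\|x_s^{i,N}-x_s^i\|+\mathbb{W}_1(\mu_s^N,\mu_s^{[N]})\bigr).
\end{equation}
Applying Cauchy--Schwarz, together with the uniform moment bounds of Proposition \ref{prop_moment_bounds} and the propagation-of-chaos bounds, gives an integrand whose $\mathbb{L}^1$ norm is $O(1/\sqrt{N})$ uniformly in $s$. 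Dividing by $t$ and integrating then yields an $O(1/\sqrt{N})$ contribution to the final bound.

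For the stochastic-integral part, I would use It\^o's isometry, combined with the Cauchy--Schwarz inequality in the form $\mathbb{E}[\|M_t\|] \leq (\mathbb{E}[\|M_t\|^2])^{1/2}$, to reduce the question to bounding $\frac{1}{t^2}\int_0^t \mathbb{E}_{\theta_0}[\|\nabla_\theta G(\theta,x_s^{i,N},\mu_s^N,\mu_s^N) - \nabla_\theta G(\theta,x_s^i,\mu_s^{[N]},\mu_s^{[N]})\|^2]\,\mathrm{d}s$. The same Lipschitz-with-polynomial-growth estimate as above gives an integrand of order $1/N$ uniformly in $s$; the time integral contributes a factor $t$, and after taking the square root and dividing by $t$, one obtains a bound of order $\frac{1}{\sqrt{tN}}$. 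Summing the two contributions yields the claimed rate. The second statement, concerning $\mathcal{L}_t^N$ versus $\mathcal{L}_t^{[N]}$, then follows by writing each as the average over $i$ of the corresponding $\mathcal{L}_t^{i,N}$ or $\mathcal{L}_t^{[i,N]}$ and applying the triangle inequality (the bound does not improve, because the same coupled Brownian motions are used for every particle).

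The main obstacle is ensuring that every estimate is \emph{uniform in time}: a standard Gr\"onwall-type propagation-of-chaos bound would produce an exponentially growing prefactor, which would destroy the $\frac{1}{\sqrt{N}}(1+\frac{1}{\sqrt{t}})$ rate for large $t$. Crucially, Conditions \ref{assumption1}--\ref{assumption2}, in particular the monotonicity of $b(\theta_0,\cdot)$ and the smallness of the Lipschitz constant of $\phi(\theta_0,\cdot,\cdot)$ relative to $\alpha_{\theta_0}$, supply the contraction needed for uniform-in-time propagation of chaos (cf.\ Proposition \ref{prop_invariant_measure} and related appendix results), and it is precisely this input that controls the polynomial-growth prefactors through the uniform moment bounds of Proposition \ref{prop_moment_bounds}.
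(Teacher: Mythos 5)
Your proposal is correct and follows essentially the same route as the paper's proof: synchronous coupling of the IPS with independent McKean--Vlasov copies driven by the same Brownian motions, a split into the Lebesgue-integral and stochastic-integral parts, It\^o isometry plus H\"older for the martingale term (giving the $1/\sqrt{Nt}$ contribution), and a Cauchy--Schwarz product-difference estimate with uniform moment bounds and uniform-in-time propagation of chaos for the drift term (giving the $1/\sqrt{N}$ contribution), with the averaged case handled by the triangle inequality. Your emphasis on the time-uniformity of the chaos bound is exactly the point the paper relies on via Proposition \ref{prop_chaos}.
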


\begin{proof}
We begin by proving that the two statements hold for $\mathcal{L}_t^{i,N}(\theta)$. First recall that
\begin{align}
\frac{1}{t}\nabla_{\theta}{\mathcal{L}}^{i,N}_t(\theta)&= \underbrace{\frac{1}{t}\int_0^t \nabla_{\theta} L(\theta,x_s^{i,N},\mu_s^{N},\mu_s^{N})\mathrm{d}s}_{I^{i,N}_{1}(\theta,t)}+ \underbrace{\frac{1}{t}\int_0^t \left\langle \nabla_{\theta} G(\theta,x_s^{i,N},\mu_s^{N},\mu_s^N),\mathrm{d}w_s^{i}\right\rangle}_{I^{i,N}_{2}(\theta,t)} \\
\frac{1}{t}\nabla_{\theta}{\mathcal{L}}^{[i,N]}_t(\theta)&= \underbrace{\frac{1}{t}\int_0^t \nabla_{\theta} L(\theta,x_s^{i},\mu_s^{[N]},\mu_s^{[N]})\mathrm{d}s}_{I^{[i,N]}_{1}(\theta,t)}+ \underbrace{\frac{1}{t}\int_0^t \left\langle \nabla_{\theta} G(\theta,x_s^{i},\mu_s^{[N]},\mu_s^{[N]}),\mathrm{d}w_s^{i}\right\rangle}_{I^{[i,N]}_{2}(\theta,t)}
\end{align}
We will begin by seeking a bound for $\mathbb{E}_{\theta_0}||I_{2}^{i,N}(\theta,t)-I_{2}^{[i,N]}(\theta,t)||$. Using Assumption \ref{assumption3}, Proposition \ref{prop_moment_bounds} (bounded moments of the McKean-Vlasov SDE and the IPS), and Proposition \ref{prop_chaos} (uniform-in-time propagation-of-chaos), there exists a constant $K_{\theta_0}>0$ (independent of $\theta$), such that
\begin{align}
\sup_{t\geq 0}\mathbb{E}_{\theta_0}\left[||\nabla_{\theta} G(\theta,x_t^{i,N},\mu_t^N,\mu_t^N) - \nabla_{\theta} G(\theta,x_t^{i},\mu_t^{[N]},\mu_t^{[N]})||^2\right] \\
\leq K_{\theta_0}\sup_{t\geq 0}\mathbb{E}_{\theta_0}\left[ ||x_t^{i} - x_t^{i,N}||^2 + \mathbb{W}^2_2(\mu_t^N,\mu_t^{[N]})\right] \leq \frac{K_{\theta_0}}{N}. \label{prop_chaos_G}
\end{align}
It follows, using also the It\^o isometry and Fubini's Theorem, that for all $\theta\in\mathbb{R}^p$, and for all $t>0$, 
\begin{align}
& \mathbb{E}_{\theta_0}\left[||I_2^{i,N}(\theta,t) - I_2^{[i,N]}(\theta,t)||^2\right] \\
 &\leq \frac{K_{\theta_0}}{t^2}\int_0^t\mathbb{E}_{\theta_0}\left[||\nabla_{\theta} G(\theta,x_s^{i,N},\mu_s^N,\mu_s^N) - \nabla_{\theta} G(\theta,x_s^{i},\mu_s^{[N]},\mu_s^{[N]})||^2\right] \mathrm{d}s
\leq \frac{K_{\theta_{0}}}{Nt}, \label{eQ32}
 \end{align}
 and so, by the H\"older inequality, 
 \begin{equation}
\mathbb{E}_{\theta_0}\big[||I_2^{i,N}(\theta,t) - I_2^{[i,N]}(\theta,t)||\big]\leq \frac{K_{\theta_0}}{\sqrt{Nt}}. \label{eQ346}
\end{equation}
In much the same fashion, we can obtain a bound for $\mathbb{E}_{\theta_0}||I_{1}^{i,N}(\theta,t)-I_{1}^{[i,N]}(\theta,t)||$. We begin by noting that
\begin{align}
 &\sup_{t\geq 0}\mathbb{E}_{\theta_0}\left[\left| \left| \nabla_{\theta}L(\theta,x_s^{i,N},\mu_s^N,\mu_s^N) - \nabla_{\theta} L(\theta,x_s^{i},\mu_s^{[N]},\mu_s^{[N]})\right|\right|\right]\\[1mm]
 &=\sup_{t\geq 0} \mathbb{E}_{\theta_0}\big[|| \underbrace{\nabla_{\theta} G(\theta,x_t^{i,N},\mu_t^N,\mu_t^N)}_{Y_N}\underbrace{G(\theta,x_t^{i,N},\mu_t^N,\mu_t^N)}_{Z_N} - \underbrace{\nabla_{\theta} G(\theta,x_t^{i},\mu_t^{[N]},\mu_t^{[N]})}_{Y_{[N]}}\underbrace{G(\theta,x_t^{i},\mu_t^{[N]},\mu_t^{[N]})}_{Z_{[N]}}||\big] \hspace{-15mm} \\[-5mm]
 &\leq \sup_{t\geq 0} \left[\mathbb{E}_{\theta_0}\left[||Z_{N}||^2\right]^{\frac{1}{2}}  \mathbb{E}_{\theta_0}\left[||Y_{N}-Y_{[N]}||^2\right]^{\frac{1}{2}}+ \mathbb{E}_{\theta_0}\left[||Y_{[N]}||^2\right]^{\frac{1}{2}} \mathbb{E}_{\theta_0}\left[||Z_{N}-Z_{[N]}||^2\right]^{\frac{1}{2}}\right]
\end{align}
where the final line follows from the triangle inequality and the Cauchy-Schwarz inequality.
Now, using Condition \ref{assumption3} (the polynomial growth of $G$) and Proposition \ref{prop_moment_bounds} (bounded moments of the McKean-Vlasov SDE and the IPS) for the two left-most inequalities, and arguing as in \eqref{prop_chaos_G} for the two right-most inequalities, 
there exist constants $K_{\theta_0,1},K_{\theta_0,2},K_{\theta_0,3},K_{\theta_0,4}>0$ (all independent of $\theta$), such that 
\begin{alignat}{3}
&\sup_{t\geq 0} \mathbb{E}_{\theta_0}\big[ ||\underbrace{G(\theta,x_t^{i,N},\mu_t^N,\mu_t^N)}_{Z_N}||^2 \big] \leq K_{\theta_0,1}, \label{eQ34a} \\
&\sup_{t\geq 0}\mathbb{E}_{\theta_0}||\underbrace{\nabla_{\theta} G(\theta,x_t^{i,N},\mu_t^N,\mu_t^N)}_{Y_{N}} - \underbrace{\nabla_{\theta} G(\theta,x_t^{i},\mu_t^{[N]},\mu_t^{[N]})}_{Y_{[N]}}||^2\leq \frac{K_{\theta_0,2}}{N} \hspace{10mm}  \label{eQ34b}   \\
&\sup_{t\geq 0} \mathbb{E}_{\theta_0}\big[ ||\underbrace{\nabla_{\theta} G(\theta,x_t^{i},\mu_t^{[N]},\mu_t^{[N]})}_{Y_{[N]}}||^2 \big] \leq K_{\theta_0,3} \label{eQ34c} \\
&\sup_{t\geq 0}\mathbb{E}_{\theta_0}||\underbrace{G(\theta,x_t^{i,N},\mu_t^N,\mu_t^N)}_{Z_N} - \underbrace{G(\theta,x_t^{i},\mu_t^{[N]},\mu_t^{[N]})}_{Z_{[N]}}||^2\leq \frac{K_{\theta_0,4}}{N} . \label{eQ35}
\end{alignat}
Substituting \eqref{eQ34a} - \eqref{eQ35} into the previous bound, we finally arrive at
\begin{align}
\sup_{t\geq 0}\mathbb{E}_{\theta_0}\left[\left| \left| \nabla_{\theta}L(\theta,x_t^{i,N},\mu_t^N,\mu_t^N) - \nabla_{\theta} L(\theta,x_t^{i},\mu_t^{[N]},\mu_t^{[N]})\right|\right|\right]&\leq K_{\theta_0,1}^{\frac{1}{2}}  \left[ \frac{K_{\theta_0,2}}{N}\right]^{\frac{1}{2}} + K_{\theta_0,3}^{\frac{1}{2}} \left[ \frac{K_{\theta_0,4}}{N}\right]^{\frac{1}{2}} \\
&\leq \frac{K_{\theta_0}}{\sqrt{N}}
\end{align}
and so, arguing similarly to \eqref{eQ32}, that for all $t>0$, 
\begin{align}
\mathbb{E}_{\theta_0}\left[||I_{1}^{i,N}(\theta,t)-I_{1}^{[i,N]}(\theta,t)||\right]
\leq  \frac{K_{\theta_0}}{\sqrt{N}}. \label{eQ360}
\end{align}
Combining inequalities \eqref{eQ346} and \eqref{eQ360}, and making use of the triangle inequality one final time, we have that the desired result. This establishes convergence in $\mathbb{L}^1$ as $N\rightarrow\infty$, for all $t>0$. It remains only to establish that the statements of the lemma also hold for $\mathcal{L}_t^N(\theta)$, which now follows almost trivially. We omit the calculations, which are essentially identical to those used at the end of the proof of Lemma \ref{lemma0_1}. 
\end{proof}

\begin{lemma} \label{lemmaC_1}
Assume that Conditions \ref{assumption_init}, \ref{assumption1} - \ref{assumption2}, and \ref{assumption3} hold. Then, for all $\theta\in\mathbb{R}^p$, for all $t\geq0$, for all $i=1,\dots,N$, we have, in $\mathbb{L}^1$, that
\begin{align}
\lim_{N\rightarrow\infty} ||\tfrac{1}{t}\nabla_{\theta}\mathcal{L}_t^{[i,N]}(\theta)||=||\tfrac{1}{t}\nabla_{\theta}\underline{\mathcal{L}}_t^{i}(\theta)||~~~\text{and}~~~ 
\lim_{N\rightarrow\infty} ||\tfrac{1}{t}\nabla_{\theta}\mathcal{L}_t^{[N]}(\theta)||=||\tfrac{1}{t}\nabla_{\theta}\underline{\mathcal{L}}_t^{i}(\theta)||.
\end{align}
In addition, there exists a positive constant $K_{\theta_0}$ such that, for all $\theta\in\mathbb{R}^p$, for all $t>0$, for all $N\in\mathbb{N}$, and for all $i=1,\dots,N$, 
\begin{align}
\mathbb{E}_{\theta_0}\left[\left|\left|\frac{1}{t}\nabla_{\theta}\underline{\mathcal{L}}_t^{i}(\theta) - \frac{1}{t}\nabla_{\theta}\mathcal{L}_t^{[i,N]}(\theta)\right|\right|\right]&\leq K_{\theta_0}{\alpha(N)}\left(1+\frac{1}{\sqrt{t}}\right),
\end{align}
where the function $\alpha:\mathbb{N}\rightarrow\mathbb{R}_{+}$ is defined according to 
\begin{equation}
\alpha(N)= \left\{
\begin{array}{lll} 
N^{-\frac{1}{4}} & \text{if} & d=1 \\
N^{-\frac{1}{4}}\log(1+N)^{\frac{1}{2}} & \text{if} & d=2 \\
N^{-\frac{1}{2d}} & \text{if} & d\geq 3.
\end{array}
\right. \label{eq:alpha2}
\end{equation}
Moreover, this bound also holds if $\mathcal{L}_t^{[i,N]}(\cdot)$ is replaced by $\mathcal{L}_t^{[N]}(\cdot)$.
\end{lemma}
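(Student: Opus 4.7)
The plan is to mirror the structure of the proof of Lemma~\ref{lemmaC}, but to replace the propagation-of-chaos bound on $\mathbb{E}_{\theta_0}[||x_s^{i,N}-x_s^i||^2+\mathbb{W}_2^2(\mu_s^N,\mu_s^{[N]})]$ by a Fournier-Guillin type bound on the convergence of the empirical measure $\mu_s^{[N]}$ of $N$ i.i.d.\ copies of the McKean-Vlasov SDE to the true law $\mu_s$. First I would decompose
\[
\tfrac{1}{t}\nabla_\theta\mathcal{L}_t^{[i,N]}(\theta) - \tfrac{1}{t}\nabla_\theta\underline{\mathcal{L}}_t^{i}(\theta) = J_1(\theta,t,N) + J_2(\theta,t,N),
\]
where $J_1$ is the time-averaged deterministic difference $\tfrac{1}{t}\int_0^t[\nabla_\theta L(\theta,x_s^i,\mu_s^{[N]},\mu_s^{[N]})-\nabla_\theta L(\theta,x_s^i,\mu_s,\mu_s)]\,\mathrm{d}s$ and $J_2$ is the analogous stochastic integral involving $\nabla_\theta G$ against $\mathrm{d}w_s^i$.

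For $J_2$, It\^o's isometry followed by Jensen's inequality yields
\[
\mathbb{E}_{\theta_0}||J_2|| \leq \tfrac{1}{t}\left[\int_0^t \mathbb{E}_{\theta_0}||\nabla_\theta G(\theta,x_s^i,\mu_s^{[N]},\mu_s^{[N]})-\nabla_\theta G(\theta,x_s^i,\mu_s,\mu_s)||^2\,\mathrm{d}s\right]^{1/2}.
\]
Exploiting Condition~\ref{assumption3}(ii) (local Lipschitz with polynomial growth) via a coupling argument on the measure slots of $\nabla_\theta G$, followed by Cauchy-Schwarz and Proposition~\ref{prop_moment_bounds} to absorb the polynomial-growth pre-factors, one arrives at the pointwise bound $\mathbb{E}_{\theta_0}||\nabla_\theta G(\theta,x_s^i,\mu_s^{[N]},\mu_s^{[N]})-\nabla_\theta G(\theta,x_s^i,\mu_s,\mu_s)||^2\leq K_{\theta_0}\mathbb{E}_{\theta_0}[\mathbb{W}_2^2(\mu_s^{[N]},\mu_s)]$. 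For $J_1$, I would expand $\nabla_\theta L = -\nabla_\theta G\cdot G$ exactly as in the proof of Lemma~\ref{lemmaC}, apply the triangle and Cauchy-Schwarz inequalities to split off four product terms, and use the same Lipschitz-plus-moment estimates to get $\mathbb{E}_{\theta_0}||J_1|| \leq \tfrac{K_{\theta_0}}{t}\int_0^t\sqrt{\mathbb{E}_{\theta_0}[\mathbb{W}_2^2(\mu_s^{[N]},\mu_s)]}\,\mathrm{d}s$.

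The key input is then the Fournier-Guillin rate~\cite{Fournier2015}: because $(x_s^i)_{i=1}^N$ are i.i.d.\ with common law $\mu_s$, and because Condition~\ref{assumption_init} together with Proposition~\ref{prop_moment_bounds} ensures that $\sup_{s\geq 0}\int_{\mathbb{R}^d}||x||^k\mu_s(\mathrm{d}x)<\infty$ for every $k\in\mathbb{N}$, one obtains
\[
\sup_{s\geq 0}\mathbb{E}_{\theta_0}[\mathbb{W}_2^2(\mu_s^{[N]},\mu_s)] \leq K_{\theta_0}\alpha(N)^2,
\]
with $\alpha(N)$ as defined in~\eqref{eq:alpha2}. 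Substituting these bounds delivers $\mathbb{E}_{\theta_0}||J_1||\leq K_{\theta_0}\alpha(N)$ and $\mathbb{E}_{\theta_0}||J_2||\leq K_{\theta_0}\alpha(N)/\sqrt{t}$, so that the claimed estimate follows by the triangle inequality, and the $\mathbb{L}^1$ convergence is immediate since $\alpha(N)\to 0$. The analogous statement for $\mathcal{L}_t^{[N]}$ follows by averaging over $i=1,\dots,N$, which cannot worsen the $\mathbb{L}^1$ bound (any $N^{-1/2}$ refinement from independence is in any case dominated by $\alpha(N)$).

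The main obstacle I anticipate is in the final step: pinning down the correct moment threshold in Fournier-Guillin's theorem to match the polynomial growth exponents $q_k$ introduced by Condition~\ref{assumption3}(ii), so that the constant $K_{\theta_0}$ remains finite and uniform in $s$. This is precisely why Condition~\ref{assumption_init} is formulated to demand finite moments of \emph{all} orders, and why Proposition~\ref{prop_moment_bounds} must be invoked to transfer this property from $\mu_0$ to $\sup_{s\geq 0}\mu_s$.
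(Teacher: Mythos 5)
Your proposal is correct and follows essentially the same route as the paper's own proof: decompose $\tfrac{1}{t}\nabla_\theta\mathcal{L}_t^{[i,N]}-\tfrac{1}{t}\nabla_\theta\underline{\mathcal{L}}_t^{i}$ into the time-averaged drift term and the stochastic integral term, bound each via the Lipschitz-in-measure/polynomial-growth estimates from Condition \ref{assumption3} together with Proposition \ref{prop_moment_bounds}, and invoke the Fournier--Guillin rate (uniformly in $s$, thanks to the all-orders moment bounds) to get $\sup_s\mathbb{E}_{\theta_0}[\mathbb{W}_2^2(\mu_s^{[N]},\mu_s)]\leq K_{\theta_0}\alpha(N)^2$, exactly as in the paper, with the $\mathcal{L}_t^{[N]}$ case handled by averaging over $i$. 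Your handling of the moment-threshold issue in Fournier--Guillin is, if anything, stated more carefully than in the paper's sketch.
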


\begin{proof}
The proof of this result is very similar to the proof of the previous result. We will just sketch the main details. Recall that
\begin{align}
\frac{1}{t}\nabla_{\theta}{\mathcal{L}}^{[i,N]}_t(\theta)&= \underbrace{\frac{1}{t}\int_0^t \nabla_{\theta} L(\theta,x_s^{i},\mu_s^{[N]},\mu_s^{[N]})\mathrm{d}s}_{I^{[i,N]}_{1}(\theta,t)}+ \underbrace{\frac{1}{t}\int_0^t \left\langle \nabla_{\theta} G(\theta,x_s^{i},\mu_s^{[N]},\mu_s^{[N]}),\mathrm{d}w_s^{i}\right\rangle}_{I^{[i,N]}_{2}(\theta,t)} \\
\frac{1}{t}\nabla_{\theta}{\underline{\mathcal{L}}}^{i}_t(\theta)&= \underbrace{\frac{1}{t}\int_0^t \nabla_{\theta} L(\theta,x_s^{i},\mu_s,\mu_s)\mathrm{d}s}_{I^{i}_{1}(\theta,t)}+ \underbrace{\frac{1}{t}\int_0^t \left\langle \nabla_{\theta} G(\theta,x_s^{i},\mu_s,\mu_s),\mathrm{d}w_s^{i}\right\rangle}_{I^{i}_{2}(\theta,t)} 
\end{align}
Using Assumption \ref{assumption3} and Proposition \ref{prop_moment_bounds} (bounded moments of the McKean-Vlasov SDE), there exists a constant $K_{\theta_0}>0$ (independent of $\theta$), such that
\begin{align}
\sup_{t\geq 0}\mathbb{E}_{\theta_0}\left[||\nabla_{\theta} G(\theta,x_t^{i},\mu_t^{[N]},\mu_t^{[N]}) - \nabla_{\theta} G(\theta,x_t^{i},\mu_t,\mu_t)||^2\right] &\leq K_{\theta_0}\sup_{t\geq 0}\mathbb{E}_{\theta_0}\left[ \mathbb{W}_2(\mu_t^{[N]},\mu_t)\right] \\
&\leq K_{\theta_0} \alpha(N)^2 .
\end{align}
where the final inequality follows from Theorem 1 in \cite{Fournier2015} (the rate of convergence of the empirical law of the McKean-Vlasov SDE to the true law of the McKean-Vlasov SDE). It follows, arguing as in Lemma \ref{lemmaC}, that
 \begin{align}
 \mathbb{E}_{\theta_0}\big[||I_1^{[i,N]}(\theta,t) - I_1^{i}(\theta,t)||\big] &\leq K_{\theta_0} \sqrt{\alpha(N)^2}~~~,~~~\mathbb{E}_{\theta_0}\big[||I_2^{[i,N]}(\theta,t) - I_2^{i}(\theta,t)||\big] \leq K_{\theta_0} \sqrt{\frac{\alpha(N)^2}{t}}. \label{eq:i2_bound}
\end{align}
Combining these bounds, and using the triangle inequality, we obtain the stated bound for $\mathcal{L}_t^{[i,N]}(\cdot)$. The result for $\mathcal{L}_t^{[N]}(\cdot)$ follows by arguing as at the end of Lemma \ref{lemma0_1}. 
\end{proof}

\begin{lemma} \label{lemmaD}
Assume that Conditions  \ref{assumption_init}, \ref{assumption1} - \ref{assumption2}, \ref{assumption3}, and \ref{assumption0} hold. Then, for all $N\in\mathbb{N}$, we have, both almost surely and in $\mathbb{L}^1$, that
\begin{align}
\lim_{t\rightarrow\infty}||\nabla_{\theta}\tilde{\mathcal{L}}^{i,N}({\theta}^{i,N}_t)||=0~~~\text{and}~~~\lim_{t\rightarrow\infty}||\nabla_{\theta}\tilde{\mathcal{L}}^{N}({\theta}^{N}_t)||=0.
\end{align}
\end{lemma}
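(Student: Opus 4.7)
The plan is to adapt the continuous-time stochastic approximation framework of \cite{Sirignano2017a,Sirignano2020a} to the interacting particle setting. Set $V(\theta) := \tilde{\mathcal{L}}^{i,N}(\theta) = \int_{(\mathbb{R}^d)^N} \hat{L}^{i,N}(\theta,\hat{x}^N)\hat{\mu}^{N}_{\infty}(\mathrm{d}\hat{x}^N)$, which is $\leq 0$ since $\hat{L}^{i,N}\leq 0$; and let $\mathcal{A}^N$ denote the infinitesimal generator of the IPS $(\hat{x}_t^N)_{t\geq 0}$ under the true parameter $\theta_0$. First I would apply It\^o's formula to $V(\theta_t^{i,N})$ using \eqref{theta_approx}, which yields
\begin{align}
V(\theta_t^{i,N}) = V(\theta_{\mathrm{init}}) + \int_0^t \gamma_s \|\nabla_\theta V(\theta_s^{i,N})\|^2 \mathrm{d}s + F_t + M_t + R_t,
\end{align}
where $F_t = \int_0^t \gamma_s \langle \nabla_\theta V(\theta_s^{i,N}),\, \nabla_\theta \hat{L}^{i,N}(\theta_s^{i,N},\hat{x}_s^N) - \nabla_\theta V(\theta_s^{i,N})\rangle\, \mathrm{d}s$ is the fluctuation term, $M_t$ is the martingale produced by the stochastic integral in \eqref{theta_approx}, and $R_t$ is the second-order It\^o correction of order $\int_0^t \gamma_s^2(1+\|\hat{x}_s^N\|^q)\mathrm{d}s$.

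The core work is to bound $F_t$ uniformly in $t$ via the Poisson equation associated with $\mathcal{A}^N$. By the exponential ergodicity of $\hat{x}_t^N$ guaranteed by Proposition \ref{prop_invariant_measure}, together with the regularity in Condition \ref{assumption3}, for each $\theta$ there exists a solution $v_\theta:(\mathbb{R}^d)^N\to\mathbb{R}^p$ of
\begin{equation}
\mathcal{A}^N v_\theta(\hat{x}^N) = \nabla_\theta \hat{L}^{i,N}(\theta,\hat{x}^N) - \nabla_\theta V(\theta),
\end{equation}
with polynomial growth in $\hat{x}^N$ for $v_\theta$ and its spatial and $\theta$-derivatives (cf.\ Lemma \ref{lemma_poisson_3}). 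Applying It\^o to $\gamma_t \langle \nabla_\theta V(\theta_t^{i,N}), v_{\theta_t^{i,N}}(\hat{x}_t^N)\rangle$ and rearranging, $F_t$ decomposes into boundary terms of order $\gamma_t$, integrals of size $O(\int_0^t\gamma_s^2\mathrm{d}s)$, integrals of size $O(\int_0^t|\gamma_s'|\mathrm{d}s)$, and a stochastic integral whose quadratic variation is $O(\int_0^t\gamma_s^2\mathrm{d}s)$. Each piece converges by the moment bounds of Proposition \ref{prop_moment_bounds}, Doob's theorem, and the learning-rate integrability in Condition \ref{assumption0}, giving $F_t\to F_\infty$ almost surely and in $\mathbb{L}^1$.

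The remaining terms are standard: $M_t$ has quadratic variation bounded in expectation by $K_{\theta_0}\int_0^\infty \gamma_s^2\mathrm{d}s<\infty$, so converges a.s.\ and in $\mathbb{L}^2$ by Doob's theorem, while $R_t$ is controlled by $\int_0^\infty \gamma_s^2 \mathbb{E}_{\theta_0}[1+\|\hat{x}_s^N\|^q]\mathrm{d}s <\infty$ using Condition \ref{assumption0} and Proposition \ref{prop_moment_bounds}. Combined with $V(\theta_t^{i,N})\leq 0$, rearranging the It\^o identity yields $\int_0^\infty \gamma_s\|\nabla_\theta V(\theta_s^{i,N})\|^2\, \mathrm{d}s <\infty$ a.s.\ and in $\mathbb{L}^1$. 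A Barbalat-type continuous-time lemma then delivers $\|\nabla_\theta V(\theta_t^{i,N})\|\to 0$: the slowly varying increments of $\theta_t^{i,N}$, of size $O(\gamma_t)$ over windows of constant length, combined with the local Lipschitz continuity of $\nabla_\theta V$ inherited from Condition \ref{assumption3} and the divergence $\int_0^\infty \gamma_s\mathrm{d}s=\infty$, preclude sustained oscillations away from zero. The $\mathbb{L}^1$ statement then follows from the a.s.\ statement and uniform integrability derived from the polynomial-growth bounds.

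The hardest step will be the Poisson-equation bookkeeping: one needs the regularity hypotheses of Lemma \ref{lemma_poisson_3} for the $Nd$-dimensional generator $\mathcal{A}^N$ locally uniformly in $\theta$, and the $\theta$-dependence of $v_\theta$ must be propagated carefully through the It\^o expansion so that only terms controllable by Condition \ref{assumption0} are generated. The proof for $\|\nabla_\theta \tilde{\mathcal{L}}^N(\theta_t^N)\|$ is essentially identical, taking $V(\theta)=\tilde{\mathcal{L}}^N(\theta)=\frac{1}{N}\sum_{i=1}^N \tilde{\mathcal{L}}^{i,N}(\theta)$, using the update \eqref{theta_approx0} with its $i$-averaged drift and martingale, and the Poisson equation with right-hand side $\frac{1}{N}\sum_i \nabla_\theta \hat{L}^{i,N}(\theta,\hat{x}^N) - \nabla_\theta V(\theta)$; all moment, ergodicity, and learning-rate bounds go through unchanged.
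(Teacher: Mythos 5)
Your overall strategy is sound and shares its main ingredients with the paper's argument: It\^o's formula applied along the parameter path, a Poisson equation for the generator of the $(\mathbb{R}^d)^N$-valued particle system to neutralise the fluctuation term $\nabla_\theta\hat{L}^{i,N}(\theta,\hat{x}^N)-\nabla_\theta\tilde{\mathcal{L}}^{i,N}(\theta)$, the learning-rate conditions in Condition \ref{assumption0}, the moment bounds of Proposition \ref{prop_moment_bounds}, and the boundedness of $\tilde{\mathcal{L}}^{i,N}$ (Lemma \ref{lemmab}) together with dominated convergence for the $\mathbb{L}^1$ statement. Where you differ is in the organisation: you derive an energy inequality yielding $\int_0^\infty\gamma_s\|\nabla_\theta\tilde{\mathcal{L}}^{i,N}(\theta_s^{i,N})\|^2\mathrm{d}s<\infty$ a.s.\ and then invoke a Barbalat-type lemma, whereas the paper follows the Bertsekas--Tsitsiklis cycle-of-stopping-times scheme of \cite{Bertsekas2000,Sirignano2017a}, defining excursions $[\tau_k,\sigma_k]$ of the gradient above a level $\kappa$ and deriving a contradiction with the upper bound on $\tilde{\mathcal{L}}^{i,N}$ via Lemmas \ref{sub_lemma_1}--\ref{sub_lemma_4}.

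The genuine gap is in your final step. The justification ``the slowly varying increments of $\theta_t^{i,N}$, of size $O(\gamma_t)$ over windows of constant length, combined with local Lipschitz continuity of $\nabla_\theta V$, preclude sustained oscillations'' is not correct as stated: the drift of \eqref{theta_approx} is $\gamma_t\nabla_\theta\hat{L}^{i,N}(\theta_t^{i,N},\hat{x}_t^N)$, whose integrand has polynomial growth in the \emph{unbounded} state $\hat{x}_t^N$ (bounded moments do not give a pathwise bound), and there is in addition the stochastic integral $\int\gamma_s\nabla_\theta\hat{B}^{i,N}\,\mathrm{d}w_s^i$, whose increments over the relevant windows are only small because they are tails of an a.s.\ convergent martingale, not because of a pointwise $O(\gamma_t)$ estimate. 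To rule out infinitely many up-crossings of $\|\nabla_\theta\tilde{\mathcal{L}}^{i,N}(\theta_t^{i,N})\|$ from $\kappa/2$ to $\kappa$ you must bound $\|\theta_{t_n}^{i,N}-\theta_{s_n}^{i,N}\|$ from above over each excursion window, and this requires (i) Cauchy--Schwarz together with your finite energy integral to control $\int_{s_n}^{t_n}\gamma_u\|\nabla_\theta\tilde{\mathcal{L}}^{i,N}(\theta_u^{i,N})\|\mathrm{d}u$, (ii) the \emph{vector-valued} Poisson correction applied to $\nabla_\theta\hat{L}^{i,N}-\nabla_\theta\tilde{\mathcal{L}}^{i,N}$ to show that $\int_{s_n}^{t_n}\gamma_u\bigl(\nabla_\theta\hat{L}^{i,N}(\theta_u^{i,N},\hat{x}_u^N)-\nabla_\theta\tilde{\mathcal{L}}^{i,N}(\theta_u^{i,N})\bigr)\mathrm{d}u\to0$, and (iii) a Doob-convergence argument for the martingale tail; this is exactly the content of the paper's Lemmas \ref{sub_lemma_1}--\ref{sub_lemma_2} and is where the real technical work lies. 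You have the Poisson solution available (you introduce it for $F_t$, where only the inner product with $\nabla_\theta\tilde{\mathcal{L}}^{i,N}$ appears), but your sketch never deploys it to control the raw parameter increments over excursions, and without that the oscillation-exclusion step does not go through. Once this excursion analysis is supplied, your route closes the proof, and the averaged case for $\theta_t^N$ carries over as you indicate.
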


\begin{proof}
We will prove the first statement of the lemma, with the second proved identically.
We will use a modified version of the approach in \cite{Sirignano2017a}, which itself is a continuous-time version of the approach first introduced in \cite{Bertsekas2000}.
We will require the following additional notation. Define an arbitrary constant $\kappa>0$, with $\lambda = \lambda(\kappa)>0$ to be determined. Set $\sigma=0$, and define the cycle of random stopping times 
$0 = \sigma_0\leq \tau_1\leq \sigma_1\leq \tau_2\leq \sigma_2\leq \dots$
via
\begin{align}
\tau_k &= \inf\big\{t>\sigma_{k-1}:||\nabla_{\theta}\tilde{\mathcal{L}}^{i,N}(\theta^{i,N}_t)||\geq \kappa\big\} \label{tau} \\[2mm]
\sigma_k&=\sup\big\{ t>\tau_k:\tfrac{1}{2}||\nabla \tilde{\mathcal{L}}^{i,N}({\theta}^{i,N}_{\tau_k})||\leq ||\nabla\tilde{\mathcal{L}}^{i,N}({\theta}^{i,N}_{s})||\leq 2|| \label{sigma} \nabla\tilde{\mathcal{L}}^{i,N}({\theta}^{i,N}_{\tau_k})||~~\forall s\in[\tau_k,t],\\
&\hspace{30mm}~~~\int_{\tau_k}^t \gamma(s)\mathrm{d}s\leq \rho\big\} 
\end{align}
The purpose of these stopping times is to control the periods of time for which $||\nabla\tilde{\mathcal{L}}^{i,N}(\theta_t^{i,N})||$ is close to zero, and those for which it is away from zero.  In addition, let $\eta>0$, and set $\sigma_{k,\eta} = \sigma_k+\eta$. We are now ready to prove this result. First consider the case in which there are a finite number of stopping times $\tau_k$. In this case, there exists finite $t_0$ such that, for all $t\geq t_0$, $\smash{||\nabla_{\theta}\tilde{\mathcal{L}}^{i,N}(\theta^{i,N}_t)||<\kappa}$. Now consider the case in which there are an infinite number of stopping times $\tau_k$. Then, using Lemmas \ref{sub_lemma_3} - \ref{sub_lemma_4} (see Appendix \ref{sec:theorem1_lemmas}), there exist $0<\beta_1<\beta$, and $k_0\in\mathbb{N}$, such that for all $k\geq k_0$, almost surely,
\begin{align}
\tilde{\mathcal{L}}^{i,N}(\theta^{i,N}_{\sigma_k}) - \tilde{\mathcal{L}}^{i,N}(\theta_{\tau_k}^{i,N})\geq \beta 
~~~\text{and}~~~\tilde{\mathcal{L}}^{i,N}(\theta^{i,N}_{\tau_k}) - \tilde{\mathcal{L}}^{i,N}(\theta_{\sigma_{k-1}}^{i,N})\geq -\beta_1. 
\end{align}
It follows straightforwardly that
\begin{align}
\tilde{\mathcal{L}}^{i,N}(\theta^{i,N}_{\tau_{n+1}}) - \tilde{\mathcal{L}}^{i,N}(\theta^{i,N}_{\tau_{k_0}})&= \sum_{k=k_0}^n\left[\tilde{\mathcal{L}}^{i,N}(\theta^{i,N}_{\sigma_k}) - \tilde{\mathcal{L}}^{i,N}(\theta_{\tau_k}^{i,N})+\tilde{\mathcal{L}}^{i,N}(\theta^{i,N}_{\tau_{k+1}}) - \tilde{\mathcal{L}}^{i,N}(\theta_{\sigma_{k}}^{i,N})\right] 
\\
&\geq (n+1-k_0)(\beta-\beta_1) \label{eq4104}
\end{align}
Since $\beta-\beta_1>0$, this implies that $\smash{\tilde{\mathcal{L}}^{i,N}(\theta^{i,N}_{\tau_{n+1}})\rightarrow\infty}$ as $n\rightarrow\infty$. But this contradicts Lemma \ref{lemmab} (see Appendix \ref{sec:theorem1_lemmas}), which states that $\tilde{\mathcal{L}}^{i,N}(\theta)$ is bounded from above. Thus, there must exist a finite time $t_0$ such that, for all $t\geq t_0$, $\smash{||\tilde{\mathcal{L}}^{i,N}(\theta_t^{i,N})||<\kappa}$. Since our original choice of $\kappa$ was arbitrary, this completes the proof that, for all $N\in\mathbb{N}$, $\lim_{t\rightarrow\infty}||\nabla_{\theta}\tilde{\mathcal{L}}^{i,N}(\theta_t^{i,N})||=0$ a.s. Finally, we observe that, by Lemma \ref{lemmab}, $||\nabla_{\theta}\tilde{\mathcal{L}}^{i,N}(\theta)||$ is bounded above for all $\theta\in\mathbb{R}^p$. Thus, we also have convergence in $\mathbb{L}^1$ via Lebesgue's dominated convergence theorem (e.g., \cite[Chapter 5]{Weir1973}). 
\end{proof}

\subsection{Proof of Theorem \ref{theorem1_2}, Theorem \ref{theorem1_2_star}, Theorem \ref{theorem2_2}, and Theorem \ref{theorem2_2_star}} \label{sec:proof2}
\begin{proof}
The proof of this result closely follows the proof of Theorem 2.7 in \cite{Sirignano2020a}, adapted appropriately to our particular case. We will focus on Theorem \ref{theorem2_2_star}, later outlining how our approach can be adapted to obtain the other results. We begin by rewriting the parameter update equation in the following form
\begin{align}
\mathrm{d}\theta_t^{i,N} 
&= \gamma_t\nabla_{\theta}\tilde{\underline{\mathcal{L}}}(\theta_t^{i,N})\mathrm{d}t + \gamma_t\big(\nabla_{\theta}\tilde{\mathcal{L}}^{i,N}(\theta_t^{i,N}) - \nabla_{\theta}\tilde{\underline{\mathcal{L}}}(\theta_t^{i,N})\big)\mathrm{d}t  \label{eq4110}  \\
&+ \gamma_t\big(\nabla_{\theta}L(\theta_t^{i,N},x_t^{i,N},\mu_t^N)-\nabla_{\theta}\tilde{\mathcal{L}}^{i,N}(\theta_t^{i,N})\big)\mathrm{d}t  + \gamma_t\nabla_{\theta}B(\theta_t^{i,N},x_t^{i,N},\mu_t^N)\mathrm{d}w_t^{i}
\end{align}
Using a first order Taylor expansion, we have that 
\begin{align}
\nabla_{\theta}\tilde{\underline{\mathcal{L}}}(\theta_t^{i,N}) &= \nabla_{\theta}\tilde{\underline{\mathcal{L}}}(\theta_0) + \nabla_{\theta}^2\tilde{\underline{\mathcal{L}}}(\underline{\tilde{\theta}}_t^{i,N})(\theta_t^{i,N}- \theta_{0}) =  \nabla^2\tilde{\underline{\mathcal{L}}}(\tilde{\underline{\theta}}_t^{i,N})(\theta_t^{i,N}- \theta_{0}) \label{eq_4_107} 
\end{align}
where $\nabla^2\underline{\mathcal{L}}(\cdot)$ denotes the Hessian, and $\underline{\tilde{\theta}}_t^{i,N}$ is a point in the segment connecting $\theta_t^{i,N}$ and $\theta_0$. Substituting \eqref{eq_4_107} into \eqref{eq4110}, we obtain the following equations for $Z_t^{i,N} = \theta_t^{i,N} - \theta_{0}$
\begin{align}
\mathrm{d}Z_t^{i,N}&= \gamma_t\nabla^2_{\theta}\tilde{\underline{\mathcal{L}}}(\tilde{\underline{\theta}}_t^{i,N})Z_t^{i,N}\mathrm{d}t + \gamma_t\big(\nabla_{\theta}\tilde{\mathcal{L}}^{i,N}(\theta_t^{i,N}) - \nabla_{\theta}\tilde{\underline{\mathcal{L}}}(\theta_t^{i,N})\big)\mathrm{d}t  \\
&+ \gamma_t\big(\nabla_{\theta}L(\theta_t^{i,N},x_t^{i,N},\mu_t^N)-\nabla_{\theta}\tilde{\mathcal{L}}^{i,N}(\theta_t^{i,N})\big)\mathrm{d}t + \gamma_t\nabla_{\theta}B(\theta_t^{i,N},x_t^{i,N},\mu_t^N)\mathrm{d}w_t^{i}.
\end{align}
Applying It\^o's formula to the function $||\cdot||^2$,
and using the strong concavity of $\tilde{\underline{\mathcal{L}}}(\theta)$ (Condition \ref{assumption4}), it follows that
\begin{align}
\mathrm{d}||Z_t^{i,N}||^2 + 2\eta\gamma_t||Z_t^{i,N}||^2\mathrm{d}t  &\leq \gamma_t\big\langle Z_t^{i,N}, \nabla_{\theta}\tilde{\mathcal{L}}^{i,N}(\theta_t^{i,N})-\nabla_{\theta}\tilde{\underline{\mathcal{L}}}(\theta_t^{i,N})\big\rangle\mathrm{d}t \label{eq_4_111} \\
&+\gamma_t\big\langle Z_t^{i,N}, \nabla_{\theta}L(\theta_t^{i,N},x_t^{i,N},\mu_t^N)-\nabla_{\theta}\tilde{\mathcal{L}}^{i,N}(\theta_t^{i,N})\big\rangle\mathrm{d}t \nonumber \\
&+ \gamma_t\big\langle Z_t^{i,N}, \nabla_{\theta}B(\theta_t^{i,N},x_t^{i,N},\mu_t^N)\mathrm{d}w_t^{i}\big\rangle \\
&+ \gamma_t^2 \big|\big|\nabla_{\theta} B(\theta_t^{i,N},x_t^{i,N},\mu_t^N)\big|\big|_{F}^2\mathrm{d}t \nonumber
\end{align}
where $||\cdot||_{F}$ is the Frobenius norm. Now, let us define the function $\Phi_{t,t'}= \exp[-2\eta\int_{t}^{t'}\gamma_u\mathrm{d}u]$, with $\partial_{t}\Phi_{t,t'} = 2\eta\gamma_t \Phi_{t,t'}$. Using the product rule, and \eqref{eq_4_111},  we obtain
\begin{align}
\mathrm{d}\left[\Phi_{t,t'}||Z_t^{i,N}||^2\right] &= \Phi_{t,t'}\left[\mathrm{d}||Z_t^{i,N}||^2 + 2\eta\gamma_t||Z_t^{i,N}||^2\mathrm{d}t\right] \\
&\leq \gamma_t\Phi_{t,t'}\big\langle Z_t^{i,N}, \nabla_{\theta}\tilde{\mathcal{L}}^{i,N}(\theta_t^{i,N})-\nabla_{\theta}\tilde{\underline{\mathcal{L}}}(\theta_t^{i,N})\big\rangle\mathrm{d}t \\
&+\gamma_t\Phi_{t,t'}\big\langle Z_t^{i,N}, \nabla_{\theta}L(\theta_t^{i,N},x_t^{i,N},\mu_t^N)-\nabla_{\theta}\tilde{\mathcal{L}}^{i,N}(\theta_t^{i,N})\big\rangle\mathrm{d}t \nonumber \\
&+ \gamma_t\Phi_{t,t'}\big\langle Z_t^{i,N}, \nabla_{\theta}B(\theta_t^{i,N},x_t^{i,N},\mu_t^N)\mathrm{d}w_t^{i}\big\rangle + \gamma_t^2\Phi_{t,t'}\big|\big|\nabla_{\theta} B(\theta_t^{i,N},x_t^{i,N},\mu_t^N)\big|\big|_{F}^2\mathrm{d}t \nonumber
\end{align} 
Finally, rewriting this in integral form, setting $t'=t$, and taking expectations, we arrive at
\begin{align}
\mathbb{E}_{\theta_0}\left[||Z_t^{i,N}||^2\right] &\leq  \mathbb{E}_{\theta_0}\left[\Phi_{1,t}||Z_1^{i,N}||^2\right] + \mathbb{E}_{\theta_0}\left[\int_1^t \gamma_s\Phi_{s,t}\big\langle Z_s^{i,N}, \nabla_{\theta}\tilde{\mathcal{L}}^{i,N}(\theta_s^{i,N})-\nabla_{\theta}\tilde{\underline{\mathcal{L}}}(\theta_s^{i,N})\big\rangle\mathrm{d}s\right] \label{eq_defs} \hspace{-15mm} \\ 
&\hspace{2mm}+ \mathbb{E}_{\theta_0}\left[\int_1^t\gamma_s\Phi_{s,t}\big\langle Z_s^{i,N}, \nabla_{\theta}L(\theta_s^{i,N},x_s^{i,N},\mu_s^N)-\nabla_{\theta}\tilde{\mathcal{L}}^{i,N}(\theta_s^{i,N})\big\rangle\mathrm{d}s\right] \nonumber \\
&\hspace{2mm}+ \mathbb{E}_{\theta_0}\left[\int_1^t\gamma_s^2\Phi_{s,t} \big|\big|\nabla_{\theta} B(\theta_s^{i,N},x_s^{i,N},\mu_s^N)\big|\big|_{F}^2\mathrm{d}s\right] \nonumber \\[2mm]
&=\mathbb{E}_{\theta_0}\left[\Omega_{t,i,N}^{(1)}\right] + \mathbb{E}_{\theta_0}\left[\Omega_{t,i,N}^{(2)}\right] + \mathbb{E}_{\theta_0}\left[\Omega_{t,i,N}^{(3)}\right] + \mathbb{E}_{\theta_0}\left[\Omega_{t,i,N}^{(4)}\right]. \label{eq_defs_2}
\end{align}
We will deal with each of these terms separately, beginning with $\Omega_{t,i,N}^{(1)}$. For this term, by Lemma \ref{lemma_theta_moments} (the moment bounds for $\theta_s^{i,N}$), and Condition \ref{assumption0} (the conditions on the learning rate), for sufficiently large $t$ we have that
\begin{equation}
\mathbb{E}_{\theta_0}\left[\Omega_{t,i,N}^{(1)}\right] 
= \Phi_{1,t}\mathbb{E}_{\theta_0}\left[||Z_1^{i,N}||^2\right] \leq K_{\theta_0}^{(1)} \gamma_t \label{eq3155}
\end{equation}
We now turn our attention to $\smash{\Omega_{t,i,N}^{(2)}}$. For this term, using Lemmas \ref{lemma0}, \ref{lemma0_1}, \ref{lemmaC}, and \ref{lemmaC_1}, we have that, for all $\theta\in\mathbb{R}^p$, 
\begin{align}
||\nabla_{\theta}\tilde{\mathcal{L}}^{i,N}(\theta) - \nabla_{\theta}\tilde{\underline{\mathcal{L}}}(\theta)|| &= \lim_{t\rightarrow\infty} ||\nabla_{\theta}\tilde{\mathcal{L}}^{i,N}(\theta) - \nabla_{\theta}\tilde{\underline{\mathcal{L}}}(\theta)|| \\
&\leq  \lim_{t\rightarrow\infty} \left[ ||\nabla_{\theta}\tilde{\underline{\mathcal{L}}}(\theta) - \tfrac{1}{t}\nabla_{\theta}\underline{\mathcal{L}}_t^{i}(\theta)| + ||\tfrac{1}{t}\nabla_{\theta}\underline{\mathcal{L}}_t^{i}(\theta) - \tfrac{1}{t}\nabla_{\theta}{\mathcal{L}}_t^{[i,N]}(\theta)||\right. \\ 
&\hspace{10mm}\left.+||\tfrac{1}{t}\nabla_{\theta}{\mathcal{L}}_t^{[i,N]}(\theta) - \tfrac{1}{t}\nabla_{\theta}{\mathcal{L}}_t^{i,N}(\theta)|| + ||\tfrac{1}{t}\nabla_{\theta}{\mathcal{L}}_t^{i,N}(\theta) - \nabla_{\theta}\tilde{\mathcal{L}}^{i,N}(\theta)||\right] \label{l_bound_a} \\
&\leq K_{\theta_0,1}\alpha(N) + \frac{K_{\theta_0,2}}{N^{\frac{1}{2}}}, \hspace{-4mm} \label{l_bound}
\end{align}
where $\alpha:\mathbb{N}\rightarrow\mathbb{R}_{+}$ is the function defined in \eqref{eq:alpha}.
Substituting this bound into \eqref{eq_defs}, and using Condition \ref{assumption0} (the conditions on the learning rate) to bound the final integral, we obtain
\begin{align}
\mathbb{E}_{\theta_0}\left[\Omega_{t,i,N}^{(2)}\right] &\leq \int_{1}^t \gamma_s\Phi_{s,t} \mathbb{E}_{\theta_0}\left[||Z_s^{i,N}|| \hspace{.5mm}\sup_{\theta_{s}^{i,N}} ||\nabla_{\theta}\tilde{\mathcal{L}}^{i,N}(\theta_s^{i,N}) - \nabla_{\theta}\tilde{\underline{\mathcal{L}}}(\theta_s^{i,N})||\right] \mathrm{d}s  \\
&\leq \left[K_{\theta_0,1}\alpha(N) + \frac{K_{\theta_0,2}}{N^{\frac{1}{2}}}\right] \int_{1}^t \gamma_s\Phi_{s,t}\mathrm{d}s \leq K_{\theta_0}^{(2,1)}\alpha(N) + \frac{K_{\theta_0}^{(2,2)}}{N^{\frac{1}{2}}}. \label{eq4168} 
\end{align}
We now turn our attention to $\smash{\Omega_{t,i,N}^{(3)}}$. We will analyse this term by constructing an appropriate Poisson equation. Let us define $R^{i,N}(\theta,\hat{x}^N) = \langle \theta-\theta_{0},\nabla_{\theta}\hat{L}^{i,N}(\theta,\hat{x}^N)-\nabla_{\theta}\tilde{\mathcal{L}}^{i,N}(\theta)\rangle$, 
where, as previously, $\hat{x}^N = (x^{1,N},\dots,x^{N,N})$. This function satisfies all of the conditions of Lemma \ref{lemma_poisson_2}. Thus, by Lemma \ref{lemma_poisson_2}, the Poisson equation
\begin{equation}
\mathcal{A}_{x} v^{i,N}(\theta,\hat{x}^N) = R^{i,N}(\theta,\hat{x}^N)~~~,~~~\int_{\mathbb{R}^{d}} v^{i,N}(\theta,\hat{x}^N)\hat{\mu}^{N}_{\infty}(\mathrm{d}\hat{x}^N)=0
\end{equation}
has a unique twice differentiable solution which satisfies $\sum_{j=0}^2 |\frac{\partial^j v^{i,N}}{\partial \theta^{i}}(\theta,\hat{x}^N)| + |\frac{\partial^2 v^{i,N}}{\partial \theta\partial x}(\theta,\hat{x}^N)|\leq K(1+||x^{i,N}||^q + \frac{1}{N}\sum_{j=1}^N ||x^{j,N}||^q)$. Now, by It\^o's formula, we have that
\begin{align}
v^{i,N}(\theta_t^{i,N},\hat{x}_t^N) - v^{i,N}(\theta_s^{i,N},\hat{x}_s^N)
&= \int_s^t \mathcal{A}_{\theta} v^{i,N}(\theta_u^{i,N},\hat{x}^N_u)\mathrm{d}u + \int_s^{t} \mathcal{A}_{\hat{x}^N} v^{i,N}(\theta_u^{i,N},\hat{x}^N_u)\mathrm{d}u \hspace{-5mm} \label{eq_4147} \\
&+\int_s^t \gamma_u\partial_{\theta}v^{i,N}(\theta_u^{i,N},\hat{x}^N_u)\nabla_{\theta}\hat{B}^{i,N}(\theta_u,\hat{x}^N_u)\mathrm{d}w_u^{i}  \\\
&+\int_s^t \partial_{x}v^{i,N}(\theta_u^{i,N},\hat{x}_u^N)\mathrm{d}\hat{w}_u^N\nonumber \\
&+\int_s^{t}\gamma_u\left[\partial_{\theta}\partial_{\hat{x}}v^{i,N}(\theta_u^{i,N},\hat{x}_u^N)\nabla_{\theta} \hat{B}^{i,N}(\theta_u^{i,N},\hat{x}_u^N)\right]\mathrm{d}u \nonumber
\end{align}
where $\hat{w}_u^N$ was defined in \eqref{big_SDE}. It follows, now writing $v_t^{i,N}:= v^{i,N}(\theta_t^{i,N},\hat{x}_t^N)$, that
\begin{align}
R^{i,N}(\theta_t^{i,N},\hat{x}_t^N)\mathrm{d}t& = \mathcal{A}_{\hat{x}^N} v^{i,N}(\theta_t^{i,N},\hat{x}_t^N)\mathrm{d}t \\
&=\mathrm{d}v_t^{i,N} - \mathcal{A}_{\theta}v^{i,N}(\theta_t^{i,N},\hat{x}_t^N)\mathrm{d}t - \gamma_t\partial_{\theta}v^{i,N}(\theta_t^{i,N},\hat{x}_t^N)\nabla_{\theta}\hat{B}^{i,N}(\theta_t^{i,N},\hat{x}_t^N)\mathrm{d}w_t^{i} \\
&~~~- \partial_{\hat{x}}v^{i,N}(\theta_t^{i,N},\hat{x}_t^N)\mathrm{d}\hat{w}_t^N- \gamma_t\left[\partial_{\theta}\partial_{\hat{x}}v^{i,N}(\theta_t^{i,N},\hat{x}_t^N)\nabla_{\theta} \hat{B}^{i,N}(\theta_t^{i,N},\hat{x}_t^N)\right]\mathrm{d}t \nonumber 
\end{align}
Thus, we can rewrite $\Omega_{t,i,N}^{(3)}$ as
\begin{align}
\Omega_{t,i,N}^{(3)} &= \int_1^t\gamma_s\Phi_{s,t}\underbrace{\left\langle \theta_s^{i,N}-\theta_{0}, \nabla_{\theta}\hat{L}^{i,N}(\theta_s^{i,N},\hat{x}_s^N)-\nabla_{\theta}\tilde{\mathcal{L}}^{i,N}(\theta_s^{i,N})\right\rangle\mathrm{d}s}_{R^{i,N}(\theta_s^{i,N},\hat{x}_s^N)\mathrm{d}s} \label{eq3163} \\[-2.5mm]
&=\int_1^t \gamma_s \Phi_{s,t}\mathrm{d}v_s^{i,N}- \int_{1}^t\gamma_s\Phi_{s,t}\mathcal{A}_{\theta}v^{i,N}(\theta_s^{i,N},\hat{x}_s^N)\mathrm{d}s \\
&-  \int_{1}^t\gamma_s^2\Phi_{s,t}\partial_{\theta}v^{i,N}(\theta_s^{i,N},\hat{x}_s^N)\nabla_{\theta}\hat{B}^{i,N}(\theta_s^{i,N},\hat{x}_s^N)\mathrm{d}w_s^{i} \\
&-  \int_{1}^t\gamma_s\Phi_{s,t}\partial_{\hat{x}}v^{i,N}(\theta_s^{i,N},\hat{x}_s^N)\mathrm{d}\hat{w}^N_s \\
&-  \int_{1}^t\gamma^2_s\Phi_{s,t}\partial_{\theta}\partial_{x}v^{i,N}(\theta_s,\hat{x}_s^N)\nabla_{\theta} \hat{B}^{i,N}(\theta_s^{i,N},\hat{x}_s^N)\mathrm{d}s \nonumber
\end{align}
We can rewrite the first term in this expression by applying It\^o's formula to $f(s,v_s) = \gamma_s\Phi_{s,t}v_s$. This yields 
\begin{equation}
\gamma_t\Phi_{t,t}v_t^{i,N} - \gamma_1\Phi_{1,t}v_1^{i,N} = \int_1^t \gamma_s\Phi_{s,t}\mathrm{d}v_s^{i,N} + \int_{1}^t\dot{\gamma}_s\Phi_{s,t}v_s^{i,N}\mathrm{d}s + \int_{1}^t2\eta\gamma^2_s\Phi_{s,t}v_s^{i,N}\mathrm{d}s. 
\end{equation}
Substituting the resulting expression for $\int_1^t \gamma_s\Phi_{s,t}\mathrm{d}v_s^{i,N}$ into \eqref{eq3163}, and taking expectations, we obtain
\begin{align}
\mathbb{E}_{\theta_0}\left[\Omega_{t,i,N}^{(3)}\right] &= \mathbb{E}_{\theta_0}\left[\gamma_t\Phi_{t,t}v^{i,N}(\theta_t^{i,N},\hat{x}_t^N)\right]- \mathbb{E}_{\theta_0}\left[\gamma_1\Phi_{1,t}v^{i,N}(\theta_1^{i,N},\hat{x}_1^N)\right] \\
&- \mathbb{E}_{\theta_0}\left[\int_{1}^t\dot{\gamma}_s\Phi_{s,t}v^{i,N}(\theta_s^{i,N},\hat{x}_s^N)\mathrm{d}s\right] - \mathbb{E}_{\theta_0}\left[\int_{1}^t2\eta\gamma^2_s\Phi_{s,t}v^{i,N}(\theta_s^{i,N},\hat{x}_s^N)\mathrm{d}s\right] \nonumber \\
&- \mathbb{E}_{\theta_0}\left[\int_{1}^t\gamma_s\Phi_{s,t}\mathcal{A}_{\theta}v^{i,N}(\theta_s^{i,N},\hat{x}_s^N)\mathrm{d}s\right] \\
&- \mathbb{E}_{\theta_0}\left[ \int_{1}^t\gamma^2_s\Phi_{s,t}\partial_{\theta}\partial_{x}v^{i,N}(\theta_s^{i,N},\hat{x}_s^N)\nabla_{\theta} \hat{B}^{i,N}(\theta_s^{i,N},\hat{x}_s^N)\mathrm{d}s\right] \nonumber \\
&\leq K_{\theta_0}\left[\gamma_t + \int_1^t \left(\dot{\gamma}_s+\gamma_s^2\right)\Phi_{s,t}\mathrm{d}s\right]\leq K_{\theta_0}^{(3)}\gamma_t,\label{eq3168}
\end{align}
where in the penultimate inequality we have used the polynomial growth of $v^{i,N}(\theta,\hat{x}^N)$ and $\partial_{\theta}\partial_{x}v^{i,N}(\theta,\hat{x}^N)$, Condition \ref{assumption3}(ii) (which implies the polynomial growth of $\nabla_{\theta}\hat{B}^{i,N}(\theta,\hat{x}^N)$), Proposition \ref{prop_moment_bounds} (the moment bounds for $\hat{x}_t^N$), Lemma \ref{lemma_theta_moments} (the moment bounds for $\theta_s^{i,N}$), and in the final inequality we have used Condition \ref{assumption0} (the conditions on the learning rate). It remains only to bound $\smash{\Omega_{t,i,N}^{(4)}}$. For this term, using the same assumptions, we obtain
\begin{equation}
\mathbb{E}\left[\Omega_{t,i,N}^{(4)}\right] = \mathbb{E}_{\theta_0}\left[\int_1^t\gamma_s^2\Phi_{s,t} \big|\big|\nabla_{\theta} B(\theta_s,x_s,\mu_s)\big|\big|_{F}^2\mathrm{d}s\right]\leq K_{\theta_0}\int_1^t\gamma_s^2\Phi_{s,t}\mathrm{d}s\leq K_{\theta_0}^{(4)}\gamma_t. \label{eq3169}
\end{equation}
Combining inequalities \eqref{eq3155}, \eqref{eq4168}, \eqref{eq3168},  and \eqref{eq3169}, and setting $K_{\theta_0,1}^{\dagger} = \max\{K_{\theta_0}^{(1)},K^{(3)}\}$, $K_{\theta_0,2}^{\dagger} = K_{\theta_0}^{(4)}$, $K_{\theta_0,3}^{\dagger} = K_{\theta_0}^{(2,1)}$, and $K_{\theta_0,4}^{\dagger} = K_{\theta_0}^{(2,2)}$, we thus have that
\begin{align}
\mathbb{E}_{\theta_0}\left[||\theta_t^{i,N}-\theta_{0}||^2\right]
&\leq (K_{\theta_0,1}^{\dagger}+K_{\theta_0,2}^{\dagger})\gamma_t + {K_{\theta_0,3}^{\dagger}}\alpha(N) + \frac{K_{\theta_0,4}^{\dagger}}{N^{\frac{1}{2}}}, \label{eq_final}
\end{align}
which completes the proof of the first statement of the theorem. Let us now turn our attention to the second statement. The proof of this bound goes through almost verbatim. Let us briefly highlight the main points of difference. To begin, we now have the following decomposition of the parameter update equation
\begin{align}
\mathrm{d}\theta_t^{N} 
&= \gamma_t\nabla_{\theta}\tilde{\underline{\mathcal{L}}}(\theta_t^{N})\mathrm{d}t +\gamma_t \frac{1}{N}\sum_{i=1}^N\big(\nabla_{\theta}\tilde{\mathcal{L}}^{i,N}(\theta_t^{N}) - \nabla_{\theta}\tilde{\underline{\mathcal{L}}}(\theta_t^{N})\big)\mathrm{d}t \\
&+ \gamma_t\frac{1}{N}\sum_{i=1}^N\big(\nabla_{\theta}L(\theta_t^{N},x_t^{i,N},\mu_t^N)-\nabla_{\theta}\tilde{\mathcal{L}}^{i,N}(\theta_t^{N})\big)\mathrm{d}t + \gamma_t\frac{1}{N}\sum_{i=1}^N\nabla_{\theta}B(\theta_t^{i,N},x_t^{i,N},\mu_t^N)\mathrm{d}w_t^{i} .\nonumber
\end{align}
Using a first order Taylor expansion around $\theta_0$, defining $Z_t^N = \theta_t^N - \theta_0$, applying It\^o's formula to the function $||Z_t^N||^2$, and using the strong concavity of $\tilde{\mathcal{L}}(\theta)$, as in \eqref{eq_4_107} - \eqref{eq_4_111}, we obtain
\begin{align}
\mathrm{d}||Z_t^{N}||^2 + 2\eta\gamma_t||Z_t^{N}||^2\mathrm{d}t  &\leq \gamma_t \frac{1}{N}\sum_{i=1}^N \big\langle Z_t^{N}, \nabla_{\theta}\tilde{\mathcal{L}}^{N}(\theta_t^{N})-\nabla_{\theta}\tilde{\underline{\mathcal{L}}}(\theta_t^{N})\big\rangle\mathrm{d}t \\
&+\gamma_t\frac{1}{N}\sum_{i=1}^N\big\langle Z_t^{N}, \nabla_{\theta}L(\theta_t^{N},x_t^{i,N},\mu_t^N)-\nabla_{\theta}\tilde{\mathcal{L}}^{i,N}(\theta_t^{N})\big\rangle\mathrm{d}t \nonumber \\
&+ \gamma_t\frac{1}{N}\sum_{i=1}^N\big\langle Z_t^{N}, \nabla_{\theta}B(\theta_t^N,x_t^{i,N},\mu_t^N)\mathrm{d}w_t^{i}\big\rangle \\
&+ \gamma_t^2 \frac{1}{N^2}\sum_{i=1}^N\big|\big|\nabla_{\theta} B(\theta_t^N,x_t^{i,N},\mu_t^N)\big|\big|_{F}^2\mathrm{d}t \nonumber
\end{align}
Continuing to follow our previous arguments, we finally arrive at 
\begin{align}
\mathbb{E}_{\theta_0}\left[||Z_t^{N}||^2\right] 
&\leq\frac{1}{N}\sum_{i=1}^N\left[\mathbb{E}_{\theta_0}\left[\tilde{\Omega}_{t,i,N}^{(1)}\right] + \mathbb{E}_{\theta_0}\left[\tilde{\Omega}_{t,i,N}^{(2)}\right] + \mathbb{E}_{\theta_0}\left[\tilde{\Omega}_{t,i,N}^{(3)}\right]\right] + \frac{1}{N^2}\sum_{i=1}^N\mathbb{E}_{\theta_0}\left[\tilde{\Omega}_{t,i,N}^{(4)}\right]
\end{align}
where, modulo changes in the arguments from $\theta_{t}^{i,N}$ to $\theta_{t}^N$, $\smash{\tilde{\Omega}_{t,i,N}^{(1)},\dots,\tilde{\Omega}_{t,i,N}^{(4)}}$ are identical to $\smash{{\Omega}_{t,i,N}^{(1)},\dots,{\Omega}_{t,i,N}^{(4)}}$ as defined in \eqref{eq_defs} - \eqref{eq_defs_2}. We thus have, using \eqref{eq3155}, \eqref{eq4168}, \eqref{eq3168}, and \eqref{eq3169}, that 
\begin{align}
\mathbb{E}_{\theta_0}\left[||Z_t^{N}||^2\right] &\leq  \frac{1}{N}\sum_{i=1}^N \left(K_{\theta_0,1}^{\dagger}\gamma_t + {K_{\theta_0,3}^{\dagger}}\alpha(N) + \frac{K_{\theta_0,4}^{\dagger}}{N^{\frac{1}{2}}}\right) + \frac{1}{N^2}\sum_{i=1}^N K_{\theta_0,2}^{\dagger}\gamma_t \\
&=(K_{\theta_0,1}^{\dagger}+\frac{K_{\theta_0,2}^{\dagger}}{N})\gamma_t + {K_{\theta_0,3}^{\dagger}}\alpha(N) + \frac{K_{\theta_0,4}^{\dagger}}{N^{\frac{1}{2}}}. \label{eq_final2}
\end{align}
It remains to outline how to adapt this proof to obtain the results in Theorems \ref{theorem1_2}, \ref{theorem1_2_star}, and \ref{theorem2_2}. Let us start with Theorem \ref{theorem2_2}. In this case, we replace $\smash{\tilde{\underline{\mathcal{L}}}(\cdot)}$ by $\smash{\tilde{{\mathcal{L}}}^{i,N}(\cdot)}$ in \eqref{eq4110}, use a first order Taylor expansion for $\smash{\tilde{{\mathcal{L}}}^{i,N}(\cdot)}$ instead of $\smash{\tilde{\underline{\mathcal{L}}}(\cdot)}$ in \eqref{eq_4_107}, and the strong concavity of $\tilde{\mathcal{L}}^{i,N}(\cdot)$ (Condition \ref{assumption4''}) rather than $\smash{\tilde{\underline{\mathcal{L}}}(\cdot)}$ (Condition \ref{assumption4}) in \eqref{eq_4_111}. Following the subsequent arguments, we see that the second term $\smash{\Omega_{t,i,N}^{(2)}}$ in \eqref{eq_defs_2} vanishes, with all other terms unchanged. The result is that, in \eqref{eq_final} and \eqref{eq_final2}, only the first two terms appear. This completes the proof of Theorem \ref{theorem2_2}. 

We now turn our attention to Theorem \ref{theorem1_2_star}. In this case, we replace $x_t^{i,N}$ by $x_t^{i}$, $\mu_t^{N}$ by $\mu_t^{[N]}$, $\smash{\theta_t^{i,N}}$ by $\smash{\theta_t^{[i,N]}}$, $\smash{\theta_t^{N}}$ by $\smash{\theta_t^{[N]}}$, $\smash{\mathcal{L}_t^{i,N}(\cdot)}$ by $\smash{\mathcal{L}_t^{[i,N]}(\cdot)}$, $\smash{\mathcal{L}_t^{N}(\cdot)}$ by $\smash{\mathcal{L}_t^{[N]}(\cdot)}$, $\smash{\tilde{\mathcal{L}}^{i,N}(\cdot)}$ by $\smash{\tilde{\mathcal{L}}^{[i,N]}(\cdot)}$, and $\smash{\tilde{\mathcal{L}}^{N}(\cdot)}$ by $\smash{\tilde{\mathcal{L}}^{[N]}(\cdot)}$. The arguments in our proof now go through essentially unchanged. The only material difference appears in \eqref{l_bound_a}, where the third term vanishes. The result is that only the second term in \eqref{l_bound} and hence \eqref{eq4168} vanishes. Thus, in particular, the final term in \eqref{eq_final} and \eqref{eq_final2} no longer appears. This completes the proof of Theorem \ref{theorem1_2_star}. Finally, Theorem \ref{theorem1_2} can be deduced from Theorem \ref{theorem1_2_star} in the same way that Theorem \ref{theorem2_2} was obtained from Theorem \ref{theorem2_2_star} (see above).
\end{proof}

\section{Numerical Examples}
\label{sec:numerics}
To illustrate the results of Section \ref{sec:results}, we now provide two illustrative examples of parameter estimation in McKean-Vlasov SDEs, and the associated systems of interacting particles. In particular, we consider a one-dimensional linear mean-field model with two unknown parameters, and a stochastic opinion dynamics model with a single unknown parameter. In both cases, we simulate sample paths and implement the recursive MLE using a standard Euler-Maruyama scheme with $\Delta t = 0.1$. In addition, we will focus on the case in which we observe trajectories $(x_t^{i,N})_{t\geq 0}$ from the IPS (Case II) rather than independent trajectories $(x_t^{i})_{t\geq 0}$ from the McKean-Vlasov SDE (Case I). 

\subsection{Linear Mean Field Dynamics} \label{sec:numerics1}
We first consider a one-dimensional linear mean field model, parametrised by $\theta=(\theta_1,\theta_2)^T\in\mathbb{R}^2$, given by
\begin{align}
\mathrm{d}x_t &= -\left[\theta_1 x_t + \theta_2 \int_{\mathbb{R}} (x_t - y)\mu_t(\mathrm{d}y)\right]\mathrm{d}t + \sigma \mathrm{d}w_t, \label{linear_model_1} \\
\mu_t &= \mathcal{L}(x_t). \label{linear_model_2}
\end{align}
where $\sigma>0$ and $w=(w_t)_{t\geq 0}$ is a standard Brownian motion. We will assume that $x_0\in\mathbb{R}$. This is clearly of the form of the McKean-Vlasov SDE \eqref{MVSDE} - \eqref{MVSDE2} with $b(\theta,x) = - \theta_1 x$ and $\phi(\theta,x,y) = -\theta_2(x-y)$. The corresponding system of interacting particles is given by
\begin{equation}
\mathrm{d}x_t^{i,N} = -\left[\theta_1 x_t^{i,N} + \theta_2 \frac{1}{N}\sum_{j=1}^N (x_t^{i,N} - x_t^{j,N})\right]\mathrm{d}t + \sigma \mathrm{d}w_t^{i,N}~,~~~i=1,\dots,N.
\end{equation}
In this model, the parameter $\theta_1$ controls the strength of attraction of the non-linear process (or, in the IPS, of each individual particle) towards zero, while the strength of the parameter $\theta_2$ controls the strength of the attraction of the non-linear process (of each individual particle) towards its mean (the empirical mean). 

We will generally consider the case in which both parameters are unknown, and to be estimated from data. In this case, one can show that all of the assumptions required for our offline results (Theorems \ref{offline_theorem1} - \ref{offline_theorem2}) are satisfied, but that one of the assumptions required for some of our online results (Theorems \ref{theorem1_1_star} - 
\ref{theorem1_2_star} and Theorems \ref{theorem2_1_star} - \ref{theorem2_2_star}) are not satisfied. We discuss this further in Section \ref{subsec:linear_online}.

\subsubsection{Offline Parameter Estimation}
We begin by illustrating the performance of the offline MLE. In this case, since this model is linear in both of the parameters, it is possible to obtain the MLE in closed form as (see also \cite{Kasonga1990})
\begin{align}
\hat{\theta}_{1,t}^N = \frac{A_t^N - B_t^N}{C_t^N - D_t^N}~~~,~~~\hat{\theta}_{2,t}^N &= \frac{D_t^N A_t^N - C_t^N B_t^N}{(C_t^N)^2-C_t^ND_t^N}
\end{align}
where we have defined, writing $\smash{\bar{x}_s^N = \tfrac{1}{N}\sum_{j=1}^N x_s^{j,N}}$, 
\begin{alignat}{3}
A_t^N&=\int_0^t \sum_{i=1}^N (x_s^{i,N} - \bar{x}_s^N)\mathrm{d}x_s^{i,N}~&&,~~~&&B_t^N= \int_0^t \sum_{i=1}^N x_s^{i,N}\mathrm{d}x_s^{i,N} \\
C_t^N&= \int_0^t \sum_{i=1}^N(x_s^{i,N}-\bar{x}_s^N)^2\mathrm{d}s~&&,~~~ &&D_t^N=\int_0^t \sum_{i=1}^N (x_s^{i,N})^2\mathrm{d}s.
\end{alignat}
For our first simulation, we assume that the true parameter is given by $\theta^{*} = (1,0.5)^T$, and that the diffusion coefficient is equal to the identity, $\sigma = 1$. The performance of the MLE is visualised in Figure \ref{fig1}, in which we plot the mean squared error (MSE) of the offline parameter estimate for $t\in[0,30]$, and $N\in\{2,5,10,25,50,100\}$, averaged over 500 random trials. As expected, the parameter estimates converge to the true parameter values as $N$ increases with $t$ fixed (see Theorem \ref{offline_theorem1}), and also as $t$ increases with $N$ fixed (see, e.g., \cite{Borkar1982,Levanony1994}). 

\begin{figure}[!h]
\centering
\subfloat[$\hat{\theta}_{1,t}^N$. ]{\label{fig_1a}\includegraphics[width=.4\linewidth]{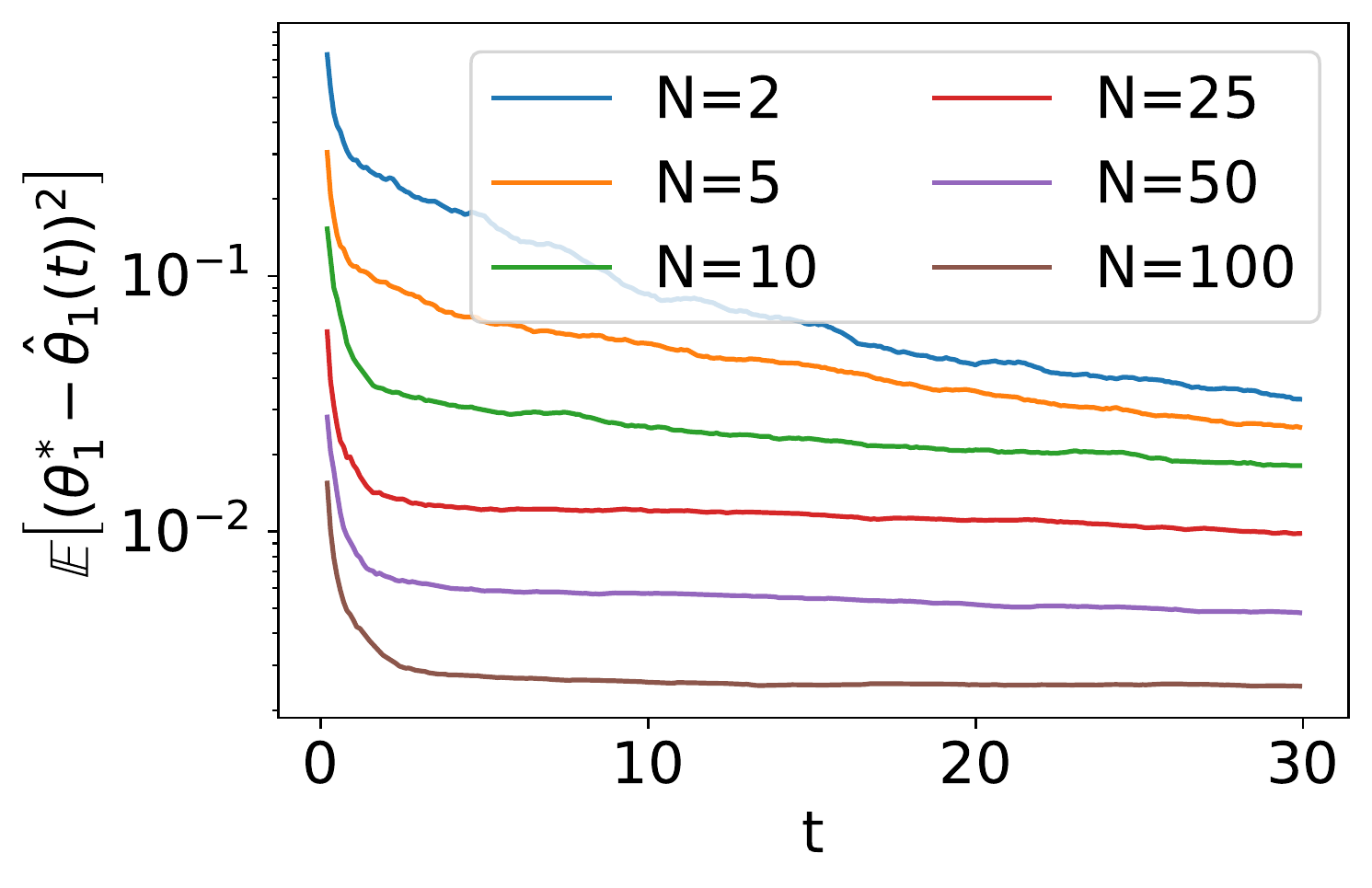}} 
\subfloat[$\hat{\theta}_{2,t}^N$. ]{{\label{fig_1b}\includegraphics[width=.4\linewidth]{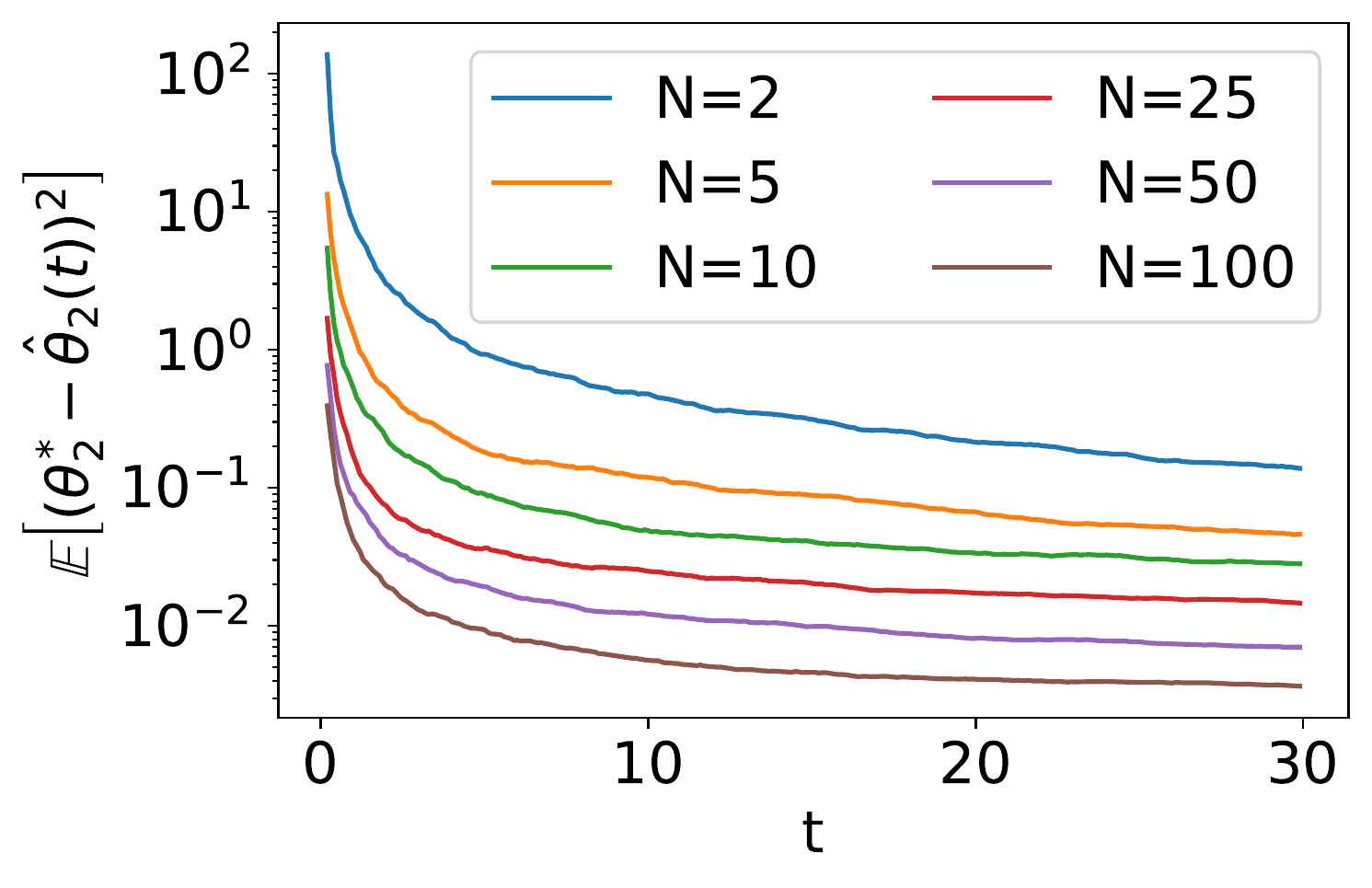}}} 
\vspace*{-2mm}
\caption{$\mathbb{L}^2$ error of the offline MLE for $t\in[0,30]$ and $N=\{2,5,10,25,50,100\}$. The $\mathbb{L}^2$ error is plotted on a log-scale.}
\label{fig1}
\end{figure}

We investigate the convergence rate of the offline MLE further in Figure \ref{fig2}, in which we plot the mean absolute error (MAE) of the offline parameter estimate for $N\in\{20,21,\dots,400\}$ with $t=5$, 
and also for $t\in[50,2000]$ with $N=2$, 
 averaged over 500 random trials. Our results suggest that the offline MLE for this model has an $\mathbb{L}^{1}$ convergence rate of order $O((Nt)^{-\frac{1}{2}})$. This is rather unsurprising: such a rate was recently established by Chen \cite{Chen2020} for a linear mean field model (of arbitrary dimension) in the absence of the global confinement term.

\begin{figure}[!h]
\centering
\subfloat[$\hat{\theta}_{1,t=5}^N$. ]{\label{fig_2a}\includegraphics[width=.38\linewidth]{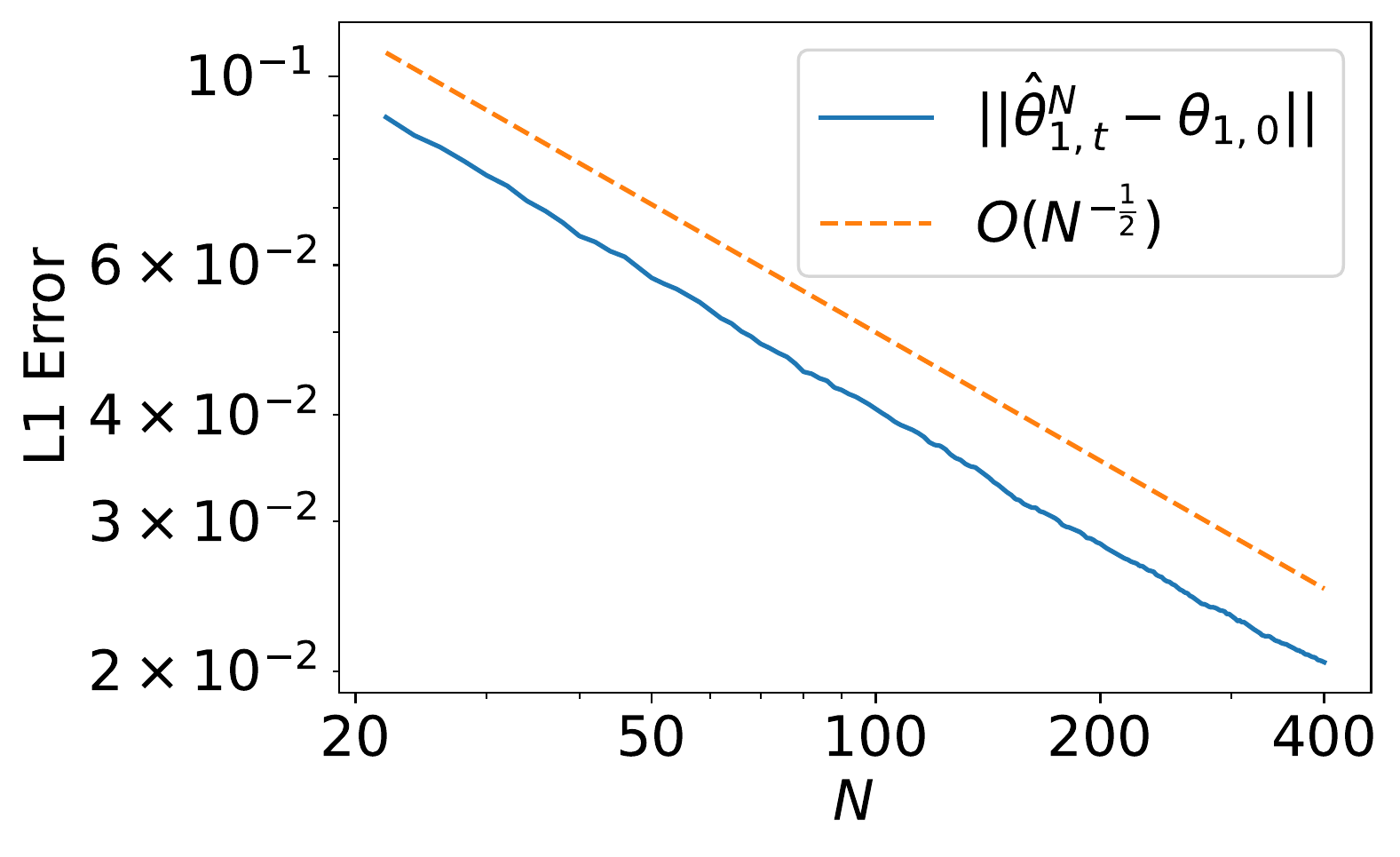}} \hspace{5mm} 
\subfloat[$\hat{\theta}_{2,t=5}^N$. ]{{\label{fig_2b}\includegraphics[width=.38\linewidth]{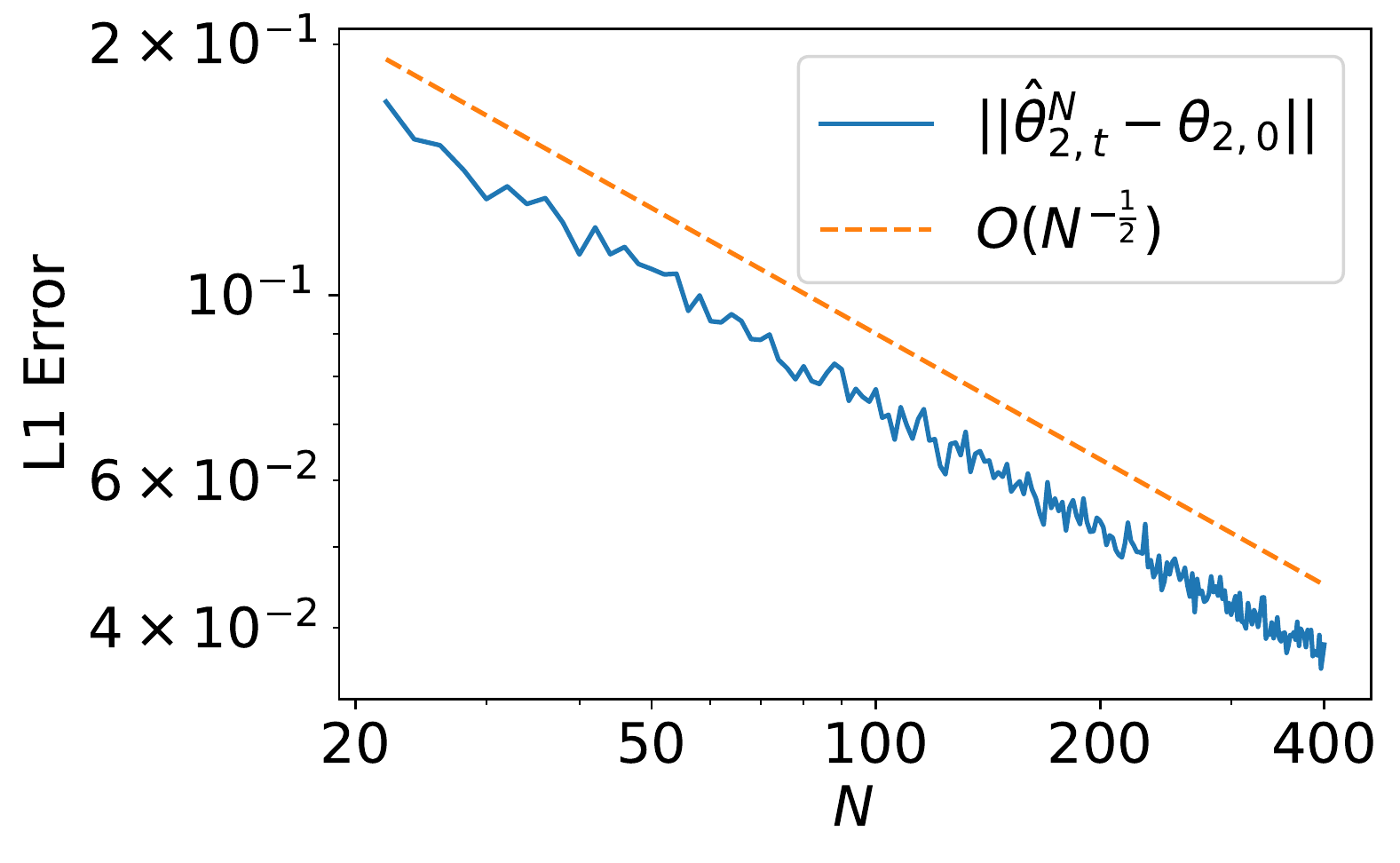}}} \\
~\subfloat[$\hat{\theta}_{1,t}^{N=2}$. ]{\label{fig_2c}\includegraphics[width=.38\linewidth]{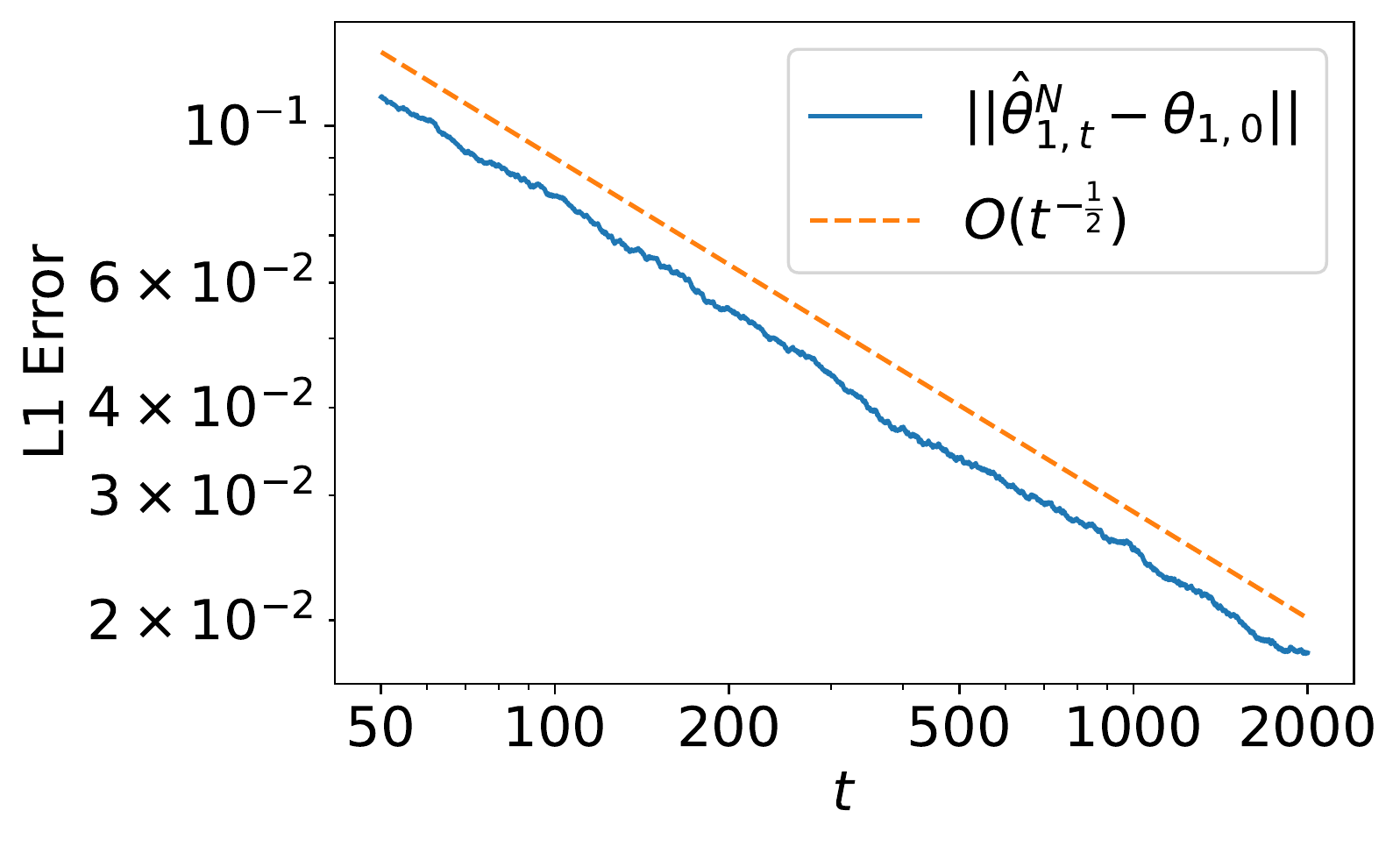}} \hspace{5mm}
\subfloat[$\hat{\theta}_{2,t}^{N=2}$. ]{{\label{fig_2d}\includegraphics[width=.38\linewidth]{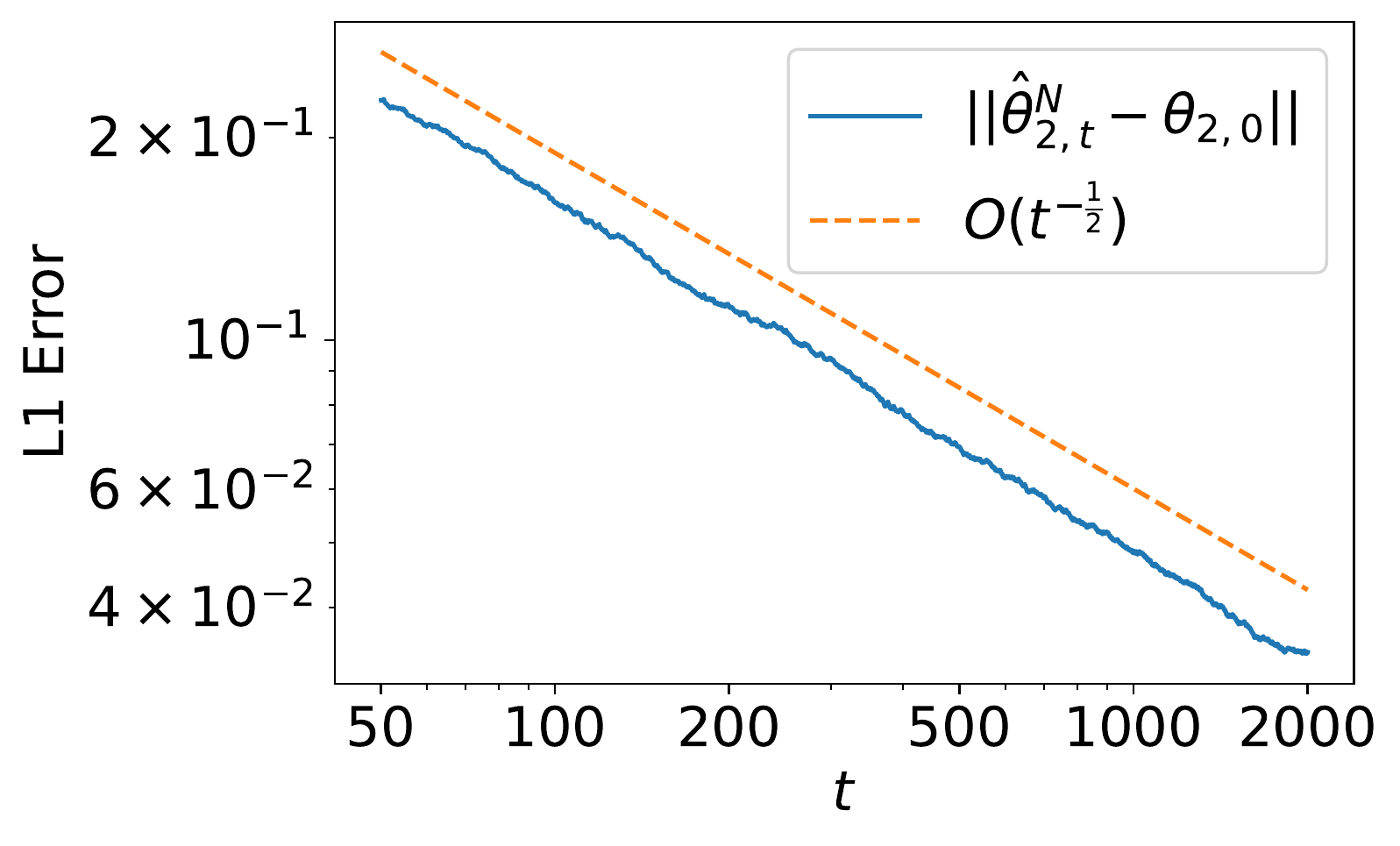}}} 
\caption{Log-log plot of the $\mathbb{L}^1$ error of the offline MLE for $t=5$ and $N\in\{20,\dots,400\}$ (top panel), and for $t\in[50,2000]$ and $N=2$ (bottom panel).}
\label{fig2}
\end{figure}

To conclude this section, we provide numerical confirmation of the asymptotic normality of the MLE (Theorem \ref{offline_theorem2}). For the linear mean field model of interest, it is in fact possible to obtain the asymptotic information matrix in closed form (see also \cite{Kasonga1990}). In particular, it is given by
\begin{equation}
I_t(\theta) = \left(\begin{array}{cc} D_t(\theta) & C_t(\theta) \\ C_t(\theta) & C_t(\theta) \end{array}\right),
\end{equation}
where, with $\gamma(\theta) = -2(\theta_1 + \theta_2)$, 
\begin{align}
C_t(\theta) 
&= \frac{1}{\gamma^2(\theta)} (e^{\gamma(\theta) t} - 1) - \frac{t}{\gamma(\theta)} + \frac{\sigma_0^2}{\gamma}(e^{\gamma(\theta) t} - 1), \\
D_t(\theta) 
&=  \frac{1}{\gamma^2(\theta)} (e^{\gamma(\theta) t} - 1) - \frac{t}{\gamma(\theta)} + \frac{\sigma_0^2}{\gamma(\theta)}(e^{\gamma(\theta) t} - 1) -\frac{\mu_0^2}{2\theta_1} (e^{-2\theta_1 t}-1).
\end{align}
As such, in Figure \ref{fig3}, we are able to provide a direct comparison of the asymptotic normal distribution of the MLE, and the approximate normal distribution obtained using a finite number of particles. 
\begin{figure}[!h]
\centering
\subfloat[Asymptotic \& approximate marginals.]{\label{fig_3a}\includegraphics[width=.38\linewidth]{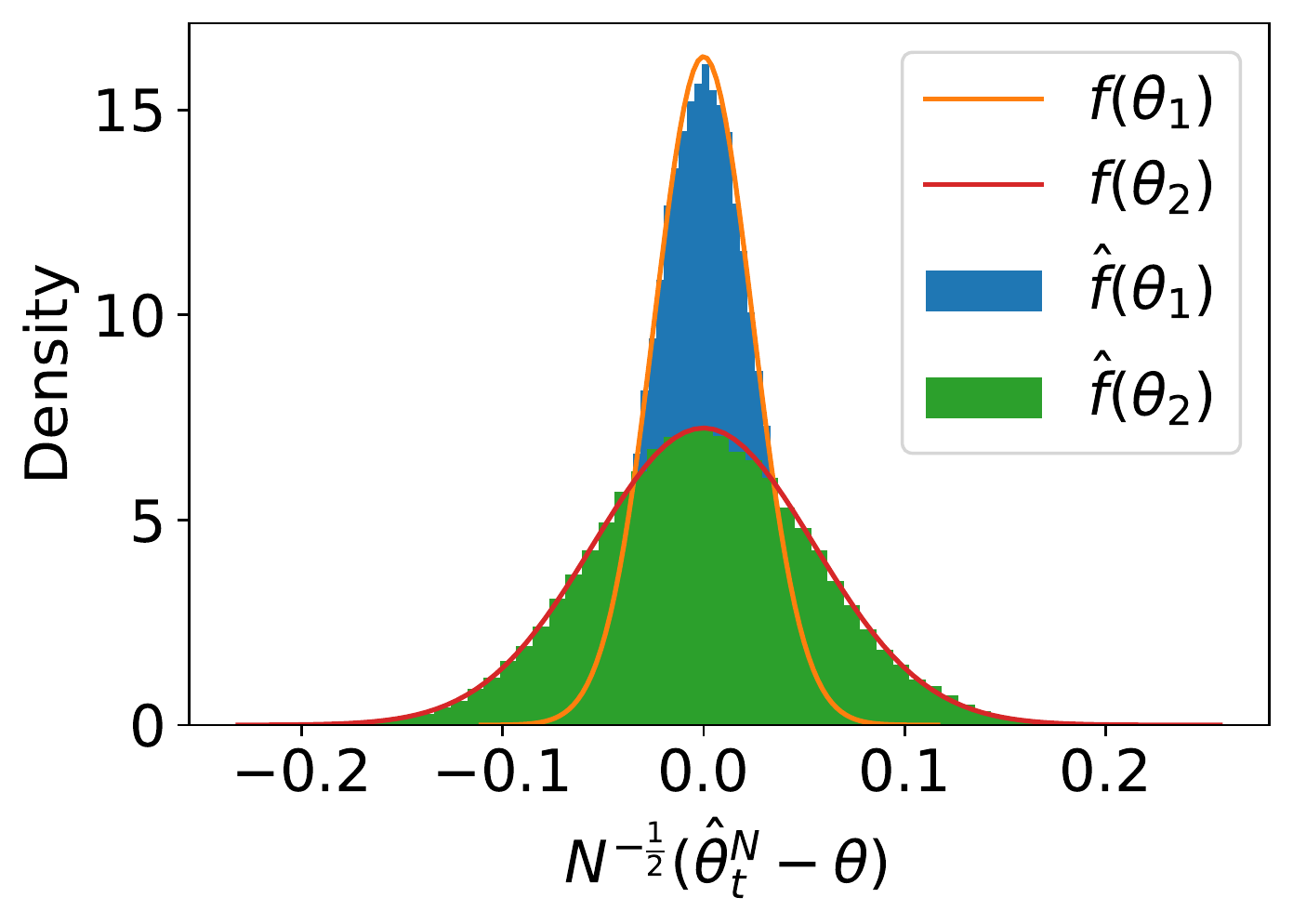}} \\
\subfloat[Approximate bivariate density.]{{\label{fig_3b}\includegraphics[width=.36\linewidth]{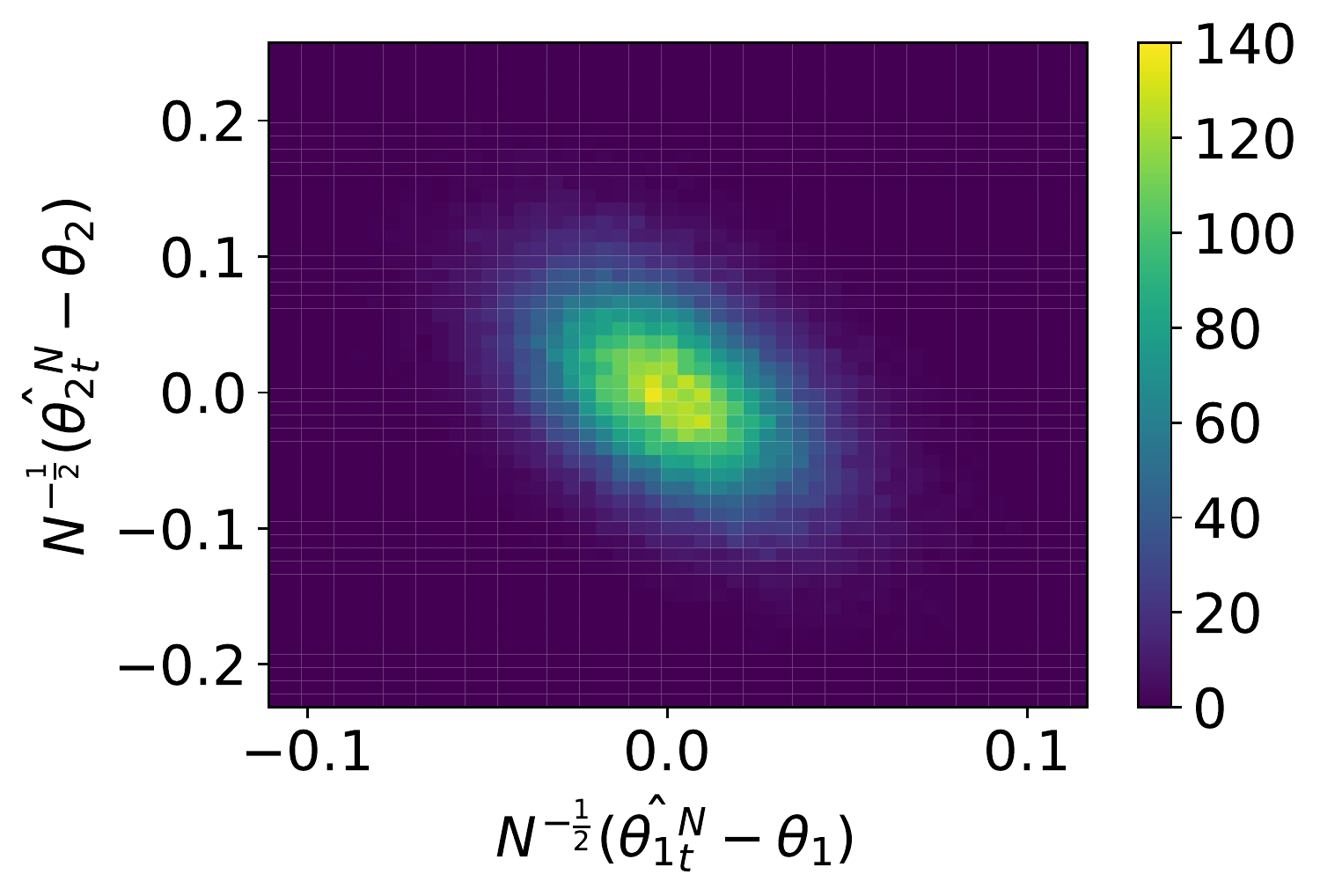}}} 
\subfloat[Asymptotic bivariate density.]{{\label{fig_3c}\includegraphics[width=.36\linewidth]{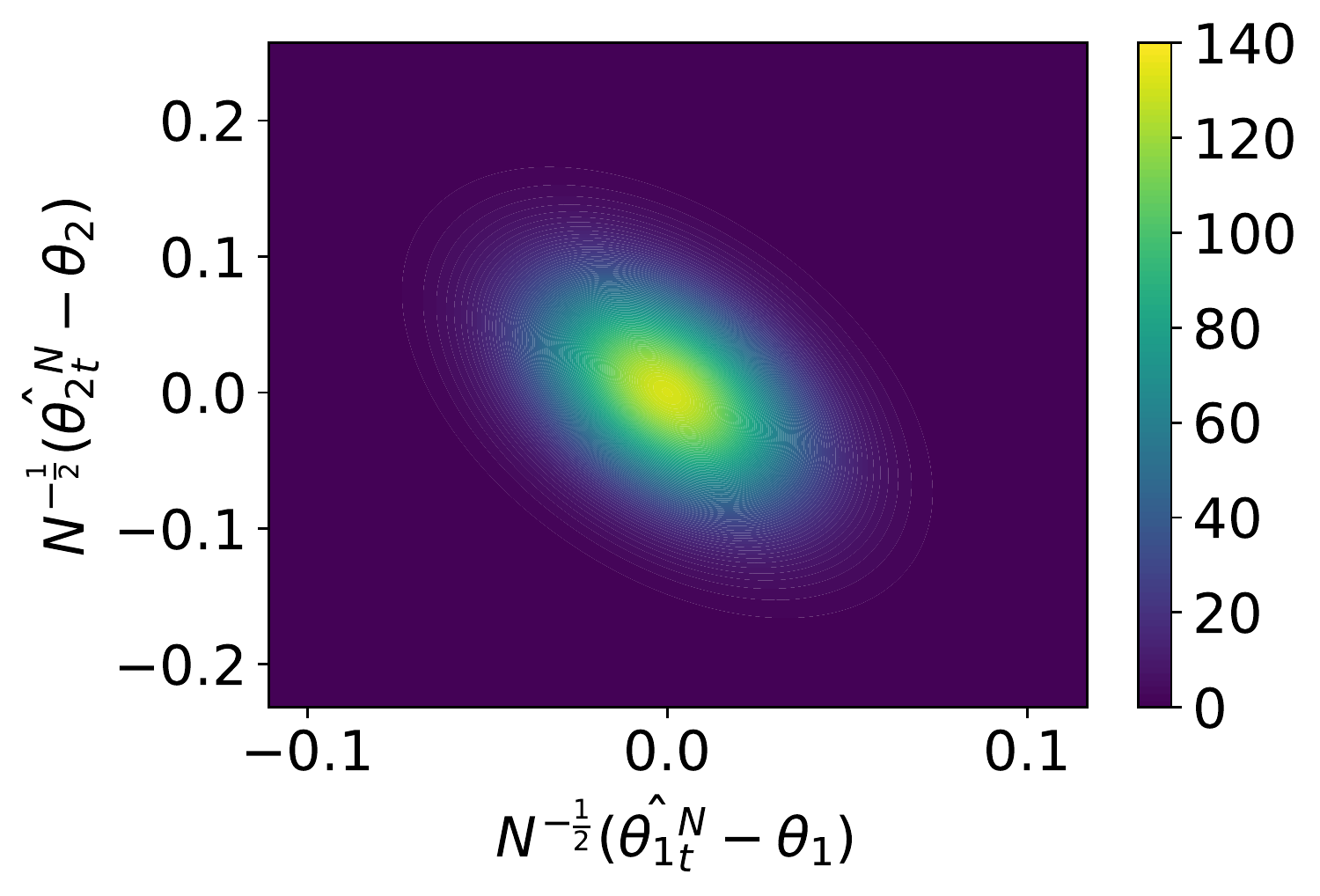}}} 
\caption{A comparison between the asymptotic normal distribution and the approximate normal distribution of the MLE for $N=500$ particles. The histograms were obtained using $10^5$ independent runs.}
\label{fig3}
\end{figure}

\subsubsection{Online Parameter Estimation}
\label{subsec:linear_online}
We now turn our attention to online parameter estimation. In particular, we will utilise the update equations in \eqref{theta_approx0}, which for this model are given by 
\begin{align}
\mathrm{d}\theta^N_{1,t} &= \frac{\gamma_{1,t}}{N \sigma^2}\sum_{i=1}^N\left[-x_t^{i,N} \mathrm{d}x_t^{i,N} - x_t^{i,N}(\theta^N_{1,t} x_t^{i,N}+ \theta^N_{2,t}(x_t^{i,N} - \bar{x}_t^N))\mathrm{d}t\right], \\
\mathrm{d}\theta^N_{2,t} &= \frac{\gamma_{2,t}}{N \sigma^2}\sum_{i=1}^N\left[-(x_t^{i,N}-\bar{x}_t^N) \mathrm{d}x_t^{i,N} - (x_t^{i,N}-\bar{x}_t^N)(\theta^N_{1,t} x_t^{i,N}+ \theta^N_{2,t}(x_t^{i,N} - \bar{x}_t^N))\mathrm{d}t\right].
\end{align}

We will initially assume that one of the parameters is fixed (and equal to the true value), while the other parameter is to be estimated. The true parameters are given by $\theta_1^{*} = 0.5$ and $\theta_2^{*} = 0.1$. Meanwhile, the initial parameter estimates are randomly generated according to $\theta^{0}_1,\theta^{0}_2\sim \mathcal{U}([2,5])$. Finally, the learning rates are given by $\gamma_{i,t} =\min\{\gamma_{i}^{0},\gamma_{i}^{0}t^{-\alpha}\}$, $i=1,2$, where $\gamma_{1}^{0} = 0.05$, $\gamma_{2}^{0}=0.30$, and $\alpha=0.51$. The performance of the stochastic gradient descent algorithm is visualised in Figures \ref{fig4}, in which we plot the MSE of the online parameter estimates for $t\in[0,1000]$ and $N=\{2,5,10,25,50,100\}$. The results are computed over 500 independent random trials. Interestingly, increasing the number of particles can result in a relatively significant reduction in the MSE of the interaction parameter $\theta_2$, but has little consequence for the error of the confinement parameter $\theta_1$.

\begin{figure}[!h]
\centering
\subfloat[${\theta}_{1,t}^N$. ]{\label{fig_4a}\includegraphics[width=.34\linewidth]{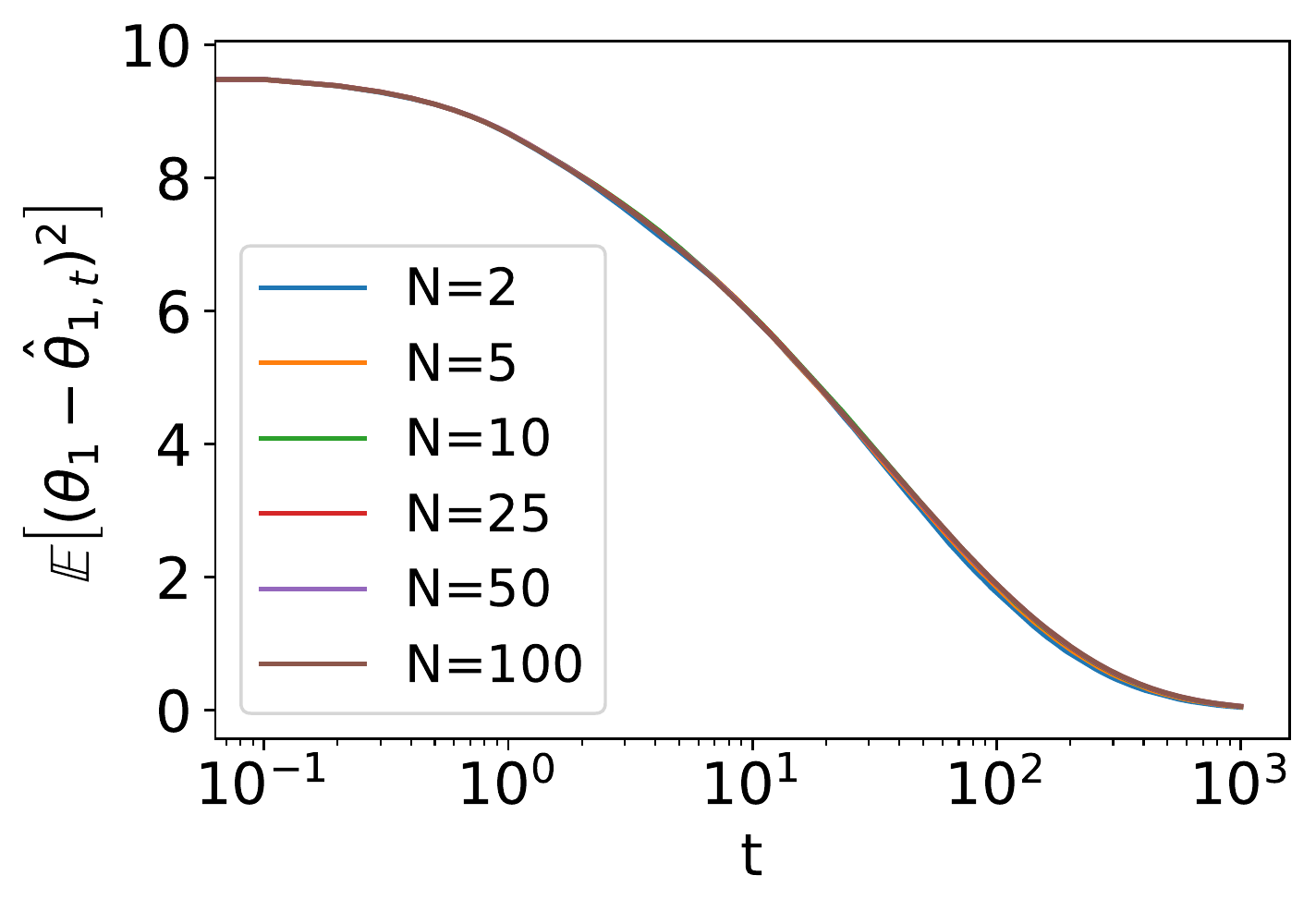}} \hspace{5mm}
\subfloat[${\theta}_{2,t}^N$. ]{{\label{fig_4b}\includegraphics[width=.34\linewidth]{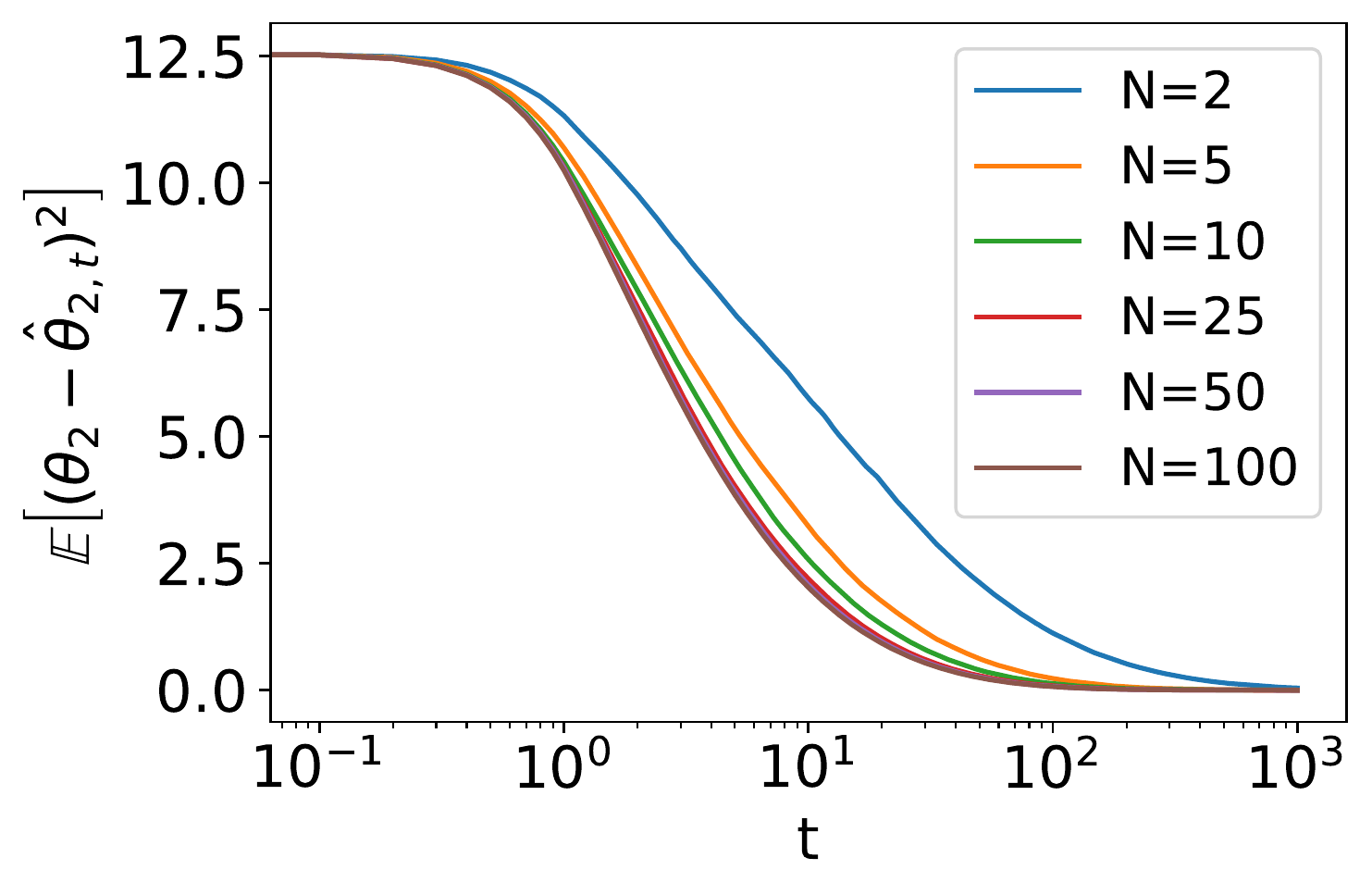}}} 
\vspace*{-2mm}
\caption{$\mathbb{L}^2$ error of the online parameter estimates for $t\in[0,1000]$ and $N=\{2,5,10,25,50,100\}$.} 
\label{fig4}
\end{figure}

Figures \ref{fig6} and \ref{fig7} provide a numerical illustration of why the finite-time performance of the online estimator improves with the number of particles, and why this improvement is more pronounced for the interaction parameter $\theta_2$. As the number of particles increases, we observe that the time weighted average of the 
log-likelihood of the IPS $\mathcal{L}_t^N(\theta)$ (the noisy objective function) much more closely resembles the asymptotic log-likelihood of the IPS $\tilde{\mathcal{L}}^N(\theta)$ (the true objective function), even for small time values. This means, in particular, that fluctuations terms of the form 
\begin{equation}
\int_0^t \gamma_s (\nabla_{\theta} \tilde{\mathcal{L}}^N(\theta^N_s)-\frac{1}{N}\sum_{i=1}^N\nabla_{\theta}L(\theta^N_s,x^N_s,\mu^N_s))\mathrm{d}s, 
\end{equation}
converge more rapidly to zero (as a function of time), for larger values of $N$. This disparity in the convergence rate of the log-likelihood (as a function of the time), for different values of $N$, appears to be much more significant for the interaction parameter $\theta_2$ (Figure \ref{fig7}) than it is for the confinement parameter $\theta_1$ (Figure \ref{fig6}). Consequently, the online parameter estimate $\smash{\theta_{2,t}^N}$ converges more rapidly as $N$ increases, while there is little difference in the convergence rate of $\smash{\theta_{1,t}^N}$. 

\begin{figure}[!h]
\centering
\subfloat[$T=1.0$.]{\label{fig_6a}\includegraphics[width=.23\linewidth]{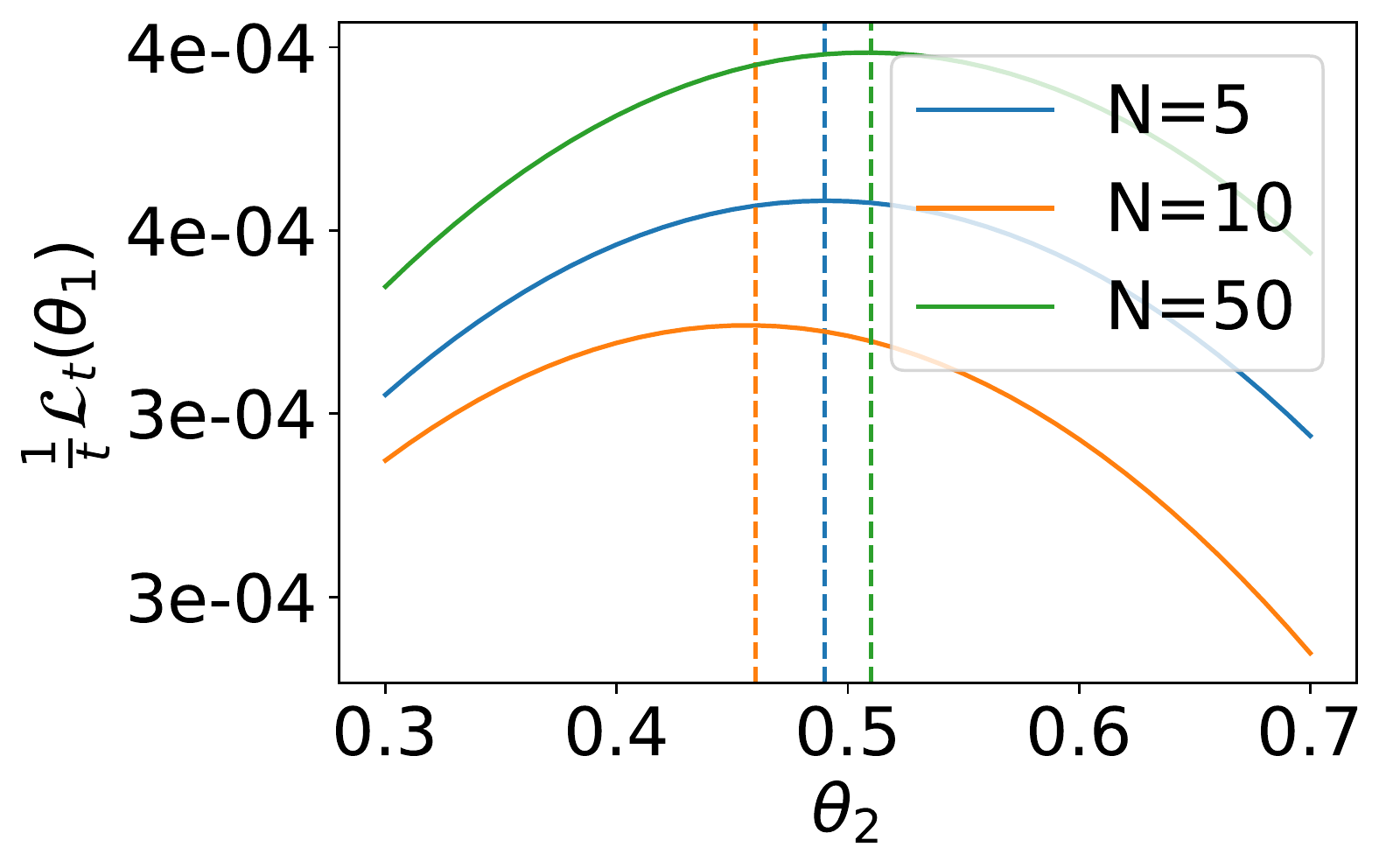}}
\subfloat[$T=2.5$.]{{\label{fig_6b}\includegraphics[width=.23\linewidth]{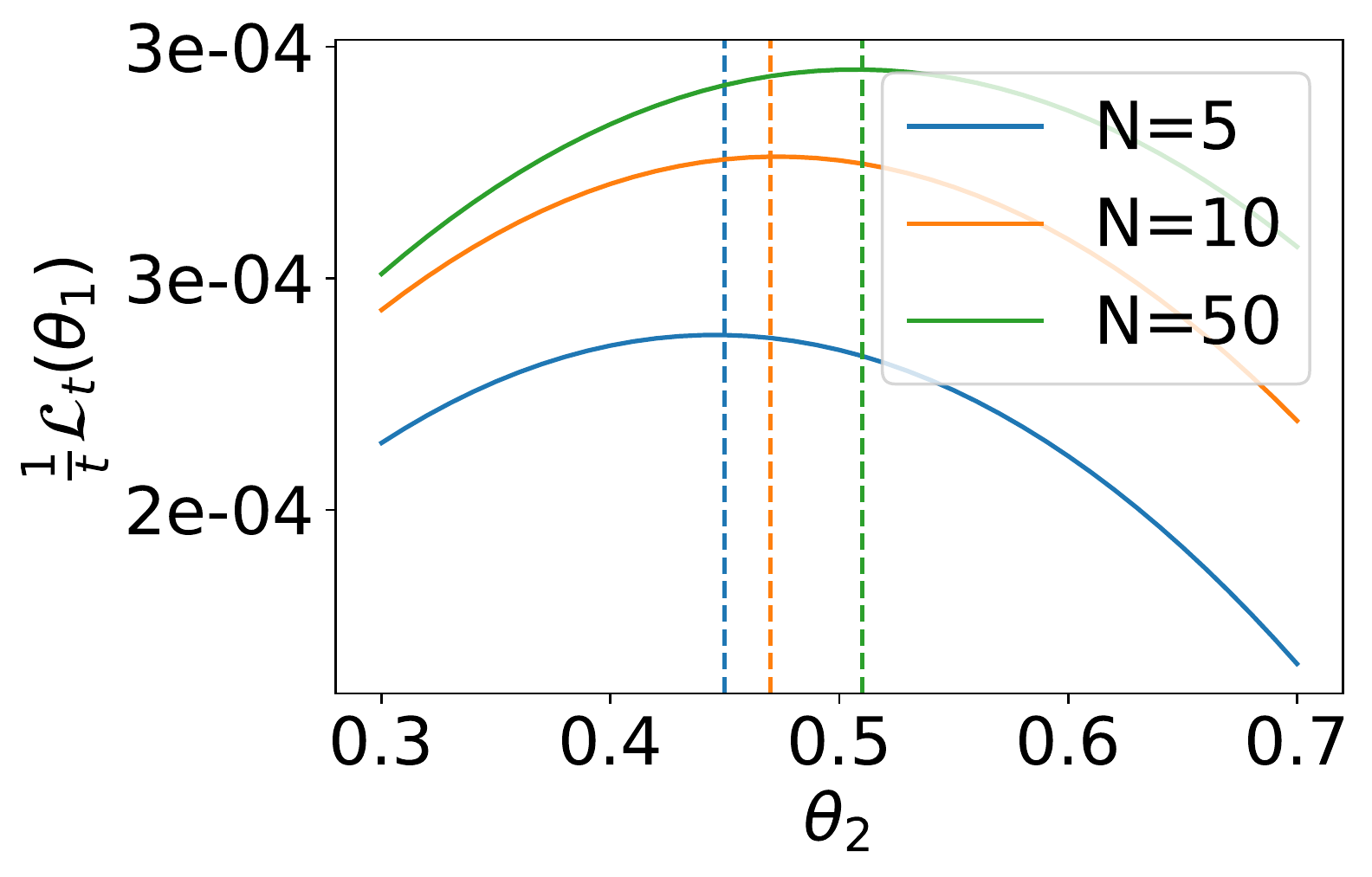}}} 
\subfloat[$T=5.0$.]{{\label{fig_6c}\includegraphics[width=.23\linewidth]{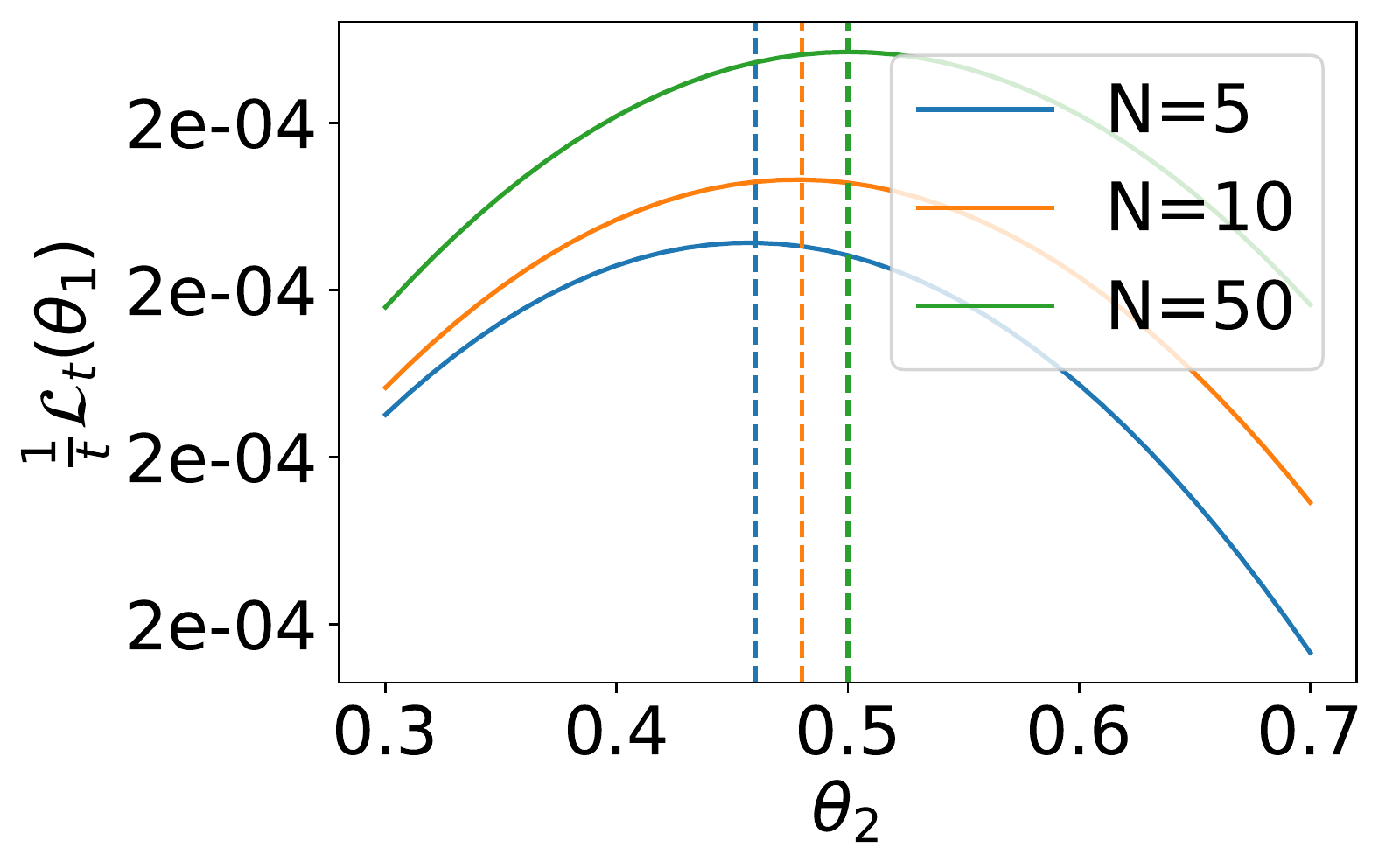}}} 
\subfloat[$T=7.5$. ]{{\label{fig_6d}\includegraphics[width=.23\linewidth]{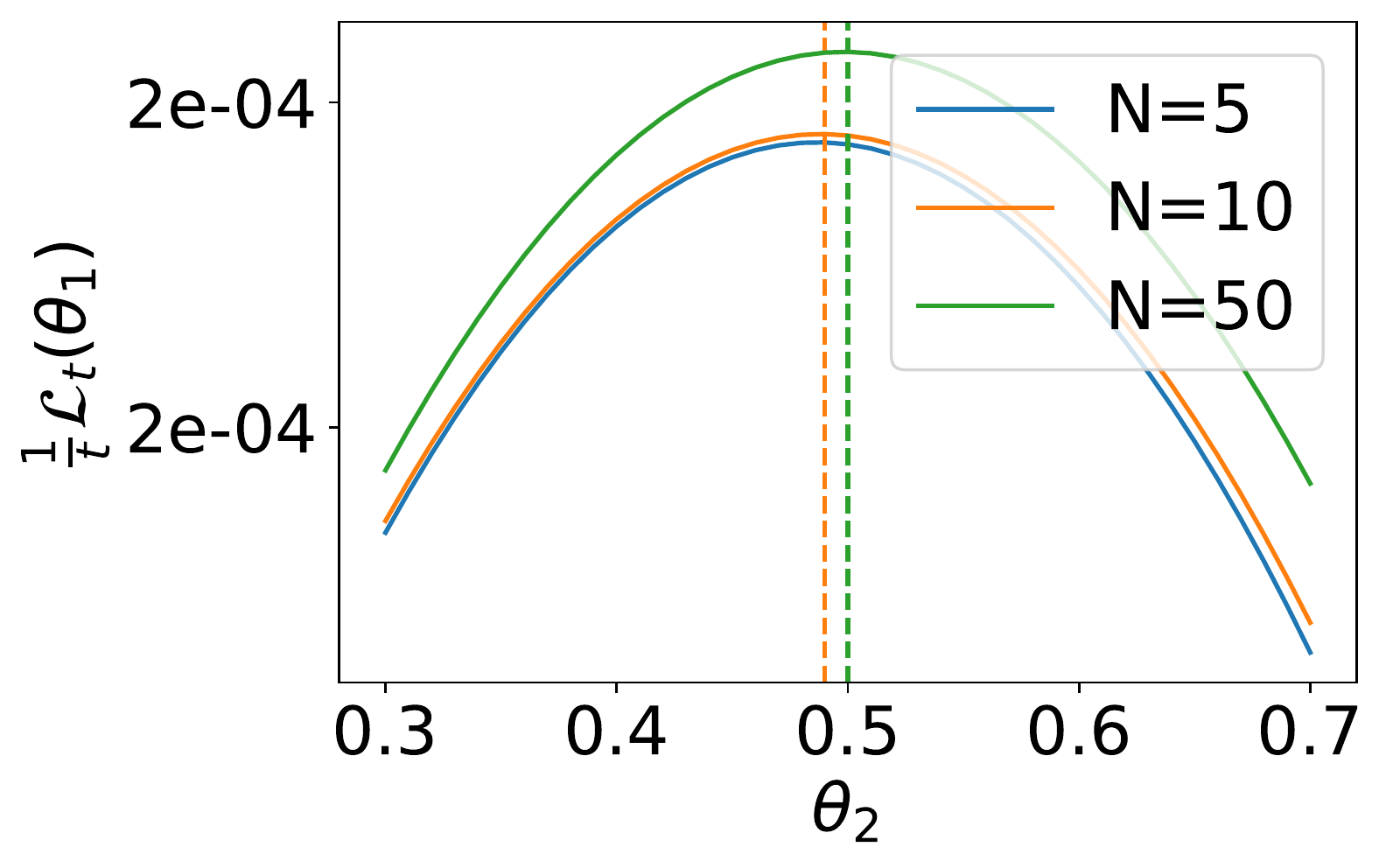}}} 
\vspace*{-1mm}
\caption{Plots of the average log-likelihood, $\frac{1}{T}\mathcal{L}_T^N(\theta_1)$, for $T =\{1,2.5,5.7.5\}$ and $N=\{5,10,50\}$.}
\label{fig6}
\vspace*{-1mm}
\end{figure}

\begin{figure}[!h]
\centering
\subfloat[$T=1.0$.]{\label{fig_7a}\includegraphics[width=.23\linewidth]{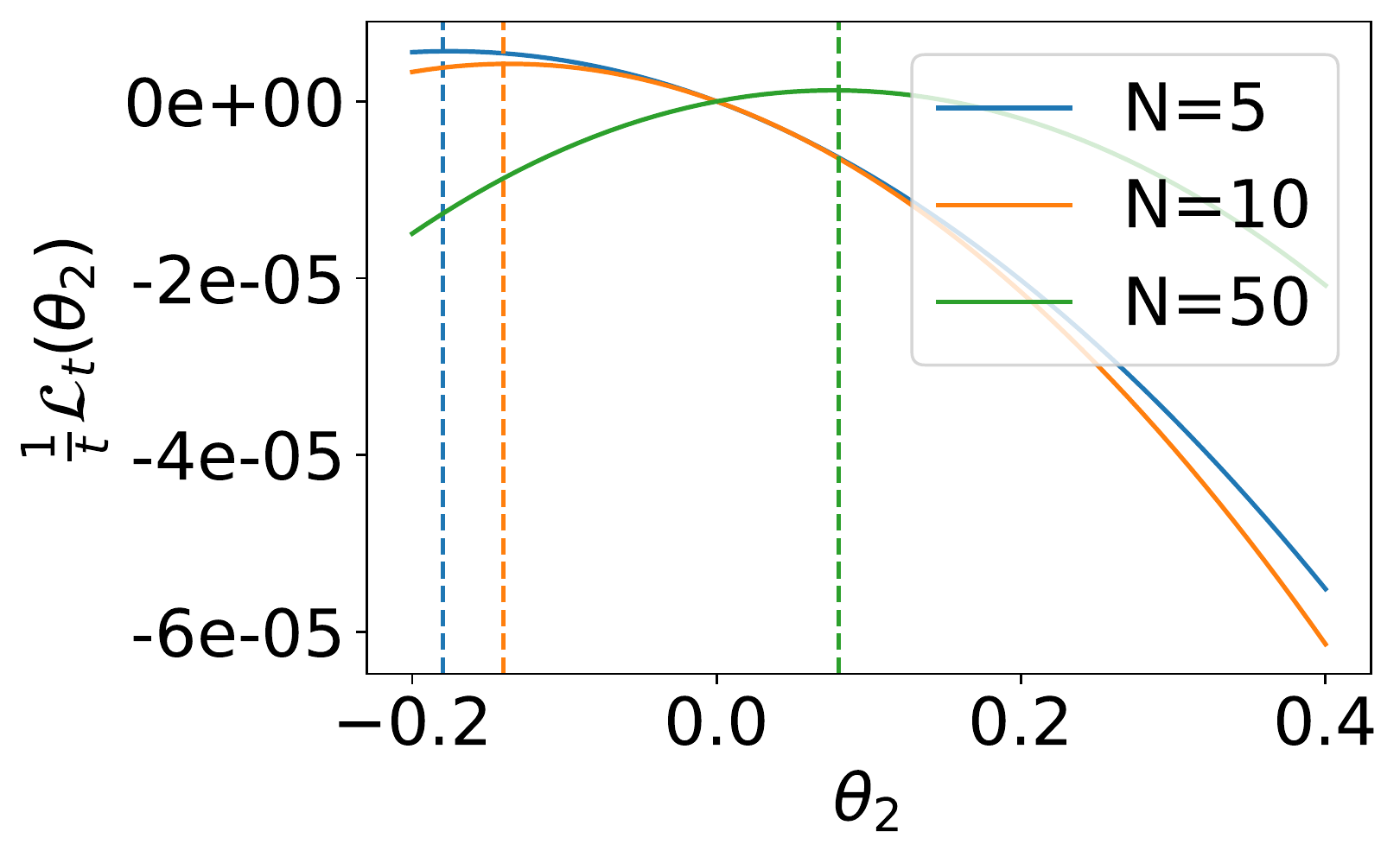}}
\subfloat[$T=2.5$.]{{\label{fig_7b}\includegraphics[width=.23\linewidth]{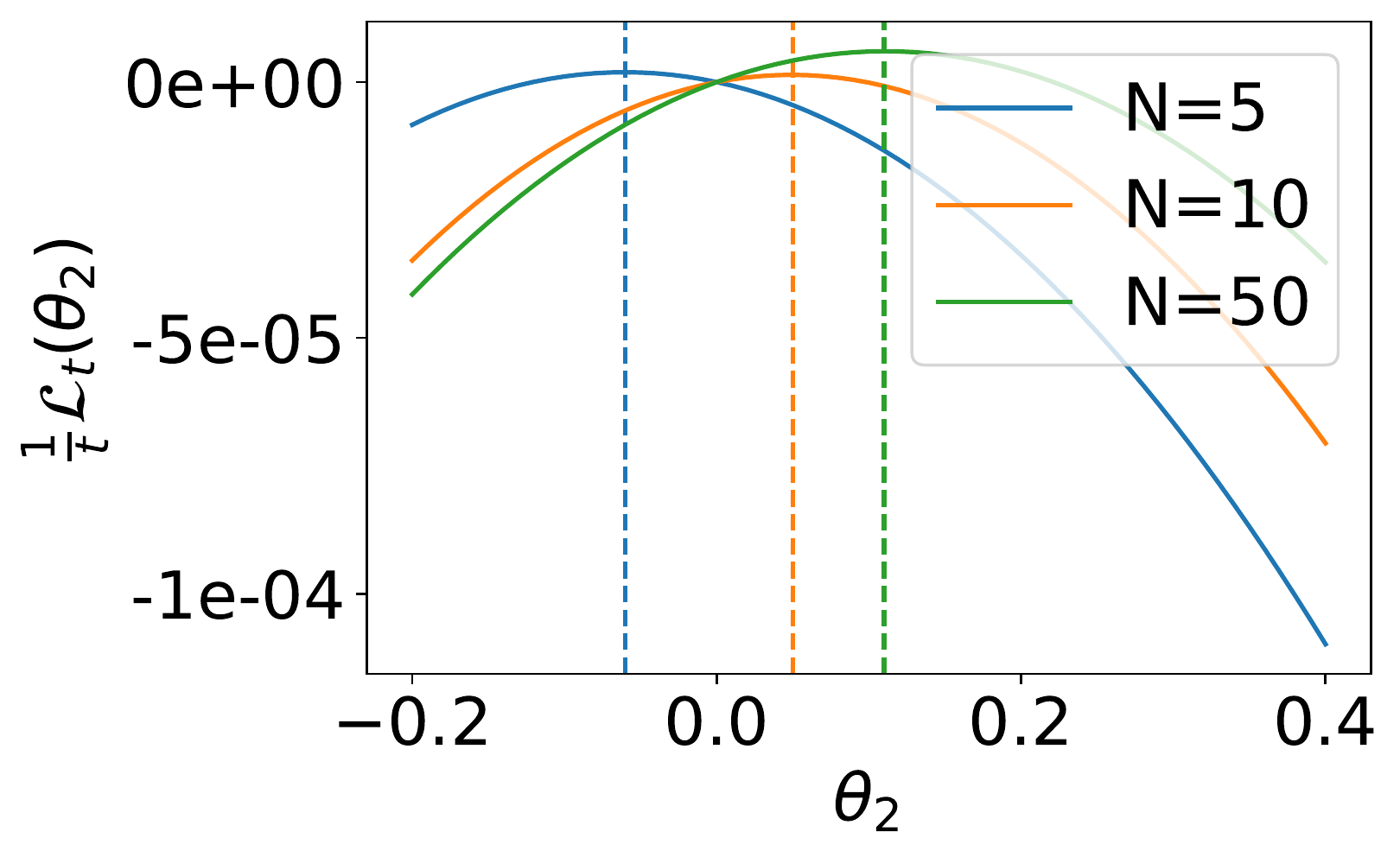}}} 
\subfloat[$T=5.0$.]{{\label{fig_7c}\includegraphics[width=.23\linewidth]{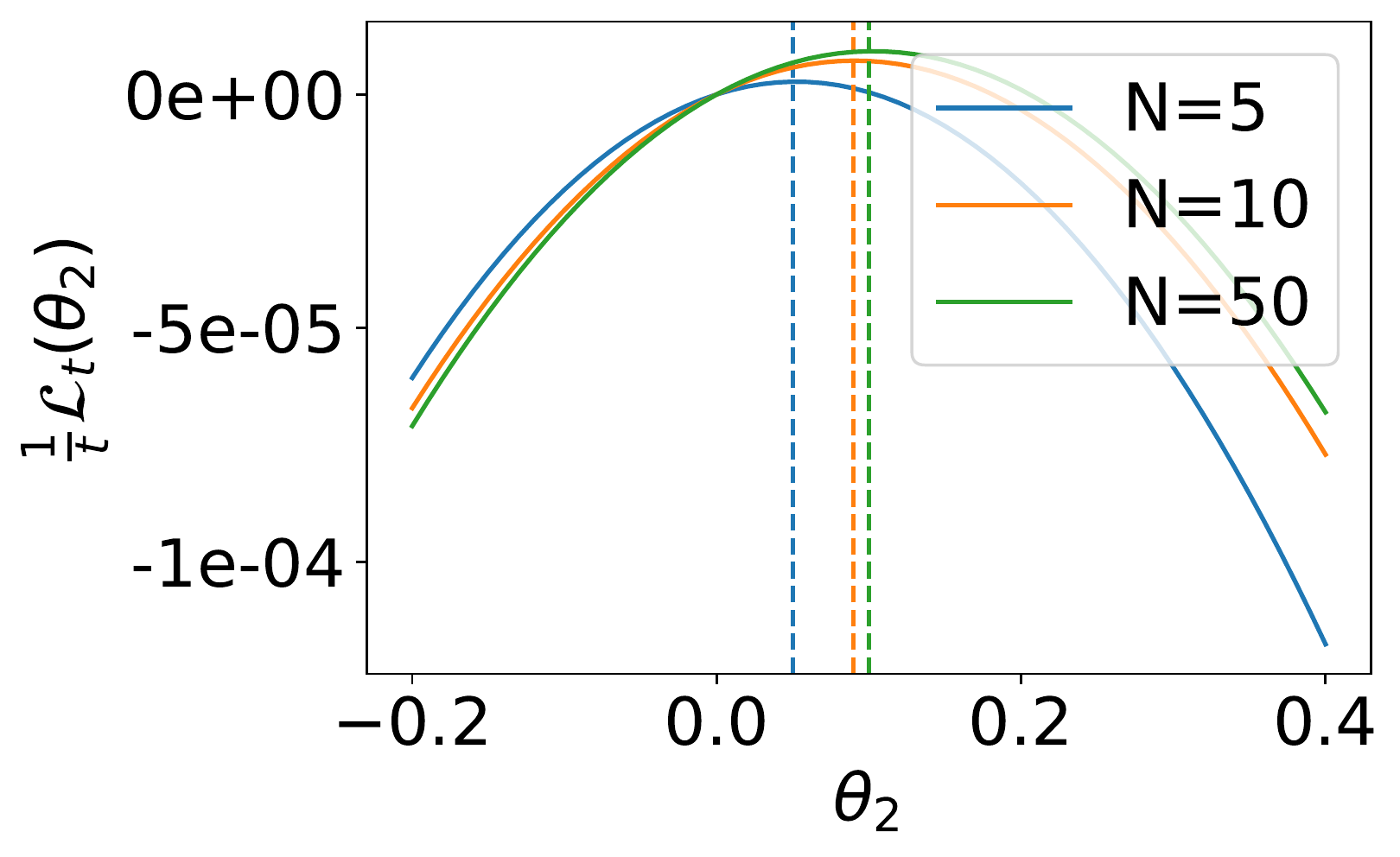}}} 
\subfloat[$T=7.5$.]{{\label{fig_7d}\includegraphics[width=.23\linewidth]{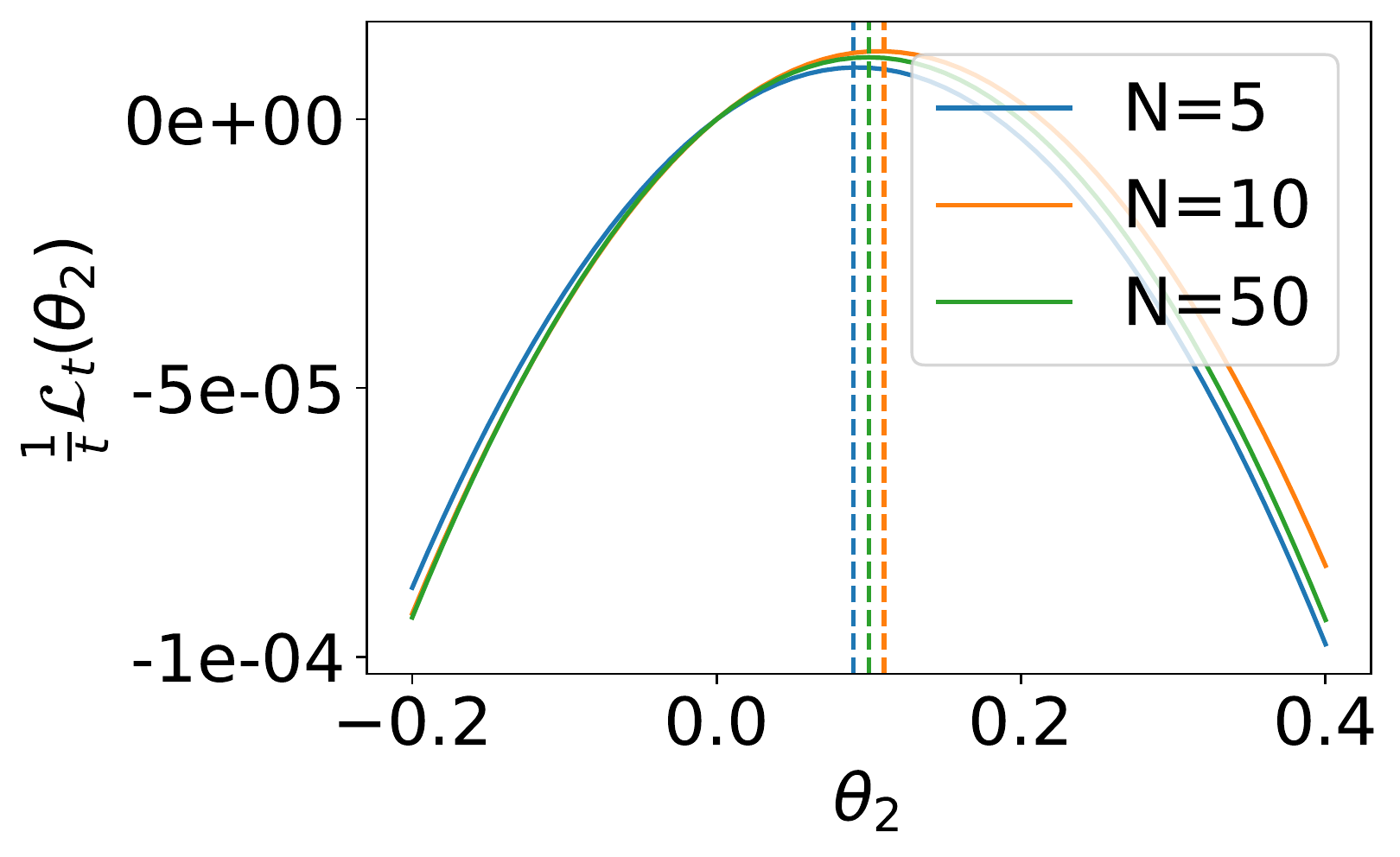}}} 
\vspace*{-1mm}
\caption{Plots of the average log-likelihood, $\frac{1}{T}\mathcal{L}_T^N(\theta_2)$, for $T =\{1,2.5,5.7.5\}$ and $N=\{5,10,50\}$.}
\label{fig7}
\vspace*{-1mm}
\end{figure}

We should remark that, in the linear mean field model, the asymptotic log-likelihood of the IPS is strongly concave for all values of $N$, with unique global maximum at the true parameter value. This is visualised in Figures \ref{fig_6d} and \ref{fig_7d}, in which we have plotted approximations of profile asymptotic log-likelihood of the IPS for several values of $N$. We are thus in the regime of Theorems \ref{theorem2_1} - \ref{theorem2_2}, meaning $\theta_t^N$ will eventually converge to the true parameter as $t\rightarrow\infty$, regardless of the value of the number of particles.

Let us now turn our attention to the case in which both parameters are unknown, and to be estimated from the data. 
In this case, one can show that the linear mean-field model satisfies Conditions \ref{assumption_init}, \ref{assumption1} - \ref{assumption2}, \ref{assumption3}, \ref{assumption_bound} - \ref{assumption_bound2}, \ref{assumption4''} (but not Condition \ref{assumption4}), and \ref{assumption0} - \ref{assumption0_1} (see Appendix \ref{app:verification}). Thus, the conditions of Theorems \ref{theorem2_1} - \ref{theorem2_2} and Theorem \ref{theorem2_1_star} are satisfied, but those of Theorem \ref{theorem2_2_star} are not.  In practice, this means that the asymptotic log-likelihood of the IPS admits a unique maximiser $\theta_0$, and thus the online parameter estimate is guaranteed to converge to $\theta_0$ as $t\rightarrow\infty$, for finite $N\in\mathbb{N}$. On the other hand, the asymptotic log-likelihood of the McKean-Vlasov SDE may not admit a unique maximiser, and thus the online parameter estimate may not convergence to $\theta_0$ as $t\rightarrow\infty$ and $N\rightarrow\infty$.

For the sake of comparison, we will once more assume that that the true parameter is given by $\theta^{*} = (\theta_1^{*},\theta_2^{*})=(0.5,0.1)$. The initial parameter estimates are now generated according to $\theta^{0}_1\sim \mathcal{U}([-1,2])$ and $\theta^0_2\sim\mathcal{U}([-2,2])$. Finally, we use constant learning rates, with $\gamma_{1,t} = 0.1$ and $\gamma_{2,t}=0.2$. The performance of the stochastic gradient descent algorithm is illustrated in Figure \ref{fig9}, in which we plot the MSE of the online parameter estimates for both of the unknown parameters, averaged over 500 random trials. 

\begin{figure}[!h]
\centering
\subfloat[${\theta}_{1,t}^N$. ]{\label{fig_9a}\includegraphics[width=.37\linewidth]{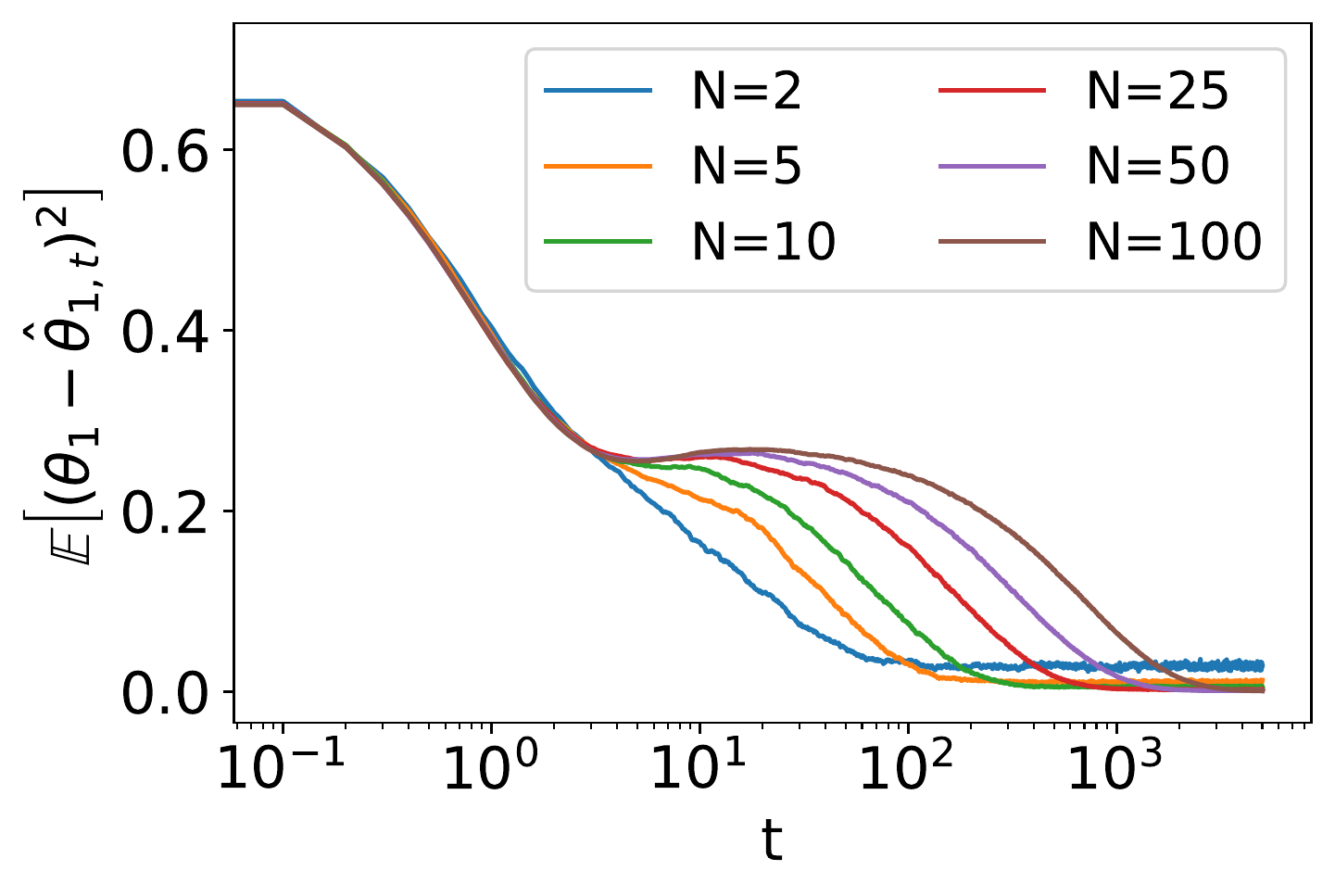}} \hspace{5mm}
\subfloat[${\theta}_{2,t}^N$. ]{{\label{fig_9b}\includegraphics[width=.37\linewidth]{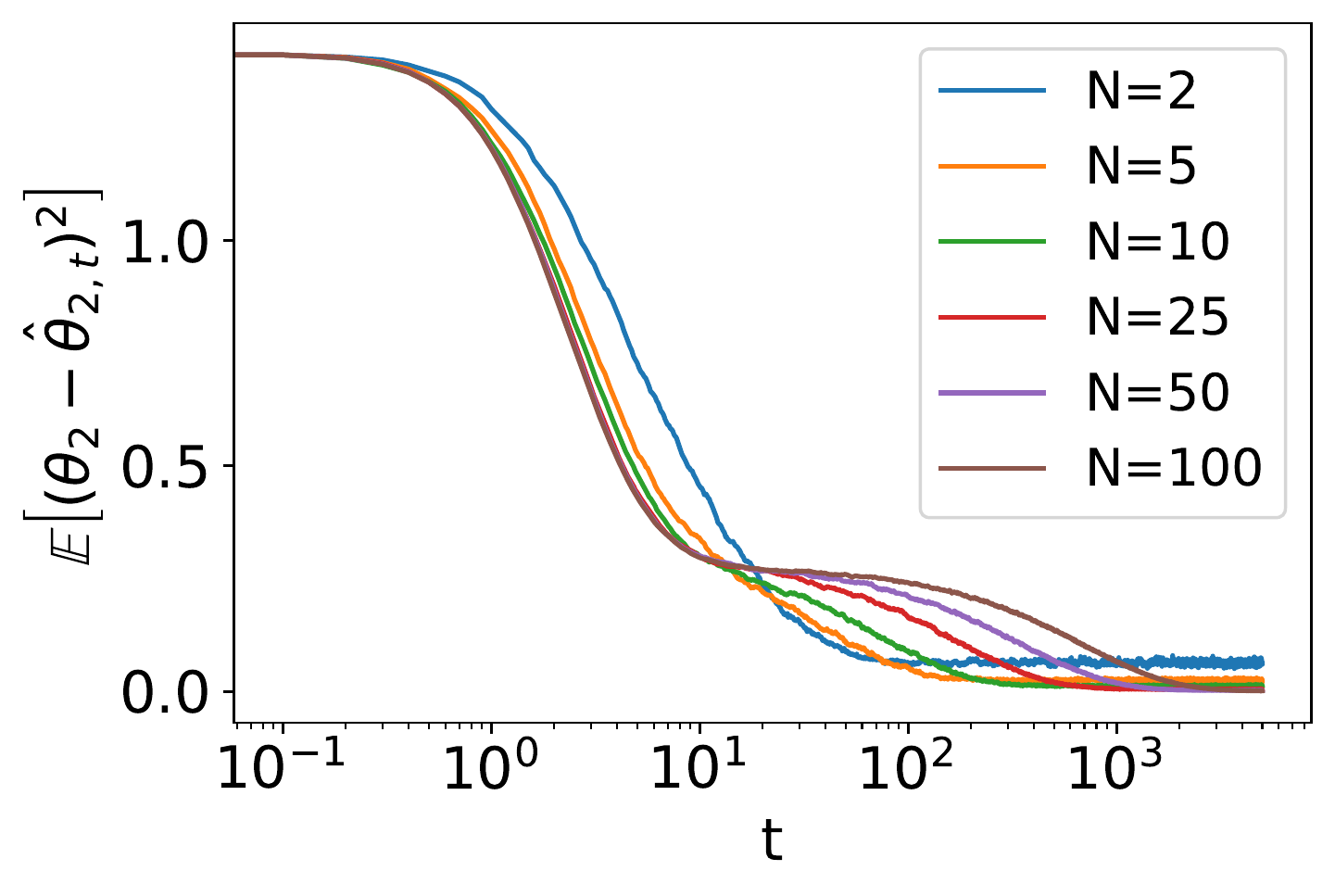}}} 
\caption{$\mathbb{L}^2$ error of the online MLEs for $T\in[0,5000]$ and $N=\{2,5,10,25,50,100\}$.} 
\label{fig9}
\end{figure}

In this case, the evolution of the MSE indicates three distinct learning phases. In the initial phase, the performance of the online estimator improves as a function of the number of particles, with this improvement being more noticeable for the interaction parameter $\theta_{2}$. In the middle phase, the online estimator performs significantly better for smaller values of $N$. In the final learning phase, the MSE of the online parameter estimate decreases as a function of the number of particles. 

These observations are readily explained by considering the asymptotic log-likelihood of the IPS for different numbers of particles (see Figure \ref{fig10}), and our theoretical results.  Regarding the initial learning phase, we note that, far from the global maximum at $\theta_0 = (0.5,0.1)$, the asymptotic log-likelihood decreases more steeply as $N$ increases. Thus, in this phase, the online parameter estimate moves more quickly towards the global maximum for larger $N$. On the other hand, close to the global maximum at $\theta_0$, the asymptotic log-likelihood exhibits an increasingly large plateau as $N$ increases; that is, the maximum is increasingly `flat'. In fact, in the mean field limit, the asymptotic log-likelihood does not even admit a unique maximum at $\theta_0$, but instead is maximised by any $\theta=(\theta_1,\theta_2)^T$ on the line $\theta_1 + \theta_2 = \theta_{0,1}+\theta_{0,2}$ (see Appendix \ref{app:verification}).  This region of the optimisation landscape is largely responsible for the middle learning phase, which explains the slower convergence of the estimator for larger numbers of particles. In the final learning phase, the MSE is governed by the results in Theorem \ref{theorem2_2}, which show that the asymptotic $\mathbb{L}_2$ error of the online parameter estimate implemented here decreases with the number of particles.
\begin{figure}[htbp]
\centering
\subfloat[$N=2$.]{\label{fig_10a}\includegraphics[width=.25\linewidth]{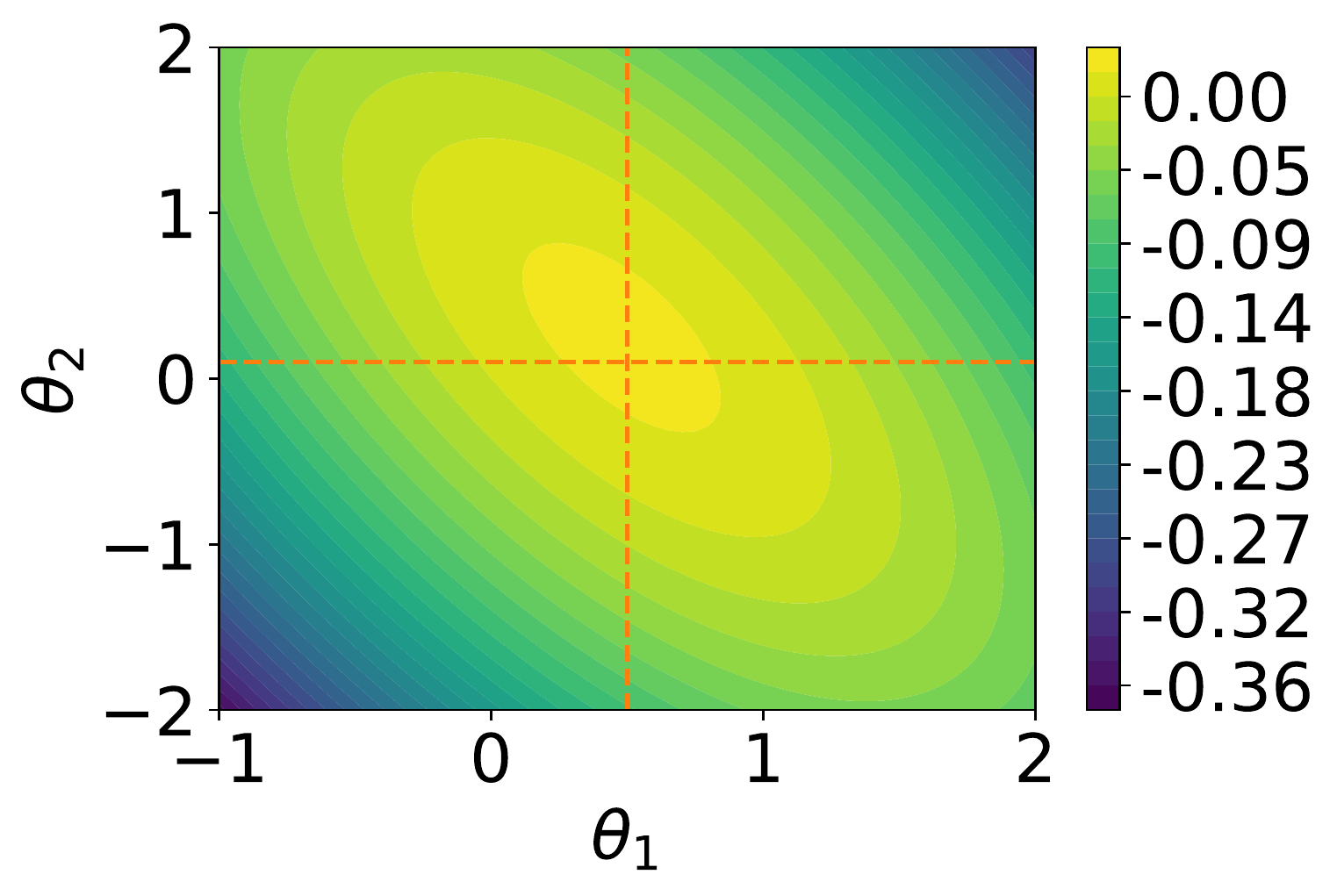}}
\subfloat[$N=5$.]{{\label{fig_10b}\includegraphics[width=.25\linewidth]{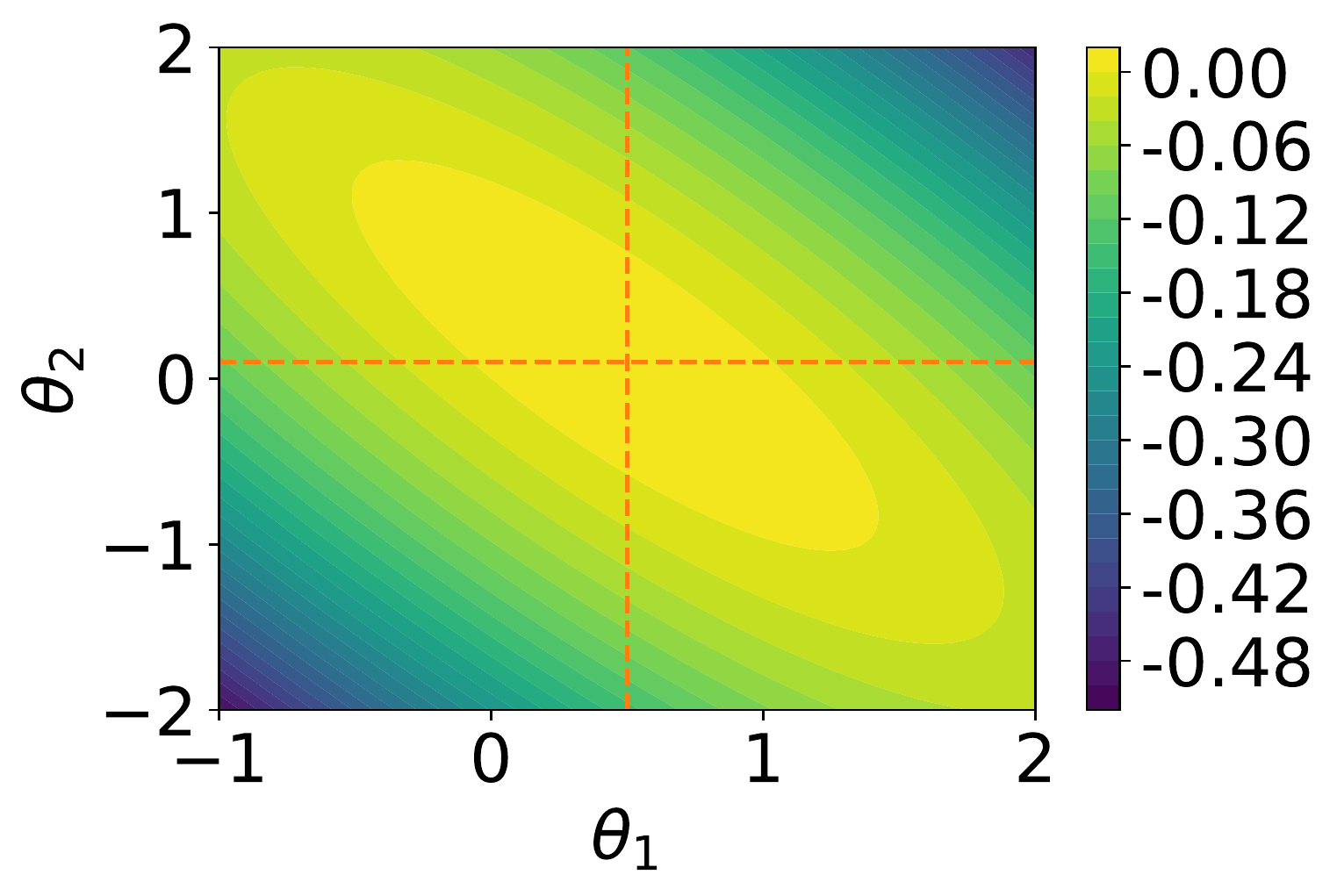}}} 
\subfloat[$N=10$.]{{\label{fig_10c}\includegraphics[width=.25\linewidth]{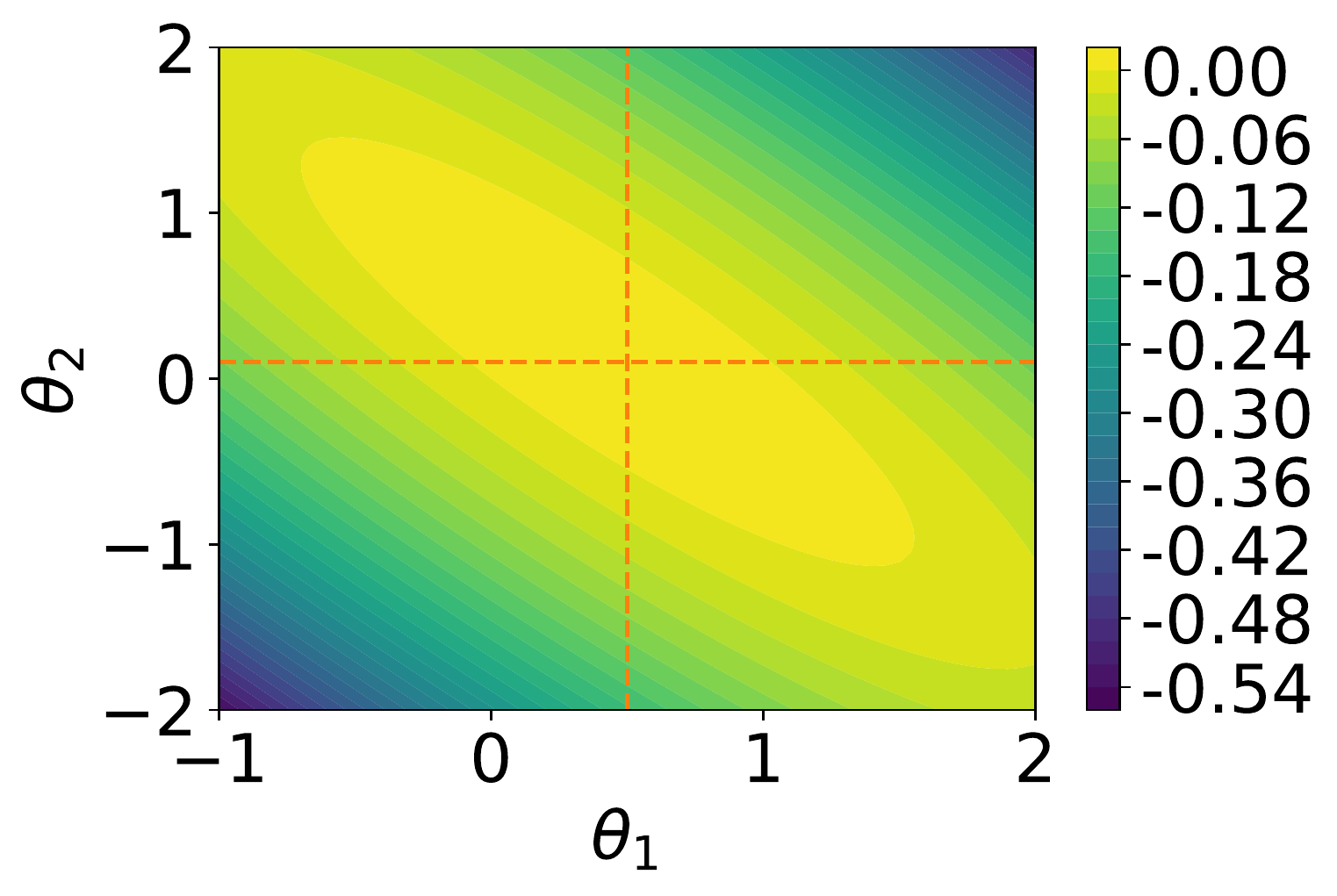}}}
\subfloat[$N=100$.]{{\label{fig_10d}\includegraphics[width=.25\linewidth]{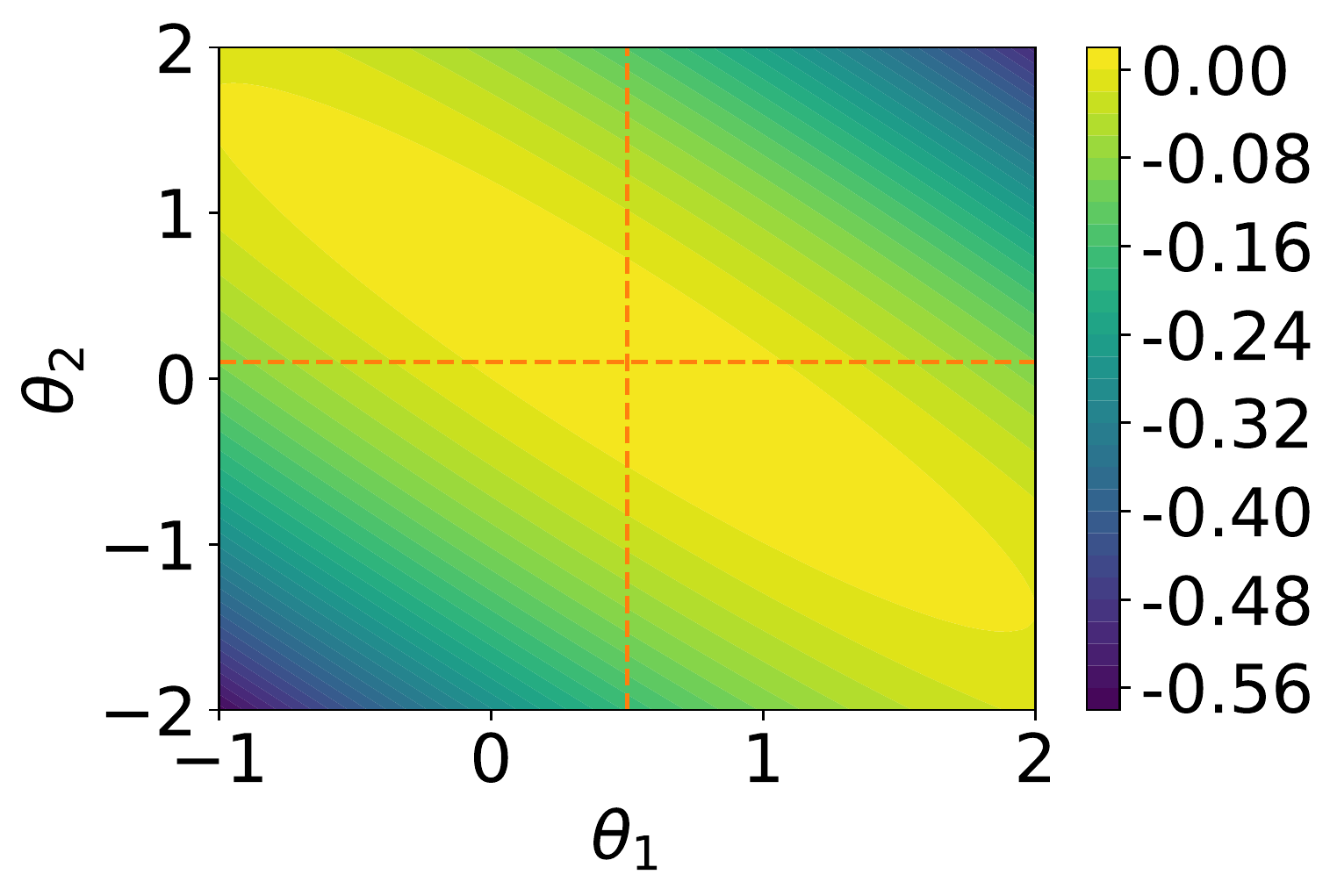}}}
\caption{Contour plots of the asymptotic log-likelihood $\tilde{\mathcal{L}}^{N}(\theta)$ for $N=\{2,5,10,100\}$.}
\label{fig10}
\end{figure}

\subsection{Stochastic Opinion Dynamics}

We now consider a one-dimensional stochastic opinion dynamics model, parametrised by $\theta = (\theta_1,\theta_2)^T\in\mathbb{R}^2$, of the form
\begin{equation}
\mathrm{d}x_t = -\left[\int_{\mathbb{R}}\varphi_{\theta}(||x_t - y||)(x_t-y)\mu_t(\mathrm{d}y)\right]\mathrm{d}t + \sigma\mathrm{d}w_t,
\end{equation}
where $\sigma>0$, $w=(w_t)_{t\geq 0}$ is a standard Brownian motion, $x_0\in\mathbb{R}$, and the {interaction kernel} $\varphi_{\theta}:\mathbb{R}_{+}\rightarrow\mathbb{R}_{+}$ is given by the scaled indicator function ${\varphi}_{\theta}(r) = \theta_1 \mathds{1}_{r\in[0,\theta_2]}$. In practice, we require a differentiable approximation to this function, and will thus use
\begin{equation}
\varphi_{\theta}(r) = \left\{ \begin{array}{ccc} \theta_1\exp\left[-\dfrac{0.01}{1-(r-\theta_2)^2}\right] & , & r>0 \\[2mm] 0 &, &  r\leq 0. \end{array}\right. \label{interac}
\end{equation}
This model is perhaps more frequently specified in terms of the corresponding system of interacting particles, which is given by
\begin{equation}
\mathrm{d}x_t^{i,N} = -\frac{1}{N}\sum_{j=1}^N \varphi_{\theta}(||x_t^{i,N} - x_t^{j,N}||)(x_t^{i,N}-x_t^{j,N})\mathrm{d}t + \sigma\mathrm{d}w_t.
\end{equation}
In this model, we can interpret $\theta_1$ as a scale parameter, which controls the strength of the attraction between particles, and $\theta_2$ as a range parameter, which determines the distance within which particles must be of one another in order to interact. 

Models of this form arise in various applications, from biology to the social sciences, in which $\varphi_{\theta}$ determines how the dynamics of one particle (e.g., the opinions of one person) may influence the dynamics of other particles (e.g., the opinions of other people). For a more detailed account of such models, we refer to \cite{Brugna2015,Chazelle2017,Garnier2017,Lu2021,Motsch2014} and references therein.  For deterministic models of this type, it is well known that, asymptotically, the particles merge into clusters, the number of which depends both on the interaction kernel (i.e., the range and strength of the interaction between particles) and the initialisation. In the stochastic setting, the random noise prohibits the formation of exact clusters; instead, the particles merge into metastable `soft clusters' (see also \cite{Lu2021}). This is shown in Figure \ref{fig11} and Figure \ref{fig12}.

\begin{figure}[htbp]
\centering
\subfloat[$\theta_2=0.0$.]{\label{fig_11a}\includegraphics[width=.25\linewidth]{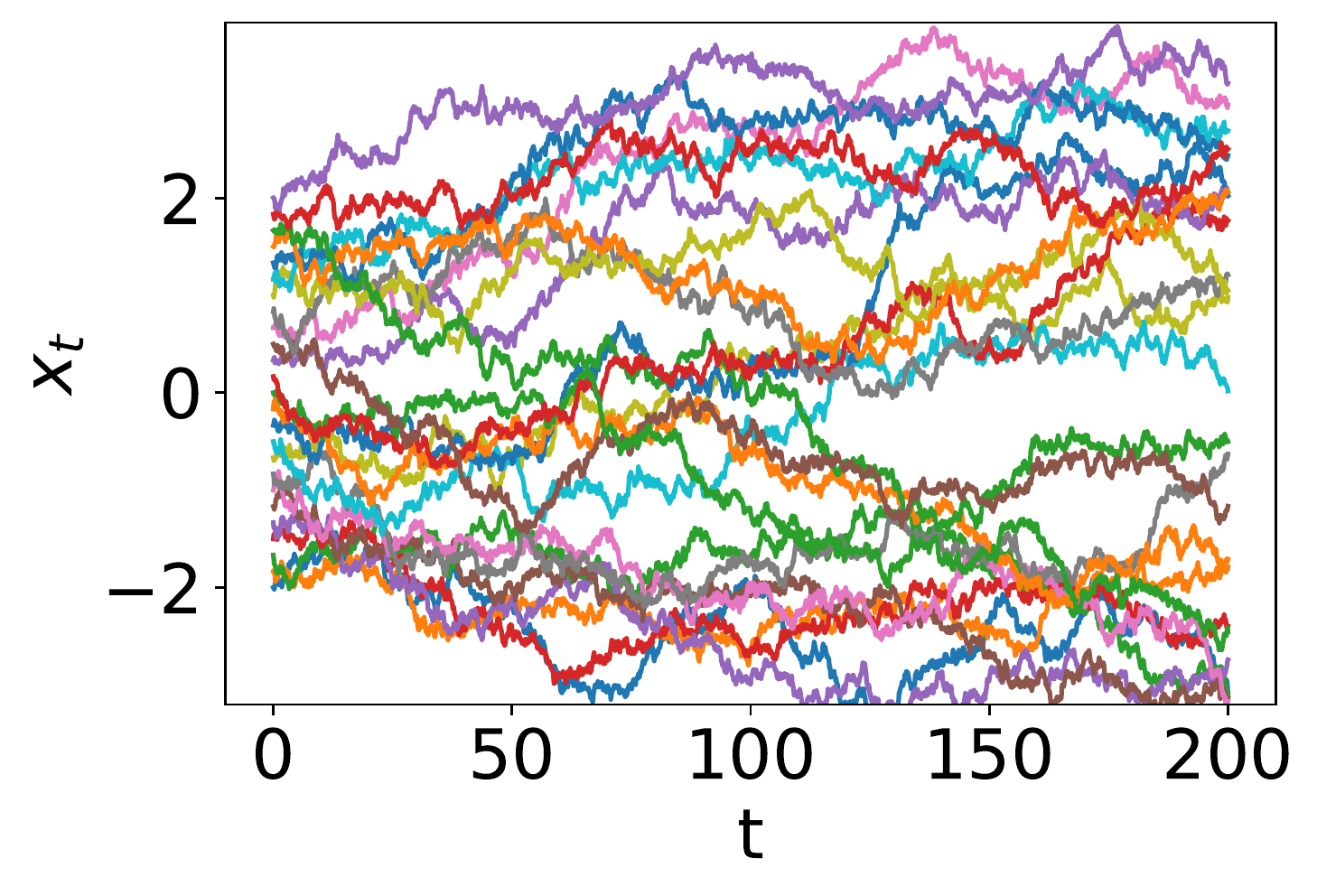}}
\subfloat[$\theta_2=0.3$.]{{\label{fig_11b}\includegraphics[width=.25\linewidth]{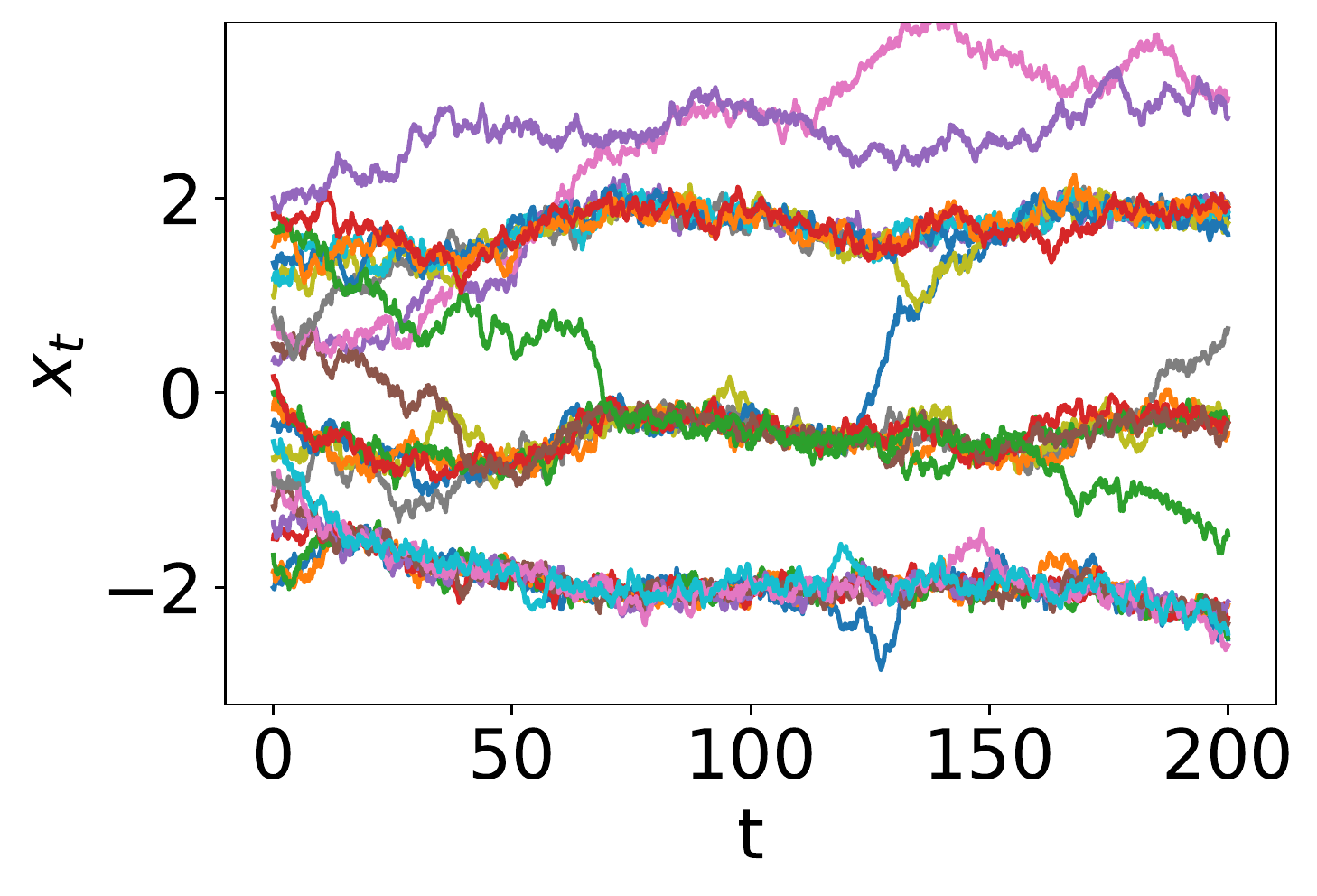}}} 
\subfloat[$\theta_2=0.5$.]{\label{fig_11c}\includegraphics[width=.25\linewidth]{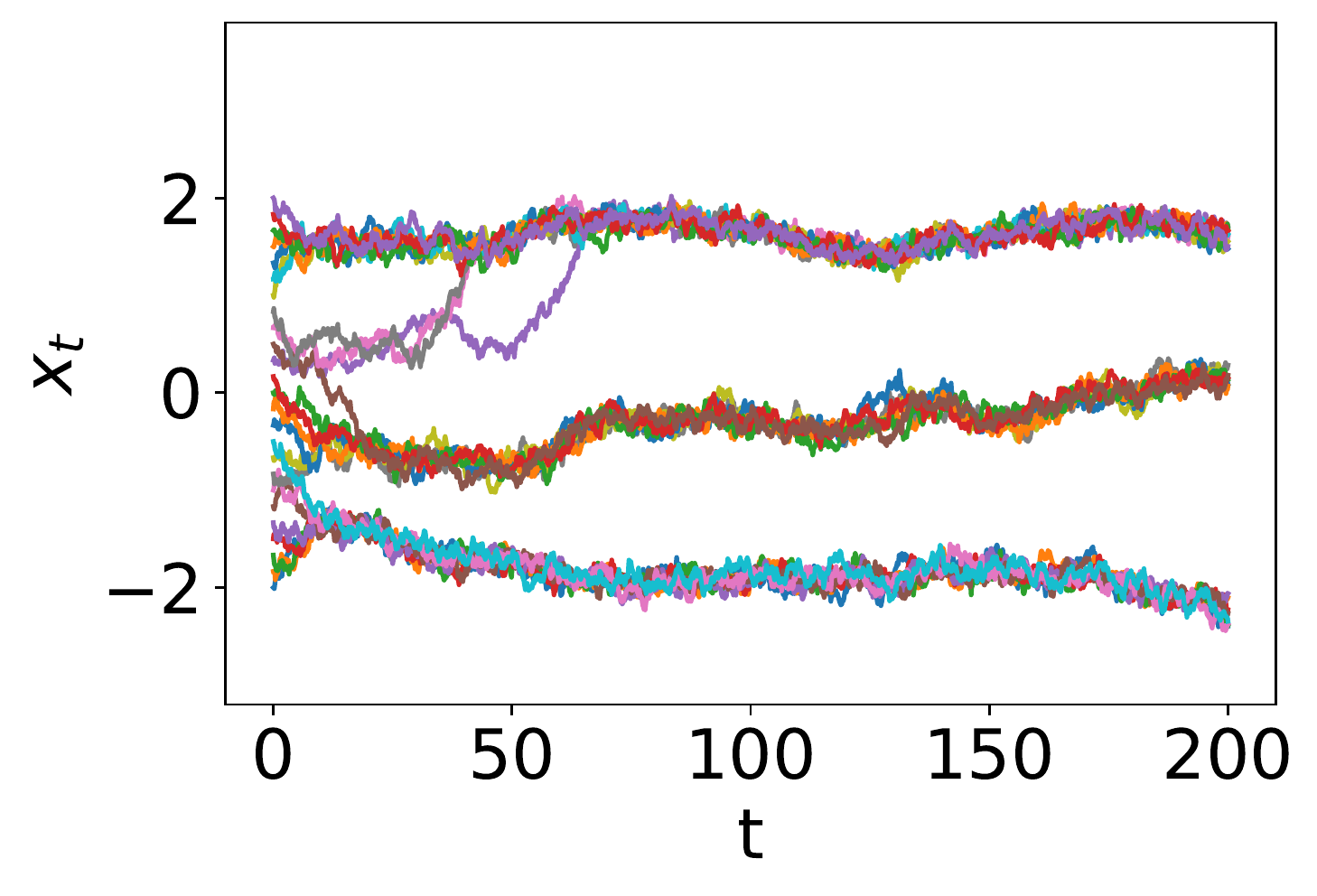}}
\subfloat[$\theta_2=1.0$.]{{\label{fig_11d}\includegraphics[width=.25\linewidth]{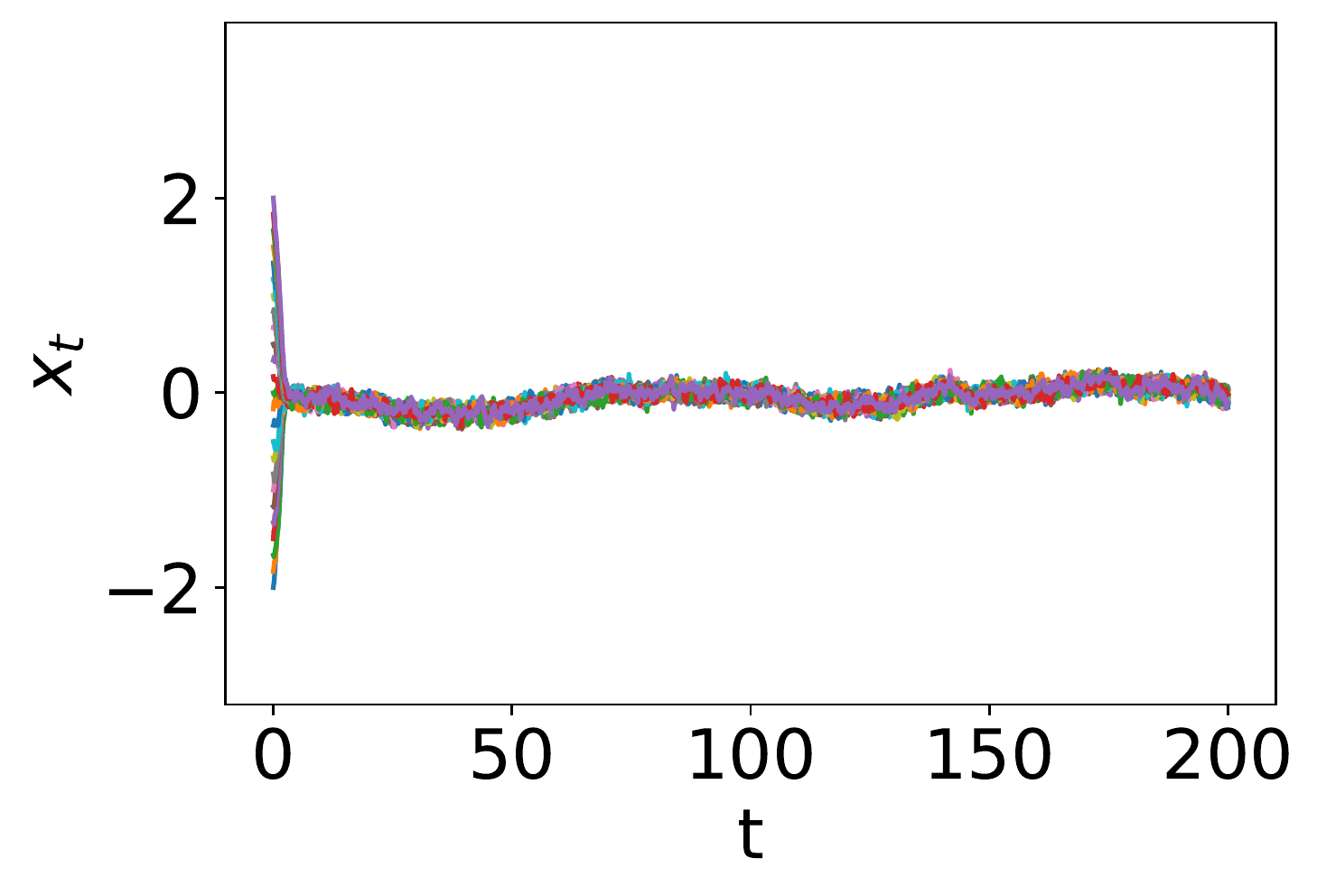}}}
\caption{Sample trajectories of the system of interacting particles for $\theta_2=\{0.0,0.3,0.5,1.0\}$. }
\label{fig11}
\end{figure}

We provide illustrative results for the case in which the scale parameter $\theta_1$ is fixed, and the range parameter $\theta_2$ is to be estimated. We assume that $\theta_{0}=(2,0.5)$. This corresponds to an interaction kernel with with compact support on $[0,0.5]$. The initial parameter estimates are generated uniformly at random on $[1.5,2.5]$. Finally, we use constant learning rates with $\gamma_{2,t} = 0.002$. The performance of the recursive MLE is illustrated in Figure \ref{fig13}, in which we plot the sequence of online parameter estimates for $\theta_2$, for several values of $N$, and for 50 different random initialisations. 

\begin{figure}[htbp]
\centering
\subfloat[$N=10$.]{\label{fig_12a}\includegraphics[width=.33\linewidth]{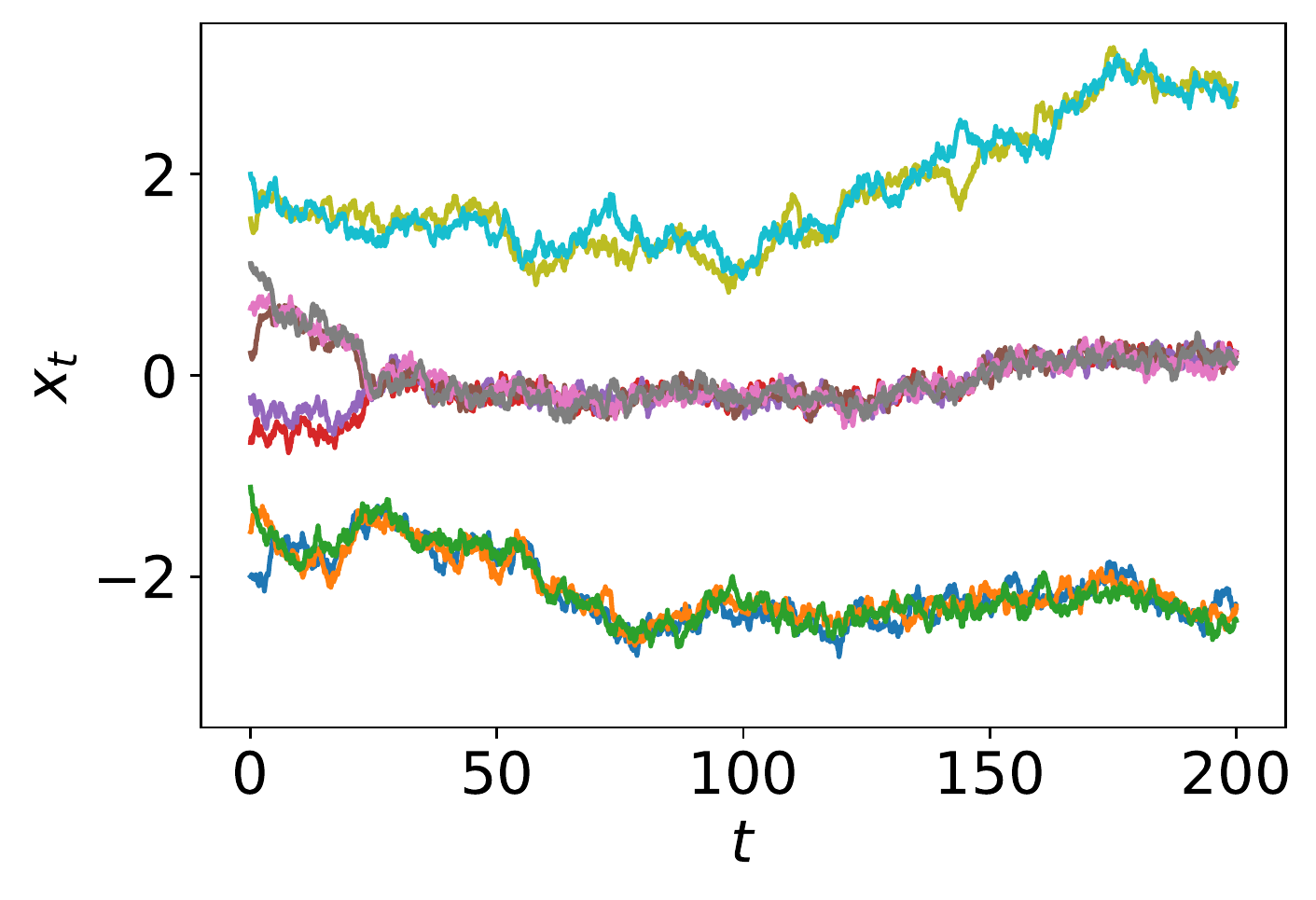}}
\subfloat[$N=20$.]{{\label{fig_12b}\includegraphics[width=.33\linewidth]{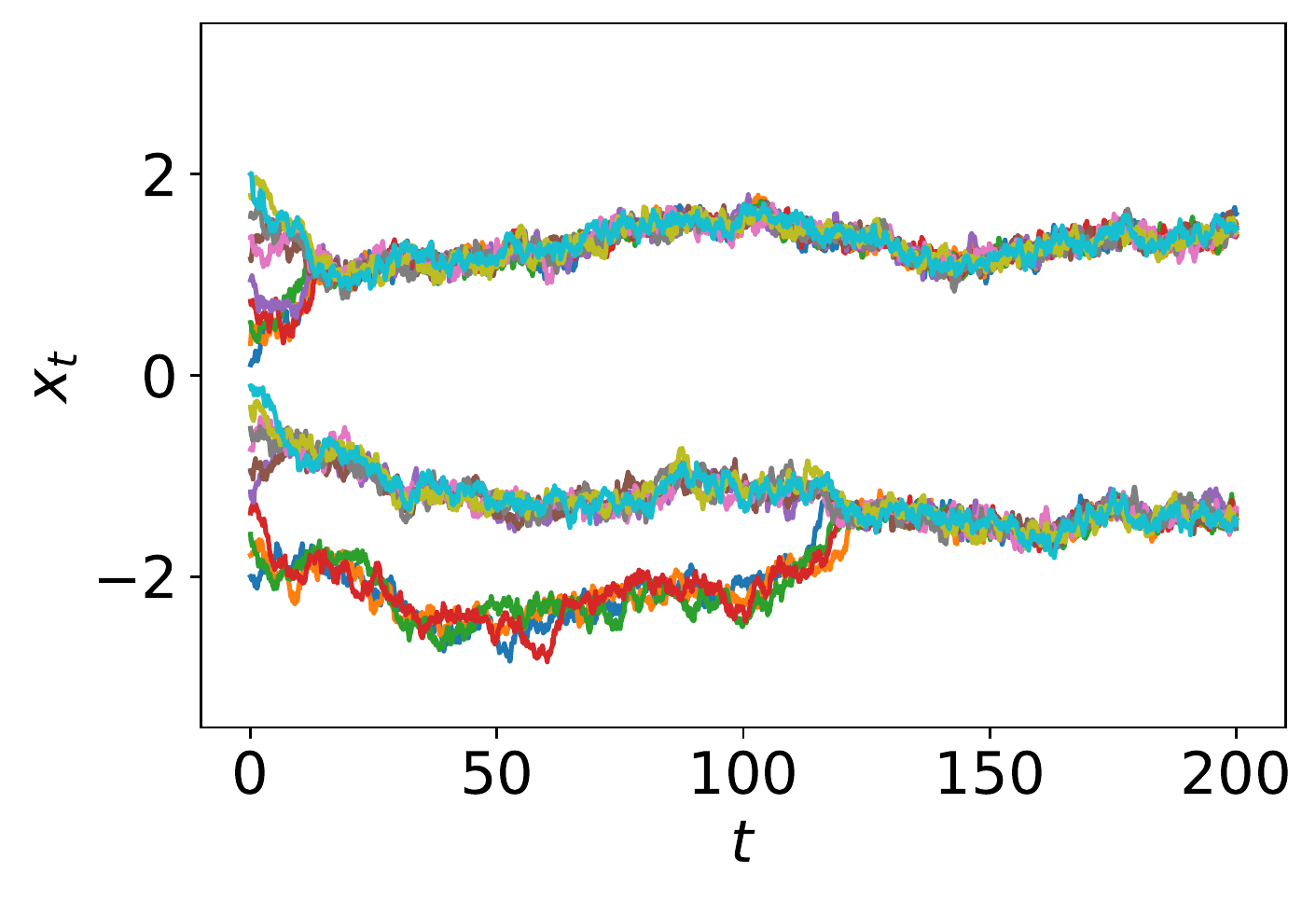}}} 
\subfloat[$N=50$.]{{\label{fig_12c}\includegraphics[width=.33\linewidth]{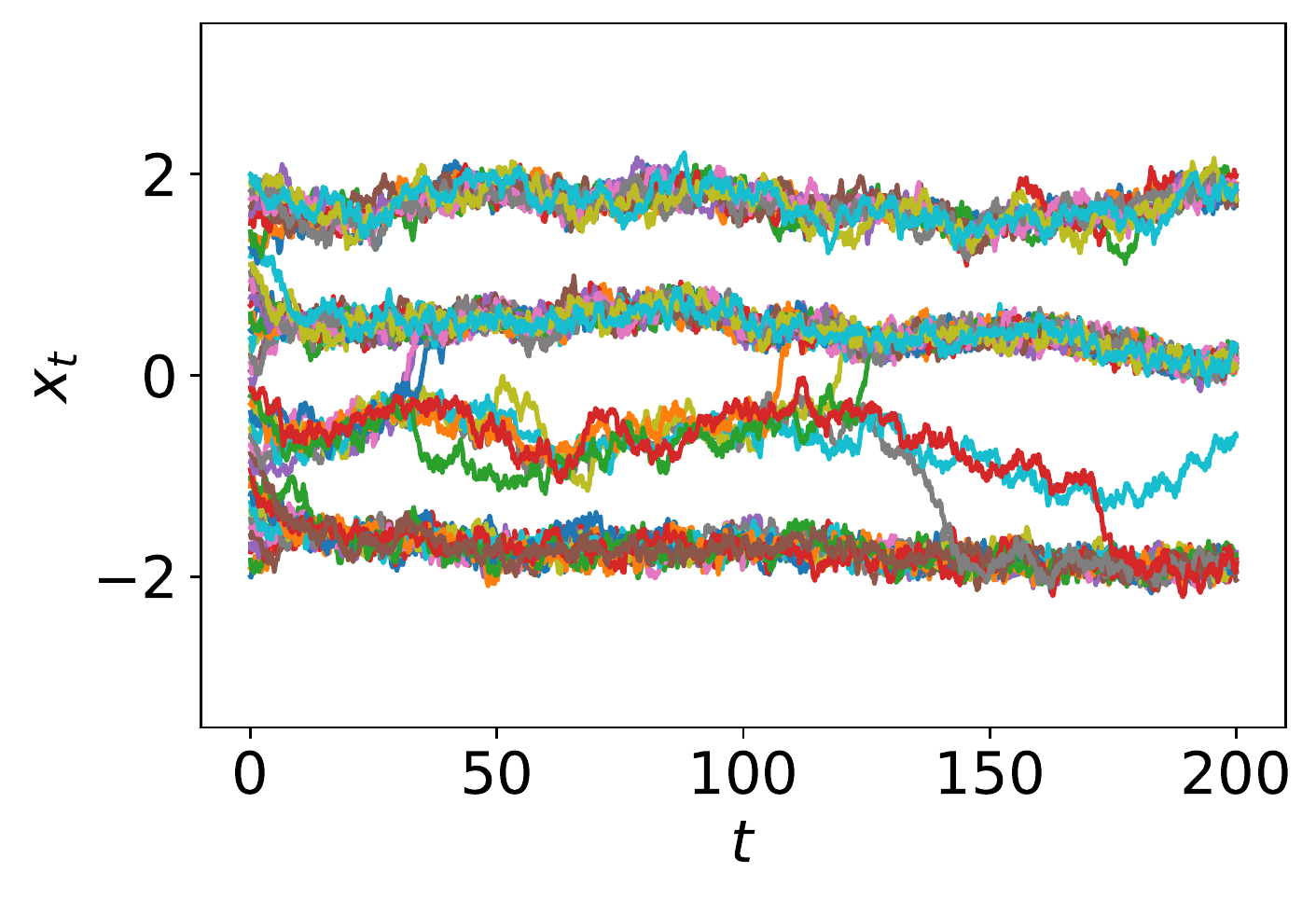}}}
\caption{Sample trajectories of the system of interacting particles for $N=\{10,20,50\}$. }
\label{fig12}
\end{figure}

Encouragingly, the vast majority of the online parameter estimates converge to within a small neighbourhood of the true value of the parameter, suggesting that it is indeed possible to estimate the range of the interaction kernel in an online fashion. As before, the performance of the online estimator improves as the number of particles is increased. We should remark that, in this case, the performance is highly dependent on the initial conditions of the particles. This should not come as a surprise; indeed, if the distance between particles is greater than the support of the interaction kernel, then the interaction kernel (and its gradient) are identically zero, and thus so too are all of the terms in the parameter update equation. Thus, the value of the parameter estimate will remain unchanged. We see this phenomenon in Figure \ref{fig13}, particularly when there are fewer particles.  

\begin{figure}[htbp]
\centering
\subfloat[$N=10$.]{\label{fig_13a}\includegraphics[width=.33\linewidth]{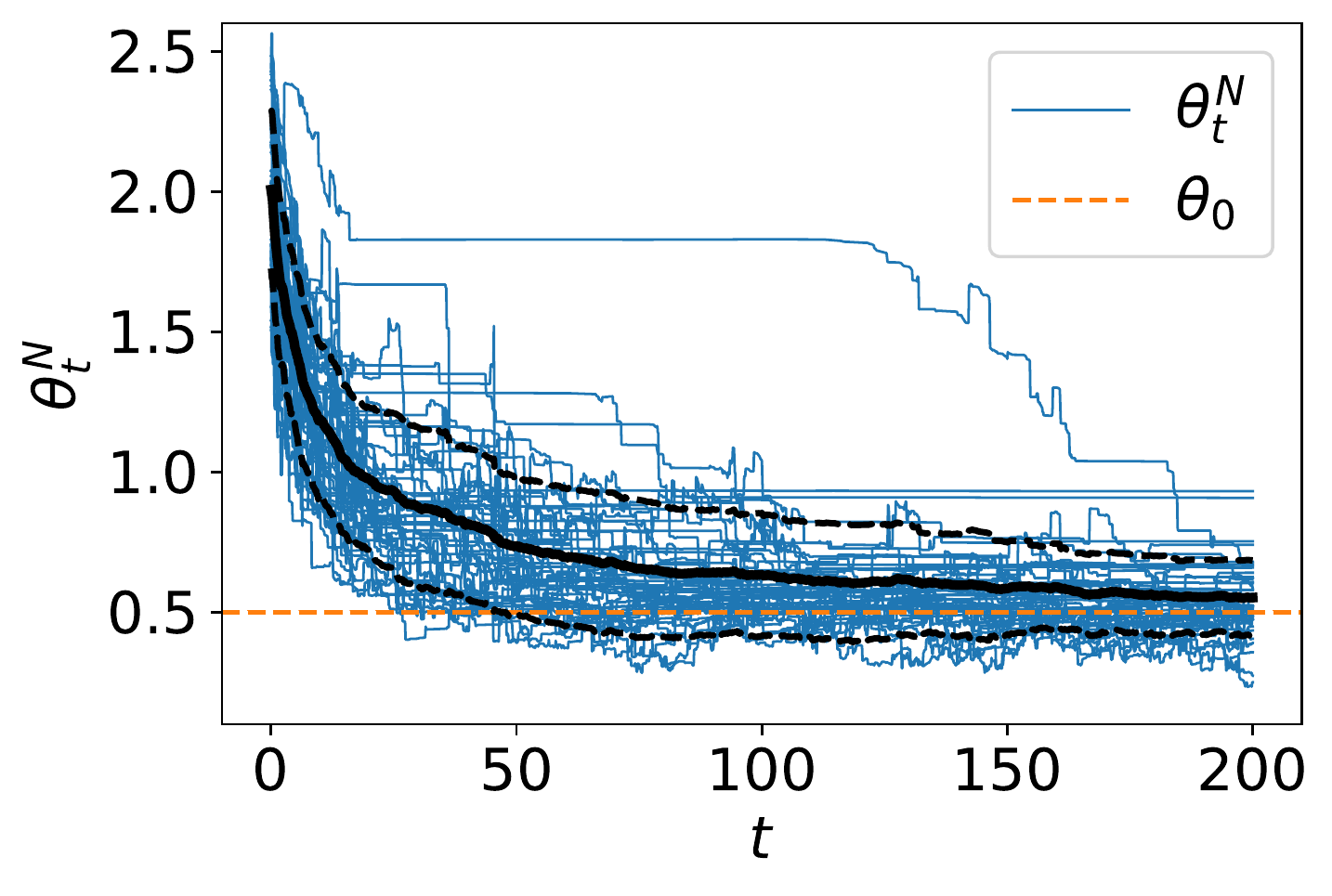}}
\subfloat[$N=20$.]{{\label{fig_13b}\includegraphics[width=.33\linewidth]{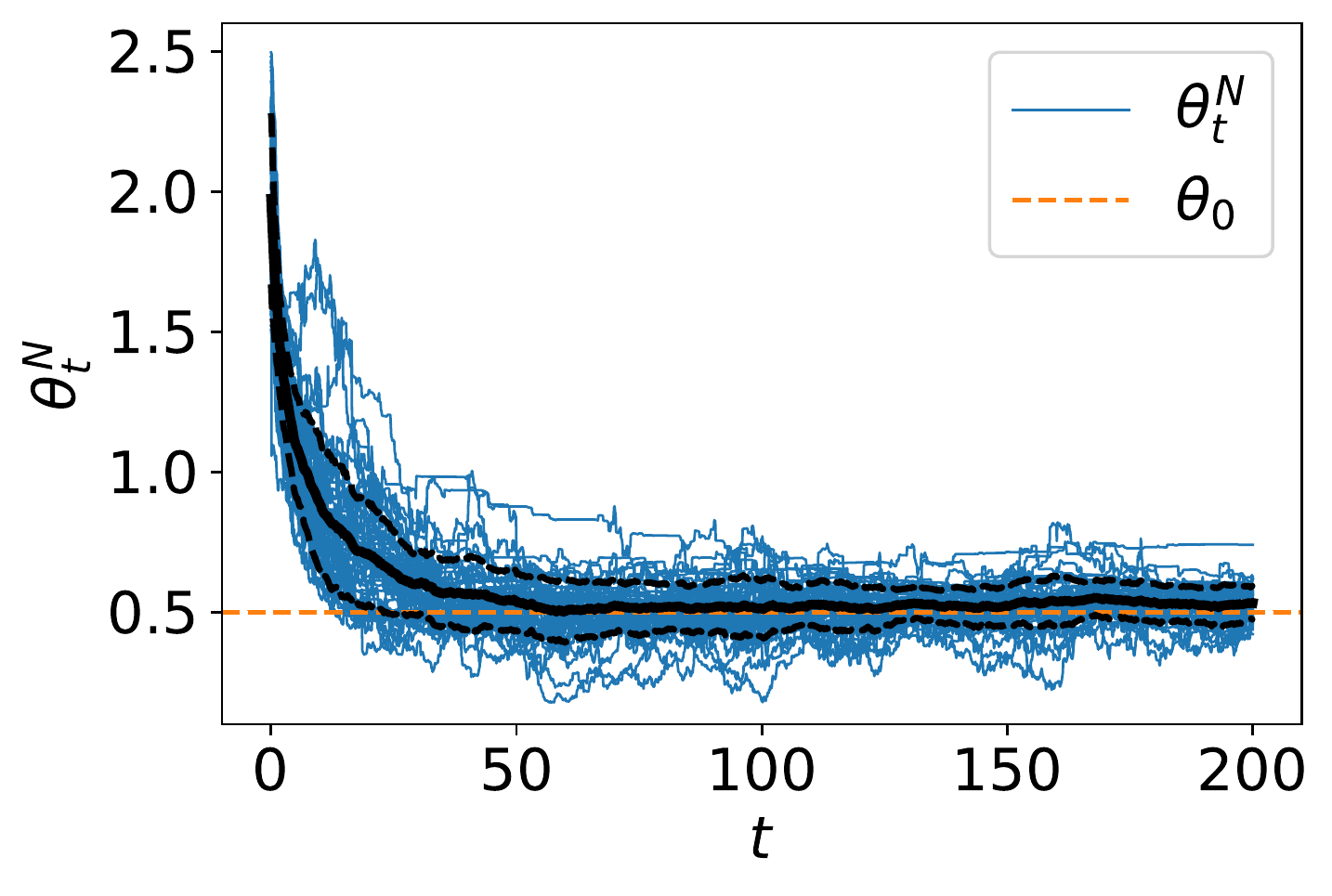}}} 
\subfloat[$N=50$.]{{\label{fig_13c}\includegraphics[width=.33\linewidth]{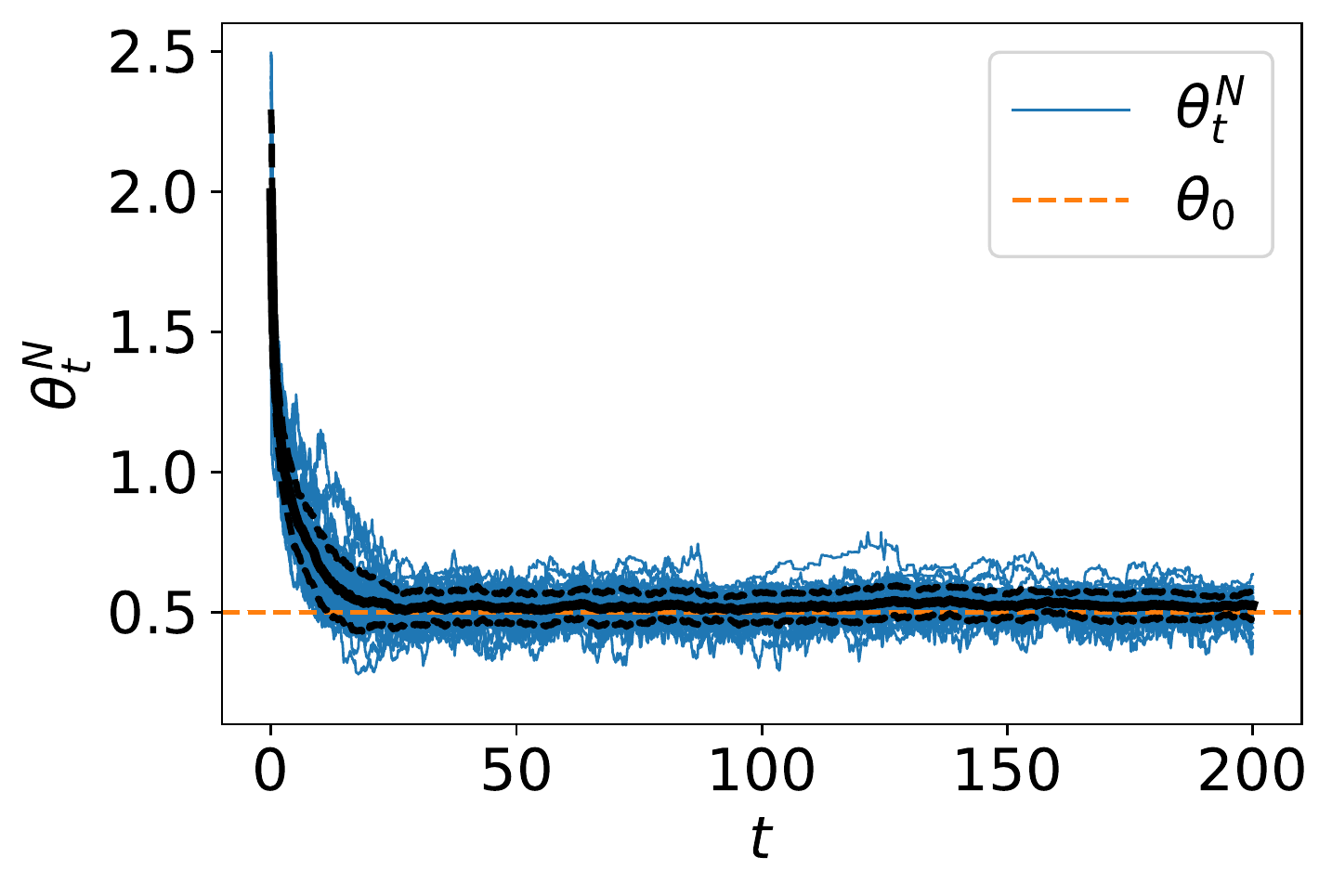}}}
\caption{Sequence on online parameter estimates (blue) for the range parameter $\theta_2$, for 50 different random initialisation $\theta_2^{0}\sim \mathcal{U}([1.5,2.5])$, and $N=\{10,20,50\}$. We also plot the true parameter value (orange), the mean online parameter estimate plus/minus one standard deviation (black: solid, dashed).}
\label{fig13}
\end{figure}

\section{Conclusions}
\label{sec:conclusions}
In this paper, we have considered the problem of parameter estimation for a stochastic McKean-Vlasov equation and the associated system of weakly interacting particles. We established consistency and asymptotic normality of the offline MLE for the IPS as the number of particles $N\rightarrow\infty$, extending classical results in \cite{Kasonga1990}. We also proposed an online estimator for the parameters of the stochastic McKean-Vlasov equation, and studied its asymptotic properties. 

Regarding directions for future research, in the offline case, it may be of interest to establish a non-asymptotic $\mathbb{L}^p$ convergence rate for the MLE in both the mean-field and long time regimes, extending the recent results in \cite{Chen2020} to a more general class of IPSs. In the online case, a natural extension of our results is to obtain a central limit theorem for the recursive estimator, extending the results in \cite{Sirignano2020a} to non-linear McKean-Vlasov diffusions. Alternatively, one could aim to extend our results to the case in which the diffusion coefficient is unknown, and must be estimated online (see \cite{Sirignano2017a} for online estimation of the diffusion coefficient in the linear case, and \cite{Huang2019} for offline estimation of the diffusion coefficient in IPSs). This is a particularly interesting problem given that, for a broad class of McKean-Vlasov SDEs, the uniqueness (or non-uniqueness) of the invariant measure(s) is known to depend on the magnitude of the noise coefficient (e.g., \cite{Herrmann2010,Herrmann2012,Tugaut2013}).


\appendix{

\section{Existing Results on the McKean-Vlasov SDE} \label{appendix_existing_results}
In this appendix, we present some existing results on the McKean-Vlasov SDE \eqref{MVSDE}, and the associated system of interacting particles \eqref{IPS}. For completeness, we also include the proofs of most of these results. These results are well known, and have appeared in a number of existing papers (e.g., \cite{Bolley2013,Cattiaux2008,Malrieu2001,Malrieu2003,Veretennikov2006}), albeit under conditions which are different to our own. Under our particular set of assumptions, these results can also be found in \cite{Bustos2008}.

\begin{proposition}[Moment Bounds] \label{prop_moment_bounds}
Assume that Conditions \ref{assumption_init} and \ref{assumption1}(i) - \ref{assumption2}(i) hold. Then, for all $N\in\mathbb{N}$, $i=1,\dots,N$, and for all $k\geq 0$, there exists $C_{k,\theta_0}>0$ such that 
\begin{align}
&\sup_{t\geq 0}\mathbb{E}_{\theta_0} ||x_t||^{k}\leq C_{k,\theta_0}\left(\int_{\mathbb{R}^d}||x||^{k}\mu_0(\mathrm{d}x)+1\right) \\
&\sup_{t\geq 0}\mathbb{E}_{\theta_0} ||x_t^{i,N}||^{k}\leq C_{k,\theta_0}\left(\int_{\mathbb{R}^d}||x||^{k}\mu_0(\mathrm{d}x)+1\right) \label{eq:bounds}
\end{align}
\end{proposition}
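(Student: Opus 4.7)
The plan is to derive the uniform moment bound via Itô's formula applied to a power-type Lyapunov function, followed by a Grönwall argument. It suffices to handle $k \geq 2$; the case $k \in [0, 2)$ follows from Jensen's inequality, and we may reduce to even integers by monotonicity of $L^p$ norms.

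For the McKean-Vlasov SDE, I would apply Itô's formula to $V(x) = \|x\|^{k}$. Taking expectations (the local martingale is a true martingale on any compact time interval thanks to standard existence-uniqueness and moment estimates for the McKean-Vlasov SDE, e.g.\ from Meleard), I get
\begin{equation}
\frac{d}{dt}\mathbb{E}_{\theta_0}\|x_t\|^k = k\,\mathbb{E}_{\theta_0}\bigl[\|x_t\|^{k-2}\langle x_t, B(\theta_0, x_t, \mu_t)\rangle\bigr] + \tfrac{k(k+d-2)}{2}\mathbb{E}_{\theta_0}\|x_t\|^{k-2}.
\end{equation}
I would split $B = b + \int \phi(\theta_0, \cdot, y)\mu_t(dy)$. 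Setting $x' = 0$ in the monotonicity condition B.1(ii) gives $\langle x, b(\theta_0, x)\rangle \leq -\alpha_{\theta_0}\|x\|^2 + \|b(\theta_0, 0)\|\|x\|$, while the global Lipschitz bound B.2(i) yields the pointwise estimate $\|\int \phi(\theta_0, x, y)\mu_t(dy)\| \leq \|\phi(\theta_0, 0, 0)\| + L_{\theta_0, 2}(\|x\| + \mu_t(\|\cdot\|))$.

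Multiplying by $\|x\|^{k-2}$, taking expectations, and handling the cross term $\mathbb{E}_{\theta_0}\|x_t\|^{k-1}\cdot \mathbb{E}_{\theta_0}\|x_t\| \leq \mathbb{E}_{\theta_0}\|x_t\|^{k}$ by Hölder, and absorbing terms such as $\mathbb{E}_{\theta_0}\|x_t\|^{k-1}$ and $\mathbb{E}_{\theta_0}\|x_t\|^{k-2}$ into $\varepsilon\mathbb{E}_{\theta_0}\|x_t\|^k + C_\varepsilon$ via Young's inequality, I expect a differential inequality
\begin{equation}
\frac{d}{dt}\mathbb{E}_{\theta_0}\|x_t\|^k \leq -k\lambda_{\theta_0}\,\mathbb{E}_{\theta_0}\|x_t\|^k + C_{k,\theta_0},
\end{equation}
with $\lambda_{\theta_0} := \alpha_{\theta_0} - 2L_{\theta_0, 2} > 0$ by Assumption B.2(i). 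Grönwall's lemma, together with Assumption A.1 to bound $\mathbb{E}_{\theta_0}\|x_0\|^k$, then yields the claimed uniform-in-time bound.

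For the IPS I would exploit exchangeability by working with the Lyapunov functional $V_t^N := \tfrac{1}{N}\sum_{i=1}^N \|x_t^{i,N}\|^k$, which satisfies $\mathbb{E}_{\theta_0}V_t^N = \mathbb{E}_{\theta_0}\|x_t^{1,N}\|^k$. After Itô, the interaction contribution appears as $\tfrac{k}{N^2}\sum_{i,j}\mathbb{E}_{\theta_0}[\|x_t^{i,N}\|^{k-2}\langle x_t^{i,N}, \phi(\theta_0, x_t^{i,N}, x_t^{j,N})\rangle]$, and the crucial observation is that the Lipschitz bound on $\phi$ controls this purely in terms of $\mathbb{E}_{\theta_0}V_t^N$, with constants independent of $N$. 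The remainder of the argument then parallels the non-linear SDE case.

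The main technical obstacle is the bookkeeping of the various cross terms generated by the factor $\|x\|^{k-2}$, ensuring that no residual $\|x_t\|^k$ contribution leaks through and erodes the strict negativity of the contraction coefficient $-k\lambda_{\theta_0}$. Should the direct calculation with $V(x) = \|x\|^k$ prove awkward (in particular at the origin for $k \in (2,4)$), I would instead use the smoother Lyapunov function $V(x) = (1+\|x\|^2)^{k/2}$, which produces cleaner second-order terms while preserving the same structural dissipation estimate.
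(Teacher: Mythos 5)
Your proposal is correct and follows essentially the same route as the paper's proof: Itô's formula applied to an (even) power of the norm, the monotonicity of $b$ combined with the Lipschitz bound on $\phi$ to produce the effective dissipation rate $\alpha_{\theta_0}-2L_{\theta_0,2}>0$, absorption of the lower-order and mean-field cross terms (your Hölder step $\mathbb{E}\|x_t\|^{k-1}\,\mathbb{E}\|x_t\|\leq\mathbb{E}\|x_t\|^{k}$ is an equivalent alternative to the paper's Young's inequality), and a Grönwall-type comparison yielding the uniform-in-time bound, with the IPS case handled by the same computation made $N$-uniform. No gaps to flag.
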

\begin{proof}
We follow the approach in {\cite[Lemma 2.3.1]{Bustos2008}} (see also \cite[Corollary 2.3]{Cattiaux2008}). We will establish the first bound in \eqref{eq:bounds}, with the second bound obtained similarly. We begin by applying It\^o's Lemma to $||\cdot||^{2k}$, and taking expectations, which yields
\begin{align}
\mathbb{E}_{\theta_0} \left[ ||x_t||^{2k} \right] = \mathbb{E}_{\theta_0}\left[||x_0||^{2k}\right] &+ 
\mathbb{E}_{\theta_0}\left[ \int_0^t 2k ||x_s||^{2k-2} \langle x_s, B(\theta_0,x_s,\mu_s)\rangle \mathrm{d}s\right] \\
&+[kd+2k(k-1)]\mathbb{E}_{\theta_0}\left[\int_0^t ||x_s||^{2k-2} \mathrm{d}s\right]  \label{exp}
\end{align}
Due to Conditions \ref{assumption1}(i) - \ref{assumption2}(i), there exists $C_{\theta_0}>0$ such that
$\langle x_s, B(\theta_0,x_s,\mu_s) \rangle \leq -(\alpha_{\theta_0} - L_{\theta_0,2}) ||x_s||^2 + C_{\theta_0}||x_s|| + L_{\theta_0,2}||x_s|| \mathbb{E}_{\theta_0}\left[ ||x_s||\right]$. 
It follows, defining $C_{k} = \frac{d}{2} + k-1 >0$, that
\begin{align}
\mathbb{E}_{\theta_0} \left[ ||x_t||^{2k} \right] &\leq \mathbb{E}_{\theta_0}\left[||x_0||^{2k}\right] -  
2k \mathbb{E}_{\theta_0}\left[ \int_0^t (\alpha_{\theta_0} - L_{\theta_0,2}) ||x_s||^{2k} \mathrm{d}s\right] \\
&+ 2k \mathbb{E}_{\theta_0}\left[ \int_0^t C_{\theta_0} ||x_s||^{2k-1} \mathrm{d}s\right] + 2k \mathbb{E}_{\theta_0}\left[ \int_0^t L_2||x_s||^{2k-1} \mathbb{E}_{\theta_0}\left[ ||x_s||\right]  \mathrm{d}s\right] \\
&+2k \mathbb{E}_{\theta_0}\left[\int_0^t C_{k} ||x_s||^{2k-2} \mathrm{d}s\right]  \nonumber
\end{align}
By Young's inequality, we then have 
\begin{align}
\mathbb{E}_{\theta_0} \left[ ||x_t||^{2k} \right] &\leq \mathbb{E}_{\theta_0}\left[||x_0||^{2k}\right] -  
2k \mathbb{E}_{\theta_0}\left[ \int_0^t (\alpha_{\theta_0} - L_{\theta_0,2}) ||x_s||^{2k} \mathrm{d}s\right] \\
&+ 2k \mathbb{E}_{\theta_0}\left[ \int_0^t C_{\theta_0} ||x_s||^{2k-1} \mathrm{d}s\right] + 2k \mathbb{E}_{\theta_0}\left[ \int_0^t L_2 \frac{2k-1}{2k} ||x_s||^{2k}  \mathrm{d}s\right] \\
&+ 2k \mathbb{E}_{\theta_0}\left[ \int_0^t L_2 \frac{1}{2k} ||x_s||^{2k}  \mathrm{d}s\right]+ 2k \mathbb{E}_{\theta_0}\left[\int_0^t C_{k} ||x_s||^{2k-2} \mathrm{d}s\right] 
\end{align}
It follows, writing $\lambda_{\theta_0} = \alpha_{\theta_0} - L_{\theta_0,2}$, that 
\begin{align}
\mathbb{E}_{\theta_0} \left[ ||x_t||^{2k} \right] &\leq \mathbb{E}_{\theta_0} \left[||x_0||^{2k}\right] +  
2k \left[ \int_0^t  \mathbb{E}_{\theta_0}\left[-\lambda_{\theta_0} ||x_s||^{2k}+C_{\theta_0} ||x_s||^{2k-1} +C_{k}  ||x_s||^{2k-2} \right] \mathrm{d}s \right] 
\end{align}
We next use the fact that, for any $a>0$, there exists $b$ such that $C_{\theta_0}||x||^{2k-1} + C_{k} ||x||^{2k-2} \leq a||x||^{2k} + b$. This implies, in particular, that there exists $C_{k,\theta_0,1}>0$, $C_{k,\theta_0,2}$ such that 
\begin{align}
\mathbb{E}_{\theta_0} \left[ ||x_t||^{2k} \right] &\leq \mathbb{E}_{\theta_0} \left[||x_0||^{2k}\right] +  
2k \left[ \int_0^t  -C_{k,\theta_0,1} \mathbb{E}_{\theta_0}\left[ ||x_s||^{2k} \right] + C_{k,\theta_0,2}  \mathrm{d}s \right].
\end{align}
 The result now follows via an extension of Gr\"onwall's inequality (see, e.g., \cite[Proposition 2.1]{Cattiaux2008}).
\end{proof}

\begin{proposition}[Asymptotic Moment Bounds] \label{lemma_invariant_moment_bounds}
Assume that Conditions \ref{assumption_init} and \ref{assumption1} - \ref{assumption2} hold. Then, for all $N\in\mathbb{N}$, for all $i=1,\dots,N$, and for all $k\in\mathbb{N}$, there exists a positive constant $K_{k,\theta_0}>0$ such that 
\begin{align}
\int_{\mathbb{R}^d} ||x||^k \mu_{\infty}(\mathrm{d}x)\leq K_{k,\theta_0}~~~,~~~\int_{(\mathbb{R}^d)^N} ||{x}^{i,N}||^k \hat{\mu}^{N}_{\infty}(\mathrm{d}\hat{x}^N)\leq K_{k,\theta_0}.
\end{align}
\end{proposition}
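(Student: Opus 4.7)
The plan is to deduce the bounds on the invariant measures directly from the uniform-in-time moment bounds established in Proposition \ref{prop_moment_bounds}, by passing to the limit $t\to\infty$ and using ergodicity.

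First I would recall that, under the stated conditions, Proposition \ref{prop_invariant_measure} provides the existence of a unique invariant measure $\mu_{\infty}$ for the McKean-Vlasov SDE \eqref{MVSDE}, together with exponential convergence $\mu_t^{\theta_0,x}\to\mu_{\infty}$ (in an appropriate Wasserstein sense) as $t\to\infty$, and similarly a unique invariant measure $\hat{\mu}^N_{\infty}$ on $(\mathbb{R}^d)^N$ for the IPS \eqref{IPS} with $\hat{\mu}^N_t\to\hat{\mu}^N_{\infty}$. In particular, this gives weak convergence of $\mu_t$ to $\mu_{\infty}$ and of the marginals of $\hat{\mu}^N_t$ to the corresponding marginals of $\hat{\mu}^N_{\infty}$.

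Next, I would fix $k\in\mathbb{N}$ and apply Proposition \ref{prop_moment_bounds} at order $k+1$. Since $\mu_0\in\mathcal{P}_{k+1}(\mathbb{R}^d)$ by Condition \ref{assumption_init}, there exists $C_{k+1,\theta_0}<\infty$ such that
\begin{equation}
\sup_{t\geq 0}\mathbb{E}_{\theta_0}\big[\|x_t\|^{k+1}\big]\leq C_{k+1,\theta_0}, \qquad \sup_{t\geq 0}\mathbb{E}_{\theta_0}\big[\|x_t^{i,N}\|^{k+1}\big]\leq C_{k+1,\theta_0}.
\end{equation}
This uniform bound on a strictly higher moment yields uniform integrability of the families $\{\|x_t\|^k\}_{t\geq 0}$ and $\{\|x_t^{i,N}\|^k\}_{t\geq 0}$ under $\mathbb{P}_{\theta_0}$.

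Finally, combining uniform integrability with the weak convergence $\mu_t\Rightarrow\mu_{\infty}$ (and likewise for the $i$-th marginal of $\hat{\mu}^N_t$), I would invoke the standard fact that weak convergence together with uniform integrability of $f(X_t)$ implies convergence of the expectations for any continuous $f$. Applied to $f(x)=\|x\|^k$, this gives
\begin{equation}
\int_{\mathbb{R}^d}\|x\|^k\mu_{\infty}(\mathrm{d}x)=\lim_{t\to\infty}\mathbb{E}_{\theta_0}\big[\|x_t\|^k\big]\leq C_{k,\theta_0},
\end{equation}
and similarly for $\hat{\mu}^N_{\infty}$. Setting $K_{k,\theta_0}=C_{k,\theta_0}$ yields the claim. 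As a lighter alternative, one could bypass uniform integrability entirely and simply apply the portmanteau theorem (Fatou-type direction) to the nonnegative continuous function $\|\cdot\|^k$ to obtain the one-sided inequality $\int\|x\|^k\mu_{\infty}(\mathrm{d}x)\leq\liminf_{t\to\infty}\mathbb{E}_{\theta_0}[\|x_t\|^k]$, which is all that is needed. There is no real obstacle here; the only subtlety is ensuring the mode of convergence provided by Proposition \ref{prop_invariant_measure} is strong enough to justify this passage to the limit, which is why I prefer to invoke the higher-moment bound from Proposition \ref{prop_moment_bounds} as a safety net.
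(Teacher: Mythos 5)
Your proposal is correct, but it reaches the conclusion by a different route than the paper. The paper works with time averages: it applies the ergodic theorem to obtain $\lim_{t\rightarrow\infty}\frac{1}{t}\int_0^t\|x_s\|^k\,\mathrm{d}s=\int\|x\|^k\mu_{\infty}(\mathrm{d}x)$ a.s., establishes uniform integrability of the family $\{\frac{1}{t}\int_0^t\|x_s\|^k\,\mathrm{d}s\}_{t>0}$ via Jensen and Proposition \ref{prop_moment_bounds}, and then interchanges limit and expectation to bound the invariant moment by $\sup_{t}\mathbb{E}_{\theta_0}[\|x_t\|^k]$. You instead exploit the exponential Wasserstein contraction in Proposition \ref{prop_invariant_measure} to get weak convergence of the one-time marginals $\mu_t\Rightarrow\mu_{\infty}$ (and of the single-particle marginals $\hat{\mu}_t^{(1),N}$, which the proposition does cover), and then pass to the limit in $\int\|x\|^k\,\mathrm{d}\mu_t$ either via uniform integrability from the order-$(k+1)$ bound or, more economically, via the Fatou/portmanteau lower-semicontinuity inequality $\int\|x\|^k\mu_{\infty}(\mathrm{d}x)\leq\liminf_t\mathbb{E}_{\theta_0}[\|x_t\|^k]$, which is all that the one-sided claim requires. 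Both arguments rest on the same two ingredients (Propositions \ref{prop_moment_bounds} and \ref{prop_invariant_measure}); yours avoids the ergodic theorem and the uniform-integrability step for time averages entirely, which makes it somewhat more elementary, while the paper's route fits the ergodic-theorem machinery it reuses elsewhere (e.g., in Lemmas \ref{lemma0} and \ref{lemma0_1}). One cosmetic point: since $\mu_{\infty}$ and $\hat{\mu}^N_{\infty}$ are independent of the initial law, the constant obtained from your argument, which a priori depends on $\mu_0$ through Proposition \ref{prop_moment_bounds}, can be made to depend only on $k$ and $\theta_0$ by fixing a convenient initial condition (say $x_0=0$); the paper glosses over the same issue with its supremum over $\mu_0$.
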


\begin{proof}
By Proposition \ref{prop_invariant_measure} (see below), the McKean-Vlasov SDE  \eqref{MVSDE} admits a unique equilibrium measure $\mu_{\infty}$ which is independent of $\mu_0$. By the ergodic theorem (e.g., \cite[Chapter X]{Revuz1999}), we thus have, for all $k\in\mathbb{N}$,
\begin{equation}
\lim_{t\rightarrow\infty}\frac{1}{t} \int_0^{t} ||x_s||^k\mathrm{d}s = \int_{\mathbb{R}^d}||x||^k\mu_{\infty}(\mathrm{d}x)~,~~~\mathrm{a.s.} \label{eq_b49}
\end{equation}
Using Jensen's inequality and the moment bounds in Proposition \ref{prop_moment_bounds}, we obtain uniform integrability of the family $\{\frac{1}{t}\int_0^t ||x_s||^k\mathrm{d}s\}_{t>0}$. It follows, taking expectations of \eqref{eq_b49}, using uniform integrability in order to interchange the limit and the expectation,  and once more making use of Proposition \ref{prop_moment_bounds}, that
\begin{align}
\int_{\mathbb{R}^d} ||x||^k \mu_{\infty}(\mathrm{d}x) 
&= \lim_{t\rightarrow\infty}\left[\frac{1}{t} \int_0^{t} \mathbb{E}_{\theta_0}\left[||x_s||^k\right]\mathrm{d}s \right] \\
&\leq \sup_{\mu_0\in\mathcal{P}_k(\mathbb{R}^d)} C_{k,\theta_0}\left(\int_{\mathbb{R}^d}||x||^{k}\mu_0(\mathrm{d}x)+1\right) \leq K_{k,\theta_0},
\end{align}
where in the final inequality we have used Condition \ref{assumption_init}. The proof of the bound for the IPS is identical, noting that all of the relevant results (Propositions \ref{prop_moment_bounds} and \ref{prop_invariant_measure}) apply to both the McKean-Vlasov SDE and the IPS.  
\end{proof}

\begin{proposition}[Unique Invariant Measure]
\label{prop_invariant_measure}
Assume that Conditions \ref{assumption_init} and \ref{assumption1} - \ref{assumption2} hold. Then, for $\theta=\theta_0$, the McKean-Vlasov SDE \eqref{MVSDE} and IPS \eqref{IPS} admit unique invariant measures $\smash{\mu_{\infty}}$ and $\smash{\hat{\mu}_{\infty}^{N}}$ which are independent of the initial conditions $\mu_0$ and $\smash{(\hat{\mu}_0)^{\otimes N}}$. Moreover, writing $\lambda_{\theta_0} = \alpha_{\theta_0} - 2L_{\theta_0,2}$, and $\smash{\hat{\mu}_t^{(k),N}}$ for the law of a subset of $1\leq k\leq N$ interacting particles, the following contraction rates hold
\begin{equation}
\mathbb{W}_2(\mu_t,\mu_{\infty})\leq e^{-\lambda_{\theta_0} t} \mathbb{W}_2(\mu_0,\mu_{\infty})~~~,~~~\mathbb{W}_2(\hat{\mu}_t^{(k),N},\hat{\mu}_{\infty}^{(k),N})\leq e^{-\lambda_{\theta_0} t}\mathbb{W}_2((\mu_0)^{\otimes k},\hat{\mu}_{\infty}^{(k),N}). \label{eq:ergodic_bounds}
\end{equation}
\end{proposition}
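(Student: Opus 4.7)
The strategy is to establish the Wasserstein contraction estimates in \eqref{eq:ergodic_bounds} first via a synchronous coupling argument, and then to deduce existence and uniqueness of the invariant measures by a Banach fixed-point argument on the complete metric space $(\mathcal{P}_2(\mathbb{R}^d), \mathbb{W}_2)$ (respectively $(\mathcal{P}_2((\mathbb{R}^d)^N), \mathbb{W}_2)$).

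For the McKean-Vlasov SDE, I would fix two initial laws $\mu_0, \tilde{\mu}_0 \in \mathcal{P}_2(\mathbb{R}^d)$ and an optimal $\mathbb{W}_2$-coupling $(x_0, \tilde{x}_0)$ thereof, then run two copies $x_t, \tilde{x}_t$ of the MVSDE \eqref{MVSDE} driven by the \emph{same} Brownian motion (so the martingale part of the difference vanishes). Applying It\^o's formula to $\|x_t - \tilde{x}_t\|^2$ and taking expectations yields
\begin{align}
\frac{\mathrm{d}}{\mathrm{d}t}\mathbb{E}\|x_t - \tilde{x}_t\|^2 &= 2\mathbb{E}\langle x_t - \tilde{x}_t, b(\theta_0,x_t) - b(\theta_0,\tilde{x}_t)\rangle \\
&\quad + 2\mathbb{E}\big\langle x_t - \tilde{x}_t, \textstyle\int\phi(\theta_0,x_t,y)\mu_t(\mathrm{d}y) - \int\phi(\theta_0,\tilde{x}_t,y)\tilde{\mu}_t(\mathrm{d}y)\big\rangle.
\end{align}
The first term is bounded above by $-2\alpha_{\theta_0}\mathbb{E}\|x_t - \tilde{x}_t\|^2$ by Condition \ref{assumption1}(ii). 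For the second, I would use the ``independent copy'' trick: writing $\int\phi(\theta_0,x_t,y)\mu_t(\mathrm{d}y) = \mathbb{E}^y[\phi(\theta_0,x_t,y_t)]$ for an independent synchronously-coupled pair $(y_t, \tilde{y}_t)$, the Lipschitz property in Condition \ref{assumption2}(i) together with Cauchy--Schwarz gives a bound of $4L_{\theta_0,2}\mathbb{E}\|x_t-\tilde{x}_t\|^2$. Combining and using $\lambda_{\theta_0} = \alpha_{\theta_0} - 2L_{\theta_0,2} > 0$, Gr\"onwall's inequality yields $\mathbb{E}\|x_t-\tilde{x}_t\|^2 \leq e^{-2\lambda_{\theta_0}t}\mathbb{E}\|x_0-\tilde{x}_0\|^2$. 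Taking the infimum over initial couplings then gives the first bound in \eqref{eq:ergodic_bounds}.

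For the IPS, the same synchronous coupling on $(\mathbb{R}^d)^N$ is applied, but now one works with $\sum_{i=1}^N \mathbb{E}\|x_t^{i,N} - \tilde{x}_t^{i,N}\|^2$. The dissipativity estimate applies particle-by-particle, giving $-2\alpha_{\theta_0}\sum_i\mathbb{E}\|x_t^{i,N}-\tilde{x}_t^{i,N}\|^2$, while for the interaction term the key manipulation is
\begin{equation}
\frac{1}{N}\sum_{i,j=1}^N \|x_t^{i,N}-\tilde{x}_t^{i,N}\|\cdot\|x_t^{j,N}-\tilde{x}_t^{j,N}\| \leq \sum_{i=1}^N \|x_t^{i,N}-\tilde{x}_t^{i,N}\|^2
\end{equation}
by Cauchy--Schwarz, producing an overall bound of $4L_{\theta_0,2}\sum_i\mathbb{E}\|x_t^{i,N}-\tilde{x}_t^{i,N}\|^2$. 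Gr\"onwall again yields an $e^{-2\lambda_{\theta_0}t}$ decay of the full sum, which dominates the $k$-marginal $\mathbb{W}_2^2$-distance via the synchronous coupling projected onto the first $k$ coordinates.

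Given these contraction estimates, I would conclude as follows: the semigroup $P_t$ acting on $\mathcal{P}_2$ by $P_t\mu_0 = \mu_t$ is then a strict $\mathbb{W}_2$-contraction for each fixed $t>0$, and by Proposition \ref{prop_moment_bounds} the orbit $\{P_t\mu_0\}_{t\geq 0}$ remains bounded in $\mathcal{P}_2$. It follows that $(P_n\mu_0)_{n\in\mathbb{N}}$ is Cauchy in the complete space $(\mathcal{P}_2,\mathbb{W}_2)$, hence converges to a limit $\mu_\infty$ which is a fixed point of $P_t$ for all dyadic $t$, and then for all $t\geq 0$ by continuity. Uniqueness is immediate from the contraction: any two fixed points $\mu,\mu'$ satisfy $\mathbb{W}_2(\mu,\mu') \leq e^{-\lambda_{\theta_0}t}\mathbb{W}_2(\mu,\mu')$ for all $t$. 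Independence from $\mu_0$ also follows. The same argument on $(\mathcal{P}_2((\mathbb{R}^d)^N),\mathbb{W}_2)$ gives $\hat{\mu}_\infty^N$.

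The main subtle point will be the interaction term in the MVSDE estimate: one must be careful that the bound produced by the Lipschitz estimate on $\phi$ is expressed in terms of $\mathbb{E}\|x_t - \tilde{x}_t\|^2$ rather than something that grows with $\mathbb{W}_2(\mu_t,\tilde{\mu}_t)$ in a way that only closes via an additional fixed-point step. The independent copy trick, combined with $\mathbb{W}_1 \leq \mathbb{W}_2 \leq (\mathbb{E}\|x_t - \tilde{x}_t\|^2)^{1/2}$ under the synchronous coupling, resolves this and gives exactly the constant $2L_{\theta_0,2}$ needed to produce $\lambda_{\theta_0} = \alpha_{\theta_0} - 2L_{\theta_0,2}$.
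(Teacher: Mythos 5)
Your proposal is correct and follows essentially the same route as the paper: a synchronous coupling combined with the monotonicity of $b$ and the Lipschitz bound on $\phi$ (via the independent-copy trick), followed by Gr\"onwall, yields the $\mathbb{W}_2$-contraction, and existence, uniqueness and independence of the initial law for $\mu_{\infty}$ and $\hat{\mu}_{\infty}^{N}$ then follow from a fixed-point/Cauchy argument on $(\mathcal{P}_2,\mathbb{W}_2)$, exactly as in the paper's proof. The only detail worth adding is that passing from the decay of $\sum_{i=1}^N\mathbb{E}\|x_t^{i,N}-\tilde{x}_t^{i,N}\|^2$ to the stated $k$-marginal bound uses exchangeability of the particles (so each coordinate contributes equally and the right-hand side involves only the $k$-marginal initial distance), a point the paper also leaves implicit.
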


\begin{proof}
We follow the approach in {\cite[Theorem 2.3.3]{Bustos2008}} (see also \cite[Theorem 3.1]{Wang2018}). We prove the first inequality in \eqref{eq:ergodic_bounds}, with the second derived in a similar fashion. Let $({x}_t^{x})_{t\geq 0}$ and $({x}_t^{y})_{t\geq 0}$ be solutions of \eqref{MVSDE}, starting from ${x}$ and ${y}$, respectively. We will write $\mu_{t}$ for the law of $x_t^{x}$, and ${\nu}_t$ for the law of $x_t^{y}$. By It\^o's formula, we have
\begin{equation}
\frac{\mathrm{d}}{\mathrm{d}t} \mathbb{E}_{\theta_0}\left[ ||x_t^{x} - x_t^{y} ||^2\right] = 2 \mathbb{E}_{\theta_0}\left[ \left\langle x_t^{x} - x_t^{y}, B(\theta_0,x_t^{x},\mu_t) - B(\theta_0,x_t^{y},\nu_t) \right\rangle \right].
\end{equation}
Using Conditions \ref{assumption1} and \ref{assumption2}, it follows that 
\begin{equation}
\frac{\mathrm{d}}{\mathrm{d}t} \mathbb{E}_{\theta_0}\left[ ||x_t^{x} - x_t^{y} ||^2\right] \leq -2(\alpha_{\theta_0} - 2L_{\theta_0,2}) \mathbb{E}_{\theta_0}\left[ ||x_t^{x} - x_t^{y}||^2\right],
\end{equation}
and thus, writing $\lambda_{\theta_0} = \alpha_{\theta_0} - 2L_{\theta_0,2}$, that $\mathbb{E}_{\theta_0}[ ||x_t^{x} - x_t^{y} ||^2] \leq e^{-2\lambda_{\theta_0} t}||{x}-{y}||^2$. Let $\pi_0$ be an arbitrary coupling of $\mu_0$ and $\nu_0$. By Condition \ref{assumption_init}, we have that $\mu_0,\nu_0\in\mathcal{P}_2(\mathbb{R}^d)$. We thus have from the previous inequality that
\begin{equation}
\int_{\mathbb{R}^d} \mathbb{E}_{\theta_0}\left[||x_t^{x} - x_t^{y}||^2 \right] \pi_0(\mathrm{d}x,\mathrm{d}y) \leq e^{-2\lambda_{\theta_0} t} \int_{\mathbb{R}^d} ||x-y||^2\pi_0(\mathrm{d}x,\mathrm{d}y).
\end{equation}
It follows, taking the infimum over all coupling measures $\pi_0$, and taking square roots, that 
\begin{equation}
\mathbb{W}_2({\mu}_t,{\nu}_t) \leq e^{-\lambda_{\theta_0} t}\mathbb{W}_2({\mu}_0,{\nu}_0). \label{eq:contraction}
\end{equation}
We now establish the existence of an equilibrium measure for the McKean-Vlasov SDE. Let $(x_t)_{t\geq 0}$ denote a solution of \eqref{MVSDE} with initial law $\mu_0$, and $(y_t)_{t\geq 0}$ the solution of \eqref{MVSDE} with initial law $\mathcal{L}(x_s)$, for some $s>0$. Using \eqref{eq:contraction}, homogeneity, and Proposition \ref{prop_moment_bounds}, there exists $K>0$ such that
\begin{equation}
\mathbb{W}_2(\mathcal{L}({x}_t),\mathcal{L}({x}_{t+s})) = \mathbb{W}_2(\mathcal{L}({x}_t),\mathcal{L}({y}_t)) \leq e^{-\lambda_{\theta_0} t} \mathbb{W}_2({\mu}_0,\mathcal{L}({x}_s)) \leq K_{\theta_0}e^{-\lambda_{\theta_0} t}.
\end{equation} 
Thus, $\mathcal{L}({x}_t)$ converges to a unique limit ${\mu}_{\infty}$, which is independent of the initial measure. We will now demonstrate that this equilibrium measure is also the unique invariant measure of the McKean-Vlasov SDE. Let $0<t_1<t_2$. Consider the functional $P_{t_1}:\mathcal{P}(\mathbb{R}^d)\rightarrow\mathcal{P}(\mathbb{R}^d)$, defined such that $P_{t_1}(\mu)$ is the law of the solution of the McKean-Vlasov SDE \eqref{MVSDE} at time $t_1$ when the initial law is $\mu$. Similarly, define $P_{t_2}:\mathcal{P}(\mathbb{R}^d)\rightarrow\mathcal{P}(\mathbb{R}^d)$.
Following \eqref{eq:contraction}, each of these functionals admits a unique fixed point. We will denote these fixed points by $\smash{\bar{\mu}_{i}\in\mathcal{P}_2(\mathbb{R}^d)}$. Note that, by homogeneity of \eqref{MVSDE}, that $P_{nt_i}(\bar{\mu}_{i}) = \bar{\mu}_{i}$ for all $n\in\mathbb{N}$. It follows, again using \eqref{eq:contraction}, that
\begin{align}
\mathbb{W}_2(\bar{\mu}_1,{\mu}_{\infty}) &= \mathbb{W}_2(P_{nt_1}(\bar{\mu}_{1},{\mu}_{\infty}) \leq K_{\theta_0,1}e^{-\lambda_{\theta_0} nt_1} \\
\mathbb{W}_2(\bar{\mu}_2,{\mu}_{\infty}) &= \mathbb{W}_2(P_{nt_2}(\bar{\mu}_{2},{\mu}_{\infty})\leq K_{\theta_0,2}e^{-\lambda_{\theta_0} nt_2} 
\end{align}
Taking the limit as $N\rightarrow\infty$, and using the existence of a unique limit, we have that $\bar{\mu}_1 = \bar{\mu}_2 = {\mu}_{\infty}$. Since $t_1$ was chosen arbitrarily, it follows that $\mu_{\infty}$ is the unique invariant measure in $[t_1,\infty)$. It remains to check that $\mu_{\infty}$ that this is also true in $(0,t_1)$. This is indeed the case. Let $(x_t)_{t\geq 0}$ be a solution of \eqref{MVSDE} starting from $\bar{\mu}_1$, and write $\mathcal{L}(x_t)$ for the law of this solution. By construction, we have that $\mathcal{L}(x_{t_1}) = \bar{\mu}_1 = \bar{\mu}_2 = \mu_{\infty}$. Let $(y_t)_{t\geq 0}$ be a solution of \eqref{MVSDE} starting from $\mu_{t_1} = \mu_{\infty}$, with law $\mathcal{L}(y_t) = \mathcal{L}(x_{t+t_1})$.  Since $\mu_{\infty}$ is the unique invariant measure in $[t_1,\infty)$, and $\mathcal{L}(x_{t_1}) = \mu_{\infty}$, we must have $\mathcal{L}(x_{t+t_1}) = \mu_{\infty}$ for $t\in[0,\infty)$. But this implies that $\mathcal{L}(y_t) = \mathcal{L}(x_{t+t_1}) = \mu_{\infty}$ for all $t\in(0,\infty)$. This concludes the proof. 

The bound in \eqref{eq:ergodic_bounds} now follows straightforwardly by setting ${\nu}_0 = {\mu}_{\infty}$ in \eqref{eq:contraction}, and using the fact that $\mu_{\infty}$ is an invariant measure. 
\end{proof}

\begin{proposition}[Propagation of Chaos] \label{prop_chaos}
Let $x^{i}= (x_t^{i})_{t\geq 0}$ be $N$ independent copies of the solutions of \eqref{MVSDE}
driven by independent Brownian motions $w^{i} = (w_t^{i})_{t\geq 0}$. Assume that Conditions \ref{assumption_init} and \ref{assumption1} - \ref{assumption2} hold. Then there exist $0<C_{\theta_0}<\infty$, independent of time, such that  
\begin{equation}
\sup_{t\geq 0} \mathbb{E}_{\theta_0}\left[||x_t^{i,N}-x_t^{i}||^2\right]\leq \frac{C_{\theta_0}}{N}.
\end{equation}
\end{proposition}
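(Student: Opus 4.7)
My plan is to establish the bound via a synchronous coupling argument, driving each interacting particle $x_t^{i,N}$ and its independent McKean--Vlasov counterpart $x_t^{i}$ by the \emph{same} Brownian motion $w_t^{i}$ and starting both from the same initial condition drawn from $\mu_0$. Under this coupling, the diffusion terms cancel in the difference $x_t^{i,N}-x_t^{i}$, so applying It\^o's formula to $\|x_t^{i,N}-x_t^{i}\|^2$ yields a purely pathwise differential inequality driven by the drift difference.

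The key step is to decompose the drift difference $B(\theta_0,x_t^{i,N},\mu_t^N)-B(\theta_0,x_t^{i},\mu_t)$ into three pieces using \eqref{bigB}: (i) the confinement difference $b(\theta_0,x_t^{i,N})-b(\theta_0,x_t^{i})$, which by the monotonicity in Condition \ref{assumption1}(ii) contributes the dissipative term $-\alpha_{\theta_0}\|x_t^{i,N}-x_t^{i}\|^2$; (ii) the ``Lipschitz'' part of the interaction, $\frac{1}{N}\sum_{j=1}^N[\phi(\theta_0,x_t^{i,N},x_t^{j,N})-\phi(\theta_0,x_t^{i},x_t^{j})]$, which by Condition \ref{assumption2}(i) and Cauchy--Schwarz is bounded in absolute value by $L_{\theta_0,2}(\|x_t^{i,N}-x_t^{i}\|+\frac{1}{N}\sum_j\|x_t^{j,N}-x_t^{j}\|)$; and (iii) a fluctuation part $\frac{1}{N}\sum_{j=1}^N\phi(\theta_0,x_t^{i},x_t^{j})-\int\phi(\theta_0,x_t^{i},y)\mu_t(dy)$, which, conditionally on $x_t^{i}$, is an empirical average of i.i.d.\ centred random variables.

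Taking expectations and summing over $i$, the symmetry of the coupling (all $(x_t^{i,N},x_t^{i})$ are exchangeable) lets me reduce to a single scalar quantity $u(t)=\mathbb{E}_{\theta_0}\|x_t^{1,N}-x_t^{1}\|^2$. The dissipation contributes $-2\alpha_{\theta_0}u(t)$, the Lipschitz interaction contributes at most $4L_{\theta_0,2}u(t)$ after applying Young's inequality and exchangeability, and the fluctuation part can be handled by squaring, taking the inner product with $x_t^{i,N}-x_t^i$, and using a variance estimate: the conditional variance of the empirical mean is $O(1/N)$ because the summands are conditionally i.i.d., with the second moment controlled by Condition \ref{assumption3}(ii) combined with the uniform moment bounds of Proposition \ref{prop_moment_bounds} and the asymptotic moment bounds of Proposition \ref{lemma_invariant_moment_bounds}. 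The net result is a differential inequality of the form
\begin{equation}
\tfrac{d}{dt}u(t)\leq -2(\alpha_{\theta_0}-2L_{\theta_0,2})u(t)+\frac{C_{\theta_0}}{N},
\end{equation}
which, since $\lambda_{\theta_0}=\alpha_{\theta_0}-2L_{\theta_0,2}>0$ by Condition \ref{assumption2}(i), integrates (via Gr\"onwall) to give the desired uniform-in-time bound $u(t)\leq C_{\theta_0}/(2\lambda_{\theta_0}N)$.

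The main obstacle I anticipate is handling the fluctuation term (iii) correctly: one must be careful that the expansion of $\|x_t^{i,N}-x_t^i\|^2$ produces an inner product with this fluctuation, so an additional Young's inequality is needed to separate it from the dissipative term, and only then does the conditional-variance computation deliver the $1/N$. Controlling the second moment of $\phi(\theta_0,x_t^i,x_t^j)$ uniformly in $t$ relies critically on the uniform moment bounds from Proposition \ref{prop_moment_bounds} together with the polynomial growth assumption in Condition \ref{assumption3}(ii); once these are in place, the absorption of $4L_{\theta_0,2}u(t)$ into the $2\alpha_{\theta_0}u(t)$ dissipation (using the strict inequality $2L_{\theta_0,2}<\alpha_{\theta_0}$) is what ensures the constant $C_{\theta_0}$ is genuinely independent of time.
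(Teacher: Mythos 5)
Your argument is correct and takes essentially the same route as the paper's proof: synchronous coupling through the shared Brownian motions $w^{i}$, It\^o's formula applied to $\|x_t^{i,N}-x_t^{i}\|^2$, the decomposition of the drift difference into the monotone confinement part, the Lipschitz interaction part, and a centred empirical fluctuation whose conditional variance is $O(1/N)$, followed by a Gr\"onwall argument that uses $\lambda_{\theta_0}=\alpha_{\theta_0}-2L_{\theta_0,2}>0$ to make the constant uniform in time. The only cosmetic differences are that the paper closes the fluctuation term by Cauchy--Schwarz, keeping a $\mathbb{E}[\|x_t^{i,N}-x_t^{i}\|^2]^{1/2}N^{-1/2}$ term and invoking the Gr\"onwall variant of Malrieu, rather than your Young's-inequality absorption, and that the second moment of the fluctuation needs only the global Lipschitz property in Condition \ref{assumption2} together with Proposition \ref{prop_moment_bounds}, so your appeal to Condition \ref{assumption3}(ii) (which is not among the proposition's hypotheses) is unnecessary.
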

\begin{proof}
We follow the approach in \cite[Lemma 2.4.1]{Bustos2008} (see also \cite[Theorem 3.3]{Malrieu2001}, \cite[Theorem 3.1]{Cattiaux2008}). Let $({x}_t)_{t\geq 0}$ be a solution of \eqref{MVSDE}, independent of $(x_t^{i})_{t\geq 0}$ for all $i=1,\dots,N$. Using It\^o's formula, 
and Condition \ref{assumption1}, we have
\begin{align}
\mathbb{E}_{\theta_0}\left[||x_t^{i,N}-x_t^{i}||^2\right] &\leq  -2\alpha_{\theta_0} \int_{0}^t \mathbb{E}_{\theta_0}\left[||x_s^{i}-x_s^{i,N}||^2\right] \mathrm{d}s \\
&+ 2\int_0^t \mathbb{E}_{\theta_0} \bigg[ ||x_s^{i,N} - x_s^{i}||\hspace{.5mm} \underbrace{|| \frac{1}{N} \sum_{j=1}^n \phi(\theta_0,x_s^{i,N},x_s^{j,N}) - \int_{\mathbb{R}^d} \phi(\theta_0,x_s^{i},y)\mu_s(\mathrm{d}y)||}_{\psi_s(N)} \bigg] \hspace{-5mm}
\end{align}
Meanwhile, using Condition \ref{assumption2}, and the triangle inequality, we then have 
\begin{equation}
\psi_s(N) \leq L_{\theta_0,2} \big[ ||x_s^{i,N} - x_{s}^{i} || + \frac{1}{N} \sum_{j=1}^N ||x_s^{j,N} - x_s^{j}||\big] + || \frac{1}{N} \sum_{j=1}^N \phi(\theta_0,x_s^{i},x_s^{j}) - \int_{\mathbb{R}^d} \phi(\theta_0,x_s^{i},y)\mu_s(\mathrm{d}y)||.
\end{equation}
Using this bound, the Hold\"er inequality, and that the particles are identically distributed, it follows that
\begin{align}
&\mathbb{E}_{\theta_0}\left[||x_t^{i,N}-x_t^{i}||^2\right] \leq  -2(\alpha_{\theta_0}-2L_{\theta_0,2}) \int_{0}^t \mathbb{E}_{\theta_0}\left[||x_s^{i}-x_s^{i,N}||^2\right] \mathrm{d}s \\
&\hspace{4mm}+ 2\int_0^t \mathbb{E}_{\theta_0} [ ||x_s^{i,N} - x_s^{i}||^2]^{\frac{1}{2}} \mathbb{E}_{\theta_0}[\big|\big| \frac{1}{N} \sum_{j=1}^N \phi(\theta_0,x_s^{i},x_s^{j}) - \int_{\mathbb{R}^d} \phi(\theta_0,x_s^{i},y)\mu_s(\mathrm{d}y)\big|\big|^2]^{\frac{1}{2}}\mathrm{d}s \label{eqbound}
\end{align}
Let $\xi_s^{j} = \phi(\theta_0,x_s^{i},x_s^{j}) - \int_{\mathbb{R}^d} \phi(\theta_0,x_s^{i},y)\mu_s(\mathrm{d}y)$. If $j\neq k$, one of them is not equal to $i$, and thus $\mathbb{E}_{\theta_0}\left[ \langle \xi_s^{j},\xi_s^{\theta,k}\rangle\right]=0$. 
It follows, using also Condition \ref{assumption2}, and the moment bounds in Proposition \ref{prop_moment_bounds}, that 
\begin{align}
&\mathbb{E}_{\theta_0}[\big|\big| \frac{1}{N} \sum_{j=1}^N \phi(\theta_0,x_s^{i},x_s^{j}) - \int_{\mathbb{R}^d} \phi(\theta_0,x_s^{i},y)\mu_s(\mathrm{d}y)\big|\big|^2] \\
&\leq \frac{1}{N^2} \sum_{j=1}^N \mathbb{E}_{\theta_0}\left[||\phi(\theta_0,x_s^{i},x_s^{j}) - \int_{\mathbb{R}^d} \phi(\theta_0,x_s^{i},y)\mu_s(\mathrm{d}y)||^2\right] \\
& \leq \frac{1}{N^2}\sum_{j=1}^N \mathbb{E}_{\theta_0}\left[\int_{\mathbb{R}^d} ||x_s^{j} - y||^2\mu_s(\mathrm{d}y) \right] \\
&\leq \frac{2}{N^2}\sum_{j=1}^N \mathbb{E}_{\theta_0}\left[||x_s^{j}||^2\right] + \mathbb{E}_{\theta_0}\left[ ||x_s||^2\right] \leq \frac{C_{\theta_0}}{N}.
\end{align}
We thus have, substituting this bound into our previous bound, and taking derivatives, that
\begin{equation}
\frac{\mathrm{d}}{\mathrm{d}t}\mathbb{E}_{\theta_0}\left[||x_t^{i,N}-x_t^{i}||^2\right] \leq  -2(\alpha_{\theta_0}-2L_{\theta_0,2})\mathbb{E}_{\theta_0}\left[||x_t^{i,N}-x_t^{i}||^2\right]  + \frac{C_{\theta_0}}{\sqrt{N}}\mathbb{E}_{\theta_0}\left[||x_t^{i,N}-x_t^{i}||^2\right]^{\frac{1}{2}}
\end{equation}
The result now follows from Gr\"onwall's inequality (see, e.g., \cite[Theorem 3.1]{Malrieu2001}). 
\end{proof}

\begin{proposition}[A Law of Large Numbers, {\cite[Theorem 1.2]{Coppini2020}}, \cite{Oelschlager1984}]
\label{prop_lln}
Assume that Conditions \ref{assumption1}(i) - \ref{assumption2}(i) hold. If $(\mu_0^N)_{N\in\mathbb{N}}$ converge weakly to $\mu_0$, then for all $g\in\mathcal{C}(\mathbb{R}^d)$ and for all $t\geq0$, as $N\rightarrow\infty$,
\begin{equation}
\lim_{N\rightarrow\infty}\left[\frac{1}{N}\sum_{i=1}^N g(x_t^{i,N})\right] \stackrel{\mathbb{P}}{=}\int_{\mathbb{R}^d}g(x)\mu_t(\mathrm{d}x).
\end{equation}
\end{proposition}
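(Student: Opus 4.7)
\vspace{2mm}
\noindent\textbf{Proof Proposal.} The natural strategy is to combine the $\mathbb{L}^2$ propagation-of-chaos bound already established in Proposition~\ref{prop_chaos} with the classical weak law of large numbers for i.i.d.\ random variables. To this end, I would introduce a synchronous coupling: on the same probability space, let $(x_t^{i})_{t\geq 0}$, $i=1,\dots,N$, denote $N$ independent copies of the McKean--Vlasov SDE \eqref{MVSDE}, driven by the same Brownian motions $(w_t^{i})_{t\geq 0}$ and with the same initial conditions as the interacting particles $(x_t^{i,N})_{t\geq 0}$. Each $x_t^{i}$ has law $\mu_t$. Using the triangle inequality, I would decompose
\begin{equation}
\left|\frac{1}{N}\sum_{i=1}^N g(x_t^{i,N}) - \int_{\mathbb{R}^d} g(x)\mu_t(\mathrm{d}x)\right| \leq A_t^N + B_t^N,
\end{equation}
where $A_t^N := \bigl|\frac{1}{N}\sum_{i=1}^N [g(x_t^{i,N}) - g(x_t^{i})]\bigr|$ and $B_t^N := \bigl|\frac{1}{N}\sum_{i=1}^N g(x_t^{i}) - \int g\,\mathrm{d}\mu_t\bigr|$, and show that each term tends to zero in probability.

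For $B_t^N$, the random variables $(g(x_t^{i}))_{i=1,\dots,N}$ are i.i.d.\ with common mean $\int g\,\mathrm{d}\mu_t$. For bounded continuous $g$, the weak law of large numbers yields $B_t^N \to 0$ in $\mathbb{L}^2$, hence in probability. If $g$ has polynomial growth, I would instead combine the strong law with the uniform moment bounds of Proposition~\ref{prop_moment_bounds} to obtain uniform integrability of $g(x_t^i)$, and conclude by Vitali's convergence theorem.

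For $A_t^N$, I would exploit Proposition~\ref{prop_chaos}, which provides $\sup_{t\geq 0}\mathbb{E}_{\theta_0}[\|x_t^{i,N}-x_t^{i}\|^2]\leq C_{\theta_0}/N$. When $g$ is bounded and Lipschitz, Jensen's inequality gives $\mathbb{E}[A_t^N] \leq \|g\|_{\mathrm{Lip}} C_{\theta_0}^{1/2}/N^{1/2}\to 0$ directly. For general bounded continuous $g$, I would truncate to a ball $\{\|x\|\leq R\}$, on which $g$ is uniformly continuous, and use Markov's inequality together with the uniform moment bounds to show that the mass outside the ball contributes negligibly as $R\to\infty$. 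Specifically, for any $\varepsilon>0$, choose $R$ such that $\mathbb{P}(\|x_t^i\|>R)+\mathbb{P}(\|x_t^{i,N}\|>R)<\varepsilon$, uniformly in $N$; pick a modulus of continuity $\omega_R$ for $g$ on the ball; and use the chaos estimate to show $\|x_t^{i,N}-x_t^i\|\to 0$ in probability.

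The main obstacle is the extension to $g\in\mathcal{C}(\mathbb{R}^d)$ that is unbounded, since the clean $\mathbb{L}^2$ chaos bound does not a priori control quantities like $\mathbb{E}|g(x_t^{i,N})-g(x_t^i)|$ without additional growth information on $g$. In the applications of Proposition~\ref{prop_lln} used elsewhere in the paper (e.g.\ in the proofs of Theorems~\ref{offline_theorem1}--\ref{offline_theorem2}), the relevant integrands $g$ have polynomial growth by Condition~\ref{assumption3}(ii), and this obstacle is resolved by pairing the $\mathbb{L}^2$ chaos estimate with higher moment bounds from Proposition~\ref{prop_moment_bounds} and the local Lipschitz--polynomial-growth bounds for $g$, then invoking the Cauchy--Schwarz inequality as in the proof of Lemma~\ref{lemmaC}. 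This is also the strategy followed implicitly in \cite{Coppini2020,Oelschlager1984}.
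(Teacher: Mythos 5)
Your proposal is correct in substance, but it takes a genuinely different route from the paper: the paper does not prove this proposition at all, it simply defers to \cite[Lemma 9]{Oelschlager1984} (citing also \cite{Coppini2020}), whereas you construct a self-contained argument by synchronously coupling the interacting particles to $N$ independent copies of the McKean--Vlasov SDE, splitting the error into a propagation-of-chaos term $A_t^N$ (controlled by Proposition~\ref{prop_chaos}) and an i.i.d.\ fluctuation term $B_t^N$ (controlled by the classical weak law of large numbers). This buys transparency and keeps the argument entirely inside the paper's own toolbox, at the price of being slightly less general than the cited result: your use of Proposition~\ref{prop_chaos} implicitly requires the initial data to be i.i.d.\ with law $\mu_0$ (which is the paper's standing setup), whereas the hypothesis ``$\mu_0^N\rightarrow\mu_0$ weakly'' in the statement, inherited from \cite{Oelschlager1984,Coppini2020}, allows more general initial configurations, and Oelschl\"ager's result gives weak convergence of the whole empirical process rather than convergence test function by test function. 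Your closing caveat about unbounded $g\in\mathcal{C}(\mathbb{R}^d)$ is well taken and is really a looseness in the statement itself: as written the claim cannot hold for arbitrary continuous $g$ of unrestricted growth, but in every application in the paper (e.g.\ in the proofs of Theorems~\ref{offline_theorem1}--\ref{offline_theorem2} and Lemma~\ref{theorem1_lemma1}) the integrands have polynomial growth by Condition~\ref{assumption3}(ii), and your Cauchy--Schwarz plus moment-bound resolution (Proposition~\ref{prop_moment_bounds}) handles exactly that case, so no essential gap remains for the purposes the proposition serves.
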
 
\begin{proof}
See  {\cite[Lemma 9]{Oelschlager1984}}.
\end{proof}

\section{Additional Lemmas for Theorem \ref{offline_theorem1} and Theorem \ref{offline_theorem2}} 
\label{appendixB}
\begin{lemma} \label{theorem1_lemma1}
For all $T\geq 0$, for all $\theta\in \Theta\subseteq \mathbb{R}^p$, 
\begin{equation}
\lim_{N\rightarrow\infty}\sup_{0\leq t\leq T}\frac{1}{N}\sum_{i=1}^N \int_0^t \langle G(\theta,x_s^{i,N},\mu_s^N),\mathrm{d}w_s^{i}\rangle=0
\end{equation}
\end{lemma}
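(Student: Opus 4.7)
Proof plan for Lemma B.1. Write $\smash{M_t^{i,N} := \int_0^t \langle G(\theta,x_s^{i,N},\mu_s^N),\mathrm{d}w_s^{i}\rangle}$ and $\bar M_t^N := \frac{1}{N}\sum_{i=1}^N M_t^{i,N}$. Since $(w^i)_{i=1,\dots,N}$ are independent Brownian motions, $\bar M^N$ is a continuous local martingale and the individual $M^{i,N}$ are pairwise orthogonal ($\langle M^{i,N},M^{j,N}\rangle\equiv 0$ for $i\ne j$), even though the integrands are coupled through the empirical measure $\mu_s^N$. The plan is simply to control $\bar M^N$ in $\mathbb{L}^2$ by a quantity of order $1/N$, and then to upgrade to uniform convergence on $[0,T]$ via Doob's maximal inequality. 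Convergence in probability then follows by Markov's inequality, which is all that is needed at \eqref{eq4_4} in the proof of Theorem 1.1.

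The first step is to apply It\^o's isometry, using the pairwise orthogonality just noted:
\begin{equation}
\mathbb{E}_{\theta_0}\!\left[|\bar M_T^N|^2\right] = \frac{1}{N^2}\sum_{i=1}^N\mathbb{E}_{\theta_0}\!\left[\int_0^T \|G(\theta,x_s^{i,N},\mu_s^N)\|^2\,\mathrm{d}s\right].
\end{equation}
The second step is to bound the integrand uniformly in $s$ and $N$. Recall $G(\theta,x,\mu,\mu) = B(\theta,x,\mu) - B(\theta_0,x,\mu)$. Condition~C.1(ii), applied at $x'=y'=0$, yields the polynomial growth bound $\|B(\theta,x,\mu)\|\leq K_\theta(1+\|x\|^q+\mu(\|\cdot\|^q))$, and similarly at $\theta_0$, so $\|G(\theta,x,\mu,\mu)\|^2\leq K_{\theta,\theta_0}(1+\|x\|^{2q}+\mu(\|\cdot\|^q)^2)$. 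Applying this to $x=x_s^{i,N}$ and $\mu=\mu_s^N$, using $\mu_s^N(\|\cdot\|^q)^2\leq \tfrac{1}{N}\sum_{j=1}^N\|x_s^{j,N}\|^{2q}$ by Jensen, and then taking expectations and invoking the uniform-in-$N$ moment bounds from Proposition~A.1 (which rely on Condition~A.1), we obtain
\begin{equation}
\sup_{N\in\mathbb{N}}\sup_{s\geq 0}\sup_{1\leq i\leq N}\mathbb{E}_{\theta_0}\!\left[\|G(\theta,x_s^{i,N},\mu_s^N)\|^2\right] \leq C_{\theta,\theta_0} < \infty.
\end{equation}

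Combining these two steps gives $\mathbb{E}_{\theta_0}[|\bar M_T^N|^2]\leq C_{\theta,\theta_0} T / N$. The final step is Doob's $\mathbb{L}^2$ inequality applied to the continuous martingale $\bar M^N$:
\begin{equation}
\mathbb{E}_{\theta_0}\!\left[\sup_{0\leq t\leq T}|\bar M_t^N|^2\right] \leq 4\,\mathbb{E}_{\theta_0}\!\left[|\bar M_T^N|^2\right] \leq \frac{4 C_{\theta,\theta_0} T}{N},
\end{equation}
which tends to $0$ as $N\to\infty$. This yields $\mathbb{L}^2$ and hence $\mathbb{P}$-convergence of $\sup_{0\leq t\leq T}|\bar M_t^N|$ to zero, as required.

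There is no serious obstacle: the only thing one must be a little careful about is that $\mu_s^N$ couples the integrands of the different $M^{i,N}$, which could naively tempt one to invoke independence; independence is false, but orthogonality of martingales driven by independent Brownian motions is enough, since it is only the second moment of $\bar M_T^N$ that we need. The polynomial growth of $G$ combined with the uniform moment bounds for the IPS (Proposition~A.1) is the other ingredient, and both are already established in the paper.
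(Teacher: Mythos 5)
Your proof is correct, but it takes a genuinely different route from the paper. The paper treats $\bar M^N$ via an exponential (Bernstein-type) supermartingale inequality, $\mathbb{P}(\sup_{t\leq T}\{M^N_t-\tfrac{\alpha}{2}[M^N]_t\}>\beta)<e^{-\alpha\beta}$, chooses $\alpha,\beta$ as powers of $N$, invokes Borel--Cantelli, and then controls the compensator $\tfrac{1}{N}\sum_i\int_0^T\|G\|^2\,\mathrm{d}s$ through the law of large numbers for the IPS (Proposition A.4); the aim there is an almost-sure statement, although since the LLN step itself only gives convergence in probability the mode of convergence delivered by the paper's argument is arguably no stronger than yours. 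You instead exploit the pairwise orthogonality of the $M^{i,N}$ (correct: the cross-variations vanish because the $w^i$ are independent, even though the integrands are coupled through $\mu^N_s$), combine the It\^o isometry with the polynomial growth of $G$ and the uniform-in-$N$ moment bounds of Proposition A.1 to get $\mathbb{E}_{\theta_0}[|\bar M^N_T|^2]\leq C_{\theta,\theta_0}T/N$, and finish with Doob's $\mathbb{L}^2$ maximal inequality. This is more elementary, avoids the LLN entirely, and yields a quantitative $O(\sqrt{T/N})$ rate in $\mathbb{L}^2$ for the supremum; as you note, convergence in probability is all that is used at \eqref{eq4_4}--\eqref{eq_4_7}. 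The only gap relative to a literal almost-sure reading of the lemma is that your $O(T/N)$ bound is not summable in $N$; if you want a.s. convergence along the full sequence you can upgrade your argument by a Burkholder--Davis--Gundy estimate of the fourth moment, $\mathbb{E}_{\theta_0}[\sup_{t\leq T}|\bar M^N_t|^4]\leq C\,\mathbb{E}_{\theta_0}[[\bar M^N]_T^2]\leq CT^2/N^2$, which is summable and closes that point via Borel--Cantelli and Markov.
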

\begin{proof}
For ease of notation,  let us define 
\begin{align}
M_t^{N}(\theta) :=\frac{1}{N}\sum_{i=1}^N \int_0^t \langle G(\theta,x_s^{i,N},\mu_s^N),\mathrm{d}w_s^{i}\rangle.
\end{align}
Now, for all $N\in\mathbb{N}$, and for all $\theta\in\mathbb{R}^p$, $(M_t^{N}(\theta))_{t\geq 0}$ is a zero mean continuous square integrable martingale, with quadratic variation 
\begin{equation}
\big[ M^{N}(\theta)\big]_t  = \frac{1}{N^2} \sum_{i=1}^N \int_0^t ||G(\theta,x_s^{i,N},\mu_s^N)||^2\mathrm{d}s. \label{eqC3}
\end{equation}
It follow, using the elementary fact that $\sup_{x} \left[f(x) - g(x)\right] \geq \sup_{x} f(x) - \sup_{x}g(x)$, and the martingale inequality \cite[page 25]{McKean1969}, that
\begin{align}
\mathbb{P}\bigg(\sup_{0\leq t \leq T}M_t^N(\theta)-\sup_{0\leq t\leq T}\frac{\alpha}{2}\big[ M^N(\theta)\big]_{t}>\beta\bigg)&\leq \mathbb{P}\bigg(\sup_{0\leq t \leq T}\left\{M_t^N(\theta)-\frac{\alpha}{2}\big[ M^N(\theta)\big]_{t} \right\}>\beta\bigg) \hspace{-3mm} \\
&<e^{-\alpha\beta}. 
\end{align}
Thus, substituting \eqref{eqC3} and using symmetry, we have that
\begin{equation}
\mathbb{P}\left(\sup_{0\leq t\leq T} \big| M_t^{N}(\theta) \big|>\beta + \frac{\alpha}{2N^2} \sum_{i=1}^N \int_0^T ||G(\theta,x_s^{i,N},\mu_s^N)||^2\mathrm{d}s \right)<2e^{-\alpha\beta}.
\end{equation}
Let $\alpha = N^{a}$, $\beta = N^{-b}$, for some $0<a<b<1$. Then
\begin{equation}
\mathbb{P}\left(\sup_{0\leq t\leq T} |M_t^{N}(\theta)|>\frac{1}{N^b} + \frac{1}{2N^{1-a}} \frac{1}{N}\sum_{i=1}^N \int_0^T ||G(\theta,x_s^{i,N},\mu_s^N)||^2\mathrm{d}s \right)<2e^{-N^{a-b}}.
\end{equation}
By a repeated application of Proposition \ref{prop_lln} (the McKean-Vlasov Law of Large Numbers), we have that, as $N\rightarrow\infty$,
\begin{equation}
\frac{1}{N}\sum_{i=1}^N \int_0^T ||G(\theta,x_s^{i,N},\mu_s^N)||^2\mathrm{d}s\stackrel{\mathbb{P}}{\longrightarrow} \int_0^T \left[\int_{\mathbb{R}^d}  ||G(\theta,x,\mu_s)||^2 \mu_s(\mathrm{d}x)\right]\mathrm{d}s.
\end{equation}
By definition, Condition \ref{assumption3}(ii), and Proposition \ref{prop_moment_bounds} the limiting function on the RHS is finite and non-random. Moreover, we have that $\sum_{N=1}^{\infty} e^{-N^{a-b}}<\infty$. The Borel-Cantelli Lemma thus implies
\begin{equation}
\lim_{N\rightarrow\infty} \sup_{0\leq t\leq T}M_t^{N}(\theta) = 0.
\end{equation}
\end{proof}

\section{Additional Lemmas for Theorems \ref{theorem1_1}, \ref{theorem1_1_star}, \ref{theorem2_1}, and \ref{theorem2_1_star}} \label{sec:theorem1_lemmas}

\begin{lemma} \label{lemma1}
Assume that Conditions \ref{assumption_init} and \ref{assumption1} - \ref{assumption2} hold. 
Then, for all $N\in\mathbb{N}$, $i=1,\dots,N$, for all $t\geq 0$, and for all $k\geq 1$, there exists $K_{k,\theta_0}>0$ such that 
\begin{align}
\mathbb{E}_{\theta_0}\left[\sup_{0\leq s\leq t}||x_s||^k\right]  \leq K_{k,\theta_0} t^{\frac{1}{2}}~~~\text{and}~~~ 
\mathbb{E}_{\theta_0}\left[\sup_{0\leq s\leq t}||x_s^{i,N}||^k\right] \leq K_{k,\theta_0} t^{\frac{1}{2}}.
\end{align}
\end{lemma}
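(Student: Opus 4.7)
The plan is to combine It\^o's formula with the Burkholder--Davis--Gundy (BDG) inequality and the uniform moment bounds of Proposition \ref{prop_moment_bounds}. I will focus on the McKean--Vlasov SDE; the argument for the IPS is identical, with $\mu_s$ replaced by $\mu_s^N$ and with Proposition \ref{prop_moment_bounds} providing the analogous uniform bounds on the $N$-particle system. The strategy is to first bound $\mathbb{E}_{\theta_0}\sup_{s\leq t}\|x_s\|^{2k}$ linearly in $t$, then convert this into the claimed $\sqrt{t}$-rate for $\mathbb{E}_{\theta_0}\sup_{s\leq t}\|x_s\|^{k}$ via Jensen's inequality.

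Applying It\^o's formula to $f(x)=\|x\|^{2k}$ yields, as in the proof of Proposition \ref{prop_moment_bounds},
\begin{align*}
\|x_t\|^{2k} = \|x_0\|^{2k} &+ \int_0^t \big[2k\|x_s\|^{2k-2}\langle x_s,B(\theta_0,x_s,\mu_s)\rangle + (kd+2k(k-1))\|x_s\|^{2k-2}\big]\mathrm{d}s \\
&+ 2k\int_0^t \|x_s\|^{2k-2}\langle x_s,\mathrm{d}w_s\rangle.
\end{align*}
The monotonicity of $b$ (Condition \ref{assumption1}), the Lipschitz property of $\phi$ (Condition \ref{assumption2}), the a priori first-moment bound from Proposition \ref{prop_moment_bounds}, and Young's inequality together show that the drift integrand is bounded above, uniformly in $s$, by a constant $C_{k,\theta_0}$; hence $\sup_{s\leq t}\int_0^s(\mathrm{drift})_u\,\mathrm{d}u\leq C_{k,\theta_0}\,t$. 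For the stochastic integral, BDG followed by Cauchy--Schwarz and the $(4k-2)$-order moment bound from Proposition \ref{prop_moment_bounds} gives
\begin{equation*}
\mathbb{E}_{\theta_0}\sup_{s\leq t}\Big|2k\int_0^s \|x_u\|^{2k-2}\langle x_u,\mathrm{d}w_u\rangle\Big| \leq 2k\,C_{\mathrm{BDG}}\Big(\int_0^t \mathbb{E}_{\theta_0}\|x_u\|^{4k-2}\,\mathrm{d}u\Big)^{\!1/2} \leq C'_{k,\theta_0}\,t^{1/2}.
\end{equation*}
Combining these estimates with $\mathbb{E}_{\theta_0}\|x_0\|^{2k}<\infty$ (Condition \ref{assumption_init}) yields $\mathbb{E}_{\theta_0}\sup_{s\leq t}\|x_s\|^{2k}\leq K_{2k,\theta_0}(1+t)$.

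By Jensen's inequality applied to the concave map $y\mapsto\sqrt{y}$,
\begin{equation*}
\mathbb{E}_{\theta_0}\sup_{s\leq t}\|x_s\|^k \leq \big(\mathbb{E}_{\theta_0}\sup_{s\leq t}\|x_s\|^{2k}\big)^{1/2}\leq K_{k,\theta_0}(1+t^{1/2}),
\end{equation*}
which is absorbed into $K_{k,\theta_0}\,t^{1/2}$ (possibly after enlarging $K_{k,\theta_0}$) whenever $t$ is bounded away from zero, as is needed in the applications of the lemma. The only mild obstacle is that the interaction contribution to $B$ forces the monotonicity estimate to involve $\mathbb{E}_{\theta_0}\|x_s\|$; this is why Proposition \ref{prop_moment_bounds} must be invoked as a prerequisite input to close up the drift estimate, rather than attempting a self-contained calculation. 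Once that input is available, the BDG step is what produces the $t^{1/2}$ rate, and Jensen transfers it from the $2k$-th to the $k$-th moment.
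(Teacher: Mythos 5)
Your proof is correct and follows essentially the same route as the paper: Itô's formula applied to $\|x_t\|^{2k}$, drift terms bounded by $O(t)$ via the monotonicity/Lipschitz conditions together with the uniform moment bounds of Proposition \ref{prop_moment_bounds}, the BDG inequality producing the $t^{1/2}$ rate from the stochastic integral, and Hölder/Jensen to pass from the $2k$-th to the $k$-th moment. The only cosmetic difference is that you bound the drift integrand pathwise (which for the IPS case requires taking expectations first, since the empirical mean term is random), whereas the paper bounds the expected drift contribution directly; both yield the same $O(t)$ estimate.
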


\begin{proof}
We will prove the first claim (the proof of the second being essentially identical). By It\^o's Lemma, we have 
\begin{align}
||x_t||^{2k} = ||x_0||^{2k} &+ \int_0^t 2k ||x_s||^{2k-2} \langle x_s, B(\theta_0,x_s,\mu_s)\rangle \mathrm{d}s \\
&+\int_0^t k||x_s||^{2k-2} \mathrm{Tr}[I_{d} + (k-2)[x_s^{i}x_s^{j}]_{i,j=1}^d||x_s||^{-2}] \mathrm{d}s \nonumber \\
&+ \int_0^t 2k ||x_s||^{2k-2} \langle \mathrm{d}x_s, \mathrm{d}w_s\rangle \nonumber
\end{align}
It follows, taking the supremum and taking expectations, that
\begin{align}
\mathbb{E}_{\theta_0}\left[\sup_{0\leq s\leq t}||x_t||^{2k}\right] &\leq \mathbb{E}_{\theta_0}\left[||x_0||^{2k}\right] + \underbrace{2k \int_0^t \mathbb{E}_{\theta_0}\left[\left| ||x_s||^{2k-2} \langle x_s, B(\theta_0,x_s,\mu_s)\rangle \right|\right]\mathrm{d}s}_{\Pi^{1}_t} \label{eqa4} \\
&+\underbrace{[kd + 2k(k-1)] \int_0^t \mathbb{E}_{\theta_0}\left[ ||x_s||^{2k-2}\right]\mathrm{d}s}_{\Pi^{2}_t} \\
&+ \underbrace{2k \mathbb{E}_{\theta_0}\left[\sup_{0\leq s \leq t} \int_0^t  ||x_s||^{2k-2} \langle x_s, \mathrm{d}w_s\rangle\right]}_{\Pi^{3}_t} \nonumber
\end{align}
We begin by bounding the first term. Due to Conditions \ref{assumption1} - \ref{assumption2}, there exist $C_{\theta_0,0}, C_{\theta_0,1}>0$ such that 
\begin{align}
\langle x_s, B(\theta,x_s,\mu_s)\rangle
&\leq -(\alpha_{\theta_0} - L_{\theta_0,2}) ||x_s||^2 + C_{\theta_0,0}||x_s|| + L_{\theta_0,2}||x_s||\mathbb{E}_{\theta_0}\left[||x_s||\right] \\
&\leq C_{\theta_0,1} \left[ ||x_s||^2 + ||x_s|| + ||x_s||\mathbb{E}_{\theta_0}\left[||x_s||\right]\right]  \nonumber
\end{align}
It follows straightforwardly, using the moment bounds in Proposition \ref{prop_moment_bounds}, that 
\begin{align}
\Pi_t^{2}&\leq 2k  \int_0^t C_{\theta_0,1} \left[ \mathbb{E}_{\theta_0}\left[||x_s||^{2k}\right] + \mathbb{E}_{\theta_0}\left[||x_s||^{2k-1}\right] + \mathbb{E}_{\theta_0}\left[||x_s||^{2k-2}\right] \mathbb{E}_{\theta_0}\left[||x_s||\right] \right]\mathrm{d}s\leq C_{k,\theta_0,1}t \label{eqa8}
\end{align}
Similarly, for the second term in \eqref{eqa4},  
\begin{align}
\Pi_t^{2}&\leq [kd+2k(k-1)]\int_0^t \mathbb{E}_{\theta_0}\left[||x_s||^{2k-2}\right]\mathrm{d}s\leq C_{k,\theta_0,2}t \label{eqa9}
\end{align}
Finally, for the final term in \eqref{eqa4}, we have
\begin{align}
\Pi_t^{3}
&\leq 2k \mathbb{E}_{\theta_0}\left[\int_0^t ||x_s||^{4k-4}||x_s||^2\mathrm{d}s\right]^{\frac{1}{2}} \leq 2k \left[\int_0^t \mathbb{E}_{\theta_0}[||x_s||^{4k-2}]\mathrm{d}s\right]^{\frac{1}{2}}\leq C_{k,\theta_0,3} t^{\frac{1}{2}}. \label{eqa20}
\end{align}
where we have used the Burkholder-Davis-Gundy inequality, and once more Proposition \ref{prop_moment_bounds}. Combining equations \eqref{eqa4}, \eqref{eqa8}, \eqref{eqa9}, and \eqref{eqa20}, and using the H\"older inequality, the conclusion follows. 
\end{proof}

\begin{lemma} \label{lemmab}
Assume that Conditions \ref{assumption_init}, \ref{assumption1} - \ref{assumption2}, \ref{assumption3} hold. Then, for $k=0,1,2$, there exist $K_{\theta_0}>0$ such that, for all $\theta\in\mathbb{R}^p$, $||\nabla_{\theta}^k\underline{\tilde{\mathcal{L}}}(\theta)||\leq K_{\theta_0}$ and $||\nabla_{\theta}^k\tilde{\mathcal{L}}^{i,N}(\theta)||\leq K_{\theta_0}$.
\end{lemma}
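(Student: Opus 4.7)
The plan is to prove both bounds in parallel, since each reduces to the same two ingredients: a uniform-in-$\theta$ polynomial-in-state growth estimate on $\nabla_\theta^k L$ (respectively on $\nabla_\theta^k \hat{L}^{i,N}$), followed by integration against a measure whose moments are already controlled by Proposition \ref{lemma_invariant_moment_bounds}. I will focus on $\underline{\tilde{\mathcal{L}}}$; the argument for $\tilde{\mathcal{L}}^{i,N}$ is structurally identical.

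First I would legitimise interchanging differentiation and integration in
\begin{equation*}
\underline{\tilde{\mathcal{L}}}(\theta) \;=\; \int_{\mathbb{R}^d} L(\theta,x,\mu_\infty,\mu_\infty)\,\mu_\infty(\mathrm{d}x).
\end{equation*}
The regularity required of $\theta \mapsto L$ is provided by Condition \ref{assumption3}(i), while the dominating function required by the dominated convergence theorem is supplied by the polynomial-growth step below together with Proposition \ref{lemma_invariant_moment_bounds}. The output is the representation $\nabla_\theta^k \underline{\tilde{\mathcal{L}}}(\theta) = \int \nabla_\theta^k L(\theta,x,\mu_\infty,\mu_\infty)\,\mu_\infty(\mathrm{d}x)$ for $k=0,1,2$.

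Next I would derive the required growth bound. Writing $L = -\tfrac{1}{2}\|G\|^2$, where $G(\theta,x,\mu,\mu) = B(\theta,x,\mu) - B(\theta_0,x,\mu)$, and noting that only the first summand depends on $\theta$, the chain and product rules give
\begin{equation*}
\nabla_\theta L = -\nabla_\theta B(\theta,x,\mu)^T G, \qquad \nabla_\theta^2 L = -\nabla_\theta B^T \nabla_\theta B - \nabla_\theta^2 B^T G.
\end{equation*}
Each factor $\nabla_\theta^j B(\theta,x,\mu)$, $j=0,1,2$, is dominated by applying Condition \ref{assumption3}(ii) with base points $x'=y'=0$ (and absorbing the resulting $\theta$-dependent constants $\nabla_\theta^j b(\theta,0)$, $\nabla_\theta^j \phi(\theta,0,0)$, which are bounded uniformly in $\theta$ as per the interpretation of Condition \ref{assumption3}); one obtains $\|\nabla_\theta^j B(\theta,x,\mu)\| \leq K[1+\|x\|^{q_j}+\mu(\|\cdot\|^{q_j})]$ with constants independent of $\theta$. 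Multiplying these estimates yields
\begin{equation*}
\|\nabla_\theta^k L(\theta,x,\mu,\mu)\| \;\leq\; K\bigl[1+\|x\|^q+\mu(\|\cdot\|^q)\bigr], \qquad k=0,1,2,
\end{equation*}
for constants $K,q$ that do not depend on $\theta$. The same computation, with $\hat{B}^{i,N}$ and $\hat{G}^{i,N}$ in place of $B,G$, produces $\|\nabla_\theta^k \hat{L}^{i,N}(\theta,\hat{x}^N)\| \leq K[1+\|x^{i,N}\|^q + \tfrac{1}{N}\sum_{j=1}^N \|x^{j,N}\|^q]$, with the same structural constants.

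Finally I would integrate against $\mu_\infty$ (respectively $\hat{\mu}_\infty^N$) and invoke Proposition \ref{lemma_invariant_moment_bounds}, which furnishes a bound on the relevant moments that is uniform in $N$ and $i$. This yields $\|\nabla_\theta^k \underline{\tilde{\mathcal{L}}}(\theta)\| \leq K[1 + 2\mu_\infty(\|\cdot\|^q)] \leq K_{\theta_0}$ and, by exchangeability of the invariant law of the IPS, $\|\nabla_\theta^k \tilde{\mathcal{L}}^{i,N}(\theta)\| \leq K[1 + 2 K_{q,\theta_0}] \leq K_{\theta_0}$, with a constant depending only on the true parameter. The main obstacle is ensuring that the growth constants inherited from Condition \ref{assumption3} are genuinely uniform in $\theta$ — that is, that the base-point values $\nabla_\theta^j b(\theta,0)$ and $\nabla_\theta^j \phi(\theta,0,0)$ are bounded in $\theta$; this is implicit in the paper's convention (and is compatible with the remark that the assumptions could be weakened to allow linear $\theta$-growth at the cost of extra technicalities). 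Once this uniformity is in hand the remaining work is a mechanical chain-rule-plus-moment-bound computation.
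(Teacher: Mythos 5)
Your proposal is correct and takes essentially the same route as the paper: the paper likewise represents $\nabla_{\theta}^k\underline{\tilde{\mathcal{L}}}(\theta)$ and $\nabla_{\theta}^k\tilde{\mathcal{L}}^{i,N}(\theta)$ as integrals of $\nabla_{\theta}^k L$ (resp. $\nabla_{\theta}^k \hat{L}^{i,N}$) against the invariant measures (via Lemmas \ref{lemma0} and \ref{lemma0_1}), invokes the polynomial growth of these integrands implied by Condition \ref{assumption3}, and concludes with the invariant-measure moment bounds of Proposition \ref{lemma_invariant_moment_bounds}. Your explicit chain-rule derivation of the growth estimate and the remark on uniformity in $\theta$ merely spell out what the paper leaves implicit.
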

\begin{proof}
Using the definition of $\nabla_{\theta}^k\tilde{\underline{\mathcal{L}}}(\theta)$ (established in Lemma \ref{lemma0}), the polynomial growth of $\nabla_{\theta}^{k}L(\theta,x,\mu,\mu)$ (from Condition \ref{assumption3}), and the finite moments of the invariant measure of the McKean-Vlasov SDE (Proposition \ref{lemma_invariant_moment_bounds}), we have that
\begin{align}
||\nabla_{\theta}^k\tilde{\underline{\mathcal{L}}}(\theta)||
&\leq\int_{\mathbb{R}^d} ||\nabla_{\theta}^k L(\theta,x,\mu_{\infty},\mu_{\infty})||\mu_{\infty}(\mathrm{d}x) \\
&\leq K \int_{\mathbb{R}^d} \big[1+ ||x||^q + [\int_{\mathbb{R}^d} ||y||^q\mu_{\infty}(\mathrm{d}y)\big]\mu_{\infty}(\mathrm{d}y)
\leq K_{\theta_0}. \label{b_139} 
\end{align}
The bound for $\nabla_{\theta}^{k}\tilde{\mathcal{L}}^{i,N}(\theta)$ follows identically, this time using the defintion of $\nabla_{\theta}^{k}\tilde{\mathcal{L}}^{i,N}(\theta)$ (Lemma \ref{lemma0_1}), and the finite moments of the invariant measure of the IPS (Proposition \ref{lemma_invariant_moment_bounds}).
\end{proof}

\subsection{Additional Lemmas for Lemma \ref{lemmaC}}
\begin{lemma} \label{lemma_a3}
Assume that Conditions \ref{assumption_init}  and \ref{assumption1} - \ref{assumption2} hold. 
For all Lipschitz functions $\varphi$, there exists $K_{\theta_0}>0$ such that, for all $t\geq 0$, for all $N\in\mathbb{N}$, 
\begin{equation}
\mathbb{E}_{\theta_0}\left[\bigg|\bigg|\int_{\mathbb{R}^d}\varphi(y)\mu_t(\mathrm{d}y) - \frac{1}{N}\sum_{i=1}^N \varphi(x_t^{i,N})\bigg|\bigg|^2\right]\leq \frac{K_{\theta_0}}{N}
\end{equation}
\end{lemma}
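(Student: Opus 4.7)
The plan is to introduce the standard synchronous coupling between the IPS and $N$ independent copies of the McKean-Vlasov SDE, and reduce the bound to two familiar ingredients: a classical i.i.d.\ variance estimate and the uniform-in-time propagation-of-chaos bound from Proposition \ref{prop_chaos}.

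To begin, let $(x_t^{i})_{t\geq 0}$, $i=1,\dots,N$, denote $N$ independent solutions of the McKean-Vlasov SDE \eqref{MVSDE}, driven by the same Brownian motions $(w_t^{i})_{t\geq 0}$ and started from the same initial conditions as the particles $(x_t^{i,N})_{t\geq 0}$. Using the triangle inequality and $(a+b)^2\leq 2a^2+2b^2$, the quantity of interest is bounded by $2(\mathrm{I}_t^N + \mathrm{II}_t^N)$, where
\begin{align}
\mathrm{I}_t^N &= \mathbb{E}_{\theta_0}\bigg[\bigg|\bigg|\int_{\mathbb{R}^d}\varphi(y)\mu_t(\mathrm{d}y) - \frac{1}{N}\sum_{i=1}^N \varphi(x_t^{i})\bigg|\bigg|^2\bigg], \\
\mathrm{II}_t^N &= \mathbb{E}_{\theta_0}\bigg[\bigg|\bigg|\frac{1}{N}\sum_{i=1}^N\big[\varphi(x_t^{i}) - \varphi(x_t^{i,N})\big]\bigg|\bigg|^2\bigg].
\end{align}

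For $\mathrm{I}_t^N$, since the $x_t^{i}$ are i.i.d.\ with common law $\mu_t$, the cross terms vanish in expectation, so $\mathrm{I}_t^N = \tfrac{1}{N}\,\mathrm{Var}_{\theta_0}(\varphi(x_t^1))$. The Lipschitz property of $\varphi$ gives the linear growth bound $\|\varphi(x)\|^2\leq K(1+\|x\|^2)$, and Proposition \ref{prop_moment_bounds} yields a uniform-in-time bound on $\mathbb{E}_{\theta_0}\|x_t^1\|^2$. Thus $\mathrm{I}_t^N \leq K_{\theta_0}/N$ uniformly in $t$. For $\mathrm{II}_t^N$, I would apply Cauchy-Schwarz in the sum over $i$, use the Lipschitz property of $\varphi$, and then invoke Proposition \ref{prop_chaos} to obtain $\mathrm{II}_t^N \leq L_\varphi^2\,N^{-1}\sum_{i=1}^N \mathbb{E}_{\theta_0}\|x_t^{i}-x_t^{i,N}\|^2 \leq L_\varphi^2 C_{\theta_0}/N$, again uniformly in $t$. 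Combining the two bounds gives the claim.

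There is no real obstacle here — both ingredients are supplied by results already proved in Appendix \ref{appendix_existing_results}. The only mild point to be careful about is that the variance bound in $\mathrm{I}_t^N$ must be uniform in $t$, which is why one needs the uniform-in-time moment bound of Proposition \ref{prop_moment_bounds} rather than a bound that could blow up with $t$; similarly, $\mathrm{II}_t^N$ relies crucially on the fact that the propagation-of-chaos estimate in Proposition \ref{prop_chaos} is uniform in $t$.
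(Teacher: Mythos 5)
Your proposal is correct and follows essentially the same route as the paper's proof: the same decomposition via independent copies of the McKean--Vlasov SDE and the inequality $\|a+b\|^2\leq 2\|a\|^2+2\|b\|^2$, with the first term handled by the i.i.d.\ variance estimate together with the Lipschitz property and Proposition \ref{prop_moment_bounds}, and the second by Cauchy--Schwarz and the uniform-in-time propagation-of-chaos bound of Proposition \ref{prop_chaos}. The only cosmetic difference is that you bound the variance term directly via linear growth, whereas the paper compares $\varphi(x_t^1)$ to $\varphi(\mathbb{E}[x_t^1])$ before applying the Lipschitz property; both yield the same uniform-in-$t$ $K_{\theta_0}/N$ bound.
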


\begin{proof}
Let $x_t^{i}$, $i=1,\dots,N$ denote independent solutions of the McKean-Vlasov SDE \eqref{MVSDE}. 
We then have, using the elementary inequality $||a+b||^2\leq 2||a||^2+2||b||^2$, that 
\begin{align}
\mathbb{E}_{\theta_0}\bigg[\big|\big|\int_{\mathbb{R}^d}\varphi(y)\mu_t(\mathrm{d}y) - \frac{1}{N}\sum_{i=1}^N \varphi(x_t^{i,N})\big|\big|^2\bigg] \leq 
&2\mathbb{E}_{\theta_0}\bigg[\big|\big|\int_{\mathbb{R}^d}\varphi(y)\mu_t(\mathrm{d}y) - \frac{1}{N}\sum_{i=1}^N \varphi(x_t^{i})\big|\big|^2\bigg] \\
&+2\mathbb{E}_{\theta_0}\bigg[\big|\big| \frac{1}{N}\sum_{i=1}^N \left(\varphi(x_t^{i}) -  \varphi(x_t^{i,N})\right)\big|\big|^2\bigg] \nonumber 
\end{align}
For the first term, using independence of $(x_t^{i})^{i=1,\dots,N}_{t\geq 0}$, the Lipschitz property of $\varphi$, and Proposition \ref{prop_moment_bounds} (bounded moments of the McKean-Vlasov SDE), we have that
\begin{align}
\mathbb{E}_{\theta_0}\bigg[\big|\big|\int_{\mathbb{R}^d}\varphi(y)\mu_t(\mathrm{d}y) - \frac{1}{N}\sum_{i=1}^N \varphi(x_t^{i})\big|\big|^2\big] 
&\leq \frac{1}{N}\mathbb{E}_{\theta_0}\left[\big|\big|\varphi(x_t^{1})-\mathbb{E}[\varphi(x_t^{1})]\big|\big|^2\right] \\
&\leq\frac{1}{N}\mathbb{E}_{\theta_0}\left[\big|\big|\varphi(x_t^{1})-\varphi(\mathbb{E}[x_t^{1}])\big|\big|^2\right] 
\leq \frac{K_{\theta_0}}{N}.
\end{align}
Meanwhile, for the second term, using the Cauchy-Schwarz inequality, the Lipschitz property of $\varphi$, and Proposition \ref{prop_chaos} (uniform-in-time propagation of chaos), we obtain
\begin{align}
\mathbb{E}_{\theta_0}\big[\big|\big| \frac{1}{N}\sum_{i=1}^N \big(\varphi(x_t^{i}) -  \varphi(x_t^{i,N})\big)\big|\big|^2\big] 
&\leq \frac{K_{\theta_0}}{N}\sum_{i=1}^N\mathbb{E}_{\theta_0}\big[\big|\big|x_t^{i}-x_t^{i,N}\big|\big|^2\big] \leq 
 \frac{K_{\theta_0}}{N}.
\end{align}
The result follows immediately.
\end{proof}

\begin{lemma} \label{lemma_a3'}
Assume that Conditions \ref{assumption_init} and \ref{assumption1} - \ref{assumption2} hold. Suppose also that $\mu_{0}\in\mathcal{P}_2(\mathbb{R}^d)$. Let $x_{t}^{i}$ denote a solution of the McKean-Vlasov SDE, driven by $w^{i}=(w_t^{i})_{t\geq 0}$. Then, for all Lipschitz functions $\varphi$, there exists $K_{\theta_0}>0$ such that, for all $t\geq 0$, for all $N\in\mathbb{N}$, 
\begin{equation}
\mathbb{E}\left[\bigg|\bigg|\int_{\mathbb{R}^d}\varphi(x_t^{i},y)\mu_t(\mathrm{d}y) - \frac{1}{N}\sum_{i=1}^N \varphi(x_t^{i,N},x_t^{j,N})\bigg|\bigg|^2\right]\leq \frac{K_{\theta_0}}{N}
\end{equation}
\end{lemma}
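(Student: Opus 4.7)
The strategy is to mimic the proof of Lemma~\ref{lemma_a3}, but with an extra technical step to handle the fact that the first argument of $\varphi$ is itself one of the sample points (so the $j=i$ summand is not an i.i.d.\ copy conditional on $x_t^i$). First I would apply the elementary inequality $\|a+b\|^2 \leq 2\|a\|^2 + 2\|b\|^2$ to split the target as
\begin{align*}
\mathbb{E}\bigg[\Big\|\!\int \varphi(x_t^i,y)\mu_t(\mathrm{d}y) &- \tfrac{1}{N}\textstyle\sum_{j=1}^N \varphi(x_t^{i,N},x_t^{j,N})\Big\|^2\bigg]
\leq 2\mathbb{E}\bigg[\Big\|\tfrac{1}{N}\textstyle\sum_{j=1}^N\big(\varphi(x_t^{i,N},x_t^{j,N})-\varphi(x_t^{i},x_t^{j})\big)\Big\|^2\bigg] \\
&+ 2\mathbb{E}\bigg[\Big\|\!\int \varphi(x_t^i,y)\mu_t(\mathrm{d}y) - \tfrac{1}{N}\textstyle\sum_{j=1}^N \varphi(x_t^{i},x_t^{j})\Big\|^2\bigg].
\end{align*}

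For the first (``particle vs.\ McKean-Vlasov'') term, the Lipschitz property of $\varphi$, the Cauchy-Schwarz inequality, and Proposition~\ref{prop_chaos} (uniform-in-time propagation of chaos) immediately deliver a bound of order $K_{\theta_0}/N$, uniformly in $t\geq 0$, exactly as in Lemma~\ref{lemma_a3}.

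The more delicate piece is the second term. I would further split it by isolating the diagonal summand $j=i$:
\begin{align*}
\tfrac{1}{N}\textstyle\sum_{j=1}^N \varphi(x_t^{i},x_t^{j}) - \int \varphi(x_t^i,y)\mu_t(\mathrm{d}y)
&= \tfrac{1}{N}\big[\varphi(x_t^i,x_t^i) - \textstyle\int \varphi(x_t^i,y)\mu_t(\mathrm{d}y)\big] \\
&\quad+ \tfrac{N-1}{N}\big[\tfrac{1}{N-1}\textstyle\sum_{j\neq i}\varphi(x_t^{i},x_t^{j}) - \int \varphi(x_t^i,y)\mu_t(\mathrm{d}y)\big].
\end{align*}
The squared expectation of the diagonal piece is $O(1/N^2)$ by the Lipschitz property of $\varphi$ combined with the uniform moment bounds on $x_t^i$ (Proposition~\ref{prop_moment_bounds}). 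For the off-diagonal piece, the key observation is that for $j\neq i$ the trajectories $x_t^{j}$ are independent copies driven by independent Brownian motions and i.i.d.\ initial conditions, hence independent of $x_t^i$. Conditioning on $x_t^i$, the variables $\{\varphi(x_t^i, x_t^j)\}_{j\neq i}$ are i.i.d.\ with conditional mean $\int \varphi(x_t^i,y)\mu_t(\mathrm{d}y)$, so a standard conditional-variance computation, together with the Lipschitz bound $\|\varphi(x_t^i,x_t^j)\|^2 \leq K(1+\|x_t^i\|^2+\|x_t^j\|^2)$ and the moment bounds of Proposition~\ref{prop_moment_bounds}, yields
\[
\mathbb{E}\Big[\Big\|\tfrac{1}{N-1}\textstyle\sum_{j\neq i}\varphi(x_t^{i},x_t^{j}) - \int \varphi(x_t^i,y)\mu_t(\mathrm{d}y)\Big\|^2\Big]\leq \tfrac{K_{\theta_0}}{N-1}.
\]
Combining the three bounds (and absorbing constants for $N=1$, where the statement is trivial) gives the claimed $K_{\theta_0}/N$ rate uniformly in $t$. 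The main subtlety is simply the diagonal term separation; everything else is a variance computation and an application of propagation of chaos.
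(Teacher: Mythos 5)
Your proposal is correct and follows the same basic strategy as the paper: swap the interacting particles for the independent McKean--Vlasov copies using Proposition~\ref{prop_chaos}, then control the empirical average against the law by a variance computation. The only structural difference is in how the second term is closed. The paper replaces \emph{only} the first argument ($x_t^{i,N}\mapsto x_t^{i}$, costing $K\|x_t^{i}-x_t^{i,N}\|^2$ by the Lipschitz property) and then invokes Lemma~\ref{lemma_a3} directly for $\mathbb{E}\big[\|\int\varphi(x_t^{i},y)\mu_t(\mathrm{d}y)-\tfrac{1}{N}\sum_j\varphi(x_t^{i},x_t^{j,N})\|^2\big]$, treating the random first argument as if it were a parameter; you instead replace both arguments by the independent copies $x_t^{j}$ and redo the law-of-large-numbers step by hand, conditioning on $x_t^{i}$ and isolating the diagonal summand $j=i$, which contributes only $O(1/N^2)$. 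Your route is marginally longer but makes explicit the two points the paper's citation of Lemma~\ref{lemma_a3} glosses over (the dependence of the test function on $x_t^{i}$ and the resulting diagonal correlation), and both arguments deliver the same uniform-in-$t$ bound of order $K_{\theta_0}/N$.
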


\begin{proof}
The is an immediate corollary of Lemma \ref{lemma_a3}. Using the H\"older inequality, and that $\varphi$ is Lipschitz, we have
\begin{align}
\big|\big|\int_{\mathbb{R}^d}\varphi(x_t^{i},y)\mu_t(\mathrm{d}y) - \frac{1}{N}\sum_{j=1}^N \varphi(x_t^{i,N},x_t^{j,N})\big|\big|^2 
&\leq 2\big|\big|\int_{\mathbb{R}^d}\varphi(x_t^{i},y)\mu_t(\mathrm{d}y) - \frac{1}{N}\sum_{j=1}^N \varphi(x_t^{i},x_t^{j,N})\big|\big|^2 \\
&~~+ \frac{2K_{\theta_0}}{N}\sum_{j=1}^N \big|\big|x_t^{i} - x_t^{i,N}\big|\big|^2
\end{align}
It follows immediately, as required, that
\begin{align}
&\mathbb{E}\left[\bigg|\bigg|\int_{\mathbb{R}^d}\varphi(x_t^{i},y)\mu_t(\mathrm{d}y) - \frac{1}{N}\sum_{i=1}^N \varphi(x_t^{i,N},x_t^{j,N})\bigg|\bigg|^2\right] \\
&\leq 2\underbrace{\mathbb{E}\left[\big|\big|\int_{\mathbb{R}^d}\varphi(x_t^{i},y)\mu_t(\mathrm{d}y) - \frac{1}{N}\sum_{j=1}^N \varphi(x_t^{i},x_t^{j,N})\big|\big|^2\right]}_{\leq \frac{K_{\theta_0}}{N} \text{ by Lemma \ref{lemma_a3}}}+ \frac{2K_{\theta_0}}{N}\underbrace{\mathbb{E}\left[\sum_{j=1}^N \big|\big|x_t^{i} - x_t^{i,N}\big|\big|^2\right]}_{\leq 
\frac{K_{\theta_0}}{N} \text{ by Proposition \ref{prop_chaos}}}
\leq \frac{K_{\theta_0}}{N}.
\end{align}
where, as elsewhere, we have allowed the value of the constant $K_{\theta_0}$ to increase from line to line.
\end{proof}

\subsection{Additional Lemmas for Lemma \ref{lemmaD}} 
\subsubsection{Main Lemmas}
The lemmas in this section are modified versions of Lemmas 3.1 - 3.5 in \cite{Sirignano2017a}. We provide the proofs of these results in full, appropriately adapted to the current setting.

\begin{lemma} \label{sub_lemma_1}
Assume that Conditions \ref{assumption_init}, \ref{assumption1} - \ref{assumption2}, \ref{assumption3}, and \ref{assumption0} hold. Define, with $\hat{x}^N = (x^{1,N},\dots,x^{N,N})$, 
\begin{equation}
\Gamma_{k,\eta} = \int_{\tau_k}^{\sigma_{k,\eta}} \gamma_s\left(\nabla_{\theta}\hat{L}^{i,N}(\theta_s^{i,N},\hat{x}_s^{N}) - \nabla_{\theta}\tilde{\mathcal{L}}^{i,N}(\theta_s^{i,N})\right)\mathrm{d}s.
\end{equation}
Then, almost surely, $||\Gamma_{k,\eta}||\rightarrow 0$ as $k\rightarrow\infty$.
\end{lemma}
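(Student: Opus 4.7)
The plan is to convert the fluctuation integrand $\nabla_\theta \hat{L}^{i,N}(\theta,\hat{x}^N)-\nabla_\theta \tilde{\mathcal{L}}^{i,N}(\theta)$ into a total It\^o differential plus summable remainders via a Poisson equation, in direct analogy with the construction deployed later in the proof of Theorem \ref{theorem2_2_star}. Set $R^{i,N}(\theta,\hat{x}^N)=\nabla_\theta \hat{L}^{i,N}(\theta,\hat{x}^N)-\nabla_\theta \tilde{\mathcal{L}}^{i,N}(\theta)$, which by Lemma \ref{lemma0_1} is centered with respect to the unique invariant measure $\hat{\mu}^N_\infty$ of the IPS. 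Using Condition \ref{assumption3} to provide the required regularity and polynomial growth, Lemma \ref{lemma_poisson_2} furnishes a twice-differentiable solution $v^{i,N}(\theta,\hat{x}^N)$ to $\mathcal{A}_{\hat{x}} v^{i,N}=R^{i,N}$ with $\int v^{i,N}(\theta,\cdot)\,d\hat{\mu}^N_\infty=0$, whose derivatives up to second order grow at most polynomially in $\hat{x}^N$.

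Applying It\^o's formula to $v^{i,N}(\theta_t^{i,N},\hat{x}_t^N)$, rearranging to express $R^{i,N}(\theta_s^{i,N},\hat{x}_s^N)\,ds$, multiplying by $\gamma_s$, and integrating over $[\tau_k,\sigma_{k,\eta}]$ with a single integration by parts on $\int \gamma_s\,dv_s^{i,N}$ yields a decomposition of $\Gamma_{k,\eta}$ into five terms: boundary evaluations $\gamma_{\sigma_{k,\eta}}v^{i,N}_{\sigma_{k,\eta}}-\gamma_{\tau_k}v^{i,N}_{\tau_k}$; an absolutely continuous integral $\int_{\tau_k}^{\sigma_{k,\eta}}\gamma_s'\,v^{i,N}_s\,ds$; two integrals involving $\gamma_s^2$ times polynomially bounded factors (coming from $\mathcal{A}_\theta v^{i,N}$ and from the cross term $\partial_\theta\partial_{\hat{x}}v^{i,N}\nabla_\theta\hat{B}^{i,N}$, both carrying an extra $\gamma_s$ via the evolution of $\theta_s^{i,N}$); and two stochastic integrals against $dw^i_s$ and $d\hat{w}^N_s$, with integrands proportional to $\gamma_s^2$ and $\gamma_s$ respectively. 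The uniform-in-time moment bounds of Proposition \ref{prop_moment_bounds} for $\hat{x}^N_t$ and of Lemma \ref{lemma_theta_moments} for $\theta^{i,N}_t$, combined with the polynomial growth of $v^{i,N}$ and of $\nabla_\theta\hat{B}^{i,N}$, let us estimate each deterministic piece. The boundary terms are bounded by $K\gamma_{\tau_k}$, which tends to zero a.s. since $\tau_k\to\infty$ on the event of infinitely many cycles; the absolutely continuous pieces are tails of $\int_0^\infty(|\gamma_s'|+\gamma_s^2)\,ds<\infty$ by Condition \ref{assumption0}, and therefore vanish a.s. as $k\to\infty$.

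The main obstacle is the almost sure convergence of the stochastic integrals over the random windows $[\tau_k,\sigma_{k,\eta}]$, as one cannot simply invoke a pathwise ergodic theorem on such intervals. Here I would bound the running supremum of each stochastic integral by the Burkholder-Davis-Gundy inequality, producing an $\mathbb{L}^2$ estimate of order $\bigl(\mathbb{E}\int_{\tau_k}^{\sigma_{k,\eta}}\gamma_s^{2j}K(1+\|\hat{x}_s^N\|^{2q}+\|\theta_s^{i,N}\|^{2q})\,ds\bigr)^{1/2}$ for $j=1,2$, which using the uniform moment bounds collapses to $K\bigl(\int_{\tau_k}^\infty\gamma_s^{2}\,ds\bigr)^{1/2}$. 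Summability of $\int_0^\infty\gamma_s^2\,ds$ together with Chebyshev's inequality applied along a subsequence (e.g.\ choosing $k_n$ so that the tail estimate is below $n^{-2}$) and Borel-Cantelli delivers almost sure convergence to zero along that subsequence; monotonicity of the tail then upgrades to full a.s.\ convergence in $k$. Summing all five contributions gives $\|\Gamma_{k,\eta}\|\to 0$ almost surely as $k\to\infty$.
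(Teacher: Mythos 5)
Your overall route is exactly the paper's: centre $R^{i,N}=\nabla_\theta\hat L^{i,N}-\nabla_\theta\tilde{\mathcal L}^{i,N}$, solve the Poisson equation for the generator of $\hat x^N$, apply It\^o's formula to $\gamma_t v^{i,N}(\theta_t^{i,N},\hat x_t^N)$, and decompose $\Gamma_{k,\eta}$ into boundary terms, $\dot\gamma$/$\gamma^2$ integrals, and stochastic integrals. (Minor point: you cite Lemma \ref{lemma_poisson_2}, but you also use $\mathcal A_\theta v^{i,N}$ and $\partial_\theta\partial_{\hat x}v^{i,N}$, whose growth bounds come from Lemma \ref{lemma_poisson_3}.) However, two of your term-wise convergence arguments have genuine gaps. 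First, the boundary terms: you bound $\gamma_{\tau_k}\,v^{i,N}(\theta_{\tau_k}^{i,N},\hat x_{\tau_k}^N)$ by $K\gamma_{\tau_k}$ with a deterministic $K$, but $v^{i,N}$ only grows polynomially in $\hat x^N$ and the path $\|\hat x_t^N\|$ is not almost surely bounded; the uniform moment bounds are in expectation only, so there is no pathwise constant $K$. This is precisely why the paper proves $\gamma_t\|v^{i,N}(\theta_t^{i,N},\hat x_t^N)\|\to 0$ a.s.\ via the Borel--Cantelli argument of \cite{Sirignano2020a}, which is where the extra learning-rate condition $\lim_t\gamma_t^2 t^{2p+\frac12}=0$ in Condition \ref{assumption0} is actually used; your proposal never invokes it.

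Second, the stochastic integrals: your plan to apply BDG over the random window $[\tau_k,\sigma_{k,\eta}]$ and ``collapse'' the bound to $K\bigl(\int_{\tau_k}^{\infty}\gamma_s^2\,\mathrm{d}s\bigr)^{1/2}$ treats $\tau_k$ as if it were deterministic: with a random lower limit you cannot pull $\sup_s\mathbb E\|\hat x_s^N\|^{2q}$ out of $\mathbb E\int_{\tau_k}^{\sigma_{k,\eta}}(\cdots)\mathrm{d}s$ and end up with a quantity that tends to zero in $k$, and without a deterministic lower bound on $\tau_k$ the Chebyshev--Borel--Cantelli step has no summable deterministic majorant. Moreover, the final upgrade ``monotonicity of the tail'' from a subsequence to the full sequence does not apply, since the window increments are not monotone in $k$. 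The clean repair is the paper's: show $\sup_{t}\mathbb E|M_t|^2<\infty$ for the stochastic integrals taken over $[0,t]$ (It\^o isometry plus the polynomial growth and moment bounds, and square-integrability of $\gamma$), invoke Doob's martingale convergence theorem to get a.s.\ convergence of $M_t$ as $t\to\infty$, and then conclude that the increments over $[\tau_k,\sigma_{k,\eta}]$ vanish a.s.\ because $\tau_k\to\infty$; the same ``converges a.s.\ to a finite limit, hence tails vanish'' device handles the $\dot\gamma$/$\gamma^2$ integrals, whose integrands also carry the unbounded polynomial factor you implicitly suppressed.
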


\begin{proof}
Let $\hat{x}^N = (x^{1,N},\dots,x^{N,N})\in(\mathbb{R}^d)^N$. Consider $S^{i,N}(\theta,\hat{x}^N)
=\nabla_{\theta}\hat{L}^{i,N}(\theta,\hat{x}^N) - \nabla_{\theta}\tilde{\mathcal{L}}^{i,N}(\theta).$ We begin by noting that this function is `centred' with respect to the invariant measure $\hat{\mu}_{\infty}(\cdot)$,
using the definition of $\nabla_{\theta}\tilde{\mathcal{L}}^{i,N}(\cdot)$ from Lemma \ref{lemma0_1}. In addition, $S^{i,N}(\theta,\hat{x})\in\mathcal{C}^{2,\alpha}(\mathbb{R}^p,(\mathbb{R}^d)^N)$ and, due to Condition \ref{assumption3}, there exist positive constants $q,K>0$ such that, for $j=0,1,2$, $\smash{| \partial_{\theta}^{j} S^{i,N}(\theta,\hat{x}^N)|\leq K(1+||x_i||^q + \frac{1}{N}\sum_{j=1}^N ||x_j||^q)}$. Thus, the function $S^{i,N}:\mathbb{R}^p\times (\mathbb{R}^d)^N\rightarrow\mathbb{R}^{p}$ satisfies the conditions of Lemma \ref{lemma_poisson_3}. It follows that, 
for all $i=1,\dots,N$, the Poisson equation
\begin{equation}
\mathcal{A}_{\hat{x}} v^{i,N}(\theta,\hat{x}^N) = S^{i,N}(\theta,\hat{x}^N)~~~,~~~\int_{(\mathbb{R}^d)^N} v^{i,N}(\theta,\hat{x}^N)\hat{\mu}^N_{\infty}(\mathrm{d}\hat{x}^N)=0
\end{equation}
has a unique twice differentiable solution which satisfies $\sum_{j=0}^2 |\frac{\partial^j v^{i,N}}{\partial \theta^{i}}(\theta,\hat{x}^N)| + |\frac{\partial^2 v^{i,N}}{\partial \theta\partial x}(\theta,\hat{x}^N)|\leq K(1+||x^{i,N}||^q + \frac{1}{N}\sum_{j=1}^N ||x^{j,N}||^q)$.
Let $u^{i,N}(t,\theta,\hat{x}^N) = \gamma_tv^{i,N}(\theta,\hat{x}^N)$. Applying Ito's formula to each component of this vector-valued function, we obtain, for $l=1,\dots,p$, 
\begin{align}
u^{i,N}_l(t_2,\theta^{i,N}_{t_2},\hat{x}^N_{t_2}) - u^{i,N}_l(t_1,\theta^{i,N}_{t_1},\hat{x}^N_{t_1}) &= \int_{t_1}^{t_2}\partial_s u^{i,N}_l(s,\theta^{i,N}_{s},\hat{x}_s^N)\mathrm{d}s  \\
&+ \int_{t_1}^{t_2}\mathcal{A}_{\hat{x}}u^{i,N}_l(s,\theta^{i,N}_{s},\hat{x}_s^N)\mathrm{d}s + \int_{t_1}^{t_2}\mathcal{A}_{\theta}u^{i,N}_l(s,\theta^{i,N}_{s},\hat{x}_s^N)\mathrm{d}s \\
&+ \int_{t_1}^{t_2}\gamma_s \mathrm{Tr}\bigg[\nabla_{\theta}\hat{B}^{i,N}(\theta_s^{i,N},\hat{x}_s^N)\partial_{\theta}\partial_{\hat{x}}u^{i,N}_l(s,\theta^{i,N}_{s},\hat{x}_s^N)\bigg]\mathrm{d}s\\
&+\int_{t_1}^{t_2}\partial_{\hat{x}}u^{i,N}_l(s,\theta^{i,N}_{s},\hat{x}_s^N)\cdot \mathrm{d}\hat{w}_s^N \\
&+\int_{t_1}^{t_2}\gamma_s\partial_{\theta}u^{i,N}_l(s,\theta^{i,N}_{s},\hat{x}_s^N)\cdot \nabla_{\theta} \hat{B}^{i,N}(\theta_s^{i,N},\hat{x}_s^N)\mathrm{d}w_s^{i}
\end{align}
where $\mathcal{A}_{\hat{x}}$ and $\mathcal{A}_{\theta}$ are the infinitesimal generators of $\hat{x}^N$ and $\theta$, and we recall from \eqref{big_SDE} that $\hat{w}_t^N = (w_t^{1},\dots,w_t^N)^T$. Rearranging this identity, and also recalling that $v^{i,N}(\theta,\hat{x}^N)$ is the solution of the Poisson equation, we obtain
\begin{align}
\Gamma_{k,\eta} &= \int_{\tau_k}^{\sigma_{k,\eta}}\gamma_s\mathcal{A}_{\hat{x}}v^{i,N}(\theta^{i,N}_{s},\hat{x}_s^N)\mathrm{d}s \\
&= \gamma_{\sigma_{k,\eta}}v^{i,N}(\theta^{i,N}_{\sigma_{k,\eta}},\hat{x}^N_{\sigma_{k,\eta}}) - \gamma_{\tau_k}v^{i,N}(\theta^{i,N}_{\tau_k},\hat{x}^N_{\tau_k})  - \int_{\tau_k}^{\sigma_{k,\eta}}\dot{\gamma}_sv^{i,N}(\theta^{i,N}_{s},\hat{x}_s^N)\mathrm{d}s \nonumber \\
& - \int_{\tau_k}^{\sigma_{k,\eta}}\gamma_s\mathcal{A}_{\theta}v^{i,N}(\theta^{i,N}_{s},\hat{x}_s^N)\mathrm{d}s - \int_{\tau_k}^{\sigma_{k,\eta}}\gamma_s^2 \mathrm{Tr}\big[\nabla_{\theta} \hat{B}^{i,N}(\theta_s^{i,N},\hat{s}_x^{N})\partial_{\theta}\partial_{\hat{x}}v^{i,N}(\theta^{i,N}_{s},\hat{x}_s^N)\big]\mathrm{d}s \nonumber \\
&-\int_{\tau_k}^{\sigma_{k,\eta}}\gamma_s\partial_{\hat{x}}v^{i,N}(\theta^{i,N}_{s},\hat{x}_s^N)\cdot \mathrm{d}\hat{w}_s^N-\int_{\tau_k}^{\sigma_{k,\eta}}\gamma_s^2\partial_{\theta}v^{i,N}(\theta^{i,N}_{s},\hat{x}_s^N)\cdot \nabla_{\theta} \hat{B}^{i,N}(\theta_{s}^{i,N},\hat{x}_s^N)\mathrm{d}w^{i}_s \nonumber
\end{align} 
We now prove the convergence of each term on the right hand side of this equation. First define $J_t^{(1)} = \gamma_t ||v^{i,N}(\theta_t^{i,N},\hat{x}_t^N)||$. 
We have, using the polynomial growth of $v^{i,N}(\theta,\hat{x}^N)$, and Proposition \ref{prop_moment_bounds} (the bounded moments of the IPS), that
\begin{align}
\mathbb{E}_{\theta_0}[|J_t^{(1)}|^2]
\leq K\gamma_t^2\bigg(1+\mathbb{E}_{\theta_0}[||{x}^{i,N}_t||^q]+\frac{1}{N}\sum_{j=1}^N \mathbb{E}_{\theta_0}[||x^{j,N}_t||^q]\bigg)\leq K_{\theta_0}\gamma_t^2.
\end{align}
Applying the Borel-Cantelli argument as in \cite[Appendix B]{Sirignano2020a}, it follows that $J_t^{(1)}$ converges to zero with probability one. We next consider the term
\begin{align}
J_{0,t}^{(2)} &= \int_{0}^t\partial_s\dot{\gamma}_sv^{i,N}(\theta^{i,N}_s,\hat{x}^N_s)\mathrm{d}s+\int_{0}^t\gamma_s\mathcal{A}_{\theta}v^{i,N}(\theta^{i,N}_{s},\hat{x}_s^N)\mathrm{d}s \\
&+\int_0^{t}\gamma_s^2 \mathrm{Tr}\big[\nabla_{\theta} \hat{B}^{i,N}(\theta_{s}^{i,N},\hat{x}_s^N)\partial_{\theta}\partial_{x}v^{i,N}(\theta^{i,N}_{s},\hat{x}_s^N)\big]\mathrm{d}s \nonumber
\end{align}
This term obeys the bound
\begin{align}
\sup_{t>0}\mathbb{E}_{\theta_0}|J_{0,t}^{(2)}|&\leq K\int_0^{\infty} (|\dot{\gamma}_s|+\gamma_s^2)(1+\mathbb{E}_{\theta_0}[||{x}^{i,N}_s||^q]+\frac{1}{N}\sum_{j=1}^N \mathbb{E}_{\theta_0}[||x^{j,N}_s||^q])\mathrm{d}s\\
&\leq K_{\theta_0}\int_0^{\infty}(|\dot{\gamma}_s|+\gamma_s^2)\mathrm{d}s<\infty.
\end{align} 
Here, the first inequality follows from the growth properties of the $v^{i,N}(\theta,\hat{x}^N)$, 
 the second inequality from Proposition \ref{prop_moment_bounds} (the bounded moments of the IPS), and the final inequality from Condition \ref{assumption0} (the properties of the learning rate). It follows that there exists a finite random variable $\smash{J_{0,\infty}^{(2)}}$ such that, with probability one, 
\begin{equation}
J_{0,t}^{(2)}\rightarrow J_{0,\infty}^{(2)}~,~~~\text{as $t\rightarrow\infty$.} \label{eq4_116}
\end{equation}
The last term to consider is the stochastic integral
\begin{equation}
J_{0,t}^{(3)} = \int_{0}^{t}\gamma_s\partial_{\hat{x}}v^{i,N}(\theta^{i,N}_{s},\hat{x}_s^N)\cdot \mathrm{d}\hat{w}_s^N+\int_{\tau_k}^{\sigma_{k,\eta}}\gamma_s^2\partial_{\theta}v^{i,N}(\theta^{i,N}_{s},\hat{x}_s^N)\cdot \nabla_{\theta} \hat{B}^{i,N}(\theta_{s}^{i,N},\hat{x}_s^N)\mathrm{d}w^{i}_s
\end{equation}
In this case, using the BDG inequality, and the same bounds as above, we have
\begin{align}
\mathbb{E}_{\theta_0}\left[|J_{0,t}^{(3)}|^2\right]
&\leq K\int_0^{\infty} (\gamma_s^2+\gamma_s^4) \bigg[1+\mathbb{E}_{\theta_0}[||{x}^{i,N}_s||^q]+\frac{1}{N}\sum_{j=1}^N \mathbb{E}_{\theta_0}[||x^{j,N}_s||^q]\bigg]\mathrm{d}s \\
&\leq K_{\theta_0}\int_0^{\infty}\gamma_s^2\mathrm{d}s<\infty.
\end{align}
Thus, by Doob's martingale convergence theorem, there exists a square integrable random variable $\smash{J_{0,\infty}^{(3)}}$ such that, both almost surely and in $\mathbb{L}^2$,  
\begin{equation}
J_{0,t}^{(3)}\rightarrow J_{0,\infty}^{(3)}~,~~~\text{as $t\rightarrow\infty$.} \label{eq4_119}
\end{equation}
It remains only to observe, combining \eqref{eq4_116} and \eqref{eq4_119}, we have 
\begin{equation}
||\Gamma_{k,\eta}||\leq J_{\sigma_{k,\eta}}^{(1)}+ J_{\tau_k}^{(1)}+J^{(2)}_{\tau_k,\sigma_{k,\eta}}+J^{(3)}_{\tau_k,\sigma_{k,\eta}}\stackrel{k\rightarrow\infty}{\rightarrow}0.
\end{equation}
~
\end{proof}

\begin{lemma} \label{sub_lemma_2}
Assume that Conditions \ref{assumption1} - \ref{assumption2}, \ref{assumption3}, \ref{assumption_init} and \ref{assumption0} hold. Let $\rho>0$ be such that, for a given $\kappa>0$, it is true that $3\rho+\frac{\rho}{4\kappa} = \frac{1}{2L}$, where $L$ denotes the Lipschitz constant of $\nabla_{\theta}\tilde{\mathcal{L}}^{i,N}(\theta)$. For $k$ large enough, and for $\eta>0$ small enough (potentially random, and depending on $k$), one has 
\begin{equation}
\int_{\tau_k}^{\sigma_{k,\eta}}\gamma_s\mathrm{d}s>\rho~~~\text{and, a.s., }~~~\frac{\rho}{2}\leq \int_{\tau_k}^{\sigma_k}\gamma_s\mathrm{d}s\leq \rho.
\end{equation}
\end{lemma}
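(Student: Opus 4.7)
The plan is to reduce both claims to a single fact: for $k$ large enough, $\sigma_k$ is determined by the integral cap in its definition rather than by the gradient constraint. Define $S_k := \inf\{s>\tau_k : \int_{\tau_k}^s \gamma_u\mathrm{d}u = \rho\}$, which is a.s. finite since $\gamma_s>0$ and $\int_0^\infty \gamma_s\mathrm{d}s = \infty$ by Condition \ref{assumption0}. If we can show $\sigma_k = S_k$ for all $k \geq k_0$ (some random $k_0$), then $\int_{\tau_k}^{\sigma_k}\gamma_s\mathrm{d}s = \rho$, so the second claim is trivially $\rho/2 \leq \rho \leq \rho$, and the first reads $\int_{\tau_k}^{\sigma_{k,\eta}}\gamma_s\mathrm{d}s = \rho + \int_{\sigma_k}^{\sigma_k+\eta}\gamma_s\mathrm{d}s > \rho$ for any $\eta>0$.

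Showing $\sigma_k = S_k$ amounts to verifying that $\|\nabla_\theta\tilde{\mathcal{L}}^{i,N}(\theta_s^{i,N})\|$ remains inside $[A_k/2, 2A_k]$ for all $s \in [\tau_k, S_k]$, where $A_k := \|\nabla_\theta\tilde{\mathcal{L}}^{i,N}(\theta_{\tau_k}^{i,N})\| \geq \kappa$. Decompose
\begin{equation*}
\theta_s^{i,N} - \theta_{\tau_k}^{i,N} = \int_{\tau_k}^s \gamma_u \nabla_\theta\tilde{\mathcal{L}}^{i,N}(\theta_u^{i,N})\mathrm{d}u + \Gamma_k(s) + M_k(s),
\end{equation*}
where $\Gamma_k(s) := \int_{\tau_k}^s \gamma_u\bigl(\nabla_\theta \hat{L}^{i,N}(\theta_u^{i,N},\hat{x}_u^N) - \nabla_\theta\tilde{\mathcal{L}}^{i,N}(\theta_u^{i,N})\bigr)\mathrm{d}u$ is the fluctuation term of Lemma \ref{sub_lemma_1} and $M_k(s)$ is the associated stochastic integral against $\mathrm{d}w^i$. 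The Poisson-equation representation used in Lemma \ref{sub_lemma_1}, together with Doob's $\mathbb{L}^2$ martingale convergence theorem applied to the stochastic integrals and the summability $\int_0^\infty\gamma_s^2\mathrm{d}s < \infty$, actually yields the stronger uniform estimate $\sup_{s \in [\tau_k, S_k]}\bigl(\|\Gamma_k(s)\| + \|M_k(s)\|\bigr) \to 0$ a.s. as $k\to\infty$. Hence for any prescribed $\delta > 0$, there is a random $k_0$ such that this supremum is bounded by $\delta$ for all $k\geq k_0$.

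The core contradiction argument is then as follows. Suppose $\tau^* \in (\tau_k, S_k]$ is the first exit time of the gradient from $[A_k/2, 2A_k]$. By continuity, $\bigl|\|\nabla_\theta\tilde{\mathcal{L}}^{i,N}(\theta_{\tau^*}^{i,N})\| - A_k\bigr| \geq A_k/2$. On the other hand, since the gradient stays bounded by $2A_k$ on $[\tau_k, \tau^*]$, the drift integral has norm at most $2A_k\rho$, so $\|\theta_{\tau^*}^{i,N} - \theta_{\tau_k}^{i,N}\| \leq 2A_k\rho + \delta$. Using that $\nabla_\theta\tilde{\mathcal{L}}^{i,N}$ is $L$-Lipschitz (finite by Lemma \ref{lemmab} and Condition \ref{assumption3}), we get
\begin{equation*}
\Bigl|\|\nabla_\theta\tilde{\mathcal{L}}^{i,N}(\theta_{\tau^*}^{i,N})\| - A_k\Bigr| \leq L(2A_k\rho + \delta) = 2L\rho A_k + L\delta.
\end{equation*}
Calibrating $\delta \leq \rho A_k/(4\kappa)$ (admissible since $A_k \geq \kappa$, after enlarging $k_0$ if necessary), the right-hand side becomes $A_k\bigl(2L\rho + L\rho/(4\kappa)\bigr)$, which equals $A_k \cdot 2L\bigl(\rho + \rho/(8\kappa)\bigr)$; combined with the calibration $2L(3\rho + \rho/(4\kappa)) = 1$ this is strictly smaller than $A_k/2$, contradicting the lower bound $A_k/2$ above.

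The principal obstacle is the a.s. uniformity: one needs the bound $\sup_{s\in[\tau_k,S_k]}(\|\Gamma_k(s)\| + \|M_k(s)\|) \leq \delta$ to hold almost surely simultaneously for all $k \geq k_0$ along the infinite random sequence $(\tau_k)$. This is handled by upgrading the almost-sure limit $\Gamma_{k,\eta}\to 0$ from Lemma \ref{sub_lemma_1} to uniformity in the endpoint via the same It\^o-Poisson decomposition (whose remainder is already supremum-controlled by $\int_0^\infty \gamma_u^2\mathrm{d}u < \infty$) and a Borel-Cantelli argument; the resulting $k_0$ is finite a.s., completing the proof.
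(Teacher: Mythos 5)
Your proof is correct, but it follows a genuinely different route from the paper's. The paper argues by contradiction directly on the quantity $\int_{\tau_k}^{\sigma_{k,\eta}}\gamma_s\,\mathrm{d}s$: assuming it is $\leq\rho$, choosing $\eta$ small enough that $||\nabla_\theta\tilde{\mathcal{L}}^{i,N}(\theta_s^{i,N})||\leq 3||\nabla_\theta\tilde{\mathcal{L}}^{i,N}(\theta_{\tau_k}^{i,N})||$ on $[\tau_k,\sigma_{k,\eta}]$, and using Lemma \ref{sub_lemma_1} plus a Doob argument with $\varepsilon\leq\rho/8$, it shows $\theta$ moves by at most $||\nabla_\theta\tilde{\mathcal{L}}^{i,N}(\theta_{\tau_k}^{i,N})||/(2L)$, so the gradient stays in the band up to $\sigma_{k,\eta}$, contradicting $\sigma_{k,\eta}>\sigma_k$; the lower bound $\rho/2$ then comes from subtracting $\int_{\sigma_k}^{\sigma_{k,\eta}}\gamma_s\,\mathrm{d}s\leq\rho/2$. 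You instead run a first-exit-time argument inside $[\tau_k,S_k]$ (with $S_k$ the time the learning-rate integral reaches $\rho$) and conclude the stronger statement $\sigma_k=S_k$, i.e. $\int_{\tau_k}^{\sigma_k}\gamma_s\,\mathrm{d}s=\rho$ exactly for large $k$, from which both claims follow at once and the first claim holds for every $\eta>0$; your calibration check ($L(2\rho+\rho/(4\kappa))<L(3\rho+\rho/(4\kappa))=1/2$, using $A_k\geq\kappa$) is sound, and working only up to the putative exit lets you use the bound $2A_k$ on the gradient rather than the paper's $3A_k$, so the $3\rho$ in the calibration simply provides slack. What your route buys is a cleaner statement that avoids the paper's delicate choice of $\eta$; what it costs is that you must upgrade Lemma \ref{sub_lemma_1} to a bound uniform in the endpoint, $\sup_{s\in[\tau_k,S_k]}(||\Gamma_k(s)||+||M_k(s)||)\to 0$ a.s. This is legitimate — the Poisson-equation decomposition expresses $\Gamma_k(s)$ through boundary terms $\gamma_t v$ that vanish a.s. and through processes converging a.s. as $t\to\infty$ (finite-variation integrals by absolute integrability under Condition \ref{assumption0}, stochastic integrals by Doob), so tail increments are uniformly small once $\tau_k$ exceeds a finite random time; the paper itself implicitly needs the same endpoint-uniformity since it evaluates at the random time $\sigma_{k,\eta}$ — but you should state it as a separate estimate rather than assert it, and you should also record (as the paper leaves implicit) that $\tau_k\to\infty$ a.s. along the infinite sequence of cycles, which is what lets "for $k\geq k_0$" translate into "after a finite random time".
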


\begin{proof}
We proceed by contradiction. Let us assume that $\smash{\int_{\tau_k}^{\sigma_{k,\eta}}\gamma_s\mathrm{d}s\leq \rho}$. Choose arbitrary $\varepsilon>0$ such that $\varepsilon\leq \frac{\rho}{8}$. We begin with the observation that, via the It\^o isometry, we have that
\begin{align}
\sup_{t\geq 0}\mathbb{E}_{\theta_0}\big|\big|\int_0^t \gamma_s \frac{\kappa}{||\nabla\tilde{\mathcal{L}}^{i,N}(\theta^{i,N}_{\tau_k})||}\nabla_{\theta}\hat{B}^{i,N}(\theta_s^{i,N},\hat{x}_s^{N})\mathrm{d}w_s^{i}\big|\big|^2
&\leq \int_0^t K\gamma_s^2\left(1+\mathbb{E}_{\theta_0}\left[||\hat{x}_s^N||^{q}\right]\right)\mathrm{d}s<\infty
\end{align}
where, we have used the polynomial growth of $\nabla_{\theta}\hat{B}^{i,N}(\theta,\hat{x})$, 
Proposition \ref{prop_moment_bounds} (the bounded moments of the IPS), and Condition \ref{assumption0} (the properties of the learning rate). 
Thus, by the Doob's martingale convergence theorem, there exists a finite random variable $M$ such that, both almost surely and in $\mathbb{L}^2$, 
\begin{equation}
\int_0^t \gamma_s \frac{\kappa}{||\nabla\tilde{\mathcal{L}}^{i,N}(\theta^{i,N}_{\tau_k})||}\nabla_{\theta}\hat{B}^{i,N}(\theta_s^{i,N},\hat{x}_s^{N})\mathrm{d}w_s^{i}\rightarrow M
\end{equation}
It follows that, for the chosen $\varepsilon>0$, there exists $k$ such that 
\begin{equation}
\int_{\tau_k}^{\sigma_{k,\eta}}\gamma_s \frac{\kappa}{||\nabla\tilde{\mathcal{L}}^{i,N}(\theta^{i,N}_{\tau_k})||}\nabla_{\theta}\hat{B}^{i,N}(\theta_s^{i,N},\hat{x}_s^{N})\mathrm{d}w_s^{i}<\varepsilon. \label{eq391}
\end{equation}
Let us now also assume that, for the given $k$, $\eta$ is small enough such that for all $s\in[\tau_k,\sigma_{k,\eta}]$, we have $||\nabla_{\theta}\tilde{\mathcal{L}}^{i,N}(\theta_s^{i,N})||\leq 3||\nabla_{\theta}\tilde{\mathcal{L}}^{i,N}(\theta_{\tau_k}^{i,N})||$. We can then compute
\begin{align}
~ \label{d99} \\[-2mm] 
||\theta^{i,N}_{\sigma_{k,\eta}}-\theta^{i,N}_{\tau_k}||&= \big|\big|\int_{\tau_k}^{\sigma_{k,\eta}}\gamma_s\nabla_{\theta}\hat{L}^{i,N}(\theta_s^{i,N},\hat{x}_s^N)\mathrm{d}s+\int_{\tau_k}^{\sigma_{k,\eta}}\gamma_s\langle \nabla_{\theta} \hat{B}^{i,N}(\theta_s^{i,N},\hat{x}_s^N),\mathrm{d}w_s^{i}\rangle\big|\big|  \\[3mm]
&\leq 3||\nabla_{\theta}\tilde{\mathcal{L}}^{i,N}(\theta_{\tau_k}^{i,N})||\int_{\tau_k}^{\sigma_{k,\eta}}\gamma_s\mathrm{d}s+ 
\big|\big|\int_{\tau_k}^{\sigma_{k,\eta}}\gamma_s[\nabla_{\theta}\hat{L}^{i,N}(\theta_s^{i,N},\hat{x}_s^N)-\nabla_{\theta}\tilde{\mathcal{L}}^{i,N}(\theta_s^{i,N})]\mathrm{d}s\big|\big| \hspace{-30mm} \\[-1mm]
&+\frac{||\nabla\tilde{\mathcal{L}}^{i,N}(\theta^{i,N}_{\tau_k})||}{\kappa}
\big|\big|\int_{\tau_k}^{\sigma_{k,\eta}}\gamma_s\frac{\kappa}{||\nabla\tilde{\mathcal{L}}^{i,N}(\theta^{i,N}_{\tau_k})||}\langle \nabla_{\theta} B(\theta_s^{i,N},\hat{x}_s^N),\mathrm{d}w_s^{i}\rangle\big|\big| \hspace{-30mm} \\[2.5mm]
&\leq 3||\nabla_{\theta}\tilde{\mathcal{L}}^{i,N}(\theta_{\tau_k}^{i,N})||\rho + \varepsilon + \frac{||\nabla\tilde{\mathcal{L}}^{i,N}(\theta^{i,N}_{\tau_k})||}{\kappa}\varepsilon 
\leq ||\nabla_{\theta}\tilde{\mathcal{L}}^{i,N}(\theta_{\tau_k}^{i,N})||\left[{3\rho}+\frac{\rho}{4\kappa}\right] \label{d105}
\end{align}
where in the penultimate line we have used Lemma \ref{sub_lemma_1} and \eqref{eq391}, and in the final line we have used the fact that our choice of $\varepsilon$ satisfies $\varepsilon\leq \frac{\rho}{8}$. We thus obtain
\begin{equation}
||\theta^{i,N}_{\sigma_{k,\eta}}-\theta^{i,N}_{\tau_k}||\leq ||\nabla_{\theta}\tilde{\mathcal{L}}^{i,N}(\theta_{\tau_k}^{i,N})||\left[{3\rho}+\frac{\rho}{4\kappa}\right]\leq ||\nabla_{\theta}\tilde{\mathcal{L}}^{i,N}(\theta_{\tau_k}^{i,N})||\frac{1}{2L}.\label{d106}
\end{equation}
Thus, using also the definition of the Lipschitz constant $L$, we obtain
\begin{equation}
||\nabla_{\theta}\tilde{\mathcal{L}}^{i,N}(\theta_{\sigma_{k,\eta}}^{i,N})-\nabla_{\theta}\tilde{\mathcal{L}}^{i,N}(\theta_{\tau_k}^{i,N})||\leq L||\theta^{i,N}_{\sigma_{k,\eta}}-\theta^{i,N}_{\tau_k}||\leq \frac{1}{2}||\nabla_{\theta}\tilde{\mathcal{L}}^{i,N}(\theta_{\tau_k}^{i,N})||
\end{equation}
which then yields
\begin{equation}
\frac{1}{2}||\nabla_{\theta}\tilde{\mathcal{L}}^{i,N}(\theta_{\tau_k}^{i,N})||\leq ||\nabla_{\theta}\tilde{\mathcal{L}}^{i,N}(\theta_{\sigma_{k,\eta}}^{i,N})||\leq 2||\nabla_{\theta}\tilde{\mathcal{L}}^{i,N}(\theta_{\tau_k}^{i,N})||. \label{d108}
\end{equation}
But this implies that $\sigma_{k,\eta}\in[\tau_k,\sigma_k]$, which is a contradiction. Thus we do indeed have $\int_{\tau_k}^{\sigma_{k,\eta}}\gamma_s\mathrm{d}s>\rho$. We now turn our attention to the second part of the Lemma. By definition, we have that $\smash{\int_{\tau_k}^{\sigma_k}\gamma_s\mathrm{d}s\leq \rho}$. Thus, it remains only to show that $\smash{\frac{\rho}{2}\leq \int_{\tau_k}^{\sigma_k}\gamma_s\mathrm{d}s}$. From the first part of the Lemma, we have that $
\smash{\int_{\tau_k}^{\sigma_{k,\eta}}\gamma_s\mathrm{d}s>\rho}$. Moreover, for $k$ sufficiently large and $\eta$ sufficiently small, we must have $\smash{\int_{\sigma_k}^{\sigma_{k,\eta}}\gamma_s\mathrm{d}s\leq \frac{\rho}{2}}$. We thus obtain
\begin{equation}
\int_{\tau_k}^{\sigma_k}\gamma_s\mathrm{d}s\geq \rho - \int_{\sigma_k}^{\sigma_{k,\eta}}\gamma_s\mathrm{d}s\geq \rho - \frac{\rho}{2} = \frac{\rho}{2}.
\end{equation}
\end{proof}

\begin{lemma} \label{sub_lemma_3}
Assume that Conditions \ref{assumption1} - \ref{assumption2}, \ref{assumption3}, \ref{assumption_init} and \ref{assumption0} hold. Suppose that there are an infinite number of intervals $[\tau_k,\sigma_k)$. Then there exists a fixed constant $\beta=\beta(\kappa)>0$ such that, for $k$ large enough, almost surely,
\begin{equation}
\tilde{\mathcal{L}}^{i,N}(\theta^{i,N}_{\sigma_k}) - \tilde{\mathcal{L}}^{i,N}(\theta_{\tau_k}^{i,N})\geq \beta.
\end{equation}
\end{lemma}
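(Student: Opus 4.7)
The natural route is to apply It\^o's formula to the process $(\tilde{\mathcal{L}}^{i,N}(\theta_s^{i,N}))_{s\geq 0}$ between $\tau_k$ and $\sigma_k$. Since $\tilde{\mathcal{L}}^{i,N}$ is twice continuously differentiable (by the regularity of $L$ and the invariant measure representation in Lemma \ref{lemma0_1}), and the parameter follows the SDE \eqref{theta_approx}, this yields the decomposition
\begin{align*}
\tilde{\mathcal{L}}^{i,N}(\theta^{i,N}_{\sigma_k}) - \tilde{\mathcal{L}}^{i,N}(\theta_{\tau_k}^{i,N}) = A_k + F_k + I_k + M_k,
\end{align*}
where $A_k = \int_{\tau_k}^{\sigma_k} \gamma_s \|\nabla_{\theta}\tilde{\mathcal{L}}^{i,N}(\theta_s^{i,N})\|^2\mathrm{d}s$ is the ``true ascent'' term, $F_k = \int_{\tau_k}^{\sigma_k} \gamma_s \langle \nabla_{\theta}\tilde{\mathcal{L}}^{i,N}(\theta_s^{i,N}), \nabla_{\theta}\hat{L}^{i,N}(\theta_s^{i,N},\hat{x}_s^N)-\nabla_{\theta}\tilde{\mathcal{L}}^{i,N}(\theta_s^{i,N})\rangle\mathrm{d}s$ is the fluctuation, $I_k$ is the It\^o correction of order $\gamma_s^2$, and $M_k$ is the driving stochastic integral against $w^{i}$.

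The plan is to show that $A_k$ is bounded below by a fixed positive quantity while the other three terms tend to zero almost surely. For $A_k$, the definition of $\sigma_k$ in \eqref{sigma} guarantees that $\|\nabla_{\theta}\tilde{\mathcal{L}}^{i,N}(\theta_s^{i,N})\| \geq \tfrac{1}{2}\|\nabla_{\theta}\tilde{\mathcal{L}}^{i,N}(\theta_{\tau_k}^{i,N})\| \geq \kappa/2$ on $[\tau_k,\sigma_k]$, and Lemma \ref{sub_lemma_2} gives $\int_{\tau_k}^{\sigma_k}\gamma_s\mathrm{d}s\geq \rho/2$. Combining, $A_k \geq \kappa^2\rho/8$. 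For $F_k$, the bound $\|\nabla_{\theta}\tilde{\mathcal{L}}^{i,N}(\theta_s^{i,N})\| \leq 2\|\nabla_{\theta}\tilde{\mathcal{L}}^{i,N}(\theta_{\tau_k}^{i,N})\|$ on the same interval, together with the uniform bound on the gradient from Lemma \ref{lemmab}, reduces matters to showing that $\|\int_{\tau_k}^{\sigma_k}\gamma_s[\nabla_{\theta}\hat{L}^{i,N}(\theta_s^{i,N},\hat{x}_s^N)-\nabla_{\theta}\tilde{\mathcal{L}}^{i,N}(\theta_s^{i,N})]\mathrm{d}s\| \to 0$ a.s.; this is exactly the content of Lemma \ref{sub_lemma_1}, whose proof via the Poisson equation of Lemma \ref{lemma_poisson_3} applies verbatim with $\sigma_{k,\eta}$ replaced by $\sigma_k$ (the argument only exploits that the endpoint is a stopping time and that $\gamma_t\to 0$). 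For $I_k$, the polynomial growth of $\nabla_{\theta}^2\tilde{\mathcal{L}}^{i,N}$ and of $\nabla_{\theta}\hat{B}^{i,N}$, combined with Proposition \ref{prop_moment_bounds} and $\int_0^\infty\gamma_s^2\mathrm{d}s<\infty$ (Condition \ref{assumption0}), give $\mathbb{E}_{\theta_0}[|I_k|] \leq K \int_{\tau_k}^\infty \gamma_s^2\mathrm{d}s \to 0$; a standard Borel--Cantelli argument (as in \cite{Sirignano2020a}) upgrades this to a.s.\ convergence. For $M_k$, the stochastic integral up to time $t$ has bounded $\mathbb{L}^2$ norm uniformly in $t$, so Doob's martingale convergence theorem yields a limit $M_\infty$, and hence $M_k \to 0$ a.s.

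Putting these pieces together, for all $k$ sufficiently large we have $|F_k|+|I_k|+|M_k|\leq \kappa^2\rho/16$ almost surely, so the lemma holds with $\beta := \kappa^2\rho/16$. The main technical obstacle is verifying that the Poisson-equation machinery of Lemma \ref{sub_lemma_1} still delivers $\Gamma_k\to 0$ at the endpoint $\sigma_k$ rather than $\sigma_{k,\eta}$: the boundary terms in the It\^o representation take the form $\gamma_{\sigma_k} v^{i,N}(\theta_{\sigma_k}^{i,N},\hat{x}_{\sigma_k}^N)$, and to make these vanish one must know that $\gamma_{\sigma_k}\to 0$. This follows because the existence of infinitely many intervals forces $\tau_k\to\infty$ (otherwise $\int_0^\infty \gamma_s\mathrm{d}s<\infty$ from the bound $\int_{\tau_k}^{\sigma_k}\gamma_s\mathrm{d}s\leq \rho$ would contradict Condition \ref{assumption0}), together with $\gamma_t\downarrow 0$ and a Borel--Cantelli step controlling the polynomially growing state-dependent prefactor.
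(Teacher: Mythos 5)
Your decomposition via It\^o's formula and your treatment of the ascent term $A_k$ (lower bound $\kappa^2\rho/8$ from the stopping-time definitions and Lemma \ref{sub_lemma_2}), of the It\^o correction $I_k$, and of the martingale term $M_k$ (Doob convergence from square-summability of $\gamma$ and the moment bounds) all match the paper's proof. The problem is your handling of the fluctuation term $F_k$. You claim that bounding $\|\nabla_{\theta}\tilde{\mathcal{L}}^{i,N}(\theta_s^{i,N})\|$ on $[\tau_k,\sigma_k]$ ``reduces matters to showing'' that $\bigl\|\int_{\tau_k}^{\sigma_k}\gamma_s[\nabla_{\theta}\hat{L}^{i,N}(\theta_s^{i,N},\hat{x}_s^N)-\nabla_{\theta}\tilde{\mathcal{L}}^{i,N}(\theta_s^{i,N})]\mathrm{d}s\bigr\|\to 0$, i.e.\ to Lemma \ref{sub_lemma_1}. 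This reduction is not valid: the first factor in the inner product depends on $s$, and
\begin{equation}
\Bigl|\int_{\tau_k}^{\sigma_k}\gamma_s\langle a_s,b_s\rangle\,\mathrm{d}s\Bigr|\;\not\leq\;\sup_{s}\|a_s\|\,\Bigl\|\int_{\tau_k}^{\sigma_k}\gamma_s b_s\,\mathrm{d}s\Bigr\|
\end{equation}
in general. After taking the supremum of $\|a_s\|$ out you are left with $\int_{\tau_k}^{\sigma_k}\gamma_s\|b_s\|\mathrm{d}s$, the integral of the norm, and this does \emph{not} tend to zero: the integrand $\nabla_{\theta}\hat{L}^{i,N}-\nabla_{\theta}\tilde{\mathcal{L}}^{i,N}$ is $O(1)$ pointwise, and Lemma \ref{sub_lemma_2} forces $\int_{\tau_k}^{\sigma_k}\gamma_s\mathrm{d}s\geq\rho/2$, so this quantity is generically of the same order as $A_k$ itself. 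The smallness in Lemma \ref{sub_lemma_1} comes entirely from cancellation inside the integral (the Poisson-equation/ergodic-averaging argument), which is destroyed once you replace the integral of the inner product by a product of a sup and an integral. (A splitting $\langle a_{\tau_k},\int\gamma_s b_s\mathrm{d}s\rangle+\int\gamma_s\langle a_s-a_{\tau_k},b_s\rangle\mathrm{d}s$ does not rescue it either, since $\|a_s-a_{\tau_k}\|$ is only bounded, not small, on $[\tau_k,\sigma_k]$.)

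The paper avoids this by applying the Poisson-equation machinery to the \emph{full} inner product: it defines $T^{i,N}(\theta,\hat{x}^N)=\langle\nabla_{\theta}\tilde{\mathcal{L}}^{i,N}(\theta),\nabla_{\theta}\hat{L}^{i,N}(\theta,\hat{x}^N)-\nabla_{\theta}\tilde{\mathcal{L}}^{i,N}(\theta)\rangle$, notes that this function is still centred with respect to $\hat{\mu}^N_{\infty}$ (the first factor depends only on $\theta$), is $\mathcal{C}^{2,\alpha}$ with the required polynomial growth (using Lemma \ref{lemmab} and Condition \ref{assumption3}), and then repeats the It\^o/martingale argument of Lemma \ref{sub_lemma_1} for the solution of the Poisson equation with source $T^{i,N}$, concluding $A_{2,k}^{i,N}\to 0$ a.s. Your proof needs this step (or an equivalent one) inserted; with it, the rest of your argument, including the choice $\beta=\kappa^2\rho/16$ up to constants, goes through as in the paper. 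Your closing remark about the endpoint $\sigma_k$ versus $\sigma_{k,\eta}$ and $\tau_k\to\infty$ is a genuine but minor point and is not where the difficulty lies.
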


\begin{proof}
By It\^o's formula, we have that
\begin{align}
\tilde{\mathcal{L}}^{i,N}(\theta^{i,N}_{\sigma_k}) - \tilde{\mathcal{L}}^{i,N}(\theta_{\tau_k}^{i,N}) \label{eq_d117} &= \underbrace{\int_{\tau_k}^{\sigma_k}\gamma_s ||\nabla_{\theta}\tilde{\mathcal{L}}^{i,N}(\theta_s^{i,N})||^2\mathrm{d}s }_{A_{1,k}^{i,N}} \\
&+ \underbrace{\int_{\tau_k}^{\sigma_k}\gamma_s\langle \nabla_{\theta}\tilde{\mathcal{L}}^{i,N}(\theta_s^{i,N}),\nabla_{\theta} \hat{L}^{i,N}(\theta_s^{i,N},\hat{x}_s^N) - \nabla_{\theta}\tilde{\mathcal{L}}^{i,N}(\theta_s^{i,N})\rangle \mathrm{d}s}_{A_{2,k}^{i,N}} \nonumber\\[-2mm]
&+ \underbrace{\int_{\tau_k}^{\sigma_k}\gamma_s\langle \nabla_{\theta}\tilde{\mathcal{L}}^{i,N}(\theta_s^{i,N}), \nabla_{\theta} \hat{B}^{i,N}(\theta_s^{i,N},\hat{x}_s^N)\mathrm{d}w_s^{i}\rangle}_{A_{3,k}^{i,N}} \\
&+ \underbrace{\int_{\tau_k}^{\sigma_k}\frac{1}{2}\gamma_s^2\mathrm{Tr}\left[\nabla_{\theta} \hat{B}^{i,N}(\theta_s^{i,N},\hat{x}_s^N)\nabla_{\theta} \hat{B}^{i,N}(\theta_s^{i,N},\hat{x}_s^N)^T\nabla^2_{\theta}\tilde{\mathcal{L}}^{i,N}(\theta_s^{i,N})\mathrm{d}s\right]}_{A_{4,k}^{i,N}} \hspace{-10mm}\nonumber
\end{align}
We will deal with each of these terms individually. First consider $A_{1,k}^{i,N}$. For this term, we have that
\begin{align}
A_{1,k}^{i,N} 
&= \int_{\tau_k}^{\sigma_k}\gamma_s||\nabla_{\theta}\tilde{\mathcal{L}}^{i,N}(\theta_s^{i,N})||^2\mathrm{d}s \geq \frac{||\nabla_{\theta}\tilde{\mathcal{L}}^{i,N}(\theta_{\tau_k}^{i,N})||^2}{4} \int_{\tau_k}^{\sigma_k}\gamma_s\mathrm{d}s \geq \frac{||\nabla_{\theta}\tilde{\mathcal{L}}^{i,N}(\theta_{\tau_k}^{i,N})||^2}{8} \rho   
\end{align}
where, in the first inequality, we have used the definition of the $\{\tau_k\}_{k\geq 0}$, namely, that $||\nabla_{\theta}\tilde{\mathcal{L}}^{i,N}(\theta_s^{i,N})||\geq \frac{1}{2}||\nabla_{\theta}\tilde{\mathcal{L}}^{i,N}(\theta_{\tau_k}^{i,N})||$ for all $s\in[\tau_k,\sigma_k]$, and in the second inequality we have used Lemma \ref{sub_lemma_2}. 

We now turn our attention to $\smash{A_{2,k}^{i,N}}$. We will handle this term using a very similar to approach to that used in the proof of Lemma \ref{sub_lemma_1}. Let us consider the function $T^{i,N}(\theta,\hat{x}^N) = \langle \nabla_{\theta}\tilde{\mathcal{L}}^{i,N}(\theta),\nabla_{\theta}\hat{L}^{i,N}(\theta,\hat{x}^N) - \nabla_{\theta}\tilde{\mathcal{L}}^{i,N}(\theta)\rangle$.
It is clear that $\smash{T^{i,N}(\theta,\hat{x}^N)\in\mathcal{C}^{2,\alpha}(\mathbb{R}^p,\mathbb{R}^d)}$, and that $||\partial_{\theta}^j T^{i,N}(\theta,\hat{x}^N)|\leq K(1+||x^{i,N}||^q + \frac{1}{N}\sum_{j=1}^N ||x_j||^q)$, for $j=0,1,2$. Moreover, it is straightforward to show that this function satisfies $\int_{(\mathbb{R}^d)^N}T^{i,N}(\theta,\hat{x}^N)\hat{\mu}_{\infty}(\mathrm{d}\hat{x}^N)=0$. Thus, Lemma \ref{lemma_poisson_3}, the Poisson equation
\begin{equation}
\mathcal{A}_{\hat{x}} v^{i,N}(\theta,\hat{x}^N) = T^{i,N}(\theta,\hat{x}^N)~~~,~~~\int_{(\mathbb{R}^d)^N} v^{i,N}(\theta,\hat{x}^N)\mu_{\infty}(\mathrm{d}\hat{x}^N)=0
\end{equation}
has a unique twice differentiable solution which satisfies $\sum_{j=0}^2 |\frac{\partial^j v^{i,N}}{\partial \theta^{i}}(\theta,\hat{x}^N)| + |\frac{\partial^2 v^{i,N}}{\partial \theta\partial x}(\theta,\hat{x}^N)|\leq K[1+||x^{i,N}||^q + \frac{1}{N}\sum_{j=1}^N ||x^{j,N}||^q]$.
Using the same steps as in the proof of Lemma \ref{sub_lemma_1}, we can then prove that, a.s., 
\begin{equation}
\left|\left|\int_{\tau_k}^{\sigma_k}\gamma_s\langle \nabla_{\theta}\tilde{\mathcal{L}}^{i,N}(\theta_s^{i,N}),\nabla_{\theta} \hat{L}^{i,N}(\theta_s^{i,N},\hat{x}_s^N) - \nabla_{\theta}\tilde{\mathcal{L}}^{i,N}(\theta_s^{i,N})\rangle \mathrm{d}s\right|\right|\stackrel{k\rightarrow\infty}{\rightarrow} 0.
\end{equation}
We next consider $\smash{A_{3,k}^{i,N}}$. 
Using It\^o's isometry, Lemma \ref{lemmab} (the bound on the asymptotic log-likelihood of the IPS), the polynomial growth of the function $\smash{\nabla_{\theta} \hat{B}^{i,N}(\theta,\hat{x})}$, 
Proposition \ref{prop_moment_bounds} (the moment bounds for solutions of the IPS) and Condition \ref{assumption0} (the square summability of the learning rate), we have that
\begin{align}
&\sup_{t\geq 0}\mathbb{E}_{\theta_0}\left[\left|\int_{0}^{t}\gamma_s\langle \nabla_{\theta}\tilde{\mathcal{L}}^{i,N}(\theta_s^{i,N}), \nabla_{\theta} \hat{B}^{i,N}(\theta_s^{i,N},\hat{x}_s^N)\mathrm{d}w_s^{i}\rangle\right|^2\right] \\
&\leq K_{\theta_0}\mathbb{E}_{\theta_0} \int_0^{\infty}\gamma_s^2||\nabla_{\theta} \hat{B}^{i,N}(\theta_s^{i,N},\hat{x}_s^N)||^2\mathrm{d}s \\
&\leq K_{\theta_0}\int_0^{\infty} \gamma_s^2 (1+\mathbb{E}_{\theta_0}\left[||{x}_s^{i,N}||^q\right] + \frac{1}{N}\sum_{j=1}^N \mathbb{E}_{\theta_0}\left[||{x}_s^{j,N}||^q\right]\mathrm{d}s<\infty.
\end{align} 
Thus, by Doob's martingale convergence theorem, there exists a finite random variable $A_{3,\infty}^{i,N}$ such that, both almost surely and in $\mathbb{L}^2$, 
\begin{equation}
\int_{0}^{t}\gamma_s\langle \nabla_{\theta}\tilde{\mathcal{L}}^{i,N}(\theta_s^{i,N}), \nabla_{\theta} \hat{B}^{i,N}(\theta_s^{i,N},\hat{x}_s^N)\mathrm{d}w_s^{i}\rangle\rightarrow A_{3,\infty}^{i,N}.
\end{equation}
as $t\rightarrow\infty$. It follows that $A_{3,k}^{i,N}\rightarrow 0$ a.s. as $k\rightarrow\infty$. Finally, we turn our attention to $A_{4,k}^{i,N}$. For this term, we observe that
\begin{align}
&\sup_{t\geq 0}\mathbb{E}_{\theta_0}\left|\left|\int_{0}^{t}\frac{1}{2}\gamma_s^2\mathrm{Tr}\left[\nabla_{\theta} \hat{B}^{i,N}(\theta_s^{i,N},\hat{x}_s^N)\nabla_{\theta} \hat{B}^{i,N}(\theta_s^{i,N},\hat{x}_s^N)^T\nabla^2_{\theta}\tilde{\mathcal{L}}^{i,N}(\theta_s^{i,N})\right]\mathrm{d}s\right|\right| \\
&\leq K_{\theta_0}\int_0^{\infty} \gamma_s^2(1+\mathbb{E}_{\theta_0}\left[||{x}_s^{i,N}||^q\right] + \frac{1}{N}\sum_{j=1}^N \mathbb{E}_{\theta_0}\left[||\hat{x}_s^{j,N}||^q\right])\mathrm{d}s<\infty, 
\end{align}
where, as above, we have used Lemma \ref{lemmab}, the polynomial growth of $\nabla_{\theta}\hat{B}^{i,N}(\theta,\hat{x})$, Proposition \ref{prop_moment_bounds}, and Condition \ref{assumption0}. It follows that the random variable
\begin{equation}
\int_{0}^{\infty}\frac{1}{2}\gamma_s^2\mathrm{Tr}\left[\nabla_{\theta} \hat{B}^{i,N}(\theta_s^{i,N},\hat{x}_s^N)\nabla_{\theta} \hat{B}^{i,N}(\theta_s^{i,N},\hat{x}_s^N)^T\nabla^2_{\theta}\tilde{\mathcal{L}}^{i,N}(\theta_s^{i,N})\right]\mathrm{d}s
\end{equation}
is finite a.s., which in turn implies that there exists a finite random variable $A_{4,\infty}^{i,N}$ such that
\begin{equation}
\int_{0}^{t}\frac{1}{2}\gamma_s^2\mathrm{Tr}\left[\nabla_{\theta} \hat{B}^{i,N}(\theta_s^{i,N},\hat{x}_s^N)\nabla_{\theta} \hat{B}^{i,N}(\theta_s^{i,N},\hat{x}_s^N)^T\nabla^2_{\theta}\tilde{\mathcal{L}}^{i,N}(\theta_s^{i,N})\right]\mathrm{d}s\rightarrow A_{4}^{\infty}
\end{equation}
almost surely. It follows, in particular, that $A_{4,k}^{i,N}\rightarrow 0$ a.s. as $k\rightarrow\infty$. Summarising, we thus have that, for all $\varepsilon>0$, there exists $k$ such that 
\begin{align}
\tilde{\mathcal{L}}^{i,N}(\theta^{i,N}_{\sigma_k}) - \tilde{\mathcal{L}}^{i,N}(\theta_{\tau_k}^{i,N}) 
&\geq A_{1,k}^{i,N} -||A_{2,k}^{i,N}|| - ||A_{3,k}^{i,N}|| - ||A_{4,k}^{i,N}|| 
=\frac{||\nabla_{\theta}\tilde{\mathcal{L}}^{i,N}(\theta_{\tau_k}^{i,N})||^2}{8} \rho - 3 \varepsilon
\end{align}
The claim follows by setting $\varepsilon = \frac{\rho(\kappa)\kappa^2}{32}$ and $\beta=\frac{\rho(\kappa)\kappa^2}{32}$. 
\end{proof}

\begin{lemma}{} \label{sub_lemma_4}
Assume that Conditions \ref{assumption_init}, \ref{assumption1} - \ref{assumption2}, \ref{assumption3}, and \ref{assumption0} hold. Suppose that there are an infinite number of intervals $[\tau_k,\sigma_k)$. Then there exists a fixed constant $0<\beta_1<\beta$ such that, for $k$ large enough, 
\begin{equation}
\tilde{\mathcal{L}}^{i,N}(\theta^{i,N}_{\tau_k}) - \tilde{\mathcal{L}}^{i,N}(\theta_{\sigma_{k-1}}^{i,N})\geq -\beta_1.
\end{equation}
\end{lemma}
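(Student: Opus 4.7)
The plan is to mirror the structure of the proof of Lemma \ref{sub_lemma_3}, but now applied on the ``quiet'' intervals $[\sigma_{k-1},\tau_k]$ where the gradient $\|\nabla_\theta \tilde{\mathcal{L}}^{i,N}(\theta_s^{i,N})\|$ has no a priori lower bound, rather than the ``active'' intervals $[\tau_k,\sigma_k]$ where it is bounded below by $\tfrac{\kappa}{2}$. The key observation is that here we only need a lower bound on the increment of the asymptotic log-likelihood, and the leading Riemann term $\int_{\sigma_{k-1}}^{\tau_k}\gamma_s\|\nabla_\theta \tilde{\mathcal{L}}^{i,N}(\theta_s^{i,N})\|^2\mathrm{d}s$ is non-negative and so can simply be discarded.

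I would first apply It\^o's formula to $\tilde{\mathcal{L}}^{i,N}(\theta_t^{i,N})$ between the stopping times $\sigma_{k-1}$ and $\tau_k$, obtaining the same four-term decomposition as in \eqref{eq_d117}, but with the limits of integration replaced by $[\sigma_{k-1},\tau_k]$. Denote the resulting terms by $\tilde{A}_{j,k}^{i,N}$ for $j=1,2,3,4$. Since $\tilde{A}_{1,k}^{i,N}\geq 0$, it suffices to prove that $\|\tilde{A}_{2,k}^{i,N}\|+\|\tilde{A}_{3,k}^{i,N}\|+\|\tilde{A}_{4,k}^{i,N}\|\to 0$ almost surely as $k\to\infty$.

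For the fluctuation term $\tilde{A}_{2,k}^{i,N}$, I would invoke exactly the same Poisson equation used in Lemma \ref{sub_lemma_3} (for the function $T^{i,N}(\theta,\hat{x}^N)=\langle\nabla_\theta\tilde{\mathcal{L}}^{i,N}(\theta),\nabla_\theta\hat{L}^{i,N}(\theta,\hat{x}^N)-\nabla_\theta\tilde{\mathcal{L}}^{i,N}(\theta)\rangle$), which by Lemma \ref{lemma_poisson_3} admits a unique twice-differentiable solution $v^{i,N}$ with polynomial growth. The point is that, after applying It\^o's formula to $\gamma_s v^{i,N}(\theta_s^{i,N},\hat{x}_s^N)$, the boundary terms $\gamma_{\tau_k}v^{i,N}(\theta_{\tau_k}^{i,N},\hat{x}_{\tau_k}^N)$ and $\gamma_{\sigma_{k-1}}v^{i,N}(\theta_{\sigma_{k-1}}^{i,N},\hat{x}_{\sigma_{k-1}}^N)$ both tend to zero a.s. (this was already shown via the Borel--Cantelli argument in Lemma \ref{sub_lemma_1}, which depends only on $\tau_k,\sigma_{k-1}\to\infty$, not on the interval being short), and the remaining Riemann and It\^o integrals are tails of a.s.\ convergent processes under Condition \ref{assumption0}, and so also vanish in the limit $k\to\infty$. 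The martingale term $\tilde{A}_{3,k}^{i,N}$ and the second-order term $\tilde{A}_{4,k}^{i,N}$ are handled verbatim as in Lemma \ref{sub_lemma_3}: since the corresponding integrals on $[0,\infty)$ have finite second moments (respectively, finite expectation) by the polynomial growth of $\nabla_\theta \hat{B}^{i,N}$ and $\nabla_\theta^2\tilde{\mathcal{L}}^{i,N}$, Proposition \ref{prop_moment_bounds}, Lemma \ref{lemmab}, and the square-summability of $\gamma_s$, Doob's martingale convergence theorem yields an a.s.\ limit, so the tail increments over $[\sigma_{k-1},\tau_k]$ also vanish as $k\to\infty$.

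Combining these three facts, for any prescribed $\varepsilon>0$ there exists $k_0$ such that $\|\tilde{A}_{2,k}^{i,N}\|+\|\tilde{A}_{3,k}^{i,N}\|+\|\tilde{A}_{4,k}^{i,N}\|<\varepsilon$ for all $k\geq k_0$. Choosing $\varepsilon=\beta/2$ and setting $\beta_1:=\beta/2$ gives $\tilde{\mathcal{L}}^{i,N}(\theta_{\tau_k}^{i,N})-\tilde{\mathcal{L}}^{i,N}(\theta_{\sigma_{k-1}}^{i,N})\geq -\beta_1$ for all sufficiently large $k$, and by construction $0<\beta_1<\beta$. The only nontrivial point, which I expect to be the main obstacle, is verifying that the boundary terms and tail integrals produced by the Poisson decomposition genuinely vanish along the random sequence $(\sigma_{k-1},\tau_k)$ rather than along deterministic times; but as long as $\sigma_{k-1}\to\infty$ a.s.\ (which is automatic under the assumption that infinitely many cycles occur), this follows from the almost-sure convergence of the corresponding processes indexed by $t$.
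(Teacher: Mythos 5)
Your proposal is correct and follows essentially the same route as the paper: Itô's formula on $[\sigma_{k-1},\tau_k]$, discarding the non-negative term $\int\gamma_s\|\nabla_\theta\tilde{\mathcal{L}}^{i,N}(\theta_s^{i,N})\|^2\mathrm{d}s$, and then showing the fluctuation, martingale, and second-order terms vanish a.s. as $k\to\infty$ by the same Poisson-equation and Doob-convergence arguments used in Lemma \ref{sub_lemma_3}. Your explicit choice $\beta_1=\beta/2$ and the remark about convergence along the random times $\sigma_{k-1},\tau_k$ are consistent with (and slightly more detailed than) the paper's proof.
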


\begin{proof}
Using It\^o's formula, we have that
\begin{align}
\tilde{\mathcal{L}}^{i,N}(\theta^{i,N}_{\tau_k}) - \tilde{\mathcal{L}}^{i,N}(\theta_{\sigma_{k-1}}^{i,N}&) \geq \underbrace{\int_{\sigma_{k-1}}^{\tau_k}\gamma_s\langle \nabla_{\theta}\tilde{\mathcal{L}}^{i,N}(\theta_s^{i,N}),\nabla_{\theta} \hat{L}^{i,N}(\theta_s^{i,N},\hat{x}_s^N) - \nabla_{\theta}\tilde{\mathcal{L}}^{i,N}(\theta_s^{i,N})\rangle \mathrm{d}s}_{B_{1,k}^{i,N}} \\[-3mm]
&+ \underbrace{\int_{\sigma_{k-1}}^{\tau_k}\gamma_s\langle \nabla_{\theta}\tilde{\mathcal{L}}^{i,N}(\theta_s^{i,N}), \nabla_{\theta} \hat{B}^{i,N}(\theta_s^{i,N},\hat{x}_s^N)\mathrm{d}w_s^{i}\rangle}_{B_{2,k}^{i,N}} \nonumber\\[-3mm]
&+ \underbrace{\int_{\sigma_{k-1}}^{\tau_k}\frac{1}{2}\gamma_s^2\mathrm{Tr}\left[\nabla_{\theta} \hat{B}^{i,N}(\theta_s^{i,N},\hat{x}_s^N)\nabla_{\theta} \hat{B}^{i,N}(\theta_s^{i,N},\hat{x}_s^N)^T\nabla^2_{\theta}\tilde{\mathcal{L}}^{i,N}(\theta_s^{i,N})\mathrm{d}s\right]}_{B_{3,k}^{i,N}}. \nonumber
\end{align}
Arguing as in the proof of Lemma \ref{sub_lemma_3}, the magnitude of each of the terms converges to zero a.s. as $k\rightarrow\infty$. This is sufficient for the conclusion.
\end{proof}

\subsubsection{Technical Lemmas: On A Related Poisson Equation}

\begin{lemma} \label{lemma_poisson_0}
Assume that Conditions \ref{assumption_init} and \ref{assumption1} - \ref{assumption2} hold. Suppose that, for all $\theta\in\mathbb{R}^p$, $f(\theta,\cdot):\mathbb{R}^d\rightarrow\mathbb{R}$ is locally Lipschitz, and satisfies a polynomial growth condition, viz
\begin{equation}
||f(\theta,{x}) - f(\theta,y)|| \leq K ||x-y|| \left[1+||x||^{q}+||y||^{q}\right].
\end{equation}
Then, for all $\theta\in\mathbb{R}^p$, $x,y\in\mathbb{R}^d$, $t\geq 0$, there exist positive constants $q,K_{\theta_0}>0$ such that 
\begin{align}
\bigg|\mathbb{E}_{\theta_0,x}\left[f(\theta,x_t)\right] - \int_{\mathbb{R}^d} f(\theta,z)\mu_{\infty}(\mathrm{d}z) \bigg| &\leq K_{\theta_0}[1+||x||^{q}]e^{-\lambda t}.\\
\bigg|\mathbb{E}_{\theta_0,x}\left[f(\theta,x_t)\right] - \mathbb{E}_{\theta_0,y}\left[f(\theta,x_t)\right]\bigg| &\leq K_{\theta_0}[1+||x||^{q}+||y||^{q}]e^{-\lambda t}.
\end{align}
Alternatively, suppose that, for all $\theta\in\mathbb{R}^p$, $f(\theta,\cdot):(\mathbb{R}^d)^N\rightarrow \mathbb{R}$ is locally Lipschitz and satisfies a polynomial growth condition in the sense that
\begin{align}
&\bigg|f(\theta,\hat{x}^N) - f(\theta,\hat{y}^N)\bigg|\leq K\bigg[1+||x^{i,N}||^{q} + ||y^{i,N}||^{q} + \frac{1}{N}\sum_{j=1}^N ||x^{j,N}||^{q} +  \frac{1}{N}\sum_{j=1}^N ||y^{j,N}||^{q}\bigg] \\[-.5mm]
&\hspace{41mm}\cdot\bigg[||x^{i,N}-y^{i,N}|| + \bigg(\frac{1}{N}\sum_{j=1}^N||x^{j,N} - y^{j,N}||^2\bigg)^{\frac{1}{2}}\bigg] \nonumber
\end{align}
where $\hat{x}^N = (x^{1,N},\dots,x^{N,N}) \in(\mathbb{R}^d)^N$. Then, for all $i=1,\dots,N$, and for all $\theta\in\mathbb{R}^p$, there exist positive constants $q,K_{\theta_0}>0$ such that
\begin{align}
&\bigg|\mathbb{E}_{\theta_0,\hat{x}^N}[f(\theta,\hat{x}^{N}_t)] - \int_{(\mathbb{R}^d)^N} f(\theta,\hat{z}^N)\hat{\mu}^{N}_{\infty}(\mathrm{d}\hat{z}^N)\bigg|\leq K_{\theta_0}\bigg[1+||x^{i,N}||^{q}+\frac{1}{N}\sum_{j=1}^N ||x^{j,N}||^{q}\bigg]e^{-\lambda t}  \\
&\bigg|\mathbb{E}_{\theta_0,\hat{x}^N}[f(\theta,\hat{x}^{N}_t)] - \mathbb{E}_{\theta_0,\hat{y}^N}[f(\theta,\hat{x}^{N}_t)] \bigg| \leq K_{\theta_0}\bigg[1+||x^{i,N}||^{q}+||y^{i,N}||^{q}+\frac{1}{N}\sum_{j=1}^N (||x^{j,N}||^{q}+||y^{j,N}||^{q})\bigg]e^{-\lambda t}  \hspace{-15mm}
\end{align}
for all $\hat{x}^N,\hat{y}^N\in(\mathbb{R}^d)^N$, and for all $t\geq 0$.
\end{lemma}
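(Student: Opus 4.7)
The plan is to exploit the synchronous coupling estimate that underlies Proposition \ref{prop_invariant_measure}, combined with the polynomial growth of $f$ and the moment bounds of Propositions \ref{prop_moment_bounds} and \ref{lemma_invariant_moment_bounds}. Concretely, for two solutions $(x_t^x)_{t\geq 0}$ and $(x_t^y)_{t\geq 0}$ of \eqref{MVSDE} starting from $x,y\in\mathbb{R}^d$ and driven by the \emph{same} Brownian motion, the calculation in the proof of Proposition \ref{prop_invariant_measure} (applying It\^o to $\|x_t^x-x_t^y\|^2$ and using Conditions \ref{assumption1} and \ref{assumption2}) gives the pointwise contraction $\mathbb{E}_{\theta_0}\|x_t^x - x_t^y\|^2 \leq e^{-2\lambda_{\theta_0} t}\|x-y\|^2$, with $\lambda_{\theta_0}=\alpha_{\theta_0}-2L_{\theta_0,2}>0$. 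This is the workhorse of both statements.

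For the second inequality of the first part, I would couple $x_t^x$ and $x_t^y$ synchronously, then use the local Lipschitz/polynomial growth hypothesis on $f$ together with the Cauchy--Schwarz inequality to obtain
\begin{align}
|\mathbb{E}_{\theta_0,x}[f(\theta,x_t)] - \mathbb{E}_{\theta_0,y}[f(\theta,x_t)]|
&\leq K\, \mathbb{E}_{\theta_0}\|x_t^x - x_t^y\|^2{}^{1/2}\,\mathbb{E}_{\theta_0}\bigl[(1+\|x_t^x\|^q + \|x_t^y\|^q)^2\bigr]^{1/2} \\
&\leq K_{\theta_0}\bigl(1+\|x\|^{q'}+\|y\|^{q'}\bigr)e^{-\lambda_{\theta_0} t},
\end{align}
where the second inequality uses the contraction above (together with the crude $\|x-y\|\leq 1+\|x\|+\|y\|$) and the moment bounds from Proposition \ref{prop_moment_bounds}, absorbing the extra polynomial factor into $q'$. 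The first inequality of the first part then follows immediately: by the invariance of $\mu_\infty$,
\begin{equation}
\Bigl|\mathbb{E}_{\theta_0,x}[f(\theta,x_t)] - \int f(\theta,z)\mu_\infty(\mathrm{d}z)\Bigr|
\leq \int \bigl|\mathbb{E}_{\theta_0,x}[f(\theta,x_t)]-\mathbb{E}_{\theta_0,z}[f(\theta,x_t)]\bigr|\mu_\infty(\mathrm{d}z),
\end{equation}
into which we substitute the just-proved estimate and use Proposition \ref{lemma_invariant_moment_bounds} to bound $\int \|z\|^{q'}\mu_\infty(\mathrm{d}z)$ by a $\theta_0$-dependent constant.

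The second part, concerning the IPS on $(\mathbb{R}^d)^N$, follows the same blueprint but with a vector-valued coupling. I would again use the synchronous coupling (same Brownian motions $w^i$) of two IPS solutions $\hat{x}_t^{N,\hat{x}}$ and $\hat{x}_t^{N,\hat{y}}$; applying It\^o to $\tfrac{1}{N}\sum_{i=1}^N \|x_t^{i,N,\hat{x}}-x_t^{i,N,\hat{y}}\|^2$ and using Conditions \ref{assumption1} and \ref{assumption2} (noting that the pairwise interaction contributes at most $2L_{\theta_0,2}$ per coordinate after symmetrisation, exactly as in the McKean--Vlasov case) yields the analogous contraction $\mathbb{E}_{\theta_0}[\tfrac{1}{N}\sum_i \|x_t^{i,N,\hat{x}} - x_t^{i,N,\hat{y}}\|^2] \leq e^{-2\lambda_{\theta_0} t}\tfrac{1}{N}\sum_i \|x^{i,N}-y^{i,N}\|^2$, together with the per-coordinate bound $\mathbb{E}_{\theta_0}\|x_t^{i,N,\hat{x}} - x_t^{i,N,\hat{y}}\|^2 \leq e^{-2\lambda_{\theta_0} t}\|x^{i,N}-y^{i,N}\|^2$. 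The hypothesis on $f$ is tailored precisely to match the two terms $\|x^{i,N}-y^{i,N}\|$ and $(\tfrac{1}{N}\sum_j\|x^{j,N}-y^{j,N}\|^2)^{1/2}$ that appear, so a single Cauchy--Schwarz step, combined with the uniform moment bounds of Proposition \ref{prop_moment_bounds} and the use of $\hat{\mu}_\infty^N$-invariance plus Proposition \ref{lemma_invariant_moment_bounds} (to integrate out the starting state in the analogue of the first inequality), closes both estimates.

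The main obstacle is really bookkeeping rather than anything conceptual: one must verify that the synchronous coupling for the IPS actually yields the contraction at the symmetric rate $\lambda_{\theta_0}=\alpha_{\theta_0}-2L_{\theta_0,2}$ after averaging over the $N$ coordinates (so the $\tfrac{1}{N}\sum_j\phi(\theta_0,x^{i,N},x^{j,N})$ terms contribute at worst $L_{\theta_0,2}$ to each diagonal estimate and $L_{\theta_0,2}$ to the off-diagonal via Young's inequality), and to track the polynomial power $q$ carefully so that the final bound really is of the claimed form. Modulo this careful symmetrisation, everything reduces to the ingredients already established in Appendix \ref{appendix_existing_results}.
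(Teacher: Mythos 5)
Your handling of the second inequality in each part, and of the IPS part generally, is sound and close in spirit to the paper's own argument (which phrases the same synchronous-coupling contraction as the Wasserstein bound of Proposition \ref{prop_invariant_measure} and then applies Cauchy--Schwarz together with the local Lipschitz/polynomial growth hypothesis and the moment bounds of Propositions \ref{prop_moment_bounds} and \ref{lemma_invariant_moment_bounds}). However, there is a genuine gap in your derivation of the first inequality for the McKean--Vlasov case. You write $\int f(\theta,z)\mu_\infty(\mathrm{d}z) = \int \mathbb{E}_{\theta_0,z}[f(\theta,x_t)]\,\mu_\infty(\mathrm{d}z)$ ``by invariance'', i.e.\ you treat the point-started transition operators as a Markov semigroup leaving $\mu_\infty$ invariant. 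For the nonlinear equation \eqref{MVSDE} this is false: the flow $\mu_0\mapsto\mu_t$ is nonlinear in the initial law, so invariance of $\mu_\infty$ (the flow started \emph{from} $\mu_\infty$ stays at $\mu_\infty$) does not imply that the $\mu_\infty$-mixture of the point-started laws $\mathcal{L}(x_t^{\delta_z})$ equals $\mu_\infty$. The linear mean-field model of Section \ref{sec:numerics1} already gives a counterexample: started from a point $z$, $x_t$ is Gaussian with mean $ze^{-\theta_1 t}$ and variance $(1-e^{-2(\theta_1+\theta_2)t})/(2(\theta_1+\theta_2))$, and mixing over $z\sim\mu_\infty=\mathcal{N}(0,\sigma_\infty^2)$ produces variance $\sigma_\infty^2\,[e^{-2\theta_1 t}+1-e^{-2(\theta_1+\theta_2)t}]\neq\sigma_\infty^2$ whenever $\theta_2\neq 0$. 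So your reduction of the first inequality to the second does not go through as written.

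The repair is exactly what the paper does: compare the point-started solution not with a mixture of point-started solutions but with the \emph{stationary} solution, i.e.\ a second copy of \eqref{MVSDE} started from $X_0\sim\mu_\infty$ and coupled synchronously, whose law equals $\mu_\infty$ for all $t$ by invariance; the contraction estimate then yields $\mathbb{E}\|x_t^{x}-x_t^{\mu_\infty}\|^2\leq e^{-2\lambda_{\theta_0}t}\,\mathbb{E}\|x-X_0\|^2$, and the moments of $\mu_\infty$ from Proposition \ref{lemma_invariant_moment_bounds} turn this into $K_{\theta_0}(1+\|x\|^{q})e^{-\lambda_{\theta_0} t}$ after the Cauchy--Schwarz step (equivalently, argue directly with $\mathbb{W}_2(\mu_t^{x},\mu_\infty)$ as in the paper). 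Your invariance step is legitimate for the IPS part, since $\hat{x}_t^N$ is a genuine Markov diffusion on $(\mathbb{R}^d)^N$. One smaller point there: the per-coordinate contraction $\mathbb{E}\|x_t^{i,N,\hat{x}}-x_t^{i,N,\hat{y}}\|^2\leq e^{-2\lambda_{\theta_0}t}\|x^{i,N}-y^{i,N}\|^2$ you assert is not true as stated (take $x^{i,N}=y^{i,N}$ but $x^{j,N}\neq y^{j,N}$ for some $j\neq i$); a Gr\"onwall argument on the $i$-th coordinate with the averaged contraction as input gives instead $e^{-\lambda t}$ times the sum of $\|x^{i,N}-y^{i,N}\|$ and the root-mean-square of the initial displacements, which is precisely what the Lipschitz hypothesis on $f$ in the second part is designed to absorb, so this is repairable bookkeeping rather than a flaw in the strategy.
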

\begin{proof}
We will focus on the first statement of the first part of the lemma. Let $\mu,\nu\in\mathcal{P}(\mathbb{R}^d)$, and $\pi\in\Pi(\mu,\nu)$. Then, using the H\"older inequality and the local Lipschitz assumption, it follows that
\begin{align}
&\int_{\mathbb{R}^d\times\mathbb{R}^d} \left|f(\theta,y)-f(\theta,z)\right|\pi(\mathrm{d}y,\mathrm{d}z) \label{d20} \\
&\leq K\bigg[\int_{\mathbb{R}^d\times\mathbb{R}^d} ||y-z||^2\pi(\mathrm{d}y,\mathrm{d}z)\bigg]^{\frac{1}{2}} \bigg[1+\bigg[\int_{\mathbb{R}^d\times\mathbb{R}^d}||y||^{2q}\pi(\mathrm{d}y,\mathrm{d}z)\bigg]^{\frac{1}{2}}+\bigg[\int_{\mathbb{R}^d\times\mathbb{R}^d}||y||^{2q}\pi(\mathrm{d}y,\mathrm{d}z)\bigg]^{\frac{1}{2}}\bigg]  \hspace{-10mm}  \\
&= K\bigg[\int_{\mathbb{R}^d\times\mathbb{R}^d} ||y-z||^2\pi(\mathrm{d}y,\mathrm{d}z)\bigg]^{\frac{1}{2}} \bigg[1+\bigg[\int_{\mathbb{R}^d}||y||^{2q}\mu(\mathrm{d}y)\bigg]^{\frac{1}{2}}+\bigg[\int_{\mathbb{R}^d}||y||^{2q}\mu(\mathrm{d}z)\bigg]^{\frac{1}{2}}\bigg]  \hspace{-10mm} \label{d22} 
\end{align}
Let $(x_t)_{t\geq 0}$ be a solution of the McKean-Vlasov SDE \eqref{MVSDE} starting from $x\in\mathbb{R}^d$. Let $\mu_t^{x}$ denote the law of $x_t$, and let $\mu_{\infty}$ denote the invariant measure. Moreover, let $\pi_t^{x,\infty}$ denote an arbitrary coupling of $\mu_t^{x}$ and $\mu_{\infty}$. It then follows straightforwardly from the previous inequality that
\begin{align}
&\bigg|\mathbb{E}_{\theta_0,x}\left[f(\theta,x_t)\right] - \int_{\mathbb{R}^d}f(\theta,z)\mu_{\infty}(\mathrm{d}z)\bigg|  = \bigg| \int_{\mathbb{R}^d} f(\theta,y)\mu_t^{x}(\mathrm{d}y) - \int_{\mathbb{R}^d} f(\theta,z)\mu_{\infty}(\mathrm{d}z) \bigg|  \\
&\leq \int_{\mathbb{R}^d\times\mathbb{R}^d} |f(\theta,y)-f(\theta,z)|\pi_t^{x,\infty}(\mathrm{d}y,\mathrm{d}z) \hspace{-2mm} \label{eq_b_54}  \\
&\leq K\bigg[\int_{\mathbb{R}^d\times\mathbb{R}^d}||y-z||^2\pi_t^{x,\infty}(\mathrm{d}y,\mathrm{d}z)\bigg]^{\frac{1}{2}}\cdot \bigg[1+\bigg[\int_{\mathbb{R}^d}||y||^{2q}\mu_t^{x}(\mathrm{d}y)\bigg]^{\frac{1}{2}} + \bigg[\int_{\mathbb{R}^d}||z||^{2q}\mu_{\infty}(\mathrm{d}z)\bigg]^{\frac{1}{2}}\bigg] \hspace{-14mm} \nonumber
\end{align}
Finally, using the fact that the chosen coupling was arbitrary, Proposition \ref{prop_moment_bounds} (the moment bounds for the McKean-Vlasov SDE), Proposition \ref{lemma_invariant_moment_bounds} (the bounded moments of the invariant measure of the McKean-Vlasov SDE), and Proposition \ref{prop_invariant_measure} (exponential contractivity of the McKean-Vlasov SDE), the previous inequality implies
\begin{align}
&\bigg|\mathbb{E}_{\theta_0,x}\left[f(\theta,x_t)\right] - \int_{\mathbb{R}^d}f(\theta,z)\mu_{\infty}(\mathrm{d}z)\bigg| \\
&\leq K\mathbb{W}_2(\mu_{t}^{x},\mu_{\infty})\bigg[1+\bigg[\int_{\mathbb{R}^d}||y||^{2q}\mu_t^{x}(\mathrm{d}y) \bigg]^{\frac{1}{2}}+\bigg[\int_{\mathbb{R}^d}||z||^{2q}\mu_{\infty}(\mathrm{d}z)\bigg]^{\frac{1}{2}}\bigg] \label{eq26}  \hspace{-5mm} \\[1mm]
&\leq K\mathbb{W}_2(\mu_0^{x},\mu^{\theta}_{\infty}) e^{-\lambda t} \bigg[1+ \bigg[C_{2q,\theta_0}(||x||^{2q} + 1)\bigg]^{\frac{1}{2}} + K_{2q,\theta_0}^{\frac{1}{2}} \bigg]  \\
&\leq K_{\theta_0} \left[ \int_{\mathbb{R}^d}||x-y||^2 \mu_{\infty}(\mathrm{d}y)\right]^{\frac{1}{2}}\left[ 1 + ||x||^{q}\right] e^{-\lambda t} \\
&\leq K_{\theta_0} \left[ ||x||^2 + \int_{\mathbb{R}^d} ||y||^2 \mu_{\infty}(\mathrm{d}y) \right]^{\frac{1}{2}} \big[1+||x||^{q}\big]e^{-\lambda t} \label{eqD27} \leq K_{\theta_0} \left[ 1 + ||x||^{q}\right] 
\end{align}
where, as elsewhere, we have allowed the value of the constants $q$ and $K_{\theta_0}$ to increase from line to line, and we have explicit the dependence of $K_{\theta_0}$ on the true parameter.
This completes the proof of the first statement of the first part of the lemma. The proof of the second statement is essentially identical, this time considering an arbitrary coupling of ${\mu}_t^{x}$ and ${\mu}_t^y$, and making use of the bound $\mathbb{W}_2({\mu}_t^{x},{\mu}_{t}^y)\leq e^{-\lambda t}\mathbb{W}_2({\mu}_0^x,{\mu}_0^{y})$. Finally, the proof of the second part of the Lemma follows closely the previous proof, now using the statements in Proposition \ref{lemma_invariant_moment_bounds}, Proposition \ref{prop_moment_bounds}, and Proposition \ref{prop_invariant_measure} that are relevant to the IPS.
\end{proof}

\begin{lemma} \label{lemma_poisson_2}
Assume that Conditions \ref{assumption_init} and \ref{assumption1} - \ref{assumption2} hold. Suppose that, for all $\theta\in\mathbb{R}^p$, $f(\theta,\cdot):(\mathbb{R}^d)^N\rightarrow \mathbb{R}$ satisfies a polynomial growth condition of the form
\begin{equation}
||f(\theta,\hat{x}^N)||\leq K\bigg(1+||x^{i,N}||^q+\frac{1}{N}\sum_{j=1}^N ||x^{j,N}||^q\bigg)
\end{equation}
Moreover, suppose that $f(\theta,\cdot)$ is centred, in the sense that $\int_{(\mathbb{R}^d)^N} f(\theta,\hat{x}^N) \mathrm{d}\hat{\mu}_{\infty}^{N}(\mathrm{d}\hat{x}^N)=0$.
Then, for all $N\in\mathbb{N}$, the function 
\begin{equation}
F(\theta,\hat{x}^N)= \int_0^{\infty} \mathbb{E}_{\hat{x}^N,\theta_0}\left[f(\theta,\hat{x}^N_t)\right] \mathrm{d}t \label{poisson_solution}
\end{equation}
is a well defined, continuous function of Sobolev class $\cap_{p\geq 1} W_{p,\mathrm{loc}}^2$, which satisfies the Poisson equation
\begin{equation}
\mathcal{A}_{\hat{x}^N,\theta^{*}} F(\theta,\hat{x}^N) = -f(\theta,\hat{x}^N). \label{poisson_equation_1}
\end{equation}
Moreover, $F$ is centred, in the sense that $\int_{(\mathbb{R}^d)^N} F(\theta,\hat{x}^N)\hat{\mu}_{\infty}^N(\mathrm{d}\hat{x}^N) = 0$, 
and there exist constants $q,K>0$ such that
\begin{align}
|F(\theta,\hat{x}^N)| &\leq K_{\theta_0}\bigg[1+||x^{i,N}||^q + \frac{1}{N}\sum_{j=1}^N ||x^{j,N}||^q\bigg] \label{bounds} \\
|\nabla_{\hat{x}^N} F(\theta,\hat{x})^N| &\leq K_{\theta_0} \bigg[1+||x^{i,N}||^q + \frac{1}{N}\sum_{j=1}^N ||x^{j,N}||^q\bigg] \label{bounds_2}
\end{align} 
\end{lemma}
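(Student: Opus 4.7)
\textbf{Proof proposal for Lemma~\ref{lemma_poisson_2}.}
My plan is to treat $F(\theta,\hat{x}^N)$ as the integral of the semigroup $(P_t^{\theta_0})_{t\geq 0}$ associated with the IPS at $\theta=\theta_0$ applied to the centred observable $f(\theta,\cdot)$. The argument has three steps: (i) well-posedness together with the polynomial bound \eqref{bounds}; (ii) verification of the Poisson equation \eqref{poisson_equation_1} and centring; (iii) the gradient bound \eqref{bounds_2}, which I expect to be the main technical obstacle.

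For step (i), I will apply the second part of Lemma~\ref{lemma_poisson_0} to the centred function $f(\theta,\cdot)$. This yields the pointwise estimate
\begin{equation*}
\bigl|\mathbb{E}_{\theta_0,\hat{x}^N}[f(\theta,\hat{x}^N_t)]\bigr|\le K_{\theta_0}\Bigl[1+\|x^{i,N}\|^{q}+\tfrac{1}{N}\sum_{j=1}^N\|x^{j,N}\|^{q}\Bigr]e^{-\lambda t},
\end{equation*}
valid for all $t\ge 0$, all $\hat{x}^N\in(\mathbb{R}^d)^N$, and a constant $K_{\theta_0}$ independent of $\theta$. Integrating in $t$ gives that $F(\theta,\hat{x}^N)$ is well defined, continuous in $\hat{x}^N$ by dominated convergence, and satisfies the polynomial growth bound \eqref{bounds}. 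Centring, $\int F(\theta,\hat{x}^N)\hat{\mu}_{\infty}^N(\mathrm{d}\hat{x}^N)=0$, then follows from Fubini (justified by the same domination) together with invariance of $\hat{\mu}_{\infty}^N$: $\int \mathbb{E}_{\theta_0,\hat{x}^N}[f(\theta,\hat{x}^N_t)]\hat{\mu}_{\infty}^N(\mathrm{d}\hat{x}^N)=\int f(\theta,\hat{y}^N)\hat{\mu}_{\infty}^N(\mathrm{d}\hat{y}^N)=0$ for every $t$.

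For step (ii), I write $P_t^{\theta_0}f(\theta,\hat{x}^N):=\mathbb{E}_{\theta_0,\hat{x}^N}[f(\theta,\hat{x}^N_t)]$ and use the standard semigroup identity $\frac{\mathrm{d}}{\mathrm{d}t}P_t^{\theta_0}f=\mathcal{A}_{\hat{x}^N,\theta^{*}}P_t^{\theta_0}f$. Integrating from $0$ to $T$ and then letting $T\to\infty$, the exponential decay above yields $P_T^{\theta_0}f(\theta,\hat{x}^N)\to 0$, and Fubini allows me to interchange $\mathcal{A}_{\hat{x}^N,\theta^{*}}$ with the integral in $t$, giving $\mathcal{A}_{\hat{x}^N,\theta^{*}}F(\theta,\hat{x}^N)=-f(\theta,\hat{x}^N)$. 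The Sobolev-class regularity $F\in\cap_{p\ge 1}W^{2}_{p,\mathrm{loc}}$ follows from interior $L^p$ estimates for the uniformly elliptic operator $\mathcal{A}_{\hat{x}^N,\theta^{*}}$ (the diffusion is the identity), applied to the equation $\mathcal{A}F=-f$ with $f$ locally bounded.

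The main obstacle is step (iii), the gradient bound. Here I will control $\nabla_{\hat{x}^N}P_t^{\theta_0}f$ and then integrate in $t$. The natural route is to write $\nabla_{x^{k,N}}\mathbb{E}_{\hat{x}^N}[f(\theta,\hat{x}^N_t)]=\mathbb{E}_{\hat{x}^N}[\nabla_{\hat{y}^N}f(\theta,\hat{x}^N_t)\cdot J_t^{k}]$, where $J_t^{k}=\partial \hat{x}^N_t/\partial x^{k,N}$ is the first-variation (derivative) flow of the IPS at $\theta_0$, solving a linear SDE with coefficients $\nabla_{\hat{x}^N}\hat{B}(\theta_0,\hat{x}^N_t)$. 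The monotonicity of $b(\theta_0,\cdot)$ (Condition~\ref{assumption1}(ii)) together with the Lipschitz bound on $\phi(\theta_0,\cdot,\cdot)$ (Condition~\ref{assumption2}(i)), combined with the constraint $2L_{\theta_0,2}<\alpha_{\theta_0}$, yields exponential contractivity of $J_t$ in the sense $\mathbb{E}_{\theta_0}\|J_t^{k}\|^2\le K e^{-2\lambda t}$ via an It\^o calculation on $\|J_t\|^2$ analogous to the one used in Proposition~\ref{prop_invariant_measure}. Combined with the polynomial growth of $\nabla f$ (inherited from the Lipschitz assumption on $f$) and the moment bounds of Proposition~\ref{prop_moment_bounds}, this gives
\begin{equation*}
\bigl|\nabla_{\hat{x}^N}P_t^{\theta_0}f(\theta,\hat{x}^N)\bigr|\le K_{\theta_0}\Bigl[1+\|x^{i,N}\|^{q}+\tfrac{1}{N}\sum_{j=1}^N\|x^{j,N}\|^{q}\Bigr]e^{-\lambda t},
\end{equation*}
after a Cauchy--Schwarz step and a bound on $\mathbb{E}_{\theta_0}\|J_t^{k}\|^{2q'}$ by Gr\"onwall. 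Integrating in $t$ then delivers \eqref{bounds_2}. The subtle points here are justifying the differentiation under the expectation (done by dominated convergence using uniform-in-compact-sets moment bounds for $J_t$), and handling the case where $f$ is only locally Lipschitz rather than $C^1$, which I will treat by a mollification followed by passing to the limit, or equivalently via a Bismut--Elworthy--Li integration-by-parts formula exploiting the non-degenerate identity diffusion.
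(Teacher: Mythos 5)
Your steps (i) and (ii) essentially reproduce the paper's treatment of well-posedness, the bound \eqref{bounds}, and centring: the paper likewise applies Lemma \ref{lemma_poisson_0} to the centred $f$ to get the $N$-uniform estimate $|\mathbb{E}_{\theta_0,\hat{x}^N}[f(\theta,\hat{x}^N_t)]|\leq K_{\theta_0}(1+\|x^{i,N}\|^q+\tfrac{1}{N}\sum_j\|x^{j,N}\|^q)e^{-\lambda t}$ and integrates in $t$. Where you genuinely diverge is in everything after that. The paper does not re-derive the Poisson equation, the $\cap_{p\geq1}W^{2}_{p,\mathrm{loc}}$ regularity, uniqueness, or the gradient bound: it cites Pardoux--Veretennikov \cite{Pardoux2001} (Theorem 1(b),(c),(d),(f)) for the structural statements and obtains \eqref{bounds_2} by rerunning the interior-elliptic-estimate argument of their Theorem 1(e) with the $N$-uniform bounds on $|F|$ and $|f|$ substituted for the original ones. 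You instead verify the Poisson equation directly through the semigroup identity and prove the gradient bound probabilistically via the first-variation flow $J_t$ (or Bismut--Elworthy--Li). This is a legitimate alternative under the standing assumptions: the drift is $C^2$ in $\hat{x}^N$ by Condition \ref{assumption3}, the noise is additive, and the dissipativity computation behind Proposition \ref{prop_invariant_measure} gives $\langle v,\nabla_{\hat{x}}\hat{B}(\theta_0,\hat{x})v\rangle\leq-(\alpha_{\theta_0}-2L_{\theta_0,2})\|v\|^2$ uniformly in $N$, hence $\|J_t\|\leq e^{-\lambda t}$ pathwise; so the $N$-uniformity that the paper must track through the PDE estimates becomes transparent in your route, at the price of a fully self-contained argument rather than a citation.

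Three points in your sketch need tightening before it closes. First, the lemma only assumes polynomial growth of $f$, so $\nabla_{\hat{x}}P_tf=\mathbb{E}[\nabla f(\hat{x}^N_t)J_t]$ is not directly available; your fallback (mollification or Bismut--Elworthy--Li) is the right fix, but note that the naive BEL bound decays only like $t^{-1}$ (times a polynomial), which is not integrable on $(0,\infty)$ --- you must combine it with the Markov property, e.g. write $\nabla P_tf=\nabla P_1(P_{t-1}f)$ and use the exponential decay of the centred $P_{t-1}f$ to recover an $e^{-\lambda t}$ factor before integrating. Second, Proposition \ref{prop_moment_bounds} gives moment bounds for the initial law $\mu_0$, whereas \eqref{bounds_2} requires a conditional version of the form $\mathbb{E}_{\theta_0,\hat{x}^N}[1+\|x^{i,N}_t\|^q+\tfrac1N\sum_j\|x^{j,N}_t\|^q]\leq K_{\theta_0}(1+\|x^{i,N}\|^q+\tfrac1N\sum_j\|x^{j,N}\|^q)$ with $K_{\theta_0}$ independent of $N$ and $t$; this is the same estimate implicitly underlying Lemma \ref{lemma_poisson_0}, but in your argument it must be stated and proved. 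Third, interchanging $\mathcal{A}_{\hat{x}^N,\theta^{*}}$ with $\int_0^\infty\cdot\,\mathrm{d}t$ is not pure Fubini, since the generator is unbounded; justify it via locally uniform interior elliptic estimates on $P_tf$ or argue in the weak formulation --- this is precisely the delicate step that the paper delegates to \cite{Pardoux2001}.
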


 \emph{Remark}.  
This is essentially a statement of \cite[Theorem 1]{Pardoux2001}, adapted appropriately to the current statement. In our case, however, since we are interested in the solution of the Poisson equation associated with the generator of the IPS $\hat{x}^N = (x^{1,N},\dots,x^{N,N})\in(\mathbb{R}^d)^N$ for any $N\in\mathbb{N}$, a little care is needed in places to ensure that arguments in the proof of \cite[Theorem 1]{Pardoux2001}, in particular those used to establish that the solution is well defined, and that it satisfies the bounds in \eqref{bounds} - \eqref{bounds_2}, are independent of $N$. Indeed, we are interested in the solution of this Poisson equation for arbitrarily large $N$, since we will later take the limit as $N\rightarrow\infty$. As an example, if we were to use \cite[Theorem 1]{Pardoux2001} directly, we would only have, in place of \eqref{bounds}, the bound $|F(\theta,\hat{x})|\leq K(1+||\hat{x}^N||^q)$, 
which, due to the $||\hat{x}^N||^q$ term, is unbounded in the limit as $N\rightarrow\infty$.

\begin{proof}
We begin by showing that the function $F(\theta,\hat{x}^N)$ is well defined, and that it satisfies \eqref{bounds}. Let $\hat{x}_t^N$ denote a solution of the IPS starting from $\hat{x}^N\in(\mathbb{R}^d)^N$. Let $\hat{\mu}_t^N$ denote the law of $\hat{x}_t^N$. Using the bounds in Lemma \ref{lemma_poisson_0}, and that $f$ is centred, we have
\begin{align}
\bigg| \mathbb{E}_{\theta_0,\hat{x}^N}\big[f(\theta,\hat{x}_t^N)\big] \bigg|&= \bigg| \mathbb{E}_{\theta_0,\hat{x}^N}\big[f(\theta,\hat{x}_t^N)\big] - \int_{(\mathbb{R}^d)^N} f(\theta,\hat{z}^N)\hat{\mu}_{\infty}^N(\mathrm{d}\hat{z}^N) \bigg|  \\
&\leq K_{\theta_0} \bigg[1+||x^{i,N}||^q + \frac{1}{N}\sum_{j=1}^N ||x^{j,N}||^q\bigg]e^{-\lambda t} \label{b_84}
\end{align}
We remark that, crucially, the constants $q,K,\lambda>0$ are independent of $N$. Thus, for all $N\in\mathbb{N}$, the function $F$, as defined in \eqref{poisson_solution}, is absolutely integrable, and thus well defined. Moreover, via the triangle inequality, we immediately obtain the bound in \eqref{bounds}.

The remaining statements in Lemma \ref{lemma_poisson_2} now follow directly from \cite[Theorem 1]{Pardoux2001}. 
In particular, the arguments in the proof of \cite[Theorem 1(b), 1(c), 1(d), 1(f)]{Pardoux2001} show that \eqref{poisson_solution} defines a continuous, centred solution, unique in the class of solutions belonging to $\cap_{p\geq 1}W_{p,\mathrm{loc}}^2$, of the Poisson equation \eqref{poisson_equation_1}. 

Finally, we can obtain the bound in \eqref{bounds_2} using the argument in the proof of \cite[Theorem 1(e)]{Pardoux2001}, replacing the intermediate bound on $||F(\theta,\cdot,\cdot)||$ by \eqref{bounds}, and the intermediate bound on $||f(\theta,\cdot,\cdot)||$ by our condition on the polynomial growth of $f(\cdot)$.\footnote{In the original notation, these are the bounds on $||u||$ and $||Lu||$, respectively. See \cite[pg. 1070]{Pardoux2001}}. This completes the proof.
\end{proof}

\begin{lemma} \label{lemma_poisson_3}
Assume that Conditions \ref{assumption_init} and \ref{assumption1} - \ref{assumption2} hold. Suppose that the function $f(\theta,\hat{x}^N)\in\mathcal{C}^{\alpha,2}(\mathbb{R}^p,(\mathbb{R}^d)^N)$, for some $\alpha>0$, is centred in the same sense as Lemma \ref{lemma_poisson_2}, and satisfies
\begin{equation}
|f(\theta,\hat{x}^N)|+|\partial_{\theta} f(\theta,\hat{x}^N)| + |\partial_{\theta}^2 f(\theta,\hat{x}^N)|\leq K\bigg[1+||x^{i,N}||^q+\frac{1}{N}\sum_{j=1}^N ||x^{j,N}||^q\bigg]
\end{equation}
where $\hat{x} = (x^{1,N},\dots,x^{N,N})$. Then the solution \eqref{poisson_solution} of the Poisson equation \eqref{poisson_equation_1} satisfies $F(\cdot,\hat{x}^N)\in\mathcal{C}^2$ for all $\hat{x}^N\in(\mathbb{R}^d)^N$. Moreover, there exist $q',K'>0$ such that 
\begin{equation}
\sum_{k=0}^2\bigg|\frac{\partial^{k}F}{\partial \theta^{k}}\bigg| + \bigg|\frac{\partial^2 F}{\partial \hat{x}\partial\theta}\bigg|\leq K_{\theta_0}\bigg[1+||x^{i,N}||^q+\frac{1}{N}\sum_{j=1}^N ||x^{j,N}||^q\bigg]
\end{equation}
\end{lemma}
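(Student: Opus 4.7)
The plan is to reduce everything to Lemma \ref{lemma_poisson_2} by differentiating the integral representation
\begin{equation*}
F(\theta,\hat{x}^N) = \int_0^{\infty} \mathbb{E}_{\theta_0,\hat{x}^N}\bigl[f(\theta,\hat{x}_t^N)\bigr]\,\mathrm{d}t
\end{equation*}
with respect to $\theta$. The decisive observation is that the law of $\hat{x}_t^N$ under $\mathbb{P}_{\theta_0}$ does not depend on $\theta$, so $\theta$ enters only through the integrand $f(\theta,\cdot)$. Thus formally $\partial_{\theta}^{k} F(\theta,\hat{x}^N) = \int_0^{\infty}\mathbb{E}_{\theta_0,\hat{x}^N}[\partial_{\theta}^{k} f(\theta,\hat{x}_t^N)]\,\mathrm{d}t$ for $k=1,2$.

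First I would verify that each $\partial_{\theta}^{k} f(\theta,\cdot)$, $k=1,2$, is itself centred with respect to $\hat{\mu}_{\infty}^N$. This follows by differentiating the centring identity $\int f(\theta,\hat{x}^N)\hat{\mu}_{\infty}^N(\mathrm{d}\hat{x}^N) = 0$ under the integral sign, which is justified by the hypothesised polynomial bounds on $\partial_{\theta}^{k} f$ together with the uniform moment bounds on the invariant measure (Proposition \ref{lemma_invariant_moment_bounds}). Since $\partial_{\theta}^{k} f$ further inherits the required polynomial growth in $\hat{x}^N$ and the $\mathcal{C}^{\alpha,2}$ regularity from the hypotheses, Lemma \ref{lemma_poisson_2} applies directly to $\partial_{\theta}^{k} f$, producing a centred solution $F_{k}(\theta,\hat{x}^N) = \int_0^\infty \mathbb{E}_{\theta_0,\hat{x}^N}[\partial_{\theta}^k f(\theta,\hat{x}_t^N)]\,\mathrm{d}t$ of the Poisson equation $\mathcal{A}_{\hat{x}^N,\theta_0} F_{k} = -\partial_{\theta}^{k} f$, and yielding, via the bound \eqref{bounds}, the desired polynomial growth estimates
\begin{equation*}
|F_{k}(\theta,\hat{x}^N)| \leq K_{\theta_0}\Bigl[1+\|x^{i,N}\|^{q} + \tfrac{1}{N}\sum_{j=1}^{N}\|x^{j,N}\|^{q}\Bigr].
\end{equation*}
Analogously, the bound \eqref{bounds_2} applied with $\partial_\theta f$ in place of $f$ gives the required estimate on $\partial_{\hat{x}}F_{1} = \partial_{\hat{x}}\partial_{\theta}F$.

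The core analytic step is to justify the identification $F_{k} = \partial_{\theta}^{k} F$, i.e.\ the exchange of $\partial_{\theta}^{k}$ with $\int_0^\infty \mathbb{E}_{\theta_0,\hat{x}^N}[\cdot]\,\mathrm{d}t$. For each fixed $t$, differentiation under the expectation is immediate because $\theta$ appears only in $f$ and the polynomial bound on $|\partial_{\theta}^{k} f(\theta, \hat{x}_t^N)|$ combined with Proposition \ref{prop_moment_bounds} yields a uniform-in-$\theta$ integrable majorant on compact $\theta$-balls. To then commute differentiation with the $t$-integral, I would apply the key centring argument used in the proof of Lemma \ref{lemma_poisson_2}: since each $\partial_{\theta}^{k} f$ is centred with bounded polynomial growth, Lemma \ref{lemma_poisson_0} gives the exponential decay
\begin{equation*}
\bigl|\mathbb{E}_{\theta_0,\hat{x}^N}[\partial_{\theta}^{k} f(\theta,\hat{x}_t^N)]\bigr| \leq K_{\theta_0}\Bigl[1+\|x^{i,N}\|^{q}+\tfrac{1}{N}\textstyle\sum_{j}\|x^{j,N}\|^{q}\Bigr]e^{-\lambda t},
\end{equation*}
which is integrable in $t$, uniformly in $\theta$ on compacts, providing the dominating function needed for dominated convergence. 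Continuity of $\theta \mapsto \partial_{\theta}^{k} f(\theta,\cdot)$ (from the $\mathcal{C}^{\alpha,2}$ hypothesis) then yields continuity of $\partial_{\theta}^{k} F$, establishing $F(\cdot,\hat{x}^N)\in\mathcal{C}^{2}$.

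The main obstacle I anticipate is ensuring that the polynomial growth constants produced at each stage are \emph{independent of} $N$, so that the stated bound has the particular empirical-measure form $1 + \|x^{i,N}\|^q + \tfrac{1}{N}\sum_j \|x^{j,N}\|^q$ rather than the cruder $1+\|\hat{x}^N\|^q$ that a naive application of Pardoux--Veretennikov would give. This is precisely the refinement already handled in Lemma \ref{lemma_poisson_2}, so by carefully invoking that lemma's conclusion (rather than \cite{Pardoux2001} directly) for each of $\partial_\theta f$ and $\partial_\theta^2 f$, the $N$-independent form of the bounds is preserved throughout. The mixed derivative bound follows from the same invocation applied to $\partial_\theta f$ and the bound \eqref{bounds_2}.
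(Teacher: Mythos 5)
Your proposal is correct and follows essentially the same route as the paper: both bound $\partial_{\theta}^{k}F$ by applying the exponential-decay estimate of Lemma \ref{lemma_poisson_0} to $\partial_{\theta}^{k}f$ (which inherits centredness and the $N$-uniform polynomial growth from the hypotheses) and integrating in $t$, and both obtain the mixed-derivative bound from \eqref{bounds_2} of Lemma \ref{lemma_poisson_2} applied with $\partial_{\theta}f$ in place of $f$. The only minor difference is that the paper delegates the $\mathcal{C}^2$-in-$\theta$ regularity of $F$ to \cite[Theorem 3]{Pardoux2003}, whereas you justify the interchange of $\partial_{\theta}^{k}$ with $\int_0^{\infty}\mathbb{E}_{\theta_0,\hat{x}^N}[\,\cdot\,]\,\mathrm{d}t$ directly by dominated convergence and make explicit the centring of $\partial_{\theta}^{k}f$ that the paper uses implicitly.
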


\begin{proof}
The first statement of the Theorem follows directly from \cite[Theorem 3]{Pardoux2003}. Now, observe that, since $\partial_{\theta}^k f$, $k=0,1,2$, satisfies a polynomial growth condition in the required sense, $\partial_{\theta}^kf^{i,N}$ can be shown to satisfy bounds of the form given in Lemma \ref{lemma_poisson_0}. It follows, arguing as in 
\eqref{b_84}, that 
\begin{align}
&\bigg| \mathbb{E}_{\theta_0,\hat{x}^N} \bigg[\frac{\partial^kf}{\partial{\theta}^k}(\theta,\hat{x}_t^N)\bigg]\bigg|\leq K_{\theta_0}\bigg[ 1+||x^{i,N}||^{q}+\frac{1}{N}\sum_{j=1}^N ||x^{j,N}||^q\bigg]  e^{-\lambda t}
\end{align}
We thus have that, allowing the value of the constant $K$ to change from line to line, that
\begin{align}
\bigg|\frac{\partial^{k}F}{\partial \theta^{k}}(\theta,\hat{x}^N)\bigg| &\leq \int_0^{\infty} \bigg| \mathbb{E}_{\theta_0,\hat{x}^N} \bigg[\frac{\partial^kf}{\partial{\theta}^k}(\theta,\hat{x}^N)\bigg]\bigg|\mathrm{d}t \\
&\leq K_{\theta_0} \int_0^{\infty} \bigg[ 1+||x^{i,N}||^{q}+\frac{1}{N}\sum_{j=1}^N ||x^{j,N}||^q\bigg]  e^{-\lambda t}\mathrm{d}t \label{b_92} \\
&\leq K_{\theta_0} \bigg[ 1+||x^{i,N}||^{q}+\frac{1}{N}\sum_{j=1}^N ||x^{j,N}||^{q}\bigg].
\end{align}
Finally, the bound on the mixed derivative follows from \eqref{bounds_2} in Lemma \ref{lemma_poisson_2}.
\end{proof}

\subsection{Additional Lemmas for Theorem \ref{theorem2_2_star}}
\begin{lemma}  \label{lemma_theta_moments}
Assume that Conditions \ref{assumption_init}, \ref{assumption1} - \ref{assumption2}, \ref{assumption3}, \ref{assumption_bound} - \ref{assumption_bound2}, and \ref{assumption0} hold. Then, for all $q\geq 1$, for all $i=1,\dots,N$, for all $N\in\mathbb{N}$, there exists $K$ such that 
\begin{align}
\sup_{t>0}\mathbb{E}_{\theta_0}\left[||\theta_t||^{q}\right]\leq K ~~~\text{and}~~~\sup_{t>0}\mathbb{E}_{\theta_0}\left[||\theta_t^{i,N}||^{q}\right]\leq K.
\end{align}
\end{lemma}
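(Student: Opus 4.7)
My plan is to treat $(\theta_t)_{t\geq 0}$ and $(\theta_t^{i,N})_{t\geq 0}$ in parallel, since the arguments are structurally identical (the former is just a special case in which the empirical law is replaced by the true law, and analogous moment bounds are available from Propositions \ref{prop_moment_bounds} and \ref{prop_invariant_measure}). I will focus on the IPS case $\theta_t^{i,N}$. The strategy is classical: apply It\^o's formula to a suitable Lyapunov function of $\theta_t^{i,N}$, use Condition \ref{assumption_bound} to obtain a dissipative drift outside a ball of radius $R$, use Condition \ref{assumption_bound2} together with the moment bounds of Proposition \ref{prop_moment_bounds} to control the diffusion, and then conclude via a comparison/Gr\"onwall argument, as in \cite{Sirignano2017a,Bhudisaksang2021}.

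More concretely, fix $q \geq 1$ and let $V_q(\theta) = (1+||\theta||^2)^{q}$. Applying It\^o's formula to $V_q(\theta_t^{i,N})$, one obtains
\begin{align}
\mathrm{d}V_q(\theta_t^{i,N}) &= 2q\gamma_t(1+||\theta_t^{i,N}||^2)^{q-1}\langle \theta_t^{i,N}, \nabla_\theta L(\theta_t^{i,N},x_t^{i,N},\mu_t^N,\mu_t^N)\rangle\mathrm{d}t \\
&\hspace{3mm}+ \gamma_t^2\, D_q(\theta_t^{i,N},x_t^{i,N},\mu_t^N)\mathrm{d}t + \mathrm{d}M_t^{i,N},
\end{align}
where $M_t^{i,N}$ is a local martingale and $D_q$ is the standard second-order It\^o correction, which is a polynomial in $||\theta_t^{i,N}||$ of order $2q$ whose coefficients depend on $\nabla_\theta B \nabla_\theta B^T$, and in particular on $\tau^2(\theta_t^{i,N},x_t^{i,N},\mu_t^N)$.

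The key step is then to split the drift into $\{||\theta_t^{i,N}|| < R\}$ and $\{||\theta_t^{i,N}|| \geq R\}$. On the latter set, Condition \ref{assumption_bound} gives the dissipative bound
\begin{equation}
2q\langle \theta_t^{i,N}, \nabla_\theta L\rangle (1+||\theta_t^{i,N}||^2)^{q-1} \leq -2q \kappa(x_t^{i,N},\mu_t^N)\,||\theta_t^{i,N}||^2(1+||\theta_t^{i,N}||^2)^{q-1},
\end{equation}
while on $\{||\theta_t^{i,N}|| < R\}$ the same expression is bounded by a deterministic constant depending only on $R$, $q$, and the polynomial-growth constants of $\nabla_\theta L$ from Condition \ref{assumption3}(ii) (evaluated via the uniform moment bounds on $x_t^{i,N}$ and $\mu_t^N$ from Proposition \ref{prop_moment_bounds}). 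For the diffusion correction, Condition \ref{assumption_bound2} together with Proposition \ref{prop_moment_bounds} provides
\begin{equation}
\mathbb{E}_{\theta_0}\bigl[D_q(\theta_t^{i,N},x_t^{i,N},\mu_t^N)\bigr] \leq K_{q,\theta_0}\bigl(1+\mathbb{E}_{\theta_0}[||\theta_t^{i,N}||^{2q}]\bigr).
\end{equation}
Taking expectations, localising $M_t^{i,N}$ in the standard way to discharge the martingale term, and defining $u_q(t) = \mathbb{E}_{\theta_0}[V_q(\theta_t^{i,N})]$, one arrives at a differential inequality of the form
\begin{equation}
\dot{u}_q(t) \leq -c_1 \gamma_t u_q(t) + c_2 \gamma_t + c_3 \gamma_t^2 u_q(t),
\end{equation}
where $c_1>0$ follows from averaging $\kappa(\cdot,\cdot)$ against the law of $(x_t^{i,N},\mu_t^N)$ (using that $\kappa$ is a.e. positive and the process visits regions where $\kappa$ is bounded below, via ergodicity from Proposition \ref{prop_invariant_measure}), and $c_2,c_3<\infty$ encode the bounded terms above. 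Since $\gamma_t$ is non-increasing with $\gamma_t\to 0$ and $\int_0^\infty \gamma_t^2 \mathrm{d}t < \infty$ by Condition \ref{assumption0}, for sufficiently large $t$ the coefficient $c_1\gamma_t - c_3 \gamma_t^2$ is strictly positive, and a standard Gr\"onwall-type argument yields $\sup_{t\geq 0} u_q(t) < \infty$.

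The main technical obstacle will be ensuring that the $c_1\gamma_t u_q(t)$ term truly dominates in expectation: because $\kappa(x,\mu)$ is only assumed a.e. positive and may degenerate along trajectories, one cannot simply pull the dissipation out of the expectation pointwise. To handle this, I would either invoke the comparison theorem of \cite{Ikeda1977,Yamada1973} directly on $||\theta_t^{i,N}||^2$ to dominate it by a scalar SDE with an explicit dissipative drift averaged against the invariant measure $\hat{\mu}_\infty^N$ (whose moments are controlled by Proposition \ref{lemma_invariant_moment_bounds}), or, alternatively, follow the device of \cite{Khasminskii2012,Sirignano2017a} in which one introduces a mollified Lyapunov function and uses the recurrence induced by Condition \ref{assumption_bound} to extract an effective positive dissipation rate. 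Once this is established, the conclusion for $(\theta_t)_{t\geq 0}$ follows by an identical argument with $(x_t,\mu_t)$ in place of $(x_t^{i,N},\mu_t^N)$ and the moment bounds from the McKean--Vlasov part of Proposition \ref{prop_moment_bounds}.
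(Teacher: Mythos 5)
Your primary route breaks down at exactly the point you flag, and the fix you defer to is in fact the whole proof. The differential inequality $\dot{u}_q(t)\leq -c_1\gamma_t u_q(t)+c_2\gamma_t+c_3\gamma_t^2u_q(t)$ needs a bound of the form $\mathbb{E}_{\theta_0}\big[\kappa(x_t^{i,N},\mu_t^N)\,||\theta_t^{i,N}||^{2q}\big]\geq c_1\,\mathbb{E}_{\theta_0}\big[||\theta_t^{i,N}||^{2q}\big]-c$, and this does not follow from a.e. positivity of $\kappa$ together with ergodicity of the state process: $\theta_t^{i,N}$ is adapted to the same filtration as $x_t^{i,N}$, so the two factors inside the expectation are correlated, and nothing prevents $||\theta_t^{i,N}||$ from being large precisely on the event where $\kappa(x_t^{i,N},\mu_t^N)$ is close to zero. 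Moreover, Proposition \ref{prop_invariant_measure} gives ergodicity of $(x_t^{i,N},\mu_t^N)$ alone, not of the joint process with $\theta_t^{i,N}$, so "averaging $\kappa$ against the law at time $t$" cannot manufacture an effective dissipation constant $c_1>0$. The dissipation coming from Condition \ref{assumption_bound} has to be exploited pathwise, through time averages of $\kappa$ along the trajectory, not in expectation at a fixed time.

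Your fallback is the right direction and is essentially what the paper does: its proof is a one-line citation to Lemma A.1 of \cite{Sirignano2020a}, whose argument rests on Conditions \ref{assumption_bound}--\ref{assumption_bound2} and the comparison theorem of \cite{Ikeda1977,Yamada1973}. But be aware that the comparison theorem does not hand you a scalar SDE "with a dissipative drift averaged against the invariant measure $\hat{\mu}_{\infty}^N$": it dominates $||\theta_t^{i,N}||$ by a one-dimensional process whose drift still carries the random coefficient $-\gamma_t\kappa(x_t^{i,N},\mu_t^N)$ above level $R$ (plus a bounded-in-expectation term below $R$, via Condition \ref{assumption3}(ii) and Proposition \ref{prop_moment_bounds}) and whose diffusion coefficient is $\gamma_t\tau(\theta,x,\mu)$, controlled through the Lipschitz bound of Condition \ref{assumption_bound2}. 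The substantive remaining step — the actual content of the lemma — is then to bound the moments of this dominating process, which amounts to controlling expectations of quantities like $\exp\big(-c\int_s^t\gamma_u\kappa(x_u^{i,N},\mu_u^N)\,\mathrm{d}u\big)$ multiplied by polynomially growing functionals of the state, i.e.\ a lower bound on the time-averaged dissipation along the path. As written, your proposal correctly identifies the obstacle but carries out neither of the two fixes it names, so the key step of the argument is still missing; once you commit to the comparison-theorem route and execute that moment bound, you recover the paper's (cited) proof.
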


\begin{proof}
This Lemma follows straightforwardly as an extension of \cite[Lemma A.1]{Sirignano2020a}, making use of the appropriate bounds in Conditions \ref{assumption_bound} - \ref{assumption_bound2}.
\end{proof}

\section{Verification of Conditions for the Linear Mean Field Model} \label{app:verification}
In this Appendix, we verify that the conditions of Theorems \ref{offline_theorem1} - \ref{offline_theorem2} (offline parameter estimation), as well as Theorem \ref{theorem2_1}, Theorem \ref{theorem2_1_star}, and Theorem \ref{theorem2_2} (online parameter estimation) hold for the linear one-dimensional mean field model studied in Section \ref{sec:numerics1}. We also demonstrate that one of the conditions of Theorem \ref{theorem2_2_star} (Assumption \ref{assumption4}) is not satisfied. 

\subsection{Main Conditions}

\begin{manualassumption}{A.1}
\emph{
We assume that $x_0\in\mathbb{R}$ and thus this condition is trivially satisfied. We note that this condition would also be satisfied if $x_0\sim \mathcal{N}(\mu,\sigma^2)$ for some $\mu,\sigma\in\mathbb{R}$. 
}
\end{manualassumption}

\begin{manualassumption}{B.1}
\emph{
For this model, we have $b(\theta,\cdot):\mathbb{R}\rightarrow\mathbb{R}$, with $b(\theta,x) = -\theta_1 x$. This function is Lipschitz continuous with constant $\theta_1$, and satisfies $\langle x-x', b(\theta,x) - b(\theta,x')\rangle = \langle x-x',-\theta_1(x-x')\rangle = - \theta_1||x-x'||^2$. This verifies Condition \ref{assumption1}, provided $\theta_1>0$. 
}
\end{manualassumption}

\begin{manualassumption}{B.2}
\emph{
For this model, we have $\phi(\theta,\cdot,\cdot):\mathbb{R}\times\mathbb{R}\rightarrow\mathbb{R}$, with $\phi(\theta,x,y) = -\theta_2(x-y)$. This function is twice differentiable with respect to both of its arguments, and is globally Lipschitz with constant $|\theta_2|$. This verifies Condition \ref{assumption2}, provided $|\theta_2|\leq \frac{1}{2}\theta_1$.
}
\end{manualassumption}

\begin{manualassumption}{C.1}
\emph{
The functions $b:\mathbb{R}\times\mathbb{R}\rightarrow\mathbb{R}$ and $\phi:\mathbb{R}\times\mathbb{R}\times\mathbb{R}$ are infinitely differentiable with respect to all of their arguments. Moreover, we have that $||\nabla_{\theta}^{i}b(\theta,x)||=||\nabla_{\theta}^{i}\phi(\theta,x,y)||=0$ for $i=1,2,3$. Finally, $||b(\theta,x)- b(\theta',x)||\leq ||\theta-\theta'|| ||x||$, $||\phi(\theta,x,y) - \phi(\theta',x,y)||\leq ||\theta-\theta'|| (||x||+||y||)$.This verifies Condition \ref{assumption3}.
}
\end{manualassumption}

\subsection{Offline Parameter Estimation}
\begin{manualassumption}{D.1}
\emph{
For this model, we have $B(\theta,x,\mu_s) 
 =- \theta_1 x - \theta_2(x-\mathbb{E}_{\theta_0}[x_s])$ and thus $G(\theta,x,\mu_s,\mu_s) 
= -(\theta_1-\theta_{1,0})x - (\theta_2-\theta_{2,0})(x-\mathbb{E}_{\theta_0}[x_s])$, where $\theta_0 = (\theta_{1,0},\theta_{2,0})\in\mathbb{R}^2$ denotes the true value of the parameter. We can then compute
\begin{align}
~ \label{l1}  \\[-3mm]
L(\theta,x,\mu_s) 
&= -\frac{1}{2}\left[(\theta_1-\theta_{1,0})x + (\theta_2-\theta_{2,0})(x-\mathbb{E}_{\theta_0}[x_s])\right]^2  \\
&= -\frac{1}{2}\left[(\theta_1-\theta_{1,0})^2x^2 + 2(\theta_1-\theta_{1,0})(\theta_2-\theta_{2,0})x(x-\mathbb{E}_{\theta_0}[x_s])+(\theta_2-\theta_{2,0})^2(x-\mathbb{E}_{\theta_0}[x_s])^2\right] \hspace{-10mm}
\end{align}
and thus
\begin{align}
m_t(\theta) &= \int_0^t \int_{\mathbb{R}^d} L(\theta,x,\mu_s)\mu_s(\mathrm{d}x) \mathrm{d}s \\
&= - \frac{1}{2}\int_0^t (\theta_1-\theta_{1,0})^2\mathbb{E}_{\theta_0}\left[x_s^2\right] + 2(\theta_1-\theta_{1,0})(\theta_2-\theta_{2,0})\mathrm{Var}_{\theta_0}(x_s) + (\theta_{2}-\theta_{2,0})^2\mathrm{Var}_{\theta_0}(x_s)\mathrm{d}s \label{mt_theta} \hspace{-5mm} \\ 
&= - \frac{1}{2}\int_0^t \left[(\theta_1-\theta_{1,0}) + (\theta_2 - \theta_{2,0})\right]^2\mathrm{Var}_{\theta_0}(x_s) + (\theta_1-\theta_{1,0})^2\mathbb{E}_{\theta_0}\left[x_s\right]^2\mathrm{d}s. \label{mt_theta_2}
\end{align}
Let $\mathbb{E}_{\theta_0}\left[x_0\right] = \mu_0$ and $\mathrm{Var}_{\theta_0}(x_0) = \sigma_0^2>0$. It is then relatively straightforward to compute (e.g., \cite{Kasonga1990}), defining $\gamma(\theta_0) = - 2(\theta_{1,0}+\theta_{2,0})$, 
\begin{align}
\mathbb{E}_{\theta_0}\left[x_s\right]^2 &=  \mu_0^2 e^{-2\theta_{1,0} s} \label{G6} \\
\mathrm{Var}_{\theta_0}(x_s)&= \sigma_0^2e^{\gamma(\theta_0) s} + \frac{e^{\gamma(\theta) s} - 1}{\gamma(\theta_0)} \label{G7}  
\end{align}
We thus have $\mathbb{E}_{\theta_0}\left[x_s\right]^2>0$, provided $\mu_0> 0$, and $\mathrm{Var}_{\theta_0}(x_s)>0$. It follows that $m_t(\theta)\leq 0$, with equality if and only if $\theta_1 = \theta_{1,0}$ and $\theta_{2} = \theta_{2,0}$.\footnote{We remark that, if $\mu_0 = 0$, then $\mathbb{E}_{\theta_0}\left[x_s\right]^2=0$ for all $s\geq 0$. Thus, while we certainly still have $m_t(\theta)\leq 0$, we now have equality whenever $(\theta_1 - \theta_{1,0}) + (\theta_2-\theta_{2,0}) = 0$. That is, whenever $\theta_{1}+\theta_{2} = \theta_{1,0} +\theta_{2,0}$. Thus, in this case, $\theta_1$ and $\theta_2$ are no longer jointly identifiable.} That is, equivalently, $\inf_{||\theta-\theta_0||>\delta}m_t(\theta)<0$ a.s. $\forall \delta>0$. This verifies Condition \ref{offline_assumption_2_1}.
}
\end{manualassumption}

\begin{manualassumption}{D.2}
\emph{
For this model, as noted above, we have $B(\theta,x,\mu_s) = -\theta_1 x - \theta_2(x-\mathbb{E}_{\theta_0}[x_s])$, and thus $\nabla_{\theta} B(\theta,x,\mu_s) = [-x, -(x-\mathbb{E}_{\theta_0}[x_s])]$. It follows that
\begin{align}
I_t(\theta_0) &=  \int_0^t \int_{\mathbb{R}^d}\nabla_{\theta} B(\theta_0,x,\mu_s) \otimes \nabla_{\theta} B(\theta_0,x,\mu_s) \mu_s(\mathrm{d}x) \mathrm{d}s \\
&= \int_0^t \begin{pmatrix} 
\mathbb{E}_{\theta_0}[x_s^2] & \mathrm{Var}_{\theta_0}(x_s) \\
 \mathrm{Var}_{\theta_0}(x_s) & \mathrm{Var}_{\theta_0}(x_s) 
\end{pmatrix} 
\mathrm{d}s= \begin{pmatrix} 
D_t(\theta_0) & -C_t(\theta_0)\\
-C_t(\theta_0) & C_t(\theta_0)
\end{pmatrix} 
\end{align}
where, using \eqref{G6} - \eqref{G7}, and integrating, we can obtain $C_t(\theta_0)$ and $D_t(\theta_0)$ explicitly as  
\begin{align}
C_t(\theta_0)
&= \frac{1}{\gamma^2(\theta_0)} (e^{\gamma(\theta_0) t} - 1) - \frac{t}{\gamma(\theta_0)} + \frac{\sigma_0^2}{\gamma(\theta_0)}(e^{\gamma(\theta_0) t} - 1), \\
D_t(\theta_0) 
&=  \frac{1}{\gamma^2(\theta_0)} (e^{\gamma(\theta_0) t} - 1) - \frac{t}{\gamma(\theta_0)} + \frac{\sigma_0^2}{\gamma(\theta_0)}(e^{\gamma(\theta_0) t} - 1) -\frac{\mu_0^2}{2\theta_{1,0}} (e^{-2\theta_{1,0} t}-1),
\end{align}
It remains to show that this matrix is positive-definite, and that for all $\lambda = (\lambda_1,\lambda_2)\in\mathbb{R}^2$, $\lambda^T I_t(\theta_0)\lambda$ is increasing as a function of $t$. Observe that 
\begin{align}
\lambda^T I_t(\theta) \lambda &= \lambda_1^2 D_t(\theta_0) - 2\lambda_1\lambda_2C_t(\theta_0) + \lambda_2^2C_t(\theta_0) \\
&= \lambda_1^2(D_t(\theta_0)-C_t(\theta_0)) + (\lambda_1-\lambda_2)^2C_t(\theta_0)>0
\end{align}
where, to obtain the final inequality, we have use the fact that $C_t(\theta_0)= \int_{0}^t \mathrm{Var}_{\theta_0}(x_s)\mathrm{d}s>0$ and $D_t(\theta_0) - C_t(\theta_0) = \int_{0}^t [\mathbb{E}_{\theta_0}[x_s^2] - \mathrm{Var}_{\theta_0}(x_s)]\mathrm{d}s = \int_0^t \mathbb{E}_{\theta_0}\left[x_s\right]^2\mathrm{d}s>0$ for all $s\geq 0$. Thus, $I_t(\theta)$ is positive definite. Finally, it is straightforward to see that $\lambda^TI_t(\theta_0)\lambda $ is increasing as a function of $t$, and that $I_0(\theta_0)=0$. This verifies Condition \ref{offline_assumption_2_2}.
}
\end{manualassumption}

\subsection{Online Parameter Estimation}
\begin{manualassumption}{E.1}
\emph{
For this model, we have that $L(\theta,x,\mu,\mu) = -\frac{1}{2}[(\theta_1-\theta_{1,0}) x+ (\theta_2-\theta_{2,0})(x-\mathbb{E}_{\mu}[x])]^2$. 
For simplicity, let us focus on the `pure interaction' case, in which $\theta_{1} = \theta_{1,0} = 0$. In this case, we have $L(\theta,x,\mu,\mu) = -\frac{1}{2}(\theta_2-\theta_{2,0})^2(x-\mathbb{E}_{\mu}[x])^2$, and thus $\nabla_{\theta}L(\theta,x,\mu,\mu) = -(\theta_2-\theta_{2,0})(x-\mathbb{E}_{\mu}[x])^2$. 
It is then straightforward to compute
\begin{equation}
\langle\nabla_{\theta}L(\theta,x,\mu,\mu),\theta\rangle = -\theta_2(\theta_2-\theta_{2,0})(x-\mathbb{E}_{\mu}[x])^2 = -\big(1-\frac{\theta_{2,0}}{\theta_{2}}\big)(x-\mathbb{E}_{\mu}[x])^2\theta_{2}^2 .
\end{equation}
It follows that, for all $||\theta_2||\geq ||\theta_{2,0}||$, we have
$\langle\nabla_{\theta}L(\theta,x,\mu,\mu),\theta\rangle\leq - 2(x-\mathbb{E}_{\mu}[x])^2||\theta_2||^2 = -\kappa(x,\mu)||\theta_2||^2$.
}
\end{manualassumption}

\begin{manualassumption}{E.2}
\emph{
For this model, we recall that $\nabla_{\theta} B(\theta,x,\mu) = [-x,-(x-\mathbb{E}_{\mu}[x])]$. We thus have
\begin{equation}
 \tau(\theta,x,\mu) = \big\langle \nabla_{\theta} B(\theta,x,\mu)\nabla_{\theta}B^T(\theta,x,\mu)\frac{\theta}{||\theta||},\frac{\theta}{||\theta||}\big\rangle^{\frac{1}{2}} = \left[x^2 + (x-\mathbb{E}_{\mu}(x))^2\right]^{\frac{1}{2}}.
\end{equation}
Thus implies, in particular, that $|\tau(\theta,x,\mu) - \tau(\theta',x,\mu)| = 0$, which verifies Condition \ref{assumption_bound2}.
}
\end{manualassumption}

\begin{manualassumption}{F.1}
\emph{The unique invariant measure $\hat{\mu}^N_{\infty}(\mathrm{d}\hat{x}^N)$ of the interacting particle system associated with the linear mean field model is multivariate normal, with mean 0 and covariance $\Sigma^N_{\infty}$ given by the solution of the Lyapunov equation (e.g., \cite[Chapter 3]{Pavliotis2014}) $A \Sigma^N_{\infty} + \Sigma^N_{\infty} A^T = I$, 
where $A =  (\theta_{1,0} + \theta_{1,0}) 1_{N}  - \frac{\theta_{2,0}}{N} J_N$, with $I_N$ the $\mathbb{R}^{N\times N}$ identity matrix, and $J_N$ the $\mathbb{R}^{N\times N}$ matrix of ones.
We also have that
\begin{equation}
\hat{L}^{i,N}(\theta,\hat{x}^N) = -\frac{1}{2} \bigg[ (\theta_{1}-\theta_{1,0})x^{i,N} + (\theta_2-\theta_{2,0}) (x^{i,N} - \bar{x}_N)\bigg]^2
\end{equation}
where $\bar{x}_N = \frac{1}{N}\sum_{j=1}^N x^{j,N}$. We can thus obtain the asymptotic log-likelihood as
\begin{align}
\tilde{\mathcal{L}}^{i,N}(\theta) =& -\frac{1}{2}(\theta_1-\theta_{1,0})^2 \underbrace{\int_{\mathbb{R}^N}  (x^{i,N})^2\hat{\mu}^N_{\infty}(\mathrm{d}\hat{x}^N)}_{C_1^{i,N}}  \\
&- (\theta_1 - \theta_{1,0}) (\theta_2 - \theta_{2,0}) \underbrace{\int_{\mathbb{R}^N} x^{i,N} \left(x^{i,N} - \bar{x}_N\right)\hat{\mu}^N_{\infty}(\mathrm{d}\hat{x}^N)}_{C_2^{i,N}} \\[-1mm]
& - \frac{1}{2} (\theta_2 - \theta_{2,0})^2 \underbrace{\int_{\mathbb{R}^N} \left(x^{i,N} - \bar{x}_N\right)^2\hat{\mu}^N_{\infty}(\mathrm{d}\hat{x}^N)}_{C_3^{i,N}}
\end{align}
The Hessian can now be computed as
\begin{equation}
\nabla^2_{\theta}\tilde{\mathcal{L}}^{i,N}(\theta) = \begin{pmatrix} -C_{1}^{i,N} & -C_2^{i,N} \\ -C_2^{i,N} & -C_3^{i,N}
 \end{pmatrix}
 \end{equation}
 In order to establish strong concavity, we are required to show that there exists $\eta>0$ such that, for all $\theta\in\mathbb{R}^2$, $\nabla_{\theta}^2 \tilde{\mathcal{L}}^{i,N}(\theta)\preceq -\eta I$. That is, the matrix $\nabla_{\theta}^2 \tilde{\mathcal{L}}^{i,N}(\theta)+ \eta I$ is negative semi-definite. We will show, equivalently, that $\mathrm{trace}(\nabla_{\theta}^2 \tilde{\mathcal{L}}^{i,N}(\theta)+ \eta I)<0$ and $\mathrm{det}(\nabla_{\theta}^2 \tilde{\mathcal{L}}^{i,N}(\theta)+ \eta I)>0$. First note that 
 \begin{align}
 \mathrm{trace}(\nabla_{\theta}^2 \tilde{\mathcal{L}}^{i,N}(\theta)+ \eta I) &= -C_{1}^{i,N} - C_{3}^{i,N} + 2\eta \leq -C_1^{i,N} + 2\eta.
 \end{align}
Since $\Sigma^N_{\infty}$ is a covariance matrix, we must have $C_{1}^{i,N} = [\Sigma^N_{\infty}]_{ii}>0$. Thus, for any $\eta\in(0,\frac{1}{2}C_{1}^{i,N})$, we do indeed have $\mathrm{trace}(\nabla_{\theta}^2 \tilde{\mathcal{L}}^{i,N}(\theta)+ \eta I)<0$. We now turn our attention to 
\begin{align}
\mathrm{det}(\nabla_{\theta}^2 \tilde{\mathcal{L}}^{i,N}(\theta)+ \eta I) 
&= \eta^2 - \left(C_{1}^{i,N} + C_{2}^{i,N}\right) \eta + \left(C_{1}^{i,N}C_{3}^{i,N} - (C_{2}^{i,N})^2 \right)
\end{align}
The roots of this quadratic are given by
\begin{equation}
\eta_{\pm} = \frac{\left(C_{1}^{i,N} + C_{2}^{i,N}\right) \pm \sqrt{\left(C_{1}^{i,N} + C_{2}^{i,N}\right)^2 - 4\left(C_{1}^{i,N}C_{3}^{i,N} - (C_{2}^{i,N})^2 \right)}}{2}
\end{equation}
These roots are real, since $(C_{1}^{i,N} + C_{2}^{i,N})^2 - 4(C_{1}^{i,N}C_{3}^{i,N} - (C_{2}^{i,N})^2 ) =  (C_{1}^{i,N} - C_{2}^{i,N})^2 + 2(C_2^{i,N})^2>0$. It follows that $\mathrm{det}(\nabla_{\theta}^2 \tilde{\mathcal{L}}^{i,N}(\theta)+ \eta I)>0$ for $\eta\in(-\infty,\eta_{-})\cup(\eta_{+},\infty)$. We claim that $\eta_{-}$ is positive, and thus $\mathrm{det}(\nabla_{\theta}^2 \tilde{\mathcal{L}}^{i,N}(\theta)+ \eta I)>0$ for all $\eta\in(0,\eta_{-})$. To see this, note that 
\begin{align}
C_{1}^{i,N}C_{3}^{i,N} - (C_{2}^{i,N})^2 &= \int_{\mathbb{R}^N}  (x^{i,N})^2\hat{\mu}^N_{\infty}(\mathrm{d}\hat{x}^N)\int_{\mathbb{R}^N} \left(x^{i,N} - \bar{x}_N\right)^2\hat{\mu}^N_{\infty}(\mathrm{d}\hat{x}^N) \\
&- \left(\int_{\mathbb{R}^N} x^{i,N} \left(x^{i,N} - \bar{x}_N\right)\hat{\mu}^N_{\infty}(\mathrm{d}\hat{x}^N)\right)^2>0
\end{align}
by the Cauchy-Schwarz inequality. It follows that $(C_{1}^{i,N} + C_{2}^{i,N})^2 - 4(C_{1}^{i,N}C_{3}^{i,N} - (C_{2}^{i,N})^2 )<(C_{1}^{i,N} + C_{2}^{i,N})^2$, and thus $\eta_{-}$ is indeed positive. This completes the verification of Condition \ref{assumption4''}, since for any $0<\eta<\min(\frac{1}{2}C_{1}^{i,N},\eta_{-})$, we have both $\mathrm{trace}(\nabla_{\theta}^2 \tilde{\mathcal{L}}^{i,N}(\theta)+ \eta I)<0$ and $\mathrm{det}(\nabla_{\theta}^2 \tilde{\mathcal{L}}^{i,N}(\theta)+ \eta I)>0$.
}
\end{manualassumption}

\begin{manualassumption}{F.2}
\emph{
The unique invariant measure $\mu_{\infty}(\mathrm{d}x)$ of the linear mean-field model is normal, with mean $\mu_{\infty} = 0$ and variance $\sigma_{\infty}^2 = \frac{1}{2(\theta_{1,0} + \theta_{2,0})}$ (e.g., \cite{Kasonga1990}). 
It follows that $L(\theta,x,\mu_{\infty},\mu_{\infty}) = -\frac{1}{2}[(\theta_1-\theta_{1,0}) x+ (\theta_2-\theta_{2,0})x]^2$.
We can thus compute the asymptotic contrast function as
\begin{align}
\tilde{\underline{\mathcal{L}}}(\theta) 
&=-\frac{1}{2} [(\theta_1+\theta_2) - (\theta_{1,0}+\theta_{2,0})]^2\int_{\mathbb{R}} x^2 \mu_{\infty}(\mathrm{d}x) = - \frac{[(\theta_1+\theta_2)-(\theta_{1,0}+\theta_{2,0})]^2}{4(\theta_{1,0}+\theta_{2,0})}. 
\end{align}
This function is not strongly concave, and does not admit a unique maximiser. Indeed, it is maximised by any $\theta=(\theta_1,\theta_2)^T$ such that $\theta_1+\theta_2 = \theta_{1,0} + \theta_{2,0}$. This demonstrates that Condition \ref{assumption4} is \emph{not} satisfied by the linear mean field model.
}
\end{manualassumption}

\begin{manualassumption}{G.1 - G.2}
\emph{
Conditions \ref{assumption0} - \ref{assumption0_1} are satisfied by $\gamma_t = \min\{\gamma^0,\gamma^0t^{-\delta}\}$, $\gamma^0\in[0,\infty)$, $\delta\in(\frac{1}{2},1]$.
}
\end{manualassumption}



\bibliographystyle{siamplain}
\bibliography{references}

\end{document}